\numberwithin{equation}{section}
\renewcommand{\subset}{\subseteq}
\newtheorem{thm}{Theorem}[section]
\newtheorem{coro}[thm]{Corollary}
\newtheorem{lem}[thm]{Lemma}
\newtheorem{defn}[thm]{\noindent Definition}
\newtheorem{prop}[thm]{Proposition}
\newtheorem{convention}[thm]{Convention}
\newtheorem{rem}[thm]{Remark}
\def\pf{\noindent\textit{Proof}. }
\renewcommand{\qed}{\hfill $ \square $\medskip}
\def\degE{{\lfloor E\rfloor}}
\def\hht{\textrm{ht}}
\def\ch{\mathsf{ch}}
\def\gl{\mathfrak{gl}(n)}
\def\Hom{\text{\rm Hom}}
\def\End{\text{\rm End}}
\def\diag{\text{\rm diag}}
\def\dim{\text{\rm dim\,}}
\def\max{\text{max}}
\def\min{\text{min}}
\def\bz{{\bar 0}}
\def\bo{{\bar 1}}
\def\id{\textsf{id}}
\def\frakb{{\mathfrak {B}}}
\def\frakn{{\mathfrak{N}}}
\def\h{{\frak h}}
\def\comx{{\mathcal{O}_X^{\text{min}}}}
\def\comi{{\mathcal{O}^{\text{min}}}}
\def\comit{\mathcal{O}_\theta^{\text{min}}}
\def\comitl{{\mathcal{O}^{\text{min}}_\lambda}}
\def\comif{\mathcal{O}^{\text{min}}_{\mathfrak{f}}}
\def\comid{\mathcal{O}^{\text{min}}_{\geq d}}
\def\comiid{\mathcal{O}^{\text{min}}_{\geq d}}
\def\comidd{\mathcal{O}^{\text{min}}_{\geq d'}}
\def\comis{{\mathcal{O}^{\text{min}}}}
\def\gmf{{\mathfrak{g}\mbox{-\bf{mod}}^f}}
\def\dpt{\mathsf{dpt}}
\def\pty{\mathsf{pty}}
\def\cf{\mathcal{F}}
\def\calm{\mathcal{M}}
\def\caln{\mathcal{N}}
\def\calc{\mathcal{C}}
\def\co{{\mathcal{O}}}
\def\cof{{\mathcal{O}^0_{\text{fin}}}}
\def\cae{{\mathcal{E}}}
\def\bfe{{\mathbf{E}}}
\def\bfb{{\mathbf{B}}}
\def\bsn{{\bar S(n)}}
\def\bhn{{\bar H(n)}}
\def\wn{{W(n)}}
\def\ochn{{\overline{C{\hskip-0.1cm}H}(n)}}
\def\och{{\overline{C{\hskip-0.1cm}H}}}
\def\chn{{C{\hskip-0.1cm}H}(n)}
\def\Phig{{\Phi_{\geq 1}}}
\def\sft{{\mathsf{t}}}
\def\sfp{{\mathsf{P}}}
\def\sfd{{\mathsf{d}}}
\def\sfS{{\mathsf{S}}}
\def\wt{{\mathsf{Wt}}}
\def\ggg{\mathfrak{g}}
\def\sss{\mathfrak{s}}
\def\hhh{\mathfrak{h}}
\def\nnn{\mathfrak{n}}
\def\bbb{\mathfrak{b}}
\def\uk2{U_{\chi}{(\mathfrak{gl}}(2))}
\newcommand{\zz}{\mathbb{Z}}
\newcommand{\red}[1]{{\color{red}#1}}
\def\bo{{\bar 1}}
\def\bz{{\bar 0}}
\newcommand{\bbz}{\mathbb{Z}}
\newcommand{\bbn}{\mathbb{N}}
\newcommand{\bbc}{\mathbb{C}}
\newcommand{\bbr}{\mathbb{R}}
  \def\almf{{A_\lambda\mbox{-\bf{mod}}^f}}
\newcommand{\evid}[1]{\textcolor{red}{#1}}
\begin{document}

\title[Representations of Lie superalgebras of Cartan type]%{\textcolor[rgb]{225,0,0}
{Parabolic BGG categories  and their block decomposition for  Lie superalgebras of Cartan type}%}

\author{Fei-Fei Duan, Bin Shu and Yu-Feng Yao}

\address{College of Mathematics and Information Science, Hebei Normal University, Shijiazhuang, Hebei, 050024, China.}\email{duanfeifei0918@126.com}

\address{School of Mathematical Sciences,  Key Laboratory of MEA (Ministry of Education) \& Shanghai Key Laboratory of PMMP,  East China Normal University, Shanghai 200241, China} \email{bshu@math.ecnu.edu.cn}

\address{Department of Mathematics, Shanghai Maritime University, Shanghai, 201306, China.}\email{yfyao@shmtu.edu.cn}

\subjclass[2010]{17B10, 17B66, 17B70}

\keywords{Lie superalgebras of Cartan type, parabolic BGG category, blocks, tilting modules, projective covers, semi-infinite characters}

\thanks{This work is partially supported by the National Natural Science Foundation of China (Grant Nos. 12071136 and 12271345), supported in part by Science and Technology Commission of Shanghai Municipality (No. 22DZ2229014), and Hebei Natural Science Foundation of China (No.A2021205034)}
%\thanks{The main results were announced in the $8^{\text{th}}$ ICCM 2019 %(Beijing) by B.S., and in the $16^{\text{th}}$ National Conference on Lie %Theory 2019 (Qingdao) by Y.Y..}

\begin{abstract} In this paper, we study the parabolic BGG categories  for  graded Lie superalgebras of Cartan type over the field of complex numbers. The gradation of such a  Lie superalgebra $\ggg$ naturally arises, with the zero component $\ggg_0$ being a reductive Lie algebra.
We first show that there are only two proper parabolic  subalgebras containing Levi subalgebra $\ggg_0$: the ``maximal one" $\sfp_\max$ and the ``minimal one" $\sfp_\min$. Furthermore, the parabolic BGG category arising  from  $\sfp_\max$ essentially turns out to be a subcategory of the one arising from $\sfp_\min$. Such a priority of  $\sfp_\min$ in the sense  of representation theory reduces the question to the study of the ``minimal parabolic" BGG category $\comi$ associated with $\sfp_\min$. We prove the existence of projective covers of simple objects in these categories, which enables us to establish a satisfactory  block theory. Most notably, our main results are as follows.
	
	(1) We classify and  obtain a precise description of the blocks of $\comi$.
	
		(2) We investigate indecomposable  tilting and indecomposable projective modules in  $\comi$, and compute their character formulas.
	\end{abstract}
\maketitle
\setcounter{tocdepth}{1}\tableofcontents
\section*{Introduction}

\subsection{}
By Kac's classification theorem (\cite{Kac77}), finite-dimensional simple Lie superalgebras over the field of complex numbers are either of classical type or of Cartan type, with the latter consisting of infinite series of the four types $W(n)$, $S(n)$, $\tilde S(n)$ and $H(n)$. The simple Lie superalgebra $W(n)$ ($n \ge 3$) is the derivation algebra of the Grassmann superalgebra $\Lambda(n)$ on $n$ generators. Arising from the natural $\mathbb Z$-grading on $\Lambda(n)$, $W(n)$ is also naturally $\mathbb Z$-graded. The Lie superalgebras $S(n)$ ($n\geq 4$), $\tilde{S}(n)$ ($n\geq 4$) and $H(n)$ ($n\geq 5$) are Lie subalgebras of $W(n)$. The superalgebra $\tilde{S}(n)$ is not $\mathbb Z$-graded, but carries a filtration induced by the filtration of $W(n)$.

Irreducible finite-dimensional representations of Lie superalgebras of Cartan type were studied earlier (\cite{BL83}, \cite{Shap82}, etc.), motivated by Rudakov's work  on  irreducible representations of infinite-dimensional Lie algebras of Cartan type (\cite{Rud73} and \cite{Rud75}). In \cite{Ser05}, Serganova considered the category of $\bbz$-graded irreducible representations of graded Lie superalgebras of Cartan type, determined the character formulas of their  $\bbz$-graded  irreducible highest weight  modules. After that, there were a few papers on finite-dimensional representations over $W(n)$. For example, in \cite{BKN} the authors computed the cohomological support varieties of irreducible $W(n)$-modules in a certain category, the objects of which are finite-dimensional and completely reducible over the zero component $W(n)_0$. In \cite{Shom02} Shomron studied the blocks of a certain category whose objects are finite-dimensional $W(n)$-modules by constructing extensions between irreducible modules. However,  when considering categories containing infinite-dimensional objects,  the situation becomes very  complicated. %{\color{red}For one thing}, no satisfactory (co)homological theory seems to be %available in this case. {\color{red}  For another???}

\subsection{}

Let $\ggg$ be a Lie superalgebra of Cartan type $X(n)$, where $X \in \{W, S, H\}$. Then $\ggg$ is naturally endowed with a $\bbz$-graded structure, i.e., $\ggg=\sum_{i\geq -1}\ggg_i.$ In addition, $\ggg_0$ is a reductive Lie algebra.   When $X \in \{S, H\}$, it will be convenient to study, in place of $X(n)$, the representation category of  the one-dimensional toral  extension $\bar X(n)$ determined by the following exact sequence
$$X(n)\hookrightarrow \bar X(n) \twoheadrightarrow \mathbb{C}\sfd,$$
where $\sfd$ is a canonical toral element measuring degrees in $W(n)$
(see \S\ref{toral extension} for details).

In the present paper, we introduce and study a parabolic BGG category for $X(n)$ with $X\in\{W,\bar{S},\bar{H}\}$, in analogy to the Bernstein-Gelfand-Gelfand category of complex semisimple Lie algebras (see \cite{BGG} and \cite{Hum08}). Our purpose is to investigate blocks in this category, develop a tilting module theory, and give character formulas of indecomposable tilting and indecomposable projective modules. Recall that a Lie superalgebra of Cartan type
admits many mutually non-conjugate Borel subalgebras  (\cite[\S4]{Ser05}), also many mutually non-conjugate Borel subalgebras containing the standard Borel subalgebra of the core reductive Lie subalgebra $\ggg_0$ (see \S\ref{lem:borel chain}), and hence possibly admits many ``parabolic" subalgebras.
An important ingredient in our work is to discriminate these parabolic subalgebras, and choose  a suitable ``parabolic subalgebra''. Surprisingly, there are only two such  parabolic (proper) subalgebras,  the maximal one $\sfp_\max$ which is actually $\sum_{i\geq 0}\ggg_{i}$, and the minimal one $\sfp_\min$ which is equal to $\sum_{i\leq 0}
\ggg_i$ (see Proposition \ref{prop: det para}). Furthermore, the ``parabolic BGG category" associated with $\sfp_\max$ turns out to be less interesting (see \S\ref{parabolic func}) since  whose $U(\ggg)$-finitely-generated objects
are finite-dimensional. Actually,  the parabolic BGG category associated with $\sfp_\max$  is a subcategory of the parabolic BGG category associated with $\sfp_\min$ if only considering objects finitely generated over $U(\ggg)$.

Based on the above analysis, we only need to focus on $\sfp_\min=\ggg_{-1}\oplus\ggg_0$. In this article, we simply write it as  $\sfp$ which is naturally regarded as a ``minimal parabolic" subalgebra of $\ggg$ containing the reductive Lie algebra $\ggg_0$.
We then introduce the parabolic BGG category $\comi$ associated with $\sfp$.
This category is by definition a subcategory of the $\bbz$-graded $U(\ggg)$-module category, satisfying some standard  axioms (see Definition \ref{defn}).
What is completely different from \cite{Ser05} is that all standard modules  have infinite composition factors. Nevertheless, we can prove the following fundamental result.
\begin{thm} (See Theorem \ref{projective thm}) Any simple object in $\comi$ has a projective cover which admits a flag of standard modules.
\end{thm}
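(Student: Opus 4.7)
The plan is to construct, for each simple $L(\lambda)\in\comi$, a projective module $P(\lambda)$ by a two-step induction (first from $\ggg_0$ up to $\sfp$, then from $\sfp$ up to $\ggg$), equip it with a standard filtration via the exterior-degree filtration on $\Lambda(\ggg_{-1})$, and extract the projective cover as an indecomposable summand inside a truncation $\comid$. Simple objects of $\comi$ are parameterized by weights $\lambda$ for which $V(\lambda)$ is a simple finite-dimensional $\ggg_0$-module (semisimple over $\bar\hhh$), and the standard module
\[
\Delta(\lambda):=U(\ggg)\otimes_{U(\sfp)}V(\lambda),
\]
with $\ggg_{-1}$ acting trivially on $V(\lambda)$, has $L(\lambda)$ as its unique simple quotient.

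The key intermediate object is the $\sfp$-module
\[
Q(\lambda):=U(\sfp)\otimes_{U(\ggg_0)}V(\lambda)\;\cong\;\Lambda(\ggg_{-1})\otimes V(\lambda),
\]
finite-dimensional because $\ggg_{-1}$ is so. I claim $Q(\lambda)$ is the projective cover of $V(\lambda)$ in the category of locally finite $\sfp$-modules that are $\bar\hhh$-semisimple: projectivity is by Frobenius reciprocity, while indecomposability follows from the fact that every simple $\sfp$-module has trivial $\ggg_{-1}$-action (by the nilpotency of $\mathrm{ad}\,\ggg_{-1}$), so every simple quotient of $Q(\lambda)$ factors through $Q(\lambda)/(\ggg_{-1}\cdot Q(\lambda))\cong V(\lambda)$. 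I then set
\[
P(\lambda):=U(\ggg)\otimes_{U(\sfp)}Q(\lambda)\;\cong\;U(\ggg)\otimes_{U(\ggg_0)}V(\lambda).
\]
Frobenius reciprocity $\Hom_\ggg(P(\lambda),-)\cong\Hom_{\ggg_0}(V(\lambda),-|_{\ggg_0})$ together with exactness of restriction and projectivity of $V(\lambda)$ over the reductive $\ggg_0$ yields the projectivity of $P(\lambda)$. Membership $P(\lambda)\in\comi$ reduces to local $\sfp$-finiteness, which I would deduce from the bounded $\bbz$-grading of $\ggg$, the containment $[\sfp,\ggg_{\geq 1}]\subseteq\ggg_{\geq 0}$, and the finite-dimensional nilpotency of $\mathrm{ad}\,\ggg_{-1}$: the $\sfp$-orbit of any $u\otimes q$ is confined to a finite-dimensional span of bounded degrees.

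To exhibit a $\Delta$-flag, I filter $Q(\lambda)$ by exterior degree,
\[
F^{k}:=\bigoplus_{j\geq k}\Lambda^{j}(\ggg_{-1})\otimes V(\lambda),
\]
which is $\sfp$-stable because $\ggg_{-1}$ (being super-abelian) acts by raising the exterior degree; each quotient $F^{k}/F^{k+1}\cong\Lambda^{k}(\ggg_{-1})\otimes V(\lambda)$ has trivial $\ggg_{-1}$-action and therefore decomposes into $\ggg_0$-simples $V(\mu)$. Refining $F^{\bullet}$ to a $\sfp$-composition series with simple quotients $V(\mu)$ and applying the exact functor $U(\ggg)\otimes_{U(\sfp)}(-)$ (exact since $U(\ggg)$ is $U(\sfp)$-free by PBW) produces a filtration of $P(\lambda)$ whose successive quotients are the standard modules $\Delta(\mu)$. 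Finally, passing to a truncated subcategory $\comid$ in which $P(\lambda)$ is of finite length and Krull--Schmidt applies, I decompose $P(\lambda)=\bigoplus_{i}P_i$ into indecomposable projectives; the unique summand whose head is $L(\lambda)$ is then the projective cover of $L(\lambda)$ in $\comi$.

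The principal obstacle is to show that the indecomposable summand thus extracted inherits a $\Delta$-flag, since a standard filtration on a direct sum need not induce standard filtrations on the individual summands. I anticipate resolving this via a BGG-type Ext-criterion: $M\in\comi$ admits a $\Delta$-flag if and only if $\mathrm{Ext}^{1}_{\comi}(M,\nabla(\mu))=0$ for every $\mu$, where $\nabla(\mu)$ is the costandard obtained by dualizing $\Delta(\mu)$ through an appropriate duality functor on $\comi$; once this criterion (together with the supporting duality theory) is in place, the Ext-vanishing for $P(\lambda)$ passes automatically to each summand, yielding the desired $\Delta$-flag on the projective cover. A secondary technical point is the rigorous verification of local $\sfp$-finiteness for $P(\lambda)$ despite the infinite-dimensionality of $U(\ggg_{\geq 1})$, which must be tracked carefully through the PBW decomposition.
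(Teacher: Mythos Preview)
Your construction of $I(\lambda)=U(\ggg)\otimes_{U(\ggg_0)}V(\lambda)$, the projectivity via Frobenius reciprocity, and the $\Delta$-flag via the exterior-degree filtration on $\Lambda(\ggg_{-1})\otimes V(\lambda)$ are exactly what the paper does. The Ext-criterion you propose for passing the $\Delta$-flag to summands is a clean alternative to the paper's inductive argument (the paper just cites the standard induction on flag length \`a la \cite[\S3.7]{Hum08}, while the Ext-vanishing $\mathrm{Ext}^1_{\comi}(\Delta(\lambda),\nabla(\mu))=0$ is indeed available from Brundan's framework, as recorded later in Lemma~\ref{basic lem O}(4)).

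There is, however, a genuine gap in your extraction of the projective cover. Your claim that $P(\lambda)$ has finite length in $\comid$ is false: the standard modules $\Delta(\mu)\cong U(\ggg_{\geq 1})\otimes_\bbc L^0(\mu)$ are infinite-dimensional, since $\ggg_{\geq 1}$ contains nonzero even components (e.g.\ $\ggg_2$ for $n\geq 3$), making $U(\ggg_{\geq 1})$ infinite-dimensional. So $I(\lambda)$ has infinitely many composition factors, and Krull--Schmidt in its finite-length form does not apply. What the paper uses instead is that
\[
\End_{\comi}(I(\lambda))\;\cong\;\Hom_{\ggg_0}\!\big(L^0(\lambda),\,I(\lambda)|_{\ggg_0}\big)
\]
is \emph{finite-dimensional} (each graded piece of $I(\lambda)$ is finite-dimensional and $L^0(\lambda)$ is concentrated in one degree). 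This, together with the finite $\Delta$-flag length, forces $I(\lambda)$ to split into finitely many indecomposable summands, each with local endomorphism ring; the Fitting-type argument then shows that the unique summand surjecting onto $L(\lambda)$ is an essential cover. You should replace the finite-length step with this finite-$\End$ argument; without it, neither the existence of the indecomposable decomposition nor the essentiality of the surjection is justified.
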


\subsection{} Along the direction just mentioned above, we can define blocks of $\comi$ via projective covers of irreducible modules, and get into the next topic---to classify and describe all blocks of $\comi$.

It can be proven that all simple objects in $\comi$ are parameterized by what we denote here by $\mathbf{E}$, that is, a combination of finite-dimensional irreducible modules over $\mathfrak g_0$ and their so-called ``depths" associated with the $\bbz$-gradation.
We give one of our main results in the following.

\begin{thm}\label{intro second thm} (See Theorems \ref{block thm WS}, \ref{block thm H odd} and \ref{block thm H even}) Let $\ggg=X(n)$, $X\in \{W,\bar S, \bar H\}.$ For any given $L(\lambda), L(\mu)\in \bfe$, $L(\lambda)$ and $L(\mu)$ lie in the same block if and only if the following three conditions are satisfied.
	\begin{itemize}
		\item[(1)] $\mu\in \lambda+Q;$
		\item[(2)]   $\dpt(L(\mu))=\dpt(L(\lambda))+\ell(\lambda-\mu);$
		\item[(3)]  $\pty(L(\mu))=\pty(L(\lambda))+\overline{\ell(\lambda-\mu)},$
	\end{itemize}
	where $\dpt(L(\lambda))$ denotes the depth of $L(\lambda)$ associated with its $\bbz$-graded structure; $\pty(L(\lambda))$ is the parity of the ``maximal vector" $v^0_\lambda$ of $L(\lambda)$; and for each $\alpha \in Q$ we write $\ell(\alpha)$ for the length of $\alpha$ (see (\ref{length}) for the definition) and $\overline{\ell(\alpha)}$ for the parity of $\ell(\alpha)$.
\end{thm}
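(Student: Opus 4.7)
The plan is to deduce both directions of the equivalence from the structure of the ``enveloping'' projective $I(\lambda)$ whose existence was established in Theorem~\ref{projective thm}. Recall that $\bfb(\lambda)$ is generated, as an equivalence class, by the relation ``$L(\mu)$ occurs as a composition factor of $P(\nu)$'', and that every composition factor of $P(\lambda)$ already appears in the standard flag of $I(\lambda)$. Thus the whole problem reduces to tracing weights, depths and parities across one standard module at a time, and then combining links.

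For the necessity direction, a standard module $\Delta(\nu)$ is induced from a finite-dimensional simple $\sfp$-module $V$ concentrated in a single depth, so by PBW it is $U(\ggg_{\geq 1})\otimes V$ as a vector space. Any composition factor $L(\mu)$ is produced by applying to $V$ a product of root vectors from $\ggg_{\geq 1}$. Such a product changes the weight by an element of $Q$ whose expansion in simple roots has a prescribed sum of coefficients; the $\bbz$-grading of $\ggg$ forces this sum to coincide with the change of depth, and by the definition of $\ell$ this number is exactly $\ell(\nu-\mu)$. The parity of the product tracks whether an odd number of odd root vectors were used, yielding $\overline{\ell(\nu-\mu)}$. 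Iterating along a chain of projectives gives (1)--(3).

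For the sufficiency direction, it is enough to construct explicit linkages for a generating set of pairs $(\lambda,\lambda-\alpha)$ with $\alpha$ a simple root of $\ggg$, since $\dpt$, $\pty$ and $\ell$ are all additive under concatenation. For each such $\alpha$ one should exhibit a non-semisimple subquotient of an appropriately chosen induced module $U(\ggg)\otimes_{U(\sfp)}V$ whose head is $L(\lambda)$ and which contains $L(\lambda-\alpha)$ as a composition factor, forcing the two simples into the same block. In practice this amounts to locating maximal vectors of the correct weight inside $S^{\bullet}(\ggg_{\geq 1})\otimes V$ and verifying that these do not split off. The three cases $X=W$, $\bar S$, $\bar H$ demand separate treatment because the root systems of $\ggg_0$ and the $\ggg_0$-module structure on $\ggg_{\pm 1}$ differ; for $\bar H(n)$ the parity of $n$ further alters the root system of $\ggg_0$, which is why the $H$-case splits into two theorems.

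The main obstacle I foresee is the sufficiency direction for $\bar H(n)$, where the $\ggg_0$-module structure on $\ggg_{\geq 1}$ is most intricate and both parities of $n$ must be handled. Identifying maximal vectors of the prescribed weight inside higher symmetric powers of $\ggg_{\geq 1}$, and ruling out splitting of the resulting extension, requires a careful case analysis grounded in the classical invariant theory of the relevant reductive group. Once such linkages have been produced for a generating set of the root lattice, assembling them into the full equivalence on $\bfe$ is a routine bookkeeping argument using the additivity of $\dpt$, $\pty$ and $\ell$ on $Q$.
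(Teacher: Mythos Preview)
Your necessity argument is essentially the paper's: the Depth Lemma and Parity Lemma (Lemmas~\ref{dpt lem} and~\ref{pty lem}) are proved exactly by tracing weights, $\bbz$-degrees and $\bbz_2$-degrees through the $\Delta$-flag of $I(\lambda)$, and condition~(1) follows since every weight of $I(\lambda)$ lies in $\lambda+Q$.

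For sufficiency your strategy is in the right spirit but diverges from the paper in two substantive ways, and one of them is a real gap.

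First, a difference rather than a gap: the paper does not link $\lambda$ to $\lambda-\alpha$ for each simple root $\alpha$. These Cartan-type superalgebras have no canonical simple system generating $Q$; instead the paper isolates a few distinguished elements ($\Xi$ for $W,\bar S$; $\tilde\delta$ and $\Theta$ for $\bar H$) and shows that $L(\lambda)$ is linked to $L(\lambda-\Xi)$, $L(\lambda-\tilde\delta)$, $L(\lambda-\Theta)$ etc.\ (Corollary~\ref{height one lem}). The remaining reduction is an induction on $\hht(\alpha)$ in the expression $\lambda=c\Xi+\alpha$ (or its $\bar H$-analogue). The mechanism producing a linkage is also dual to what you describe: rather than finding $L(\lambda-\alpha)$ inside $\Delta(\lambda)$, Proposition~\ref{I prop} shows that for every $\mu\in\Upsilon(\lambda)$ the module $\Delta(\mu)$ contains $L(\lambda)$ as a composition factor. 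For $W(n)$ this comes from the $\ggg_0$-module isomorphism $\ggg_{-1}\cong V^*$ together with a specific copy $M\cong V$ inside $\ggg_1$ (Lemma~\ref{basic nat rep}), turning the tensor-Hom adjunction into the needed reciprocity.

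Second, and this is where your proposal has a genuine gap: for $\bar H(n)$ you correctly anticipate the hardest case, but the remedy you sketch (locating maximal vectors in symmetric powers of $\ggg_{\geq 1}$ via classical invariant theory) is not what makes the argument go through. The direct reciprocity of Lemma~\ref{basic nat rep} fails because $\bar H(n)_1$ need not contain a $\ggg_0$-submodule dual to $\ggg_{-1}$. The paper's solution is to pass to the larger algebra $\ochn=\bhn\oplus\bbc D_H(\xi_1\cdots\xi_n)$, where a workable analogue does exist (Lemma~\ref{basic nat rep ch}), prove Proposition~\ref{I prop} there, and then transfer the block statement back to $\bhn$ via the nontrivial fact that $\Delta(\lambda)_{\ochn}$ remains \emph{indecomposable as an $\bhn$-module} (Corollary~\ref{coro for H}, proved by analysing idempotents in $\Hom_{\bhn}(\Delta(\lambda)_{\ochn},\Delta(\lambda)_{\ochn})$ in Lemma~\ref{key lem for blocks of H}). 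Without this detour through $\ochn$ your plan does not supply enough linkages in the $\bar H$ case, and your proposal gives no indication of how invariant theory alone would close that gap.
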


A more precise structural description of blocks can be found in Theorems \ref{block thm WS}, \ref{block thm H odd} and \ref{block thm H even}.
Below, we will give an outline of the proof of Theorem 0.2.

While establishing the existence of the projective cover $P(\lambda)$ of an irreducible module $L(\lambda)$ in $\mathcal O^{\rm min}$,
we consider an  ``enveloping" projective module $I(\lambda)$, which is induced from irreducible modules $L^0(\lambda)$ over the graded-zero component $\ggg_0$, endowed with a flag of standard modules. Then we can prove that $I(\lambda)$ lies in the same block $\mathbf{B}(\lambda)$ as $ P(\lambda)$.
Based on the construction of $I(\lambda)$, we use various strategies to read off information about $\bfb(\lambda)$. In particular, we examine maximal vectors.
Along this way, the block decomposition becomes easy for $W(n)$ and $\bar{S}(n).$
%It is worthwhile warning that the sets of parameters look like for $\bar H(2r)$ and $\bar H(2r+1)$, however the structures are actually not the same.
However, it does not work well for  $\bhn$.  The solution is to establish the relations  between the standard modules of $\ochn$ and the standard modules of $\bhn.$ (Here $\ochn$ is a Lie subalgebra of $W(n)$  while  $\bhn$ is the derived subalgebra of $\ochn$ with codimension one in $\ochn$).  The most important step in this approach is the non-trivial observation that  all standard modules for $\ochn$ are indecomposable over $\bhn$  (see Corollary \ref{coro for H}). Another thing to notice is that
the behavior of $\bar H(2r+1)$ at the root lattice is critically different from that of $\bar H(2r)$, which is ultimately a consequence of
the difference of orthogonal classical Lie algebras of types $B_r$ and  $D_r$. So proving the final results on blocks for $\bar H(2r+1)$ and $\bar H(2r)$ will require separate arguments (see Theorems \ref{block thm H odd} and \ref{block thm H even}). %So the block structures for $\bar H(2r+1)$ and $\bar H(2r)$  actually do not behave the same although the parameters for their blocks  can be taken the same,  as one apparently see in Theorems \ref{block thm H odd} and \ref{block thm H even}.

%It is worth mentioning that the block description of the %finite-dimensional module category over $W(n)$ was first obtained  in %\cite{Shom02}. Here we recover it as a special instance of a more general %setup (see Remark \ref{covers}).

The above block theorem actually reveals a somewhat degenerate behavior of blocks for algebraic models of Cartan series, in comparison with the classical theory of complex semi-simple Lie algebras and basic classical Lie superalgebras (see \cite{Hum08} and \cite{CW12}). The intrinsic  mechanism  should be further investigated.

\subsection{} Another important ingredient in our arguments is to prove that each of $W(n)$, $\bar S(n)$ and $\bar H(n)$ admits a semi-infinite character. The notation of semi-infinite character put forward by Soergel was derived  from the work on semi-cohomology by  Feigin, Voronov and Arkhipov (cf. \cite{Ar97}, \cite{Fe84} and \cite{Vor93}).  For $\bbz$-graded Lie algebras admitting semi-infinite characters,  Soergel established in \cite{Soe98} a framework for some $\bbz$-graded representation category.  Following Soergel's work \cite{Soe98}, Brundan  investigated some general theory of category $\mathcal{O}$ for a general $\bbz$-graded Lie superalgebra in \cite{Brun04}, which can be used to study representations of classical Lie superalgebras, and especially to deal with $\mathfrak{gl}(m,n)$ and $\mathfrak{q}(n)$.
Fortunately, the general theory of Brundan's work is available to the case of $\zz$-graded Lie superalgebras of Cartan type, so we have the category $\comis$ in the present paper. Especially, a BGG reciprocity for  truncated categories in \cite{Brun04} is true for $\comis$. Furthermore,  we can investigate tilting modules in $\comis$ on the basis of Soergel's and Brundan's work{\color{red}.} Most notably, we establish Soergel's reciprocity for tilting modules in our $\comis$.

Recall that our category $\comi$ is associated with the ``minimal parabolic" subalgebra $\sfp$, which enables us to obtain a realization of co-standard modules in $\comi$ via Kac modules. This is very important for us to go further, and in particular it leads to the following reciprocities.

\begin{thm} (See Theorem \ref{iso-2}, Propositions \ref{iso1}) Let $P(\lambda)$(resp. $T(\lambda)$) be the indecomposable projective (resp. tilting) module  in $\comis$ corresponding to the simple  object $L(\lambda)\in \bfe$,  and $K(\lambda)$ be the corresponding Kac module. Let $[P(\lambda):\Delta(\mu)]$ (resp. $[T(\lambda):\Delta(\mu)]$) denote the multiplicity of the standard module $\Delta(\mu)$ in $P(\lambda)$ (resp. $T(\lambda)$). Then the following statements hold.
	\begin{itemize}
		\item[(1)] If $\ggg=W(n),$ then
        \begin{eqnarray*}
        [P(\mu):\Delta(\lambda)]&=&(K(\lambda+\Xi):L(\mu));\cr
        [T(\mu):\Delta(\lambda)]&=&(K(-w_0\lambda+2\Xi):L(-w_0\mu+ \Xi)).
        \end{eqnarray*}
		\item[(2)] If $\ggg=\bar{S}(n),$ then
         \begin{eqnarray*}
        \hspace{-9mm}[P(\mu):\Delta(\lambda)]&=&(K(\lambda+\Xi):L(\mu));\cr
		\hspace{-9mm} [T(\mu):\Delta(\lambda)]&=&(K(-w_0\lambda+\Xi):L(-w_0\mu)).
        \end{eqnarray*}
		\item[(3)] If $\ggg=\bar{H}(n),$ then
        \begin{eqnarray*}
        \hspace{-7mm}[P(\mu):\Delta(\lambda)]&=&(K(\lambda+ n\delta):L(\mu));\cr
		\hspace{-7mm}[T(\mu):\Delta(\lambda)]&=&(K(-w_0\lambda+ n\delta):L(-w_0\mu)).
        \end{eqnarray*}
	\end{itemize}
Here $w_0$ is the longest element of the Weyl group of $\ggg_0$, $\delta$ is the linear dual of extended toral element $\sfd$, $\Xi=\epsilon_{1}+\epsilon_{2}+\cdots+\epsilon_{n}$ and $(K(\cdot):L(\cdot))$ denotes the multiplicity of a composition factor in certain Kac module.
\end{thm}
From the above theorem,  Serganova's character formulas on Kac modules in \cite{Ser05}  allow us finally to  obtain the character formulas of both indecomposable projective and indecomposable tilting modules in $\comis$.

\subsection{} This paper is organized as follows. In Section \ref{yubei}, we introduce  some basic notions and notations for Lie superalgebras of Cartan type. Most notably,  we show in \S\ref{semi-inf} that the $\zz$-graded Cartan type Lie superalgebras admit semi-infinite characters.
In Section \ref{sec: bor-para}, we make a precisely construction of  adjacent Borel subalgebras, and then show the surprising result that any parabolic subalgebra containing  $\ggg_0$ is either the maximal one or the minimal one. Then the BGG category arising from any parabolic subalgebra is essentially  the subcategory of the one arising from the minimal.
In Section \ref{cat o}, we introduce the category $\comi$, investigate some natural representations and list some properties of their weights. These arguments are very important to the study of  blocks.
In Section \ref{proj and block}, we consider the projective modules in $\comi$. In particular, we establish that all simple objects in $\comi$ have projective covers, and every indecomposable projective module admits a flag of standard modules (Theorem \ref{projective thm}).
 In Section \ref{degen}, we  obtain a degenerate BGG reciprocity (Theorem \ref{deg bggthm}).
In  Section \ref{block sec}, we investigate and describe the blocks in $\comi$,  see Theorems \ref{block thm WS}, \ref{block thm H odd} and \ref{block thm H even}. In Section \ref{Costandard M}, we obtain a version of Soergel's reciprocity for indecomposable tilting modules via a realization of co-standard modules in terms of Kac modules. Then we apply the degenerate BGG reciprocity and Soergel's reciprocity to  give  character formulas of the indecomposable projective and the indecomposable tilting modules.  The last two sections are  appendixes, where we give a detailed computation for semi-infinite characters (Appendix A) and character formulas of tilting modules (Appendix B).

In the same spirit, it is also possible to extend parts of the theory of the category $\comi$  (including  tilting modules and their character theory)  to infinite-dimensional Lie algebras of Cartan type  (see \cite{DSY18}).

\subsection*{\textsc{Acknowledgement}} B.S. is deeply indebted to  Shun-Jen Cheng and Toshiyuki Tanisaki for stimulating and helpful discussions,  and to the Institute of Mathematics at Academia Sinica for their hospitality during his visit in the winter of 2018 when this work was partially done. The authors are also thankful to Salvatore Tringali (Hebei Normal University, School of Mathematics) for an attentive reading of the introduction of this paper.

\section{Preliminaries}\label{yubei}

In this paper, we always assume that the base field is the complex field $\mathbb{C}$. All vector superspaces (resp. supermodules, superalgebras) are over $\mathbb{C}$,  and will be simply called spaces (resp. modules, algebras).
%Similarly, the term ``subalgebra'' will mean ``subsuperalgebra.''

\subsection{The Lie superalgebras  of Cartan type}

In this subsection, we recall the definitions of  finite-dimensional Lie superalgebras of Cartan type (see \cite{Kac77} for details).

Let $\Lambda(n)$ be the Grassmann superalgebra on $n$ odd generators $\xi_1,\ldots, \xi_n$ ($n\geq 2$).  Let $\mathrm{deg}(\xi_i)=1$ for $1\leq i\leq n$. Then $\Lambda(n)$ has a natural $\mathbb{Z}$-grading with $\Lambda(n)_j=\text{span}_{\mathbb{C}}\{\xi_{k_1}\wedge\cdots\wedge\xi_{k_j}\mid 1\leq k_1<\cdots<k_j\leq n\}$. The Witt type Lie superalgebra $W(n)$ is defined to be the set of all superderivations of $\Lambda(n).$ Then
$$W(n)=\left\{ \sum_{i=1}^{n}f_iD_i \mid f_i\in\Lambda(n)  \right\},$$
where $D_i$ is the superderivation of $\Lambda(n)$ defined through $D_i(\xi_j)=\delta_{ij}$ for $1\leq i,j\leq n.$ The Witt type Lie superalgebra $W(n)$ has a natural $\mathbb{Z}$-grading with
\begin{eqnarray}\label{w(n)}
W(n)_j=\left\{\sum_{i=1}^{n}f_iD_i \mid f_i\in\Lambda(n)_{j+1}\right\}.
\end{eqnarray}
Let \textbf{div} be the divergence mapping from the Witt type Lie superalgebra $W(n)$ to the  Grassmann superalgebra $\Lambda(n)$ defined as:
\[
\begin{array}{rcl}
\textbf{div}:W(n)&\rightarrow&\Lambda(n)\\
\sum\limits_{i=1}^{n}f_iD_i&\mapsto&\sum\limits_{i=1}^{n}D_i(f_i).\\
\end{array}
\]
The special Lie superalgebra $S(n)$ is defined as the Lie subalgebra of $W(n)$, consisting of all elements $x\in W(n)$ such that $\textbf{div}(x)=0$. Since the divergence mapping is a homogeneous operator of degree 0, the special Lie superalgebra $S(n)$ inherits the $\mathbb{Z}$-gradation of $W(n)$, i.e., $S(n)=\bigoplus\limits_{i=-1}^{n-2}S(n)_i$, where $S(n)_i=W(n)_i\cap S(n)$. Now we introduce the mapping $D_{ij}:$
\[
\begin{array}{ccll}
D_{ij}:&\Lambda(n)&\rightarrow& W(n)\\
& f        &\mapsto & D_{i}(f)D_j+D_{j}(f)D_i.
\end{array}
\]
We can check that $S(n)$ is the $\mathbb{C}$-linear span of the elements
belonging to $\{D_{ij}(f)\mid f\in\Lambda(n), 1\leq i,j\leq n\}$.

Up to isomorphism, there is a different class of simple Lie superalgebras of another special type  $\tilde{S}(n)$.  The Lie superalgebra $\tilde{S}(n)$ is defined only for even $n$, and it consists of all  $x\in W(n)$ such that
$$ (1+\xi_1\cdots\xi_n)\textbf{div}(x)+ x(\xi_1\cdots\xi_n)=0.$$
It is not a $\zz$-graded subalgebra of  $W(n)$ as the defining condition is not homogeneous.
Hence we ignore $\tilde{S}(n)$ in this paper.
% and only inherits the filtration structure of $W(n)$. The associated graded Lie superalgebra of $\tilde{S}(n)$ is isomorphic to $S(n)$.

Next, we introduce the Hamiltonian Lie superalgebra $H(n)$ with $n\geq 5$ (Note that $H(4)\cong A(1,1)$. We do not care about this case in the present paper. So we assume $n\geq 5$ for type $H$).
Assume that $n=2r$ or $n=2r+1,$ set
\begin{equation*}
i'=\begin{cases}
i+r, &\text{if}\,\, 1\leq i\leq r;\cr
i-r, &\text{if}\,\, r+1\leq i\leq 2r;\cr
i, &\text{if}\,\, i=2r+1.\cr
\end{cases}
\end{equation*}
 The Hamiltonian operator $D_H$ from the  Grassmann superalgebra $\Lambda(n)$ to the Witt Lie superalgebra $W(n)$ is defined as:

\[
\begin{array}{llcl}
D_H:&\Lambda(n)&\rightarrow& W(n)\\
    &f&\mapsto&D_{H}(f)=\sum_{i=1}^{n}(-1)^{\bar{f}}D_{i}(f)D_{i'},
\end{array}
\]
where $f$ is a homogeneous element in $\Lambda(n)$ and $\bar{f}$ denotes the parity of $f.$
Set $\chn=\{D_{H}(f)\mid f\in\Lambda(n) \}.$ Then the Hamiltonian Lie superalgebra $H(n)$ is by definition, the derived algebra of $\chn$, i.e.,
\begin{align}\label{Hamilton sub}
H(n)=[\chn,\chn)]; \;\; \chn=H(n)+\bbc D_H(\xi_1\cdots\xi_n),
\end{align}
 which can be further described as follows
$$
H(n)=\left\{ D_{H}(f)\mid f\in \sum_{i=0}^{n-1}\Lambda(n)_i\right \}.$$
Moreover, $H(n)$ is a $\mathbb{Z}$-graded subalgebra of $W(n)$ with $H(n)=\bigoplus\limits_{i=-1}^{n-3} H(n)_i$, where $H(n)_i=W(n)_i\cap H(n)$. Generally, for a graded subalgebra $L=\sum_{i=-1}^\infty L_i$ of $W(n)$, we set $L_{\geq j}:=\sum_{i\geq j }L_i$. Especially, we have
 the following structure
\begin{align}\label{gra -1}
L_{-1}=\sum_{i=1}^n\bbc D_i \;\;\mbox{ for } L=X(n), X\in\{W,S,H\}.
\end{align}

Let $L=W(n), S(n)$ or $H(n)$. By the following canonical map

\[
\begin{array}{ccc}
L_0&\rightarrow&\frak{gl}(n)\\
\sum_{1\leq i,j\leq n}k_{ij}\xi_iD_j&\mapsto& \sum_{1\leq i,j\leq n}k_{ij}E_{ij},\\
%\xi_iD_i-\xi_jD_j&\mapsto& E_{ii}-E_{jj}~(\mathrm{here}~i\neq j),\\
\end{array}
\]
we get
\begin{equation}\label{grading 0}
L_{0}\cong\begin{cases}
\gl,&\text{if}\,\, \ggg=W(n);\cr
\frak{sl}(n),&\text{if}\,\, \ggg=S(n);\cr
\frak{so}(n),&\text{if}\,\, \ggg=H(n),
\end{cases}
\end{equation}
and correspondingly have the standard triangular decomposition $L_{0}=\nnn^-\oplus\hhh\oplus\nnn^+$.
\iffalse
, where

\begin{equation*}
\nnn^{-}=\begin{cases}
\text{span}_{{\mathbb{C}}}\{\xi_iD_j\mid 1\leq j<i\leq n\}, &\text{if}\,\, \ggg=W(n), S(n);\cr
\text{span}_{{\mathbb{C}}}\{D_H(\xi_i\xi_{j+r}), D_H(\xi_{i+r}\xi_{j+r})\mid 1\leq j<i\leq r\}, &\text{if}\,\, \ggg=H(2r), CH(2r);\cr
\text{span}_{{\mathbb{C}}}\{D_H(\xi_i\xi_{j+r}), D_H(\xi_{i+r}\xi_{j+r}) \mid 1\leq j<i\leq r\}+ &\cr
\qquad+\text{span}_{{\mathbb{C}}}\{D_H(\xi_{s+r}\xi_{2r+1})  \mid 1\leq s\leq r\}, &
\text{if}\,\, \ggg=H(2r+1), CH(2r+1).\cr
\end{cases}
\end{equation*}

\begin{equation*}
\hhh=\begin{cases}
\text{span}_{{\mathbb{C}}}\{\xi_iD_i\mid 1\leq i\leq n\}, &\text{if}\,\, \ggg=W(n);\cr
\text{span}_{{\mathbb{C}}}\{D_{ij}(\xi_i\xi_j)\mid 1\leq i<j\leq n\}, &\text{if}\,\, \ggg=S(n);\cr
\text{span}_{{\mathbb{C}}}\{D_H(\xi_i\xi_{i+r})\mid 1\leq i\leq r\}, &\text{if}\,\, \ggg=H(2r), CH(2r) \\
&\text{or}\,\, \ggg=H(2r+1), CH(2r+1).
\end{cases}
\end{equation*}
and
\begin{equation*}
\nnn^{+}=\begin{cases}
\text{span}_{{\mathbb{C}}}\{\xi_iD_j\mid 1\leq i<j\leq n\}, &\text{if}\,\, \ggg=W(n), S(n);\cr
\text{span}_{{\mathbb{C}}}\{D_H(\xi_i\xi_{j+r}), D_H(\xi_i\xi_j)\mid 1\leq i<j\leq r\}, &\text{if}\,\, \ggg=H(2r), CH(2r);\cr
\text{span}_{{\mathbb{C}}}\{D_H(\xi_i\xi_{j+r}), D_H(\xi_{i}\xi_{j}) \mid 1\leq i<j\leq r\}+ &\cr
\qquad+\text{span}_{{\mathbb{C}}}\{D_H(\xi_s\xi_{2r+1})  \mid 1\leq s\leq r\}, &
\text{if}\,\, \ggg=H(2r+1), CH(2r+1).\cr
\end{cases}
\end{equation*}
\fi
%Corresponding to $\nnn^+$,
\subsection{Toral extension $\bar S(n)$, $\bar H(n)$ and $\ochn$}\label{toral extension} Set $\sfd=\sum_{i=1}^{n}\xi_iD_i$. Then $\sfd$ is a canonical toral element of $W(n)$. The element $\sfd$ measures the degrees of homogenous spaces of $W(n)$, thereby it normalizes any graded subalgebra $\sss$ of $W(n)$, i.e., $[\sfd, \sss]\subset \sss$. Set

\begin{align*}\label{bar SH}
&\bar{S}(n)=S(n)\oplus\mathbb{C}\sfd, \cr &\bar{H}(n)=H(n)\oplus\mathbb{C}\sfd,\cr
&\ochn=\chn\oplus \bbc\sfd,
\end{align*}
and $\bar{\hhh}:=\hhh\oplus\mathbb{C}\sfd$ for $\ggg=\bar S(n)$, $\bar H(n)$ or  $ \overline{C{\hskip-0.1cm}H}(n)$, $\bar{\hhh}:=\hhh$ for $\ggg=W(n)$.
We then have  the following standard basis of $\bar\hhh$:
\begin{equation}\label{eq: h integer basis}
\begin{cases}
\{\xi_iD_i\mid 1\leq i\leq n\}, &\text{if}\,\, \ggg=W(n),\bar{S}(n);\cr
\{\xi_{i}D_{i}-\xi_{i+r}D_{i+r},\sfd\mid 1\leq i\leq r\}, &\text{if}\,\, \ggg=\bar{H}(2r), \overline{C{\hskip-0.1cm}H}(2r),\bar{H}(2r+1), \overline{C{\hskip-0.1cm}H}(2r+1),
\end{cases}
\end{equation}
whose dual basis can be described as follows:
\begin{equation*}
\begin{cases}
\{\epsilon_i\mid 1\leq i\leq n\}, &\text{if}\,\, \ggg=W(n),\bar{S}(n);\cr
\{\epsilon_i, \delta\mid 1\leq i\leq r\}, &\text{if}\,\, \ggg=\bar{H}(2r), \overline{C{\hskip-0.1cm}H}(2r),\bar{H}(2r+1), \overline{C{\hskip-0.1cm}H}(2r+1).
\end{cases}
\end{equation*}
This means
$\epsilon_i (\xi_j D_j)=\delta_{ij}$ when $\ggg=\wn$ or $\bsn$; and $\epsilon_i(\xi_jD_j-\xi_{j+r}D_{j+r})=\delta_{ij}$, $\epsilon_i(\sfd)=0$, $\delta(\xi_{j}D_{j}-\xi_{j+r}D_{j+r})=0$ for $1\leq i,j \leq r$, and $\delta(\sfd)=1$ when $\ggg=\bar{H}(2r), \overline{C{\hskip-0.1cm}H}(2r),\bar{H}(2r+1), \overline{C{\hskip-0.1cm}H}(2r+1).$

 Set $V=\sum_{i=1}^n\bbc\xi_i$. %Then  $\xi_i D_j$ is exactly the linear transformation $E_{ij}$ on the basis $\{\xi_i\}$ of $V$.
 We can further regard
\begin{equation}\label{grading 0}
\ggg_{0}=\begin{cases}
\frak{gl}(V),&\text{if}\,\, \ggg=W(n), \bar S(n);\cr
\frak{so}(V)+\bbc\sfd,&\text{if}\,\, \ggg=\bar H(n), \ochn.
\end{cases}
\end{equation}

\begin{convention}\label{conv}
In the sequel, \textit{whenever the context is clear, we don't distinguish $\epsilon_i$ and $\epsilon_i|_{\hhh}$
for $1\leq i\leq m$.
Here,  $m=n$ when $\ggg=W(n)$ or $\bar S(n)$; $m=r$ when $\ggg=\bar H(n)$, $\overline{C{\hskip-0.1cm}H}(n)$}.
\end{convention}

\subsection{Root systems and closed subalgebras}\label{root sys} Note that the Cartan subalgebras of $\ggg$ coincide with the Cartan subalgebras of $\ggg_0$.
Associated with the Cartan subalgebra $\bar\hhh$, there is a root system $\Phi(\ggg)$ and the corresponding root space decomposition $\ggg=\bar\hhh+\sum_{\alpha\in \Phi(\ggg)}\ggg_\alpha$  for $\ggg=X(n)$ $(X\in \{W, \bar S, \bar H, \och\})$. The root system $\Phi(\ggg)$ can be described as below.

\begin{itemize}
\item[(1)]  For $\ggg=\wn$, $\Phi(\ggg)=\{\epsilon_{i_1}+\cdots+\epsilon_{i_k}-\epsilon_j\mid 1\leq i_1<\cdots<i_k\leq n; k=0,1,...,n;\; 1\leq j\leq n\}$.

     \item[(2)] For $\ggg=\bsn$, $\Phi(\ggg)=\Phi(\wn)\backslash \{(\sum_{i=1}^n\epsilon_i)-\epsilon_j\mid j=1,...,n\}$.
 \item[(3)] For $\ggg=\bar H(2r)$,
 \begin{align*}
 \Phi(\ggg)=\{ \pm\epsilon_{i_1}\pm\cdots\pm \epsilon_{i_k} +l\delta\mid \;&1\leq i_1<i_2 <\cdots< i_k \leq r;\\
 &k-2\leq l<n-2,  l-k \in 2\bbz\}.
 \end{align*}
 \item[(4)] For $\ggg=\bar H(2r+1)$,
 \begin{align*}
 \Phi(\ggg)=\{ \pm\epsilon_{i_1}\pm\cdots\pm \epsilon_{i_k} +l\delta\mid \;&1\leq i_1<i_2 <\cdots< i_k \leq r; \\&k-2\leq l<n-2\}.
 \end{align*}
 \item[(5)] For $\ggg=\ochn$, $\Phi(\ggg)=\Phi(\bhn)\cup \{ (n-2)\delta\}$.
\end{itemize}

In particular, $\Phi_0$ (resp. $\Phi_0^+$) will denote the root system of $\ggg_0$ (resp. $\nnn^+$). Correspondingly, we  have the Borel subalgebra $\bbb=\bar\hhh\oplus \nnn^+$.

A subset $\Psi$ of $\Phi$ is called a closed one if for any $\alpha, \beta\in \Psi,$ we always have $\alpha+\beta\in \Psi$ provided that $\alpha+\beta\in \Phi$. We say  a subalgebra $\frak{q}$ of $\ggg$ to be closed if there is a closed subset $\Psi$ of $\Phi$ such that $\frak{q}=\hhh+\sum_{\alpha\in\Psi}\ggg_\alpha$.

We also need a convention $\Xi\in \hhh^*$ for $\ggg=W(n)$ or $\bar S(n)$ which means $\sum_{i=1}^n\epsilon_i$.

\subsection{Semi-infinite characters}\label{semi-inf}
%We first introduce the following general notion.
\begin{defn}
Let $\ggg=\sum_{i\in\zz} \ggg_{i}$ be a $\zz$-graded Lie superalgebra with $\dim \ggg_{i}<\infty$ for all $i\in\zz$. A character $\gamma: \ggg_{0} \rightarrow \mathbb{C}$ is called a semi-infinite character for $\ggg$ if the following items are satisfied.
\begin{itemize}
	\item[(SI-1)] As a Lie superalgebra, $\ggg$ is generated by $\ggg_{1}, \ggg_{0}$  and $\ggg_{-1}$;
	\item[(SI-2)] $\gamma([x,y])=\textsf{str}\big((\textsf{ad}x\circ \textsf{ad}y)|_{\ggg_{0}}\big)$, $\forall\, x\in \ggg_{1}$ and $y\in \ggg_{-1}$.
\end{itemize}
\end{defn}

Now we turn to $\ggg=X(n)$ for $X\in \{W, \bar S, \bar H, \och\}$.
We define $\mathcal{E}_W:\ggg_{0}\longrightarrow\mathbb{F}$ to be a linear map with $\mathcal{E}_W(\xi_iD_j)=-\delta_{ij}$ for $1\leq i,j\leq n$. Set $\mathcal{E}_{\bar{S}}=\mathcal{E}_{\bar{H}}=\mathcal{E}_{\och}=0$. By a direct computation, it is not hard (but tedious) to verify the following fact.

\begin{lem}\label{semi-inf} The linear map $\mathcal{E}_X$ is a semi-infinite character for $X(n)$, where  $X\in \{W, \bar S, \bar H, \och\}$.
\end{lem}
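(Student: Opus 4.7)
The plan is to verify the axioms (SI-1) and (SI-2) separately, working case-by-case in $X \in \{W, \bar S, \bar H, \och\}$.

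For (SI-1), I would appeal to the classical transitivity of the $\bbz$-grading on graded Cartan type Lie superalgebras: inductively $\ggg_{k+1} = [\ggg_1, \ggg_k]$ for each $k \geq 1$, which is checked directly on the explicit bases --- those spanned by the $\xi_{j_1}\cdots \xi_{j_m} D_l$ in $W(n)$, by the $D_{ij}$-vectors in $\bar S(n)$, and by the $D_H$-vectors in $\bar H(n)$ and $\ochn$. Together with the fact that the toral extension $\sfd$ already sits in $\ggg_0$, this shows that $\ggg_{-1}\oplus \ggg_0 \oplus \ggg_1$ generates $\ggg$ as a Lie superalgebra, so (SI-1) holds in all four cases.

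For (SI-2), the paradigm is $\ggg = W(n)$, handled by direct bracket computation. Take $x = \xi_i\xi_k D_l \in \ggg_1$ and $y = D_j \in \ggg_{-1}$; the super-derivation bracket gives $[x,y] = (\delta_{ji}\xi_k - \delta_{jk}\xi_i) D_l$, so $\mathcal{E}_W([x,y]) = \delta_{jk}\delta_{il} - \delta_{ji}\delta_{kl}$. On the purely even space $\ggg_0$ the supertrace reduces to the ordinary trace, and one computes $(\mathrm{ad}\, x \circ \mathrm{ad}\, y)(\xi_a D_b) = \delta_{ja}(\delta_{bi}\xi_k - \delta_{bk}\xi_i) D_l$, whose diagonal sum over the basis $\{\xi_a D_b\}_{1 \leq a, b \leq n}$ is precisely $\delta_{jk}\delta_{il} - \delta_{ji}\delta_{kl}$. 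Bilinearity then extends the resulting identity to all $x \in \ggg_1,\, y \in \ggg_{-1}$.

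For $X \in \{\bar S, \bar H, \och\}$ one has $\mathcal{E}_X \equiv 0$, so (SI-2) becomes the vanishing $\mathrm{str}((\mathrm{ad}\, x \circ \mathrm{ad}\, y)|_{\ggg_0}) = 0$, and I would verify it by the parallel bookkeeping using the bases $\{D_{ij}(\xi_a\xi_b\xi_c)\}$ of $\bar S(n)_1$ and $\{D_H(\xi_a\xi_b\xi_c)\}$ of $\bar H(n)_1$, $\ochn_1$, still with $\ggg_{-1} = \sum_i \bbc D_i$. The required cancellations are forced by the (anti)symmetry of $D_{ij}$ and $D_H$ in their arguments, together with the fact that $\ggg_0$ is now the smaller $\mathfrak{sl}(n) \oplus \bbc \sfd$ or $\mathfrak{so}(n) \oplus \bbc \sfd$, which discards the trace-carrying summand of $W(n)_0 = \mathfrak{gl}(n)$. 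The main obstacle lies in the Hamiltonian case, because the pairing $i \leftrightarrow i'$ in $D_H$ couples indices non-trivially; for odd $n = 2r+1$ the self-paired index $2r+1$ further contributes extra diagonal terms that must be shown to cancel separately. This detailed Hamiltonian bookkeeping is exactly what the authors defer to Appendix A.
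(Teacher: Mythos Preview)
Your approach matches the paper's Appendix A almost exactly: direct verification of (SI-1) on generators and case-by-case bracket computation for (SI-2), with the $W(n)$ case carrying the main content and the Hamiltonian case requiring the most bookkeeping. One small correction and one shortcut you miss: for $\bar S(n)$ the zero-graded piece $\bar S(n)_0 = \mathfrak{sl}(n)\oplus\bbc\sfd$ is \emph{not} smaller than $W(n)_0$ --- it coincides with $\mathfrak{gl}(n)$ since $\sfd$ is the identity element --- so your ``discards the trace-carrying summand'' explanation is off; what actually forces the vanishing is that $\bar S(n)_1 \subset W(n)_1$ consists of divergence-free elements, so $[x,y]$ lands in $\mathfrak{sl}(n)$ and $\mathcal{E}_W([x,y])=0$. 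The paper exploits exactly this: rather than redoing the computation for $\bar S(n)$, it observes that $\bar S(n)_0 = W(n)_0$ and invokes linearity plus the already-established $W(n)$ identity, which is cleaner than a parallel direct check.
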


\begin{proof} The proof is left in Appendix A.
\end{proof}

\section{Borel subalgebras and  parabolic subalgebras}\label{sec: bor-para} In the following we denote $\Phi:=\Phi(\ggg)$ if the context is clear.
Following Serganova (\cite{Ser05}), we call a root $\alpha\in \Phi$ nonessential if $-\alpha\notin \Phi(\ggg)$, and essential if $-\alpha\in\Phi(\ggg)$. By \eqref{eq: h integer basis}, we have $\bar\hhh=\bbc\otimes_\bbz \bbz\text{-span}\{\text{standard basis}\}$.  Define
$$\bar\hhh_\bbr=\bbr\otimes_\bbz \bbz\text{-span}\{\text{standard basis}\}. $$
Call $h\in \bar\hhh_\bbr$ regular if $\alpha(h)\ne 0$ for all $\alpha\in \Phi(\ggg)$. According to \cite{Ser05}, any regular $h$ deduces a subdivision $\Phi=\Phi_h^+\cup \Phi_h^-$, where $\Phi_h^\pm=\{\alpha\in \Phi\mid \alpha(h)\in \bbr^{\pm}\}$. That defines a triangular decomposition $\ggg=\frakn_h^+\oplus \bar\hhh\oplus \frakn_h^-$ for $\frakn_h^\pm=\sum_{\alpha\in \Phi_h^\pm}\ggg_\alpha,$  where  $\ggg_\alpha$ is the corresponding root space. A Borel subalgebra $\frakb:=\frakb_h$ is defined as $\bar\hhh\oplus \frakn_h^+$. Sometimes, we write  $\Phi_h^+$ as $\Phi(\frakb)$ if without any  confusion.
 There are only finitely many Borel subalgebras (containing the given $\bar\hhh$).
 An $\alpha\in \Phi_h^+$ is called simple for the Borel subalgebra $\frakb$ if after removing $\alpha$ from $\Phi_h^+$ and adding  $-\alpha$ (if it does exist) we obtain a set of positive roots for some other Borel subalgebra $\frakb'$.
 %Call two Borel subalgebras $\frakb$ and $\frakb'$ adjacent if one is obtained from the other one by removing a simple root $\alpha$ and adding $-\alpha$ in case $-\alpha\in \Phi$.
 In this case, we call $\frakb$ and $\frakb'$ are adjacent, and related by even reflection if $\alpha$ is even essential, by odd reflection if $\alpha$ is odd essential, by nonessential reflection if $\alpha$ is nonessential. Denote $\frakb'=r_\alpha(\frakb)$.
For any two Borel subalgebras (containing $\bar\hhh$), one is linked to the other one by a chain of reflections (see  \cite{Ser05}).

\subsection{Borel subalgebras containing $\bbb$ and strongly regular toral elements} In the following, what we are interested in are Borel subalgebras $\frakb$ containing $\bbb$. Among such Borel subalgebras we
distinguish $\frakb_{\text{max}}= \bbb+\sum_{i>0}\ggg_i$ and $\frakb_{\text{min}} =\bbb+\ggg_{-1}$, whose dimensions are of maximal and minimal respectively.

 A defining toral element $h\in \hhh_\bbr$ of a Borel subalgebra containing $\bbb$ is said to be strongly regular.
We will denote by $\diag(a_{1},\ldots, a_{n})$ the diagonal matrix of size $n\times n$ with the entries $a_{i}$ on the $i$th diagonal positions. The toral element $h$ can be identified with $\diag(a_{1},\ldots, a_{n})$.
The following facts are clear.

 %{\red{
\begin{lem}\label{lem: str reg} Let $\ggg=W(n)$ or $\bar S(n)$. Suppose that $h=\diag(a_1,\ldots,a_n)\in \bar\hhh_\bbr$ is strongly regular. The following statements hold.
\begin{itemize}
\item[(1)]  The toral element $h$ satisfies $a_i>a_j$ for $1\leq i<j\leq n$.
     \item[(2)] If $\frakb_h=\frakb_\max$, then $a_i+a_j>a_k$ for any different $i,j,k\in \{1,2,\ldots n\}$.
         \item[(3)] If $\frakb_h\nsupseteqq \frakb_\min$, then $\epsilon_1\in \Phi_h^+$.
         \end{itemize}
\end{lem}
\begin{proof} (1) It follows from the fact  $\Phi_0^+=\{\epsilon_i-\epsilon_j\mid 1\leq i<j\leq n\}$.

(2) This is due to the fact that $\epsilon_i+\epsilon_j-\epsilon_k\in \Phi^+$.

(3) Suppose $\epsilon_1\notin \Phi_h^+$. Then  $a_1<0$. By (1), we have all $a_i<0, 1\leq i \leq n$. Correspondingly, $\frakb_h$ contains $\ggg_{-1}$. Hence $\frakb_h\supset \frakb_\min$.
\end{proof}

Before the arguments on $\ggg=\bar H(n)$, we need the following information on the root set $\Phi(\ggg_{-1})$ of $\ggg_{-1}$.
\begin{align}\label{eq: minus roots}
\Phi(\ggg_{-1})=\begin{cases}
 \{\pm\epsilon_i-\delta\mid i=1,\ldots,r\},  &\mbox{ for } \bar H(2r);\\
 \{\pm\epsilon_i-\delta\mid i=1,\ldots,r\}\cup\{ -\delta\},  &\mbox{ for } \bar H(2r+1).
   \end{cases}
   \end{align}
\begin{lem}\label{lem: str reg H} Let $\ggg=\bar H(n)$ with $n=2r$ or $n=2r+1$. Suppose that $h$ is strongly regular with $h=\diag(a+a_1,\ldots,a+a_r;a-a_1,\ldots,a-a_r)\in \bar\hhh_\bbr$ when $n=2r$, or $h=\diag(a+a_1,\ldots,a+a_r;a-a_1,\ldots,a-a_r, a) \in \bar\hhh_\bbr$ when $n=2r+1$. The following statements hold.
\begin{itemize}
\item[(1)]  The toral element $h$ satisfies $a_i>a_j$, $a_i+a_j>0$ for $1\leq i<j\leq r$. Additionally, $a_r>0$ for $\ggg=\bar H(2r+1)$.
     \item[(2)] If $\frakb_h=\frakb_\max$,  $(n-3)a>a_1$.
         \item[(3)] If $n=2r$ and $\frakb_h$ does not contain $\ggg_{-1}$, then $\epsilon_1+\delta\in \Phi_h^+$, or $-\epsilon_r+\delta\in\Phi_h^+$.
         \item[(4)] If $n=2r+1$ and $\frakb_h$ does not contain $\ggg_{-1}$, then one of the following
         items occurs.\begin{itemize}
        \item[(i)] $\delta\in\Phi_h^+$.
        \item[(ii)] $-\delta\in\Phi_h^+$, and either $\epsilon_1+\delta\in \Phi_h^+$ or $-\epsilon_r+\delta\in\Phi_h^+$.
        \end{itemize}
     \end{itemize}
\end{lem}
\begin{proof} (1), (2) By the same reason as in the proof of Lemma \ref{lem: str reg}, the first two statements are clear.

(3) Suppose $\epsilon_1+\delta\notin \Phi_h^+$ and $-\epsilon_r+\delta\notin\Phi_h^+$, then $-\epsilon_1-\delta\in \Phi_h^+$ and $\epsilon_r-\delta\in\Phi_h^+$. This implies that $-a_1-a=(-\epsilon_1-\delta)(h)>0$ and $a_r-a=(\epsilon_r-\delta)(h)>0$. Hence,
$a_1-a>a_2-a>\cdots>a_r-a>0$ and $-a_r-a>-a_{r-1}-a>\cdots>-a_1-a>0$ by (1). Consequently, $\Phi(\ggg_{-1})\subseteq \Phi_h^+$ by (\ref{eq: minus roots}), and $\ggg_{-1}\subseteq \frakb_h$, a contradiction.

(4) Suppose $\delta\notin\Phi_h^+$, then $-\delta\in\Phi_h^+$. Assume in contrary that $\epsilon_1+\delta\notin \Phi_h^+$ and $-\epsilon_r+\delta\notin\Phi_h^+$. Similar arguments as in (3) yield that $\{\pm\epsilon_i-\delta\mid i=1,\ldots,r\}\subseteq\Phi_h^+$. Hence $\Phi(\ggg_{-1})\subseteq \Phi_h^+$ by (\ref{eq: minus roots}), and $\ggg_{-1}\subseteq \frakb_h$, a contradiction.
\end{proof}

%}}

%{\color{red}{

\subsection{Variation of Borel subalgebras from $\frakb_\max$ to $\frakb_\min$ for $\ggg=W(n)$ or $\bar S(n)$}\label{lem:borel chain}
  Certainly, it is interesting and nontrivial to construct an adjacent chain of  Borel subalgebras. It is a good way to do that  via strongly regular toral elements.  Here, we list them for $\ggg=W(n)$ and $\bar S(n)$.
  \subsubsection{} Set
\begin{align*}
h_\max&=\diag({n\over n+1},{n-1\over n},{n-2\over n-1},\ldots,{2\over 3}, {1\over 2}),\cr
h_\min&=\diag(-{1\over 2},-{2\over 3},-{3\over 4},\ldots,-{n-1\over n}, -{n\over n+1}).
\end{align*}
By a straightforward computation, $h_\max$ and $h_\min$ are exactly the defining strongly regular toral elements in $\bar\hhh_\bbr$ for $\frakb_\max$ and $\frakb_\min$ respectively. Now we can show that there are  a sequence of reflections $r_\alpha,\ldots, r_\gamma$ such that $\frakb_{\text{min}}=r_{\alpha}(\cdots (r_{\gamma}(\frakb_\max)))$. Actually, we take a sequence of regular toral elements in $\hhh_\bbr$ as below. Set $h_0=h_\max$, and consider
\begin{align*}
h_r=\diag({n\over n+1},{n-1\over n},{n-2\over n-1},\ldots,{r+1\over r+2}; -{1\over 2},-{2\over 3},-{3\over 4},\ldots, -{r\over r+1}),
\end{align*}
$r=1,\ldots,n$.
 Naturally $h_n=h_\min$. Readers can verify that each  $h_r$ is strongly regular and the corresponding  positive root set is
\begin{align}\label{eq: h-root first}
\Phi_{h_r}^+=\Phi_0^+\cup \Phi^+\cup (-\Pi_{r})\backslash \mathbb{X}_{r}
\end{align}
where
\begin{align*}
\Pi_{r}=&\{\epsilon_{n-r+j}\mid j=1, \ldots,r\},  \cr -\Pi_{r}=&\{-\epsilon_{n-r+j}\mid j=1,\ldots,r\}, \cr \mathbb{X}_{r}=&\{\sum_{q=1}^k\epsilon_{i_q}+ \sum_{q=1}^l\epsilon_{n-r+j_q}-\epsilon_d
\mid(*)_{r}<0 \text{ for } 2\leq l+k\leq n, \cr
&1\leq i_1<\cdots<i_k\leq n-r; 1\leq j_1<\cdots<j_l\leq r; \; 1\leq d\leq n\}.
\end{align*}
with
\begin{align*}
(*)_{r}=\begin{cases} \sum_{q=1}^k {n-i_q+1\over n-i_q+2}-\sum_{q=1}^l {j_q\over j_q+1}- {d\over d+1} \;&\text{ if } 1\leq d\leq n-r;\cr
\sum_{q=1}^k {n-i_q+1\over n-i_q+2}-\sum_{q=1}^l {j_q\over j_q+1}+{d'\over d'+1} \;&\text{ if } d=n-r+d' \text{ with }1\leq d'\leq r.
\end{cases}
\end{align*}
Obviously, $\mathbb{X}_{r}  \supset  \Pi_{r}$.
\subsubsection{}  Now we continue to refine the above process.
Recall $h_1=\diag({n\over n+1},{n-1\over n},{n-1\over n-3},\ldots,{2\over 3}; -{1\over 2})$. Set $h_1^{(1)}:=h_1$ and for $q=2,\ldots, n$,
$$h_1^{(q)}:=\diag({n\over n+1},\ldots,{q\over q+1},{qn-1\over 2qn},\ldots,{n-1\over 2n};-{1\over 2}).$$
Note that from the beginning, we have set an appointment  $n>2$. So it is easily known that $h_1^{(q)}$ is strongly regular.
Inductively, for a given $r>1$ set $h_r^{(r)}:=h_r$ and
$$h_r^{(q)}:=\diag({n\over n+1},\ldots,{q\over q+1},{rqn-1\over (r+1)qn},\ldots,{r(r+1)n-1\over (r+1)(r+2)n}; -{1\over 2},-{2\over 3},-{3\over 4},\ldots, -{r\over r+1}) $$
for $q=r+1,\ldots,n$. All of $h_r^{(q)}$'s are strongly regular. The corresponding Borel subalgebra of $h_r^{(q)}$ is denoted by $\frakb_{r}^{(q)}$, $r=0,1,\ldots,n$, $q=r,r+1,\ldots,n$.
 %{\color{red}They are adjacent one after another. }
In particular, $\frakb_0=\frakb_\max$ and  $\frakb_{n}=\frakb_\min$ (here set $\frakb_{r}=\frakb_{r}^{(r)}$).
%
%
\iffalse
\begin{lem}\label{lem:borel chain} Suppose $\ggg=W(n)$ or $\bar S(n)$. A sequence of reflections $r_\alpha,\ldots, r_\gamma$ corresponding to the above $h_{r}^{(q)}$ for $r=0,1,\ldots,n$ and $q=r,r+1,\ldots,n$, satisfy  $\frakb_{\text{min}}=r_{\alpha}(\cdots (r_{\gamma}(\frakb_\max))\cdots)$.
\end{lem}

\begin{proof}  By the construction, we have $h_r^{(q)}$ for $r=0,1,\ldots,n$ and $q=r,r+1,\ldots,n$, all of which are strongly regular  toral elements in $\hhh_\bbr$. The corresponding Borel subalgebra is denoted by $\frakb_{r}^{(q)}$ (So, $\frakb_{r}^{(r)}=\frakb_{h_r}$ which is denoted directly by $\frakb_r$).  Then by some tedious but not hard computation it can be seen that
$\frakb_{h_1}$ is adjacent to $\frakb_0=\frakb_\max$ by a reflection via $\epsilon_n$. Now suppose $r>0$. Inductively, $\frakb_{r}=\frakb_{r}^{(r)}$ is adjacent to $\frakb_{{r-1}}^{(n)}$ by a reflection via $\epsilon_{n-r+1}$. And $\frakb_{r}^{(q+1)}$ is adjacent to $\frakb_{r}^{(q)}$  by a reflection via $\epsilon_{n-q+1}+\epsilon_{n-r+1}$  for $q=r+1,\ldots, n-1$. At the final step, $\frakb_{n}=\frakb_\min$ is adjacent to $\frakb_{n-1}^{(n)}$ by a reflection via $\epsilon_1$.
\end{proof}

%\iffalse
%\begin{lem}\label{lem:borel lists} Any Borel subalgebra containing $\bbb$ %must be one among $\frakb_{q}^{(r)}$, $r=0,1,\ldots,n$, and %$q=r,\ldots,n$.
%\end{lem}
%
%\begin{proof} Waiting.  Not easy
%
%\end{proof}
\fi

\subsection{Parabolic subalgebras containing $\ggg_0$}
 For a given strongly regular element $h$, we have
 $\Phi=\Phi_h^+\cup \Phi_h^-$ and the corresponding Borel subalgebra
$$\frakb_h=\bbb\oplus \sum_{\alpha \in \Phi_h^+ \backslash \Phi_{0}^+}\ggg_\alpha.$$
%If furthermore, $\Psi_h:=\Phi_h^+\cup \Phi_0^-$ is a closed subsystem of $\Phi$, this is to say, $\alpha+\beta\in\Psi_h$ for any $\alpha,\beta\in %\Psi_h$  as long as $\alpha+\beta\in \Phi$,
{\sl{We define a parabolic subalgebra $\sfp_h$ associated with $h$ (and then with $\frakb_h$) as the closed subalgebra generated by $\ggg_0$ and $\frakb_h$.}}

Associated with the maximal Borel subalgebra $\frakb_{\text{max}}=\bbb+\sum_{i>0}\ggg_i$, and the minimal Borel subalgebra $\frakb_{\text{min}}=\bbb+\ggg_{-1}$,
it is readily known that the corresponding  parabolic subalgebras are, respectively,
$$ \sfp_{\text{max}}=\ggg_0+\sum_{i>0}\ggg_i \mbox{ and } \sfp_{\text{min}}=\ggg_0+\ggg_{-1}.$$

The minimal parabolic subalgebra will be the most interesting, playing a crucial role in the theory of parabolic BGG categories. The following basic observation preliminarily reveals its importance.

\begin{prop}\label{prop: det para} Let $\ggg=X(n)$, $X\in \{W,\bar S, \bar H, \och \}$. Then any  proper parabolic subalgebra coincides with either $\sfp_{\mathrm{max}}$ or $\sfp_{\mathrm{min}}$.
\end{prop}

\begin{proof}
%We only show the statement for $\ggg=W(n)$. Similar arguments can be applied to the other cases.
Let $\sfp$ be an arbitrarily given parabolic subalgebra generated by $\ggg_0$ and $\frakb_h$, where
 \begin{align*}
h=\begin{cases} \diag(a_{1},\ldots, a_{n}), \;&\text{ if } \ggg=W(n), {\bar S}(n);\cr
\diag(a+a_1,\ldots,a+a_r;a-a_1,\ldots,a-a_r), \;&\text{ if } \ggg=H(2r);\cr
\diag(a+a_1,\ldots,a+a_r;a-a_1,\ldots,a-a_r, a), \;&\text{ if } \ggg=H(2r+1)
\end{cases}
\end{align*}
is a defining strongly regular toral element of $\frakb_h$. Then $a_{1}>a_{2}>\cdots>a_{n}$ for $ \ggg=W(n), {\bar S}(n)$, and $a_{1}>a_{2}>\cdots>a_{r}$ for $ \ggg=H(n)$ with $n=2r, 2r+1$. Denote by $\Psi_h$ the root set of $\sfp$. Then $\Psi_h$ is a closed root subsystem of $\Phi$.

Firstly we assume that $\sfp$ does not contain $\sfp_\min$. In this situation, we will show  $\sfp=\sfp_\max$. We proceed it case by case.

Case 1:  $ \ggg=W(n), {\bar S}(n)$.

By Lemma \ref{lem: str reg}(3),  $\Psi_h$ contains the root $\epsilon_1$. Note that by definition, $\Phi_0^-\subset \Psi_h$. Hence, $\Psi_h$ contains all $\epsilon_k=(\epsilon_{k}-\epsilon_1)+\epsilon_1$ for $k=2,\ldots,n$.
Correspondingly, $\Psi_h$ contains all $\epsilon_k$ for $k=1,\ldots,n$. Hence $\Psi_h$ contains all $\Phi^+$ because $\Psi_h$ is a closed root subsystem, containing $\Phi_0$ and all $\epsilon_k$, $k=1,\ldots,n$. This means that the parabolic subalgebra associated with $\frakb_h$ contains $\sfp_\max$. On the other hand, a parabolic subalgebra containing $\sfp_\max$ is either $\ggg$ itself or equal to $\sfp_\max$. We are done.

Case 2: $\ggg=H(2r)$.

By Lemma \ref{lem: str reg H}(3), $\Psi_h$ contains the root $-\epsilon_r+\delta$ or the root $\epsilon_1+\delta$. If $\epsilon_1+\delta\in\Psi_h$, then $-\epsilon_r+\delta=(-\epsilon_r-\epsilon_1)+(\epsilon_1+\delta)\in\Psi_h$, because $-\epsilon_r-\epsilon_1\in\Phi_0\subset \Psi_h$ and $\Psi_h$ is closed. Consequently, we see that $\Psi_h$ always contains the root $-\epsilon_r+\delta$. Since $\Phi_0=\{\pm\epsilon_i\pm\epsilon_j\mid 1\leq i\neq j\leq n\}\subset \Psi_h$ and $\Psi_h$ is closed, we have $\pm\epsilon_i+\delta=(\pm\epsilon_i+\epsilon_r)+(-\epsilon_r+\delta)\in\Psi_h$ for $1\leq i\leq r-1$. In addition, $\epsilon_r+\delta=(\epsilon_r+\epsilon_1)+(-\epsilon_1+\delta)\in\Psi_h$. Hence, $\pm\epsilon_j+\delta\in\Psi_h$ for any $1\leq j\leq n$. In particular, $2\delta=(\epsilon_1+\delta)+(-\epsilon_1+\delta)\in\Psi_h$, so that $2m\delta\in\Psi_h$ for any $m\in\mathbb{Z}^+$. Now let $\alpha=\pm\epsilon_{i_1}\pm\cdots\pm\epsilon_{i_k}+l\delta$ be an arbitrary root in $\ggg_{\geq 1}$, where $k-2\leq l\leq n-2$ and $l-k$ is even. Since $\alpha$ can be written as
$$\alpha=(\pm\epsilon_{i_1}\pm\epsilon_{i_2})+(\pm\epsilon_{i_3}+\delta)+\cdots+(\pm\epsilon_{i_k}+\delta)+(l-k+2)\delta,$$
we get that $\alpha\in\Psi_h$ by induction on $k$. This implies $\sfp_\max\subseteq\sfp$. On the other hand, a parabolic subalgebra containing $\sfp_\max$ is either $\ggg$ itself or equal to $\sfp_\max$. We are done.

Case 3: $\ggg=H(2r+1)$.

By Lemma \ref{lem: str reg H}(4), if $-\delta\in\Psi_h$, $\Psi_h$ contains the root $-\epsilon_r+\delta$ or the root $\epsilon_1+\delta$. While if $\delta\in\Psi_h$, then $\epsilon_1+\delta\in\Psi_h$, because $\epsilon_1\in\Psi_h$ and $\Psi_h$ is closed. Similar arguments as in Case 2 yield the desired assertion in this case.

Secondly we assume that $\sfp$ contains $\sfp_\min$. In this case, it suffices to show that $\sfp$ must coincide with $\ggg$ itself as long as $\sfp$ properly contains $\sfp_\min$. We also proceed it case by case.

Case 1:  $ \ggg=W(n), {\bar S}(n)$.

In this case, under the assumption $\sfp\supsetneqq\sfp_\min$, the root set $\Psi_h$ of $\sfp$ contains $\Phi(\sfp_\min)\cup\{\epsilon_{i_1}+\epsilon_{i_2}+\cdots+\epsilon_{i_t}-\epsilon_k\}$ for some sequence  $(1\leq i_1<i_2<\cdots<i_t\leq n)$ and $k\in\{1,2,\ldots,n\}$ with $t>1$.  By definition, $\Psi_h$ is closed. Note that $\Phi(\sfp_\min)=\{-\epsilon_i\mid i=1,\ldots,n\}\cup\{\epsilon_i-\epsilon_j\mid 1\leq i\ne j\leq n\}$ is already contained in $\Psi_h$. So it is easily deduced that all $\epsilon_i, i=1,\ldots,n,$ are contained in $\Psi(\sfp)$. Consequently, it is further deduced that $\Psi(\sfp)=\Phi$ and then $\sfp=\ggg$.

Case 2: $\ggg=H(n)$.

In this case, under the assumption $\sfp\supsetneqq\sfp_\min$, the root set $\Psi_h$ of $\sfp$ contains $\Phi(\sfp_\min)\cup\{\pm\epsilon_{i_1}+\cdots+\pm\epsilon_{i_k}+l\delta\}$ for some $k\geq 1$ and $k-2\leq l\leq n-2$. By definition, $\Psi_h$ is closed. Note that
\begin{align*}
\Phi(\sfp_\min)=\begin{cases} \{\pm\epsilon_i-\delta\mid i=1,\ldots,n\}\cup\{\pm\epsilon_i\pm\epsilon_j\mid 1\leq i\ne j\leq n\},\;&\text{ if } n=2r;\cr
\{\pm\epsilon_i-\delta, -\delta\mid i=1,\ldots,n\}\cup\{\pm\epsilon_i\pm\epsilon_j, \pm\epsilon_i\mid 1\leq i\ne j\leq n\},\;&\text{ if } n=2r+1
\end{cases}
\end{align*}
is already contained in $\Psi_h$. So it is easily deduced that all roots in $\ggg_{\geq 1}$ are contained in $\Psi_h$. Consequently, $\Psi_h=\Phi$, and then $\sfp=\ggg$.

The proof is completed.
\end{proof}

\begin{rem} There is a natural question when it is true that the subalgebra generated by $\ggg_0$ and a Borel subalgebra $\frakb_h$ is closed, i.e. it coincides with $\sfp_h$. This question can be positively answered for $\ggg=\bar{S}_n$  because in this case, all $\ggg_i$ ($i\ne 0$) are irreducible $\ggg_0$-modules (see \cite[Proposition 3.3.1]{Kac77}).
%Suppose $\sfp=\sfp_h$ is a parabolic subalgebra defined by a strongly regular %toral element $h$. We conjecture that $\sfp$ is exactly the subalgebra %generated by $\ggg_0$ and $\frakb_h$. This is to say, in the definition of %parabolic subalgebras, the condition ``closed" is redundant.
\end{rem}

\subsection{Parabolic categories}\label{parabolic func}
 In general,  we can consider a parabolic BGG category $\co_h$ of $\ggg$ associated with $\sfp_h$, whose objects are super $\ggg$-modules endowed with an admissible $\bbz$-graded structure,  locally finite over $\sfp_h$ and semisimple over $\bar\hhh$. The morphisms in $\co_h$ are  even homomorphisms of $\bbz$-graded $\ggg$-modules.

By Proposition \ref{prop: det para}, there are only two possibilities for a proper parabolic subalgebra $\sfp_h$, that is, it coincides with  either $\sfp_{\text{max}}$ or $\sfp_{\text{min}}$. If the objects of $\co_h$ are additionally required to be finitely generated over $U(\ggg)$, then it is readily seen that any objects in the BGG category arising from $\sfp_{\text{max}}$ is finite-dimensional. All such objects belong to the other BGG category $\comi$ arising from $\sfp_{\text{min}}$.

\section{The category $\comi$}\label{cat o}
\subsection{} From now on we always assume that $\ggg=X(n)$ with  $X\in \{W,\bar{S},\bar{H},\och\}$. Keeping in mind, we have $\ggg_0=\nnn^+\oplus \bar\hhh \oplus \nnn^-$ with $\bar\hhh$ defined  in \S\ref{toral extension}, and the minimal parabolic subalgebra $\sfp_{\text{min}}$ defined  in \S\ref{sec: bor-para}. From now on,  we simply write $\sfp=\sfp_{\text{min}}$.

\begin{defn}\label{defn}
We define a category  $\comi$  whose objects are $\bbz_2$-graded vector spaces $M=M_{\bar{0}}\oplus M_{\bar{1}}$ satisfying the following axioms:
\begin{itemize}
\item[(1)] $M$ is an admissible $\bbz$-graded $\ggg$-module, i.e., $M=\bigoplus_{i\in\mathbb{Z}}M_{i}$ with $M_i=(M_i\cap M_{\bar{0}})\oplus (M_i\cap M_{\bar{1}})$, $\dim M_i<\infty$, and $\ggg_iM_j\subseteq M_{i+j}, \forall\, i, j\in\mathbb{Z}$.
\item[(2)] $M$ is locally finite as  a $\sfp$-module.
\item[(3)] $M$ is semisimple over $\bar\hhh$.
\end{itemize}
The morphisms in $\comi$ are always assumed to be even (see Remark \ref{def rem}(4) below), and  they are $\ggg$-module morphisms compatible with the $\mathbb{Z}$-gradation, i.e., for any $M,N\in\comi,$
$$\Hom_{\comi}(M, N)=\{f\in \Hom_{U(\ggg)}(M, N)\mid f(M_i)\subseteq N_i, \forall\,i\in\mathbb{Z}\}.$$
\end{defn}

\begin{rem}\label{def rem}
{\rm(1)} Since $U(\sfp)\cong \bigwedge(\ggg_{-1})\otimes U(\ggg_0)$ and $\dim\bigwedge (\ggg_{-1})=2^n.$ The condition being locally
finite-dimensional over $\sfp$ is equivalent to being locally  finite-dimensional over $\ggg_0$.

{\rm(2)} The isomorphism classes of irreducible finite-dimensional $\ggg_0$-modules are parameterized by $\Lambda^+$, the set of the weights whose restriction to $[\ggg_0,\ggg_0]$ are dominant and integral. Denote by $L^0(\lambda)$ the finite-dimensional irreducible $\ggg_0$-module corresponding to $\lambda\in \Lambda^+$, which is a highest weight module associated with the Borel subalgebra $\bbb=\bar\hhh+\nnn^+$.

{\rm(3)} The $\bbz$-graded module category  of $\bar{X}(n)$ can be  naturally identified with the $\bbz$-graded module category of $X(n)$ ($X\in \{ S, H, C{\hskip-0.1cm}H\}$).

{\rm(4)} Recall that the Lie superalgebra  $\ggg$ is equal to $\ggg_\bz\oplus \ggg_\bo$ with $\ggg_\bz=\sum_{{\textmd{all even }} i}\ggg_i$, and $\ggg_\bo=\sum_{{\textmd{all odd }}i}\ggg_i.$
For any two $\ggg$-modules $M,N$, we say a homomorphism $\varphi: M\rightarrow N$ is of parity $|\varphi|$ if $\varphi(x m)=(-1)^{|\varphi||x|}x\varphi(m)$ for any $\bbz_2$-homogeneous element $x\in \ggg_{|x|}$, and $m\in M$. In this paper we always assume that the homomorphism $\varphi$ in $\comi$ is of even parity,  i.e.,  $\varphi(x m)=x\varphi(m)$ for any $x\in\ggg$ and $m\in M$.  So $\comi$ is an abelian  category.

(5) If forgetting  the $\bbz$-graded structure of $\comi$, then we have the category $\overline\comi$.  A $U(\ggg)$-module $M$ belongs to $\overline\comi$ if and only if it is a weight module and locally $\sfp$-finite. Denote by $\mathcal{F}$ the natural forgetful functor from $\comi$ to $\overline\comi$.
\end{rem}

\iffalse
\begin{rem}\label{rem: any para}
Associated with any given parabolic subalgebra $\sfp_\h$, one can correspondingly define a category $\co_h$. Among those, the most interesting one is $\comi$ (see Remark \ref{parabolic func}).
\end{rem}
\fi

Let $\mathbf{E}$ be a complete set of pairwise non-isomorphic irreducible $\bbz$-graded modules of $\ggg_0$. Each $E\in \mathbf{E}$  is necessarily concentrated in a single degree $\lfloor E\rfloor\in\bbz$, which means that $E=E_{\degE}$.  So, $\mathbf{E}$ can be parameterized by $\Lambda^+\times \bbz$.

Denote by $\comiid$ the full subcategory of $\comi$ consisting of all objects that are zero in degrees less than $d$ (called a truncated subcategory by $d$).

\subsection{Standard and co-standard modules} For a given $(\lambda,d)\in \mathbf{E}= \Lambda^+\times \bbz$, we have a $\bbz$-graded (finite-dimensional) irreducible $\ggg_0$-module $L^0(\lambda)$ whose degree $\lfloor L^0(\lambda)\rfloor$ is equal to $d$.  Let us introduce the standard
modules $\Delta(\lambda)$ and co-standard modules $\nabla(\lambda)$  in $\comi$ as below:
\begin{equation}\label{standard}
\Delta(\lambda)=U(\ggg)\otimes_{U(\sfp)}L^0(\lambda)
\end{equation}
and%{\footnote{}}
\begin{equation}\label{costandard}
\nabla(\lambda)=\mathrm{Hom}_{\ggg_{\geq0}}(U(\ggg),L^0(\lambda)).
\end{equation}
with trivial $\ggg_{-1}$(resp. $\ggg_{\geq 1}$)-action  on $L^0(\lambda)$ in $\Delta(\lambda)$ (resp. $\nabla(\lambda)$).
Once the parity $|v^0_\lambda|$ of a maximal vector $v^0_\lambda$ in $L^0(\lambda)$ is given{\footnote{For $\comi$, one can give parities for  weight spaces similar to \cite[\S6]{CLW15}.}}, say $\epsilon\in \bbz_2=\{\bz,\bo\}$, the super-structure of $\Delta(\lambda)$ is determined by the super structure of $U(\ggg_{\geq 1})= U(\ggg_{\geq 1})_\bz \oplus  U(\ggg_{\geq 1})_\bo$ together with  $\epsilon$  as follows
     $$\Delta (\lambda)=\Delta(\lambda)_\bz \oplus  \Delta(\lambda)_\bo, \mbox{ where } \Delta(\lambda)_{\delta+\epsilon}= U(\ggg_{\geq 1})_{\delta}\otimes L^0(\lambda) \mbox{ for }\delta\in\bbz_2.$$

 Obviously, $U(\ggg)$ has a $\mathbb{Z}$-grading induced by the $\mathbb{Z}$-grading of $\ggg.$  So for $\Delta(\lambda)$, we have the following decomposition as a $\ggg_{0}$-module  $$\Delta(\lambda)\cong \bigoplus_{i\geq1}U(\ggg_{\geq1})_i\otimes_{\bbc}L^0(\lambda)$$
	where $U(\ggg_{\geq1})_i$ denotes the $i$th homogeneous part of $U(\ggg_{\geq1})$.
	Because $U(\ggg_{\geq1})_i,i\geq0,$ is finite-dimensional,
	$U(\ggg_{\geq1})_i\otimes_{\bbc}L^0(\lambda)$ is a finite-dimensional $\ggg_{0}$-module. Hence, $\Delta(\lambda)$ is locally finite over $\ggg_{0}$.
	Consequently, $\Delta(\lambda)$ is an object in $\comi$.
 As to the co-standard module, we have  the following isomorphisms over $\ggg_{\leq0}$:
\begin{align*}
\nabla(\lambda)&\cong \mathrm{Hom}_{\ggg_0}(U(\ggg_{\leq0}),L^0(\lambda))\cr
&\cong \mathrm{Hom}_{\ggg_0}(U(\ggg_{\leq0}),
\mathbb{C})\otimes_{\mathbb{C}}L^0(\lambda)\cr
&\cong \mathrm{Hom}_{\bbc}(\bigwedge(\ggg_{-1}),
\mathbb{C})\otimes_{\mathbb{C}}L^0(\lambda),
\end{align*}
where $\bigwedge(\ggg_{-1})$ denotes the exterior product space on the abelian Lie (super)algebra $\ggg_{-1}$, and the last isomorphism above is due to the fact that by definition $U(\ggg_{-1})=\bigwedge(\ggg_{-1})$. Hence $\dim\nabla(\lambda)=2^n\dim L^0(\lambda)$, and $\nabla(\lambda)$ is an object in $\comi$. Especially, $\nabla(\lambda)$ admits a simple socle $L^0(\lambda)$  over $\ggg_{\leq 0}$. For the detailed description of $\nabla(\lambda),$  readers can refer to Proposition \ref{iso2} later.

%On the other hand, $\nabla(\lambda)$ is indecomposable, and admits a unique maximal submodule. So $\nabla(\lambda)$ admits a unique irreducible quotient module, denoted by $\bar L(\lambda)$.

Furthermore,  both $\Delta(\lambda)$ and $\nabla(\lambda)$ belong to $\comidd$ as long as  $\lfloor L^0(\lambda)\rfloor=d\geq d'$. In this case, we say that {\sl{both of them have depth $d$}}. Generally, for $M\in \comiid$, define the depth of $M$ to be the least number $t$ with $M_t\ne 0$ for the gradation $M=\sum_{i=d}^\infty M_i$. Denote by $\dpt(M)$ the depth of $M$. By  definition, $\dpt(M)\geq d$
 for $M\in \comiid$.
The following basic observation is clear.

\begin{lem}\label{sts-costa indec} Both $\Delta(\lambda)$ and $\nabla(\lambda)$ are indecomposable.
\end{lem}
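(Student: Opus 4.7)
The plan is to treat the two indecomposability claims in parallel: in each case I want to pin down a canonical ``extremal'' piece of the module that any direct summand must inherit whole, and then argue that this piece either generates or detects the whole module. For $\Delta(\lambda)$ the extremal piece is the bottom graded component; for $\nabla(\lambda)$ it is the socle.

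For $\Delta(\lambda)$, the starting point is the $\ggg_0$-decomposition already computed just above the lemma,
$$
\Delta(\lambda)\;\cong\;\bigoplus_{i\geq 0} U(\ggg_{\geq 1})_i\otimes_{\bbc} L^0(\lambda),
$$
which shows that the depth-$d$ component $\Delta(\lambda)_d$ (with $d=\lfloor L^0(\lambda)\rfloor$) is exactly the simple $\ggg_0$-module $L^0(\lambda)$. Suppose $\Delta(\lambda)=M\oplus N$ in $\comi$. By definition every morphism in $\comi$ preserves the $\bbz$-gradation, so the projections are homogeneous and $M,N$ are graded $\ggg$-submodules with $L^0(\lambda)=\Delta(\lambda)_d=M_d\oplus N_d$ as $\ggg_0$-modules. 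Simplicity of $L^0(\lambda)$ forces one of these summands, say $N_d$, to vanish, and hence $L^0(\lambda)\subseteq M$. Since $\Delta(\lambda)=U(\ggg)\cdot L^0(\lambda)$ is generated by its depth-$d$ component, this gives $M=\Delta(\lambda)$ and $N=0$.

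For $\nabla(\lambda)$ I would invoke the simple-socle property already recorded in the paragraph preceding the lemma: $\nabla(\lambda)$ has a simple socle $L^0(\lambda)$ over $\ggg_{\leq 0}$. Any decomposition $\nabla(\lambda)=M\oplus N$ in $\comi$ is in particular a decomposition of $\ggg_{\leq 0}$-modules, so
$$
\mathrm{soc}_{\ggg_{\leq 0}}(M)\oplus\mathrm{soc}_{\ggg_{\leq 0}}(N)\;=\;L^0(\lambda),
$$
and simplicity forces one side, say $\mathrm{soc}_{\ggg_{\leq 0}}(N)$, to be zero. But every nonzero object of $\comi$ has nonzero $\sfp$-socle: if $0\neq m\in N$, then by axiom (2) the submodule $U(\sfp)m$ is finite dimensional, hence contains a simple $\sfp$-submodule. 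Thus $N=0$, proving indecomposability.

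I do not anticipate a serious obstacle. The only point worth checking explicitly is the passage from a simple $\ggg_0$-socle to a simple $\ggg_{\leq 0}$-socle for $\nabla(\lambda)$, but this follows directly from the coinduced description $\nabla(\lambda)\cong\Hom_{\bbc}\bigl(\textstyle\bigwedge(\ggg_{-1}),\bbc\bigr)\otimes L^0(\lambda)$ recorded in the excerpt, together with Frobenius reciprocity which identifies the $\ggg_{\leq 0}$-socle with the image of the canonical copy of $L^0(\lambda)$ sitting in the top of the exterior factor. All other ingredients — the identification of the depth-$d$ component of $\Delta(\lambda)$, the grading compatibility of morphisms in $\comi$, and the local finiteness over $\sfp$ — are already in place.
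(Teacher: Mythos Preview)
Your proof is correct and follows essentially the same idea the paper has in mind: both $\Delta(\lambda)$ and $\nabla(\lambda)$ have a distinguished simple $\ggg_0$- (resp.\ $\ggg_{\leq 0}$-) piece --- the bottom graded component $L^0(\lambda)$ for $\Delta(\lambda)$, the simple socle $L^0(\lambda)$ for $\nabla(\lambda)$ --- which any graded direct summand must swallow whole, forcing the other summand to vanish. The paper simply declares the lemma ``clear''; your argument is a clean unpacking of precisely this reasoning.
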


Actually,  it is readily known that $\Delta(\lambda)$ (resp. $\nabla(\lambda)$) has a unique maximal submodule. Hence, $\Delta(\lambda)$ (resp. $\nabla(\lambda)$) has a unique simple quotient, which is denoted by $L(\lambda)$ (resp. $\tilde L(\lambda)$).

\begin{lem} \label{basic lem O pre}
Maintain the notations as above. Then $\{L(\lambda)\}_{(\lambda,d)\in \bfe}$  and $\{\tilde L(\lambda)\}_{(\lambda,d)\in \bfe}$ are two complete sets of pairwise non-isomorphic irreducibles in $\comi$ respectively. Hence every simple object in $\comi$ is finite-dimensional.
\end{lem}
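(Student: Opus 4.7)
The proof naturally splits into two halves: completeness of the $L$-family using Frobenius reciprocity for induction, and completeness of the $\tilde L$-family together with finite-dimensionality using the coinduction side. The finite-dimensionality will ultimately come for free on the $\tilde L$-side, since $\nabla(\lambda)$ was computed to satisfy $\dim \nabla(\lambda) = 2^n \dim L^0(\lambda) < \infty$, so each $\tilde L(\lambda)$ is automatically finite-dimensional. The task, then, is to bootstrap both: every simple $S\in\comi$ is isomorphic to some $L(\lambda)$, and also to some $\tilde L(\mu)$, with the parameter sets organized by $\bfe$.

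For the $L$-family, let $S \in \comi$ be simple. By cyclicity, $S = U(\ggg) v$ for any nonzero weight vector $v$; the PBW decomposition $U(\ggg) \cong U(\ggg_{\geq 0}) \otimes \Lambda(\ggg_{-1})$ together with $\dim \Lambda(\ggg_{-1}) = 2^n < \infty$ forces the $\mathbb{Z}$-grading of $S$ to be bounded below, so the depth $d := \dpt(S)$ is well-defined. The bottom piece $S_d$ is $\sfp$-stable (as $\ggg_{-1} S_d \subseteq S_{d-1} = 0$), finite-dimensional by axiom (1), and $\ggg_0$-semisimple by axiom (3); hence it contains a $\nnn^+$-highest weight vector $v^0$ of some weight $\lambda \in \Lambda^+$ generating a copy of $L^0(\lambda)\subseteq S_d$ on which $\ggg_{-1}$ acts trivially. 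Frobenius reciprocity
\[ \Hom_\ggg(\Delta(\lambda), S) \cong \Hom_{\sfp}(L^0(\lambda), S) \]
then produces a nonzero $\ggg$-morphism $\Delta(\lambda) \to S$, which simplicity of $S$ forces to be surjective; thus $S$ is the unique simple quotient of $\Delta(\lambda)$, i.e. $S \cong L(\lambda)$. Pairwise non-isomorphism within $\{L(\lambda)\}_{(\lambda,d)\in\bfe}$ follows because both $d = \dpt(L(\lambda))$ and $L(\lambda)_d \cong L^0(\lambda)$ as $\ggg_0$-module are intrinsic invariants of the isomorphism class, recovering $(\lambda,d)\in\bfe$.

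For the $\tilde L$-family and the finite-dimensionality of every simple $S\cong L(\lambda)$, I would run the dual construction on the top degree: granted that $S$ has a finite top degree $d^{\mathrm{top}}$, the piece $S_{d^{\mathrm{top}}}$ is $\ggg_{\geq 0}$-stable since $\ggg_{\geq 1} S_{d^{\mathrm{top}}} = 0$ by degree, and extracting a $\nnn^+$-highest weight vector of weight $\mu$ in it gives, via coinduction-Frobenius $\Hom_\ggg(S, \nabla(\mu))\cong \Hom_{\ggg_{\geq 0}}(S, L^0(\mu))$, a nonzero $\ggg$-map $S\to\nabla(\mu)$, necessarily an embedding by simplicity; matching $S$ with the appropriate simple subquotient of $\nabla(\mu)$ then identifies $S\cong \tilde L(\mu')$ for an explicit $\mu'$ recoverable from $\mu$ and the twist by $\Lambda^n(\ggg_{-1})$. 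The main obstacle is to break the circular dependence between ``$S$ is finite-dimensional'' and ``$S$ has a top-degree piece''. The cleanest resolution uses the semi-infinite character $\mathcal{E}_X$ from Lemma \ref{semi-inf} to pin down the canonical shift between the induced and coinduced parameterizations: composing the surjection $\Delta(\lambda) \twoheadrightarrow L(\lambda)$ with the map into a suitably twisted finite-dimensional $\nabla(\mu)$ realizes $L(\lambda)$ as the simple quotient of a concrete finite-dimensional module, directly forcing $L(\lambda)$ to be finite-dimensional and producing the bijection $\lambda \leftrightarrow \lambda^\vee$ on $\bfe$ under which $L(\lambda)\cong \tilde L(\lambda^\vee)$; non-isomorphism within $\{\tilde L(\mu)\}$ follows by the same intrinsic-invariant argument applied to the top-degree piece.
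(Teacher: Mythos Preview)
Your treatment of the $L$-family is correct and essentially matches the paper's argument. The gap is in the $\tilde L$-half, where the coinduction-Frobenius step is misapplied in a way that makes the whole second paragraph collapse.

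Extracting a $\nnn^+$-highest weight vector of weight $\mu$ in the top piece $S_{d^{\mathrm{top}}}$ gives you $L^0(\mu)$ as a $\ggg_{\geq 0}$-\emph{submodule} of $S$ (trivial $\ggg_{\geq 1}$-action because it sits at the top). But the Frobenius isomorphism $\Hom_\ggg(S,\nabla(\mu))\cong\Hom_{\ggg_{\geq 0}}(S,L^0(\mu))$ demands a $\ggg_{\geq 0}$-\emph{quotient}: any such map must kill $\ggg_{\geq 1}S$, so it factors through $(S/\ggg_{\geq 1}S)_{d^{\mathrm{top}}}$, and in general $(\ggg_{\geq 1}S)_{d^{\mathrm{top}}}\neq 0$, so the copy of $L^0(\mu)$ you found need not survive. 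Even if you did land an embedding $S\hookrightarrow\nabla(\mu)$, that makes $S$ the \emph{socle} of $\nabla(\mu)$, not the head $\tilde L(\mu)$; the sentence ``matching $S$ with the appropriate simple subquotient'' does not bridge this.

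The circular dependence you flag is a red herring, and the semi-infinite character is unnecessary machinery here. The clean resolution is to run coinduction-Frobenius at the \emph{bottom} rather than the top: since $(\ggg_{\geq 1}S)_d=0$ (nothing below degree $d$ to push up), the depth piece $S_d=L^0(\lambda)$ is an honest $\ggg_{\geq 0}$-quotient of $S$, whence $S=L(\lambda)\hookrightarrow\nabla(\lambda)$ and finite-dimensionality follows immediately. Once $S$ is finite-dimensional, your top-degree idea works if you pair it with the correct adjunction: $L^0(\nu)\hookrightarrow S_{d^{\mathrm{top}}}$ as a $\ggg_{\geq 0}$-submodule yields, by \emph{induction}-Frobenius, a surjection $K(\nu)=U(\ggg)\otimes_{U(\ggg_{\geq 0})}L^0(\nu)\twoheadrightarrow S$. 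A direct check that the top piece $\nabla(\mu)_{d+n}\cong L^0(\mu+\text{shift})$ generates $\nabla(\mu)$ under $\ggg_{-1}$, together with a dimension count, gives $\nabla(\nu-\text{shift})\cong K(\nu)$, so $S\cong\tilde L(\nu-\text{shift})$. This is presumably what the paper means by ``the same arguments'' — the paper's own proof is terse here and defers the explicit $\lambda\leftrightarrow\tilde\lambda$ bijection to Remark~\ref{simple module aw csd} and Proposition~\ref{iso2}.
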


\begin{proof}   Let $E$ be any simple object of $\comi$ and $v$ be a non-zero weight vector belonging to $E$. Consider the finite-dimensional  $U(\sfp)$-module  $U(\sfp).v.$ Obviously, $U(\sfp).v$ has a non-zero $U(\sfp)$-irreducible  submodule $E_0$. Assume that $E_0$ is isomorphic to  $L^0(\lambda)$ for some $\lambda\in \Lambda^+$, then we have a non-zero homomorphism
from $\Delta(\lambda)$ to $E$.
Hence $E$ is isomorphic to $L(\lambda)$, with the depth of $E$ equal to $\lfloor E_0\rfloor$.

On the other hand, assume that $L(\lambda)$ and $L(\mu)$ are two irreducible modules with depths $d_\lambda$ and $d_\mu$ respectively. By the construction, $L^0(\lambda)$ is the unique simple socle of $L(\lambda)$ over  $\ggg_{\leq 0}$. If $L(\lambda)$ and $L(\mu)$ are isomorphic, then $L^0(\lambda)$ and $L^0(\mu)$ must be isomorphic as  $\ggg_{\leq 0}$-modules. Hence $\lambda=\mu$. Naturally, $d_\lambda=d_\mu$. Thus, we already prove that the set $\{L(\lambda)\}_{(\lambda,d)\in \bfe}$  forms a complete set of pairwise non-isomorphic simple objects in $\comi$.

By the same arguments, one can similarly prove the statement for  $\{\tilde L(\lambda)\}_{(\lambda,d)\in \bfe}$.
Since $\nabla(\lambda)$ is finite-dimensional, any simple object of $\comi$ is finite-dimensional.
\end{proof}

\begin{rem} \label{simple module aw csd} (1) By the above lemma, we can see that for any  $\lambda\in  \Lambda^+$ (modulo the depths), there is a unique $\tilde \lambda\in \Lambda^+$
	such that $L(\lambda)\cong\tilde L(\tilde\lambda)$.
	  Thus, the correspondence sending $\lambda$ to $\tilde{\lambda}$ gives rise to  a permutation on $ \Lambda^+$. The precise description can be given in \S\ref{appendix} with aid of Proposition \ref{iso2}.

(2) For $M\in \comi$, we write $(M: L(\lambda))$  for the multiplicity of the simple object $L(\lambda)$ in $M$, i.e., the supremum of $\#\{i\mid M^i\slash M^{i-1}\cong L(\lambda)\}$ over all finite filtration $\{M=M^k\supset...\supset M^i\supset M^{i-1}\supset...\supset M^1\supset  M^0=0\mid i\in\bbz_{>0}\}$. Suppose $M=\oplus_{k\in\bbz} M_k$. Note that $\dim M_{d}<\infty$ for $d=\lfloor L(\lambda)\rfloor$. So $(M:L(\lambda))$ is finite. Especially, we will call $L(\lambda)$ a composition factor of $M$ if $(M:L(\lambda))$ is nonzero.
\end{rem}

%\subsection{}%Category $\comis$}
%Let us introduce a subcategory $\comis$ of $\comi$  whose objects are locally $\ggg_0$-semisimple. This means that for any nonzero vector $v$ in $V\in \comis$, $U(\ggg_0)v$ is a finite-dimensional semisimple $\ggg_0$-module. Similarly, we can define a truncated subcategory $\comiid$, which is a subcategory of $\comid$ with objects required locally $\ggg_0$-semisimple.

%\begin{lem}\label{basic for comis} The following statements hold.
%\begin{itemize}
%\item[(1)] For any $\lambda\in \Lambda^+$, both $\Delta(\lambda)$ and $\nabla(\lambda)$ belong to $\comis$. Consequently both $L(\lambda)$ and  $\bar L(\lambda)$  belongs to $\comis$.

%\item[(2)] For any $\lambda\in \Lambda^+$, $L(\lambda)$  belongs to $\comis$.

%\item[(2)] For any $\lambda\in \Lambda^+$, $P(\lambda)$ belongs to $\comis$.
%\end{itemize}
%\end{lem}
%\begin{proof}
\iffalse
(1) Note that for $i\in \bbz$, the $\bbz$-graded subspace $U(\ggg)_i$ in the enveloping algebra $U(\ggg)$ is a \sout{finite dimensional} \evid{finite-dimensional} $\ggg_0$-module, thereby semisimple over $\ggg'_0$. On the other hand, $\sfd$ acts by scalar $i$ on $U(\ggg)_i$. Hence $U(\ggg)$ is locally semisimple over $\ggg$. Now each $L^0(\lambda)$ for $\lambda\in \Lambda^+$ is an finite-dimensional irreducible $\ggg_0$-module. Hence both of $\Delta(\lambda)$ and $\nabla(\lambda)$ are locally semisimple over $\ggg_0$.

(2) By the construction of $I(\lambda)$, the argument in (1) is available to it. Hence $I(\lambda)$ belongs to $\comis$, so does $P(\lambda)$.
\end{proof}
\fi

\subsection{Some natural representations and related notations}\label{nat rep sec}
We collect some basic facts on natural representations of $\ggg=X(n)$ for $X\in\{W, \bar S, \bar H, \och\}$, which will be used later for the study of blocks of $\comi$.

Recall that for $\ggg=\sum_{i\geq -1}\ggg_i$,  the graded subspaces $\ggg_{-1}=\sum_{i=1}^n\bbc D_i$ and
\begin{equation}\label{grading 0}
\ggg_{0}=\begin{cases}
\frak{gl}(V),&\text{if}\,\, \ggg=W(n), \bar S(n);\cr
\frak{so}(V)+\bbc\sfd,&\text{if}\,\, \ggg=\bar H(n), \ochn
\end{cases}
\end{equation}
for $V=\sum_{i=1}^n\bbc\xi_i$. Especially, $\ggg_{-1}$ becomes the contragredient module $V^*$ of $V$ over $\ggg_0$ with the weight set
$$\wt(\ggg_{-1})=\begin{cases}\{-\epsilon_1,...,-\epsilon_n\}, &\mbox{ for } X(n), X\in\{W, \bar S\};\\
 \{-\epsilon_1-\delta,...,-\epsilon_r-\delta, \epsilon_1-\delta,...,\epsilon_r-\delta\},  &\mbox{ for } \bar H(2r) \mbox{ or }\overline{C{\hskip-0.1cm}H}(2r);\\
 \{-\epsilon_1-\delta,...,-\epsilon_r-\delta, \epsilon_1-\delta,...,\epsilon_r-\delta, -\delta\},  &\mbox{ for } \bar H(2r+1) \mbox{ or }\overline{C{\hskip-0.1cm}H}(2r+1).
   \end{cases}
   $$
   Furthermore, $\bigwedge^n(\ggg_{-1})$ is a one-dimensional $\ggg_0$-module generated by $D_1\wedge \cdots \wedge D_n$,   of weight $-\sum_{i=1}^n\epsilon_i$ when $\ggg=X(n)$ for $X\in\{W,\bar S\}$, or of weight $-n\delta$ when  $\ggg=\bar H(n)$.
Furthermore, $\bigwedge\left(\ggg_{-1}\right)=\sum_{i=0}^n\bigwedge^i(\ggg_{-1})$ admits a weight set
\begin{align}\label{-1 weight set WS}
\wt\left(\bigwedge(\ggg_{-1})\right)
=\{-(\epsilon_{i_1}+\cdots+\epsilon_{i_k})\mid 1\leq i_1<\cdots<i_k\leq n, k=0,1,...,n\}
\end{align}
for $X(n)$, $X\in\{W,\bar S\}$.

From now on, we  set
\begin{align} \label{weight setting H}
\epsilon_{r+k}:=-\epsilon_k \;\; (k=1,...,r)
\mbox{ for }\ggg=\bar H(n) \mbox{ or } \ochn.
\end{align}
Let $\aleph$ be $0$ or $1$ in the following.
Then we can write
 \begin{align}\label{-1 weight set H}
&\wt\left(\bigwedge(\ggg_{-1})\right)\\
=&\begin{cases}\{-(\epsilon_{i_1}+\cdots+\epsilon_{i_k})-k\delta\mid 1\leq i_1<\cdots<i_k\leq n, k=0,1,...,n\}, &\mbox{ for }\bar H(2r);\\
\{-(\epsilon_{i_1}+\cdots+\epsilon_{i_k})-(k+\aleph)\delta\mid 1\leq i_1<\cdots<i_k\leq n-1, k=0,1,...,n\}, &\mbox{ for } \bar H(2r+1).
\end{cases}
\end{align}
The above is also true for $\och(n)$.

We always set $\ggg'_0=[\ggg_0,\ggg_0]$ throughout the paper. Then $\ggg'_0$  is a semisimple Lie algebra.% (isomorphic to $\frak{sl}(n)$ for $W(n)$, $\frak{sl}(n)$ for $\bar S(n)$ and $\frak{so}(n)$ for $\bar H(n)$ respectively).

\begin{lem} \label{basic nat rep} Let $\ggg=W(n)$.  The following statements hold.
\begin{itemize}
\item[(1)] Set $M:=\sum_{i=1}^n\bbc m_i\in \ggg_1$  with  $m_i=\xi_i\sfd\in\ggg_1$ for $\sfd=\sum_{j=1}^n\xi_jD_j$.
    Then  both $\ggg_{-1}$ and $M$ are not only abelian subalgebras but also $\ggg_0$-modules. Especially  $U(\ggg_{-1})=\bigwedge \ggg_{-1}$ and $U(M)=\bigwedge M$. Here and after,  $\bigwedge L$ denotes  the exterior-product space of a vector space $L$.
\item[(2)] Under the identification between $\ggg_0$ and $\mathfrak{gl}(V)$ for $V=\sum_{i=1}^n\bbc\xi_i$, the $\ggg_0$-module $M$ is isomorphic to $V$ while $\ggg_{-1}$ is isomorphic to its linear dual $V^*$ (as $\ggg_0$-modules).
\item[(3)] Consider the following tensor products $$M^-(\lambda):=\bigwedge \ggg_{-1}\otimes_\bbc L^0(\lambda)$$
    and
    $$M^+(\mu):=\bigwedge M\otimes_\bbc L^0(\mu)$$
    in the category of $\ggg_0$-modules,  where $\lambda,\mu\in \Lambda^+.$
     If $L^0(\mu)$ is a composition factor of the $\ggg_0$-module $M^-(\lambda)$, then $L^0(\lambda)$ must be a composition factor of the $\ggg_0$-module $M^+(\mu)$.
%\item[(3)] In $\Delta(\mu)$, if $\lambda=\mu+\beta$ such that  $\beta=\epsilon_{i_1}+\cdots+\epsilon_{i_s}\mid 1\leq i_1< \cdots< i_s\leq n$, and $\lambda\in\Lambda^+\}$, then $\Delta(\mu)$ contains a composition factor $L^(\lambda)$.
\end{itemize}
\end{lem}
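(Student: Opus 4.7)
My plan treats (1) and (2) as short direct computations, and (3) as a consequence of tensor--Hom adjunction in the category of finite-dimensional $\ggg_0$-modules.

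For (1), the $D_i$ are odd derivations satisfying $(D_iD_j+D_jD_i)(\xi_k)=0$, so $\ggg_{-1}$ is abelian; for $M$, a Leibniz-rule check applied to the action of $m_i=\xi_i\sfd$ on the generators $\xi_k$ gives $[m_i,m_j]=0$ and $[\xi_iD_j,m_k]=\delta_{jk}m_i$, which simultaneously shows that $M$ is abelian and $\ggg_0$-stable. Since in both cases the spanning vectors are mutually anticommuting odd elements, $U(\ggg_{-1})=\bigwedge\ggg_{-1}$ and $U(M)=\bigwedge M$. For (2), reading off the same bracket formulas under the identification $\ggg_0=\mathfrak{gl}(V)$ via $\xi_iD_j\leftrightarrow E_{ij}$, the vectors $m_k$ carry weight $\epsilon_k$ while $D_k$ carries weight $-\epsilon_k$, so that $M\cong V$ and $\ggg_{-1}\cong V^{*}$ as $\ggg_0$-modules.

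The substantive claim is (3). Applying (1) and (2), we may identify $M^-(\lambda)\cong\bigwedge V^{*}\otimes_{\bbc}L^0(\lambda)$ and $M^+(\mu)\cong\bigwedge V\otimes_{\bbc}L^0(\mu)$, both of which are completely reducible (since $\ggg_0=\mathfrak{gl}(V)$ and all highest weights in play are dominant and integral). The natural isomorphism
\[
\Hom_{\ggg_0}\bigl(L^0(\mu),\,\bigwedge V^{*}\otimes L^0(\lambda)\bigr)\;\cong\;\Hom_{\ggg_0}\bigl(\bigwedge V\otimes L^0(\mu),\,L^0(\lambda)\bigr),
\]
arising from the adjunction $\Hom(X,Y\otimes Z)\cong\Hom(Y^{*}\otimes X,Z)$ for finite-dimensional $Y$, together with the degree-wise identification $(\bigwedge V^{*})^{*}\cong\bigwedge V$, then converts into an equality of composition multiplicities
\[
[M^-(\lambda):L^0(\mu)]\;=\;\dim\Hom_{\ggg_0}(L^0(\mu),M^-(\lambda))\;=\;\dim\Hom_{\ggg_0}(M^+(\mu),L^0(\lambda))\;=\;[M^+(\mu):L^0(\lambda)].
\]
In particular, nonvanishing of the left-hand side forces nonvanishing of the right-hand side, which is exactly the statement of (3). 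The hypothesis $\beta=\epsilon_{i_1}+\cdots+\epsilon_{i_s}$ is in fact automatic once $L^0(\mu)$ appears in $M^-(\lambda)$, since $\lambda-\mu$ must then be a weight of $\bigwedge V^{*}$.

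No serious obstacle is anticipated; the only step requiring mild care is the identification $(\bigwedge V^{*})^{*}\cong\bigwedge V$ as $\mathfrak{gl}(V)$-modules (immediate degree by degree) and the use of complete reducibility to turn dimensions of Hom-spaces into composition multiplicities. The essence of (3) is thus Frobenius reciprocity in the guise of tensor--Hom adjunction.
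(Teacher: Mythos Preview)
Your proof is correct and follows essentially the same approach as the paper: parts (1) and (2) are handled by direct computation, and part (3) rests on exactly the tensor--Hom adjunction isomorphism $\Hom_{\ggg_0}(L^0(\mu),\bigwedge V^{*}\otimes L^0(\lambda))\cong\Hom_{\ggg_0}(L^0(\mu)\otimes\bigwedge V,L^0(\lambda))$ that the paper invokes. Your version is somewhat more explicit in spelling out the role of complete reducibility and the identification $(\bigwedge V^{*})^{*}\cong\bigwedge V$, but the argument is the same.
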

\begin{proof}  By a straightforward computation, the statements in (1) and (2) can be easily verified.

(3) Note that $\ggg_0\cong \mathfrak{gl}(V)$. %Due to the Weyl's complete reducible theorem, both $M^-(\lambda)$ and $M^+(\mu)$ can be decomposed into direct sums of irreducible $\ggg_0'$-modules respectively.
The statement follows from  (2) and  the following isomorphism
\begin{align}\label{tensor formu}
\Hom_{\ggg_0}(L^0(\mu), \bigwedge V^*\otimes_\bbc L^0(\lambda))\cong \Hom_{\ggg_0}(L^0(\mu)\otimes_\bbc \bigwedge V, L^0(\lambda)).
\end{align}
\end{proof}

In the following, we will  generalize Lemma \ref{basic nat rep}(3) to the situation when $\ggg=\ochn$.
 Set $L:=\ggg_{\geq 1}\subseteq U(\ggg)$.
 Consider the $\ggg_{0}$-modules  $M^+(\mu):=L\otimes_{\bbc}L^0(\mu)$ and $M^-(\lambda):=\sum_{i=0}^n M^-(\lambda)_{-i}$ with $M^-(\lambda)_{-i}=\bigwedge^i (\ggg_{-1})\otimes_\bbc L^0(\lambda)$.
 The following lemma is somewhat a bridge to understand the block structure of $\comi$ for the case $\ochn$ (see Proposition \ref{I prop}).

\begin{lem}\label{basic nat rep ch}
Let $\ggg=\ochn$ and
 $L^0(\mu)$ be an irreducible composition factor of the $\ggg_0$-module $M^-(\lambda)_{i},$ $\lambda,\mu\in\Lambda^+.$ The following statements hold.

% for $X\in\{\bar{S},\bar{H}\}$, and
%and $\lambda, \mu\in\Lambda^+$. %Denote $M^+(\mu):=L\otimes_\bbc L^0(\mu)$.
 %The following statements hold.
\begin{itemize}
\item[(1)]
If $i\leq -3$, then  $L^0(\lambda-2\delta)$ is a composition factor of the $\ggg_0$-module $M^+(\mu)$.
\item[(2)] If $i=-1$, then $L^0(\lambda-2\delta)$ is a composition factor of the $\ggg_0$-module $M^+(\mu-(n-2)\delta)$.
\item[(3)] If $i=-2$, then $L^0(\lambda-2\delta)$ is a composition factor of the $\ggg_0$-module $M^+(\mu-(n-4)\delta)$.

\end{itemize}
\end{lem}

\begin{proof} (1) Recall that $\ggg=\sum_{i=-1}^{n-2}\ggg_i$ with $\ggg_i=\wn_i\cap\ggg$ for $\ggg=\ochn$. We still set $V=\sum_{i=1}^n\bbc\xi_i$. Then we can identify $\ggg'_0$ with  $\mathfrak{so}(V)$, which admits a natural representation on $V$. The $\ggg'_0$-module $\ggg_{-1}=\sum_{i=1}^n\bbc D_i$ is  isomorphic to the contragredient $\ggg'_0$-module $V^*$ of $V$. Furthermore, for $i\in\{1,2,\cdots, n-2\}$, $\ggg_i$ is isomorphic to $\bigwedge^{i+2}(V)$ and admits eigenvalue $i$ for the action of $\sfd$. Actually, we can identify $\ggg_i$ with the space spanned by $D_H(\xi_{j_1}\cdots\xi_{j_{i+2}})$ for all $(j_1,...,j_{i+2})$ satisfying $1\leq j_1<\cdots<j_{i+2}\leq n$, the latter of which is isomorphic to $\bigwedge^{i+2} V$ as vector spaces. We can further say that $\ggg_i$ is isomorphic to $\bigwedge^{i+2}V$ as $\frak{so}(V)$-modules. This is ensured by  the definition of $D_H$ and the fact that for
the basis elements  $X=D_H(\xi_s\xi_t)\in \ggg'_0$ ($1\leq s<t\leq n$), 	
the following identity holds.
 $$\textsf{ad}X. D_H(\xi_{j_1}\cdots\xi_{j_{i+2}})=\sum_{k=1}^{i+2} D_H(\xi_{j_1}\cdots \xi_{j_{k-1}}\cdot X(\xi_{j_k})\cdot \xi_{j_{k+1}}\cdots \xi_{j_{i+2}}).$$

We continue to apply the isomorphism presented in (\ref{tensor formu}) for $\ggg'_0$-modules in the current case. For $i\in\{1,...,n-2\}$, we further have the following identity
$$\Hom_{\ggg'_0}(L^0(\mu), \bigwedge^{i+2} V^*\otimes_\bbc L^0(\lambda))\cong \Hom_{\ggg'_0}(L^0(\mu)\otimes_\bbc \ggg_i , L^0(\lambda)).
$$
Or to say, for $i\in \{1,..., n-2\},$
$$\Hom_{\ggg'_0}(L^0(\mu), M^-(\lambda)_{-(i+2)})\cong \Hom_{\ggg'_0}(L^0(\mu)\otimes_\bbc \ggg_{i} , L^0(\lambda)).
$$
Taking the eigenvalues of $\sfd$ into account, we get the first statement.

(2)
Recall that as $\ggg'_0$-modules,  $\bigwedge^{n-1} V^*\cong V^*$ and $\bigwedge^{n-2}V^*\cong \bigwedge^{2}V^*$.  Taking the eigenvalues of $\sfd$ into account, the second and the third statements follow from the first one.
\end{proof}

\begin{rem}\label{g0 regarding} The $\ggg_0$-module $M^+(\mu)$ in the arguments of Lemmas   \ref{basic nat rep} and \ref{basic nat rep ch} can be regarded as a $U(\ggg_0)$-submodule in $U(\ggg_{\geq 0})\otimes_{U(\ggg_0)}L^0(\mu)$. In general, for a $\ggg_0$-submodule $L$ of $U(\ggg_{\geq 0})$ by adjoint action, the tensor product module $M^+(\mu)=L\otimes_\bbc L^0(\mu)$  can  be regarded as $LU(\ggg_0)\otimes_{U(\ggg_0)}L^0(\mu)$, the latter of which is a $\ggg_0$-submodule of the induced module $U(\ggg_{\geq 0})\otimes_{U(\ggg_0)}L^0(\mu)$. Similarly, $M^-(\lambda)$ can be regarded as a $\ggg_0$-submodule of the induced module $U(\sfp)\otimes_{U(\ggg_0)}L^0(\lambda)$.
\end{rem}

\section{Projective covers}\label{proj and block}
Keep the notations as the previous sections.
\subsection{Projective covers in $\comi$}\label{section proj}
By Lemma \ref{basic lem O pre}, $\{L(\lambda)\}_{(\lambda,d)\in \bfe}$ form a complete set of pairwise non-isomorphic simple objects in $\comi$. By abuse of notations, we don't distinguish  $\bfe$ and  the set of iso-classes of irreducible modules in $\comi$ from now on. Especially, we make an appointment that  the simple object $L(\lambda)$ with depth $d$ will be written as
$ L(\lambda)=L(\lambda)_d.$
We first have the following basic observations. %Let us recall the following basic facts in the classical Lie theory.

\begin{lem}\label{pre proj}
\begin{itemize}
 \item[(1)] Suppose that $M$ is  a $\bar \hhh$-semisimple and  locally finite $U(\ggg_0)$-module. Then $M$ is semisimple over $\ggg_0$. %, if $M$ admits an irreducible composition factor isomorphic to $L^0(\lambda)$, then there must exists in $M$, a maximal vector $v$ of weight $\lambda$ such that $L^0(\lambda)$ is the homomorphism image of $U(\ggg_0)v$.
     \item[(2)] Suppose that $M$  is a finite-dimensional $U(\sfp)$-module generated by a maximal $\lambda$-weighted vector $v$.  Then $M$ admits a unique  irreducible quotient module,  which is isomorphic to  $L^0(\lambda)$ as a $\ggg_0$-module, endowed with trivial $\ggg_{-1}$-action.

         \item[(3)] Denote by $\cof$ the category of $\bar\hhh$-semisimple and locally finite $\ggg_0$-modules. If $V\in\cof$  is a highest weight module, i.e., generated by a maximal vector of weight $\lambda$, then $V\cong L^0(\lambda)$.

             \item[(4)] Any finite-dimensional irreducible $\ggg_0$-module $L^0(\lambda)$ for $\lambda\in \Lambda^+$ is  projective in $\cof$.
     \end{itemize}
\end{lem}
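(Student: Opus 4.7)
The plan is to handle the four parts in order, with (1) supplying the semisimplicity engine from which (3) and (4) follow immediately, while (2) needs a separate argument exploiting the specific structure of $U(\sfp)$. For (1), I would exploit that $\ggg_0$ is reductive with center $\mathfrak{z}$ contained in $\bar\hhh$ (evident from the identification of $\ggg_0$ with $\mathfrak{gl}(V)$, $\mathfrak{sl}(V)\oplus\bbc\sfd$, or $\mathfrak{so}(V)\oplus\bbc\sfd$ recorded earlier). Fix a finite-dimensional $U(\ggg_0)$-submodule $N\subseteq M$; by Weyl's theorem, $N$ is semisimple over the semisimple part $\ggg'_0$. The $\bar\hhh$-semisimplicity of $M$ forces $\mathfrak{z}$ to act semisimply on $N$, and since $\mathfrak{z}$ commutes with $\ggg'_0$ its character eigenspaces are $\ggg_0$-stable and each is semisimple over $\ggg'_0$, hence also over $\ggg_0$. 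Passing to $M$, any $x\in M$ generates a cyclic submodule $U(\ggg_0)x$ which is finite-dimensional by local finiteness and thus semisimple by the preceding step, so the sum of all simple $\ggg_0$-submodules of $M$ contains every $x\in M$ and must equal $M$.

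For (2), I would use the PBW-style decomposition $U(\sfp)\cong \bigwedge\ggg_{-1}\otimes U(\ggg_0)$ together with $[\ggg_{-1},\ggg_{-1}]\subseteq \ggg_{-2}=0$ and $[\ggg_0,\ggg_{-1}]\subseteq \ggg_{-1}$ to see that $\ggg_{-1}M$ is a $\sfp$-submodule of $M$. The quotient $\bar M := M/\ggg_{-1}M$ is then a finite-dimensional $\ggg_0$-module with trivial $\ggg_{-1}$-action, generated by the image $\bar v$ of $v$, which remains a highest weight vector of weight $\lambda\in\Lambda^+$; dividing out the unique maximal $\ggg_0$-submodule not containing $\bar v$ produces an irreducible $\sfp$-quotient isomorphic to $L^0(\lambda)$ with trivial $\ggg_{-1}$-action. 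For uniqueness, let $Q$ be any irreducible $\sfp$-quotient of $M$; then $\ggg_{-1}Q$ is again a $\sfp$-submodule of $Q$, so equals $0$ or $Q$, but $\bigwedge^{n+1}\ggg_{-1}=0$ in $U(\sfp)$ rules out $Q=\ggg_{-1}^{n+1}Q$. Therefore $\ggg_{-1}Q=0$, $Q$ descends to an irreducible $\ggg_0$-quotient of $\bar M$, and is accordingly isomorphic to $L^0(\lambda)$.

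For (3) and (4), both reduce quickly to (1). In (3), $V$ is $\ggg_0$-semisimple by (1), while the cyclic submodule $U(\ggg_0)v_\lambda$ is finite-dimensional (since $V\in\cof$), a highest weight module with highest weight $\lambda\in\Lambda^+$, and semisimple; the only such module is $L^0(\lambda)$, and since $V=U(\ggg_0)v_\lambda$ by hypothesis, $V\cong L^0(\lambda)$. For (4), (1) shows that $\cof$ is a semisimple abelian category, so every short exact sequence in $\cof$ splits and every object, in particular $L^0(\lambda)$, is projective. The main obstacle I anticipate is the uniqueness clause in (2): ruling out nontrivial $\ggg_{-1}$-action on irreducible $\sfp$-quotients demands the nilpotency argument $\bigwedge^{n+1}\ggg_{-1}=0$, since the purely odd abelian ideal $\ggg_{-1}$ does not admit the classical semisimple-Lie-algebra intuition.
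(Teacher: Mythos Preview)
Your proof is correct. Parts (1), (3), and (4) match the paper's approach essentially verbatim: the paper also takes a cyclic $\ggg_0$-submodule, decomposes it over $\ggg_0'$ via Weyl's theorem, and upgrades to a $\ggg_0$-decomposition using $\bar\hhh$-semisimplicity; (3) and (4) are then immediate consequences.

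Part (2) is where you genuinely diverge. The paper argues directly on $M$: since $M=U(\ggg_{-1})U(\nnn^-)v$, the $\lambda$-weight space is one-dimensional (weights of $\ggg_{-1}$ and $\nnn^-$ are strictly below $0$ in the relevant partial order), so any proper submodule misses $v$ and there is a unique maximal submodule. You instead pass to $\bar M=M/\ggg_{-1}M$ and use the nilpotency $\ggg_{-1}^{\,n+1}=0$ in $U(\sfp)$ to force any irreducible $\sfp$-quotient to have trivial $\ggg_{-1}$-action, reducing uniqueness to the standard highest-weight argument for $\bar M$ over $\ggg_0$. Both arguments are valid; the paper's is shorter, while yours cleanly separates the $\ggg_{-1}$- and $\ggg_0$-contributions and makes explicit why the odd abelian ideal cannot survive on an irreducible quotient---a structural observation that would transfer to any setting with a nilpotent odd ideal.
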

\begin{proof} (1) %Recall that $\ggg_0'=[\ggg_0,\ggg_0]$ is a semi-simple Lie algebra.  By assumption, $M$ admits a composition factor isomorphic to $L^0(\lambda)$, which means that $M$ admits an irreducible $\ggg'_0$-component isomorphic to $L^0(\lambda)$. Suppose that $v$ is a maximal $\lambda$-weighted vector of the $\ggg'_0$-irreducible component.  By assumption, $M'=U(\ggg_0)v$ is a finite-dimensional $U(\ggg_0)$-module. Obviously, $L^0(\lambda)$ becomes a homomorphism image of $U(\ggg_0)v$. The first statement is proven.
For any nonzero $v\in M$, $V:=U(\ggg_0)v$ is finite-dimensional. %As a $\ggg_0'=[\ggg_0,\ggg_0]$-module, $V$ is decomposed into a sum of irreducible modules, as $V=\bigoplus_i V_i$.
As $M$ is $\bar\hhh$-semisimple, so is $V$. We write $V=\sum_{\lambda\in \bar\hhh^*}V_\lambda$.
The finite-dimensionality of $V$ entails,  by some routine arguments, that $V$ can be decomposed into a direct sum of  irreducible $\ggg_0$-modules which are generated by  maximal (weighted-) vectors in $V=\sum V_\lambda$. Therefore, $M$ is semisimple over $\ggg_0$.

(2)  Recall for $\mu,\tau\in \bar\hhh^*$, $\mu\succeq \tau$ means that $\mu-\tau$ lies in $\bbz_{\geq 0}$-span of $\Phig\cup \Phi_0^+$. Clearly $M$ admits one-dimensional weight space $M'_\lambda$ of the highest weight $\lambda$. Furthermore, any proper submodule of $M$ admits weight spaces less than $\lambda$. Hence $M$ admits a unique maximal submodule, thereby $M$ as a $U(\sfp)$-module, has a quotient isomorphic to $L^0(\lambda)$, which can be viewed as an irreducible $\ggg_0$-module, endowed with a trivial $\ggg_{-1}$-action.

(3) This is a direct consequence of (1). Otherwise, $V=V_1\oplus V_2 \oplus\cdots\oplus V_s,s\geq2,$ and $V_i's$ are all finite-dimensional simple $\ggg_{0}$-modules. Then $V$ can not be generated by a single maximal vector of weight $\lambda.$
%By assumption, $V=U(\ggg_0)v_\lambda$ is generated by a maximal vector $v_\lambda$ with $\nnn^+. v_\lambda=0$ and $H. v_\lambda=\lambda(H)v_\lambda$ for any $H\in \bar \hhh$. Naturally, $V$ admits a unique irreducible quotient isomorphic to $L^0(\lambda)$.  On the other hand,  $V$ is by definition  finite-dimensional because it is generated by a vector. So, as a $\ggg'_0$-module $V$ has an irreducible submodule $V'=U(\ggg'_0)v_\lambda$. Note that $\ggg_0=\ggg'_0+\bbc\sfd$, and $V'$ is $\sfd$-stable. Hence $V'$ is indeed an irreducible $\ggg_0$-submodule of $V$, which  actually coincides with $V$ by PBW theorem.

(4) It follows from the statements (1).
\end{proof}

The following result asserts the existence of  projective covers of simple modules in $\comi$.

\begin{thm} \label{projective thm}
Each simple object $L(\lambda)$ in $\comi$ has a projective cover $P(\lambda)$. Furthermore, $P(\lambda)$ admits a flag of standard modules, i.e., there is a sequence of submodules of $P(\lambda)$ $$P(\lambda)=P_0\supset P_1\supset \cdots\supset P_l\supset P_{l+1}=0$$
such that $P_{i}\slash P_{i+1}\cong \Delta(\lambda_i)$ for some $\lambda_i$, $i=0,1,\cdots, l$.
\end{thm}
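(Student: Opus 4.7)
The overall strategy is to construct an explicit projective object $I(\lambda) \in \comi$ that carries a flag of standard modules together with a surjection onto $L(\lambda)$, and then to extract $P(\lambda)$ as an indecomposable summand of $I(\lambda)$ via Krull--Schmidt.

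\textbf{Construction and projectivity.} First I would define $I(\lambda) := U(\ggg) \otimes_{U(\ggg_0)} L^0(\lambda)$, with $L^0(\lambda)$ placed in $\bbz$-degree $d = \lfloor L^0(\lambda) \rfloor$. Since $\ggg$ is finite-dimensional, $I(\lambda)$ is a finite-dimensional $\bbz$-graded $\ggg$-module, clearly locally finite over $\sfp$ and $\bar\hhh$-semisimple, hence an object of $\comi$. A degree-preserving $\ggg$-homomorphism out of $I(\lambda)$ is determined by a $\ggg_0$-homomorphism from $L^0(\lambda)$ to the degree-$d$ component of its target, so Frobenius reciprocity gives
\[
\Hom_{\comi}(I(\lambda), M) \cong \Hom_{\ggg_0}(L^0(\lambda), M_d).
\]
Since $M_d$ lies in $\cof$ and $L^0(\lambda)$ is projective in $\cof$ by Lemma \ref{pre proj}(4), this functor is exact in $M$, so $I(\lambda)$ is projective in $\comi$.

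\textbf{Standard flag on $I(\lambda)$.} By induction in stages,
\[
I(\lambda) \cong U(\ggg) \otimes_{U(\sfp)} K(\lambda), \qquad K(\lambda) := U(\sfp) \otimes_{U(\ggg_0)} L^0(\lambda) \cong \bigwedge(\ggg_{-1}) \otimes_{\bbc} L^0(\lambda),
\]
with $K(\lambda)$ being $\bbz$-graded of top degree $d$. Since $\ggg_{-1}$ lowers $\bbz$-degree by one, the subspaces $F_k := \bigoplus_{j \leq d-k} K(\lambda)_j$ form a decreasing $\sfp$-submodule filtration $K(\lambda) = F_0 \supset F_1 \supset \cdots$ whose successive quotients $F_k / F_{k+1} \cong \bigwedge^k(\ggg_{-1}) \otimes L^0(\lambda)$ carry trivial $\ggg_{-1}$-action. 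Because $\ggg_0$ is reductive these quotients are semisimple, so the filtration may be refined into a $\sfp$-filtration whose simple subquotients are $\ggg_0$-irreducibles $L^0(\mu_i)$ (with trivial $\ggg_{-1}$-action). Applying the exact functor $U(\ggg) \otimes_{U(\sfp)} -$ (exactness by PBW-freeness of $U(\ggg)$ over $U(\sfp)$) transfers this into a filtration on $I(\lambda)$ whose successive quotients are the standards $\Delta(\mu_i)$.

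\textbf{Extraction of $P(\lambda)$ and main obstacle.} The composite $I(\lambda) \twoheadrightarrow \Delta(\lambda) \twoheadrightarrow L(\lambda)$ is nonzero, so finite-dimensionality of $I(\lambda)$ together with Krull--Schmidt in $\comi$ yield a decomposition $I(\lambda) = P(\lambda) \oplus Q$ with $P(\lambda)$ indecomposable projective surjecting onto $L(\lambda)$; by standard uniqueness this $P(\lambda)$ is the projective cover of $L(\lambda)$. The subtlest point is to show that the standard flag on $I(\lambda)$ passes to the summand $P(\lambda)$. I would handle this via highest-weight-category machinery: first establish the key $\mathrm{Ext}^1$-vanishing $\mathrm{Ext}^1_{\comi}(\Delta(\mu), \nabla(\nu)) = 0$ (using the coinduced description of $\nabla(\nu)$ together with the semi-infinite character of Lemma \ref{semi-inf} in the framework of Brundan \cite{Brun04}), and then run the inductive splitting argument in the spirit of \cite{Brun04} and \cite{Hum08} that distributes each $\Delta$-subquotient of $I(\lambda)$ compatibly across the direct summands $P(\lambda)$ and $Q$.
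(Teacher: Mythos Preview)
Your overall architecture matches the paper's: construct $I(\lambda)=U(\ggg)\otimes_{U(\ggg_0)}L^0(\lambda)$, prove it is projective and carries a finite $\Delta$-flag, then extract an indecomposable summand covering $L(\lambda)$. The Frobenius reciprocity computation and the $\Delta$-flag construction via the filtration by $\bigwedge^k(\ggg_{-1})\otimes L^0(\lambda)$ are essentially identical to the paper's steps (i)--(ii).

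There is, however, a genuine error that propagates. You assert that $I(\lambda)$ is finite-dimensional because $\ggg$ is. This is false: while $\ggg_{-1}$ is purely odd (so $U(\ggg_{-1})=\bigwedge(\ggg_{-1})$ is finite-dimensional), the subalgebra $\ggg_{\geq 1}$ has nonzero even components $\ggg_2,\ggg_4,\dots$, and these contribute a polynomial factor to $U(\ggg_{\geq 1})$. Hence $I(\lambda)\cong U(\ggg_{\geq 1})\otimes\bigwedge(\ggg_{-1})\otimes L^0(\lambda)$ is infinite-dimensional (indeed already $\Delta(\lambda)$ is). Your invocation of Krull--Schmidt from finite-dimensionality therefore fails as written. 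The paper avoids this by instead observing that the \emph{endomorphism algebra} is finite-dimensional: the same Frobenius reciprocity you used gives $\End_{\comi}(I(\lambda))\cong\Hom_{\ggg_0}(L^0(\lambda),I(\lambda)_d)$, which is finite since $\dim I(\lambda)_d<\infty$. Combined with the finite $\Delta$-flag length, this is enough to decompose $I(\lambda)$ into finitely many indecomposables and to run a Fitting-type argument showing that the chosen summand $J_0$ is an essential cover of $L(\lambda)$.

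For passing the $\Delta$-flag to the summand $P(\lambda)$, you propose the $\mathrm{Ext}^1_{\comi}(\Delta(\mu),\nabla(\nu))=0$ route via \cite{Brun04}; the paper instead uses the elementary inductive argument of \cite[\S3.7]{Hum08}, exploiting that the bottom factor $\Delta(\gamma)$ of $I(\lambda)$ has $\gamma$ minimal in $\wt(I(\lambda))\cap\Lambda^+$. Both approaches are valid here; the paper's is slightly more self-contained, while yours leans on machinery it develops only later (Lemma \ref{basic lem O}).
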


\begin{proof}  Set $I(\lambda)=U(\ggg)\otimes_{U(\ggg_0)}L^0(\lambda)$. Then $I(\lambda)$ lies in $\comi$ (see Definition \ref{defn}). %Furthermore, $I(\lambda)$ lies in $\comif$ since $L^0(\lambda)$ is finite-dimensional.
Our arguments are divided into different steps.

(i) We first claim that $I(\lambda)$ is a projective object  in $\comi$.

Indeed, thanks to Lemma \ref{pre proj}, $L^0(\lambda)$ is a projective $\ggg_0$-module in $\cof$. Note that the induction functor $U(\ggg)\otimes_{U(\ggg_0)}-$ is left adjoint to the restriction functor. The claim follows.

(ii) We next show that $I(\lambda)$ has a finite filtration such that each sub-quotient is isomorphic to a standard module.

Note that $I(\lambda)=U(\ggg)\otimes_{U(\sfp)}(U(\sfp)\otimes_{U(\ggg_0)}L^0(\lambda))$.
Now we consider the $U(\sfp)$-module $U(\sfp)\otimes_{U(\ggg_0)}L^0(\lambda).$
As a vector space, $U(\sfp)\otimes_{U(\ggg_0)}L^0(\lambda)\cong \bigwedge(\ggg_{-1})\otimes_\bbc L^0(\lambda).$ Denote
$\mathscr{L}^j(\lambda):=\bigwedge\nolimits^j \ggg_{-1}\otimes_\bbc L^0(\lambda)$
and $\mathscr{L}^{\geq j}(\lambda):=\bigoplus_{i=j}^{n}\mathscr{L}^i(\lambda),0\leq j \leq n.$ By a simple calculation, we can check that each $\mathscr{L}^{\geq j}(\lambda),0\leq j \leq n,$ is a $U(\sfp)$-submodule of $U(\sfp)\otimes_{U(\ggg_0)}L^0(\lambda)$. In particular, $\mathscr{L}^{\geq 0}(\lambda)=U(\sfp)\otimes_{U(\ggg_0)}L^0(\lambda).$ Then we have the following subsequence of $U(\sfp)$-modules.
\begin{align}\label{subquo1}
U(\sfp)\otimes_{U(\ggg_0)}L^0(\lambda)=\mathscr{L}^{\geq 0}(\lambda)\supseteq\mathscr{L}^{\geq 1}(\lambda)\supseteq\cdots\supseteq \mathscr{L}^{\geq (n-1)}(\lambda)\supseteq \mathscr{L}^{\geq n}(\lambda)\supseteq0,
\end{align}
which satisfies that $\mathscr{L}^{\geq i}(\lambda)/\mathscr{L}^{\geq (i+1)}(\lambda)\cong \mathscr{L}^{ i}(\lambda), 0\leq i \leq n-1.$ Here the subquotient $\mathscr{L}^{ i}(\lambda)$ has trivial $\ggg_{-1}$-action and is finite-dimensional.

 Since $\ggg_0$ is isomorphic to $\mathfrak{gl}(n)$ (resp.  $\mathfrak{sl}(n)+\bbc \sfd$ or $\mathfrak{so}(n)+\bbc\sfd$) for $\ggg$ being of type $W$ (resp. $\bar S$ or $\bar H$)
and $\sfd$ acts on $\mathscr{L}^{ i}(\lambda)$ as a scalar $\lambda(\sfd)-i$,  Weyl's completely reducible theorem is available to  $\mathscr{L}^j(\lambda)$, which means that  $\mathscr{L}^j(\lambda)$ can be certainly  decomposed into the following sum of irreducible $\ggg_0$-modules:
\begin{equation}\label{decom of i-th}
\mathscr{L}^j(\lambda)=\bigoplus\limits_{k=1}^{n_j} L^0\left(\eta_k^{(j)}\right),
\end{equation}
where  $\eta_k^{(j)}\in \Lambda^+$ satisfies $\Hom_{\ggg_0}\left(L^0\left(\eta_k^{(j)}\right),\bigwedge^j{\ggg_{-1}}\otimes L^0(\lambda)\right)\ne 0$.

So as a $U(\sfp)$-module, there is a filtration of $\mathscr{L}^{\geq j}(\lambda)$

\begin{align}\label{subquo2}
\mathscr{L}^{\geq j}(\lambda)=:\mathscr{L}^{\geq j}_1(\lambda)\supseteq \mathscr{L}^{\geq j}_2(\lambda)\supseteq \cdots \supseteq
\mathscr{L}^{\geq j}_{n_j}(\lambda)\supseteq \mathscr{L}^{\geq j+1}(\lambda)=\mathscr{L}^{\geq j+1}_1(\lambda)
\end{align}
such that $  \mathscr{L}^{\geq j}_{k}(\lambda) \slash\mathscr{L}^{\geq j}_{k+1}(\lambda)\cong L^0\left(\eta_k^{(j)}\right)$ and  $  \mathscr{L}^{\geq j}_{n_j} (\lambda)\slash\mathscr{L}^{\geq j+1}_{1}(\lambda)\cong L^0\left(\eta_{n_j}^{(j)}\right).$
From (\ref{subquo1}) and (\ref{subquo2}), we then get the following $U(\sfp)$-module filtration,
\begin{align}\label{subquo3}
\mathscr{L}^{\geq 0}(\lambda)\supseteq\mathscr{L}^{\geq 1}(\lambda)\supseteq\cdots&\supseteq  \mathscr{L}^{\geq j}_1(\lambda)\supseteq \mathscr{L}^{\geq j}_2(\lambda)\supseteq \cdots \supseteq\mathscr{L}^{\geq j}_{n_j}(\lambda)\supseteq\cr
&\supseteq
 \mathscr{L}^{\geq j+1}_1(\lambda)\supseteq \mathscr{L}^{\geq j+1}_2(\lambda)\supseteq \cdots \supseteq \mathscr{L}^{\geq j+1}_{n_{j+1}}(\lambda)\supseteq\cdots \cr
 \cdots&\supseteq \mathscr{L}^{\geq n}(\lambda)\supseteq0,
\end{align}
such that $  \mathscr{L}^{\geq j}_{k}(\lambda) \slash\mathscr{L}^{\geq j}_{k+1}(\lambda)\cong L^0\left(\eta_k^{(j)}\right)$ and  $  \mathscr{L}^{\geq j}_{n_j} (\lambda)\slash\mathscr{L}^{\geq j+1}_{1}(\lambda)\cong L^0\left(\eta_{n_j}^{(j)}\right)$ for $j=0,1,\cdots,n.$
%Here, $\mathscr{L}^{\geq 0}(\lambda)=U(\sfp)\otimes_{U(\ggg_0)}L^0(\lambda).$
Now set $I^{\geq j}_{k}(\lambda)=U(\ggg)\otimes_{U(P)}\mathscr{L}^{\geq j}_{k}(\lambda).$ Then we have the following $U(\ggg)$-module filtration,
\begin{equation}\label{subquo4}
I^{\geq 0}(\lambda)\supseteq I^{\geq 1}_1(\lambda)\supseteq\cdots\supseteq  I^{\geq j}_1(\lambda)\supseteq I^{\geq j}_2(\lambda)\supseteq \cdots \supseteq
I^{\geq j}_{n_j}(\lambda)\supseteq I^{\geq j+1}_1(\lambda) \supseteq\cdots\supseteq I^{\geq n}(\lambda)\supseteq0.
\end{equation}
By the construction, % Because the functor $U(\ggg)\otimes_{U(\sfp)}-$ is exact,
$I_k^{\geq j}(\lambda)\slash I_{k+1}^{\geq j}(\lambda)$ is isomorphic to $\Delta\left(\eta_k^{(j)}\right)$ for $1\leq k<n_j$, and $I_{n_j}^{\geq j}\slash I_{1}^{\geq j+1}$ is isomorphic to $\Delta\left(\eta_{_{n_j}}^{(j)}\right)$.

(iii) Thirdly, we prove that any direct summand of $I(\lambda)$ admits a $\Delta$-flag.

 By the construction in (ii), we have got that $I(\lambda)$ admits a $\Delta$-flag of finite length, in which the bottom one is a submodule $\Delta(\gamma)$ with $\gamma=\lambda-\sum_{i=1}^n\epsilon_i$ for $\ggg=\wn$ or $\bsn$, and $\gamma=\lambda-n\delta$ for $\ggg=\bhn$ or $\ochn$. This means that $\gamma\in \Lambda^+$ is the minimal one in $\wt(I(\lambda))\cap \Lambda^+$ (the set of the dominant and integral weights of $I(\lambda)$  is in the same sense as in the proof of the above lemma). % that $\mu_1$ is bigger than $\mu_2$ if $\mu_1-\mu_2\in \bbz_{\geq 0}(\Phi(\ggg_{\geq 1})\cup \Phi^+_0)$.
 Actually, one can prove the general result that if $V\in \comi$ admits a $\Delta$-flag of finite length with the bottom standard module factor $\Delta(\gamma)$ satisfying that $\gamma$ is minimal in $\wt(V)\cap \Lambda^+$, then any direct summand of $V$ admits a $\Delta$-flag. This can be done by some standard inductive arguments on the lengths of $\Delta$-flags (see \cite[\S3.7]{Hum08}).
%(?????Do not have time to check it, Brundan 02 Corollary 4.3 has this property)

(iv) Fourthly, we prove that there exists an indecomposable projective module $J_0$ such that $J_0\rightarrow L(\lambda)$ is an epimorphism as $U(\ggg)$-modules.

From the arguments in (ii), we know that  as $U(\ggg)$-modules,
%\begin{align}\label{first flag factor}
$$I(\lambda)\slash I_1^{\geq 1}(\lambda)\cong \Delta(\lambda).$$
 %\end{align}
So there are natural surjective morphisms
$I(\lambda){\overset {\pi_2}\longrightarrow} \Delta(\lambda){\overset {\pi_1}\longrightarrow} L(\lambda)$.
Denote $\pi:=\pi_1\circ \pi_2$. So we have
\begin{align}\label{irr proj of I}
\pi: I(\lambda)\twoheadrightarrow L(\lambda).
\end{align}

Assume $I(\lambda)=\bigoplus_{i=0}^{k}J_i$ (the finiteness of $k$ is ensured by (ii) and (iii)). Then there is a summand of $I(\lambda)$, written as $J_0$ without loss of generality,  such that $\pi|_{J_0}$ is non-zero. We denote $\pi|_{J_0}$ by $\pi_0.$
Because $ \Delta(\lambda){\overset {\pi_1}\longrightarrow} L(\lambda)$ is surjective,
the projective property of $J_0$ entails that $\pi_0$ can be lifted to a morphism $\bar\pi_0:J_0\longrightarrow \Delta(\lambda).$

 %occurs as a quotient of its $\Delta$-flag. This unique direct summand is denoted by $P(\lambda)$.

(v) We claim that $J_0$ is the projective cover of both $\Delta(\lambda)$ and $L(\lambda)$.

 By the above argument, we already have the following commutative diagram:
\begin{equation*}%\label{projective diag}
  \begin{array}{c}
\xymatrix{
  & J_0\ar[dl]_{\bar\pi_0}\ar[d]^{\pi_0} \\
  \Delta(\lambda) \ar[r]^{\pi_1} & L(\lambda).}
\end{array}
\end{equation*}
In fact, $\bar\pi_0$ is surjective. Otherwise, the image of $\bar\pi_0$ will be contained in the maximal submodule of $\Delta(\lambda),$ so $\pi_1\circ\bar\pi_0(J_0)=0\neq \pi_0(J_0),$  which contradicts to the above commutative diagram.

What remains is to prove that $\pi_0$ is essential. Consider $A:=\Hom_\comi(I(\lambda),I(\lambda))$. Then we have an isomorphism of vector spaces: $A\cong \Hom_{U(\ggg_0)}(L^0(\lambda), I(\lambda)|_{U(\ggg_0)})$.  Because $L^0(\lambda)$
is generated by $v_{\lambda}$ and $I(\lambda)_{\lambda}$ is finite-dimensional,	
 $\dim A<\infty$. Hence, as a subalgebra of $A$, $A_0:=\Hom_\comi(J_0,J_0)$ is finite-dimensional. Then, by some standard arguments on Fitting decomposition
\iffalse
We can do that by the same argument as in the proof of  \cite[Lemma 3.3]{Soe98}. Consider $A:=\Hom_\comi(I(\lambda),I(\lambda))$. Then we have an isomorphism of vector spaces: $A\cong \Hom_{U(\ggg_0)}(L^0(\lambda), I(\lambda)|_{U(\ggg_0)})$, which implies $\dim A<\infty$. Hence we have that as an subalgebra of $A$, $A_0:=\Hom_\comi(J_0,J_0)$ is finite-dimensional. So, the arguments via the Fitting decomposition on $J_0$ is available. What is more, $J_0$ is indecomposable. So, every element of $A_0$ is either nilpotent or invertible. Now if there is a submodule $K$ such that $\pi_0(K)=L(\lambda)$. We claim that such a $K$ must coincide with $J_0$ itself. Suppose it is not so. Then $K$ would not lie in $\ker(\pi_0)$. Thus, there is a homomorphism $\nu:K\rightarrow J_0$ such that $\pi_0\circ \nu\ne 0$.
Hence, $\pi_0\circ \nu$ is surjective. On the other hand, the projectiveness of $J_0$ ensures the existence of a $U(\ggg)$-module  homomorphism $\pi_0^K:J_0\rightarrow K$ such that $(\pi_0\circ \nu)\circ\pi_0^K=\pi_0$. So $\nu\circ \pi_0^K$ is not nilpotent, thereby it is an isomorphism. We then deduce that $\pi_0^K: J_0\rightarrow K$ is injective. By the definition of $\comi$, $\pi_0^K: (J_0)_i\rightarrow K_i$ for any $i\in \bbz$. The injective property entails that $\dim (J_0)_i\leq \dim K_i$, which implies in force that $(J_0)_i=K_i$ for all $i\in \bbz$. So $J_0=K$, contradicting the assumption. Thus we then have proven
\fi
we can prove that $\pi_0$ is indeed essential.

We can further have  that $J_0$ is also the projective cover of $\Delta(\lambda)$. This is because  the essential property of $\bar \pi_0$ can be ensured by that of $\pi_0$.

As $I(\lambda)$ admits a unique factor $\Delta(\lambda)$ in its $\Delta$-flag, it is easy to deduce that the choice of $J_0$ is unique among  all indecomposable direct summands of $I(\lambda)$.
\end{proof}

\begin{rem}\label{proj remark}
		
{\rm(1)}  We can precisely construct such a $P(\lambda)$ (=$J_0$) as below.
 From $\pi_0=\pi|_{J_0}$ and the definition of the category $\comi$,  it follows that $J_0$ contains a vector $v_0$ of the form like
 $$1\otimes v^0_\lambda+\sum_{i}u_i\otimes v_i$$
  where $u_i\in U(\ggg_{\geq 1})\ggg_{\geq 1}U(\ggg_{-1})\ggg_{-1}$ with the weight of all $u_i\otimes v_i$ being $\lambda$. Set
  $$\tilde J_0:=U(\ggg)v_0.$$
   By the arguments as above, we actually have the following commutative diagram
    \begin{equation*}%\label{projective diag}
  \begin{array}{c}
\xymatrix{
  & \tilde J_0\ar[dl]_{\bar\pi_0|_{\tilde J_0}}\ar[d]^{\pi_0|_{\tilde J_0}} \\
  \Delta(\lambda) \ar[r]^{\pi_1} & L(\lambda).}
\end{array}
\end{equation*}
   The essential property of $\pi_0$ entails that $J_0=\tilde J_0$.

{\rm(2)} From the proof (v) of Theorem \ref{projective thm}, we know that $J_0$ admits a unique maximal submodule,  which is exactly $\ker(\pi_0)$. So an irreducible module in $\comi$ is naturally the unique irreducible quotient of its projective cover.

{\rm(3)} Let $\lambda\in \Lambda^+$.  Set
   \begin{align}\label{upsilon}
   \Upsilon(\lambda):=\{\mu\in \Lambda^+\mid &\;(\bigwedge{\ggg_{-1}}\otimes L^0(\lambda): L^0(\mu))_{\ggg_0}\ne 0 \},
   \end{align}
  where $(L:L^0(\mu))_{\ggg_0}$ denotes the multiplicity of $L^0(\mu)$ in the composition series of the finite-dimensional $\ggg_0$-module $L$.
   As in the proof (ii) of Theorem \ref{projective thm}, we have the following decomposition of as $\ggg'_0$-modules:
$$\bigwedge\ggg_{-1}\otimes_\bbc L^0(\lambda)=\bigoplus\limits_{\mu\in\Upsilon(\lambda)} n_{\lambda,\mu}L^0(\mu).$$
Moreover, the following statements hold.
\begin{itemize}
\item[($1^\circ$)] $n_{\lambda, \mu}=0$ for any $\mu\notin\Upsilon(\lambda)$.

\item[($2^\circ$)] For a projective object $Q\in \comi$, denote by $[Q:\Delta(\mu)]$ the multiplicity of $\Delta(\mu)$ in its $\Delta$-flag. Then
$[I(\lambda):\Delta(\mu)]=n_{\lambda, \mu}$ for any $\mu\in\Lambda^+$. In particular, $[I(\lambda):\Delta(\lambda)]=1$.
\item[($3^\circ$)] Suppose $\lambda-\sum_{i=k}^n\epsilon_i\in \Lambda^+$. Then $[I(\lambda):\Delta(\lambda-\sum_{i=k}^n\epsilon_i)]\ne 0,1\leq k\leq n,$ for $\ggg=W(n),\bar S(n)$, In particular, $[I(\lambda):\Delta(\lambda-\sum_{i=1}^n\epsilon_i)]\ne 0$ for $\ggg=W(n),\bar S(n)$. %; $[I(\lambda):\Delta(\lambda-2\delta)]\ne 0$ for $\ggg=\bar H(2r)$.
\item[($4^\circ$)] %$[I(\lambda):\Delta(\lambda-(\sum_{i=1}^{k}\epsilon_i+k\delta))]\ne 0$ ($k=1,...,r$) for $\ggg=\bar H(2r)$   %
    %
    %$[I(\lambda):\Delta(\lambda-(\sum_{i=1}^{r}\epsilon_i-r\delta-\aleph))]\ne 0$
    %for \ggg=
   % or $\bar H(2r+1)$. (It is also true for $\ochn$, $n=2r$ or $2r+1$.)}????
%I can not check it but sure that

$[I(\lambda):\Delta(\lambda+\sum_{i=1}^{k}\epsilon_i-(n-k)\delta)]\ne 0$ ($k=0,1,...,r$) for $\ggg=\bar H(n),\ochn$. In particular, $[I(\lambda):\Delta(\lambda-n\delta)]\ne 0$ for $\ggg=\bar H(n),\ochn$.
\end{itemize}
These statements  {\rm($1^\circ$)-($4^\circ$)} will be used in the sequel.
The statements  {\rm($1^\circ$)-($2^\circ$)} are direct consequences of the theorem.
For  {\rm($3^\circ$)}, we remind that  $D_{k}\wedge D_{k+1}\wedge \cdots\wedge D_n\otimes v_{\lambda}^0$ with $1\leq k \leq n$ is a maximal weight vector of $\bigwedge^{n-k+1}\ggg_{-1}\otimes_\bbc L^0(\lambda)$ for
 $\ggg=W(n)$, $\bar S(n)$.
As for  {\rm($4^\circ$)}, we can check that $D_{k+1}\wedge \cdots\wedge D_r\wedge D_{r+1}\wedge\cdots\wedge D_n\otimes v^0_{\lambda}$ is a  $\ggg_{0}$-maximal weight vector.
Now the results in   {\rm($3^\circ$)} and  {\rm($4^\circ$)} hold
by (\ref{-1 weight set WS}), (\ref{-1 weight set H}) and the formula (\ref{decom of i-th}) in the proof of Theorem \ref{projective thm}.
\end{rem}

\subsection{The category $\comif$}\label{delete}
%\subsection{The subcategory  $\comif$}
Denote by $\comif$ the full subcategory of $\comi$ whose objects are finitely-generated $U(\ggg)$-modules in $\comi$.
Then we have the following consequence based on Theorem \ref{projective thm}.

\begin{thm}\label{projective thm 2}  The category $\comif$ has enough projective objects, this is to say, for each $M\in\comif$, there is a projective object $P\in\comif$ and an epimorphism $P\twoheadrightarrow M$.    \end{thm}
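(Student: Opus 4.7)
My plan is to exhibit $P$ as a finite direct sum of the projectives $I(\lambda_j) = U(\ggg)\otimes_{U(\ggg_0)} L^0(\lambda_j)$ constructed in the proof of Theorem~\ref{projective thm}. Since each $I(\lambda_j)$ is projective in $\comi$ and finitely generated, their direct sum will lie in $\comif$ and be projective there; it remains only to arrange the $\lambda_j$'s so that the induced map surjects onto $M$.

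First I would exploit the hypotheses on $M$ to produce a finite-dimensional subspace that generates $M$ and decomposes well over $\ggg_0$. Since $M \in \comif$ is finitely generated over $U(\ggg)$, pick generators $v_1,\ldots,v_k$. The locally finite condition over $\sfp$ from axiom~(2) forces the $\sfp$-submodule $V \subseteq M$ generated by $\{v_1,\ldots,v_k\}$ to be finite-dimensional, while $V$ still generates $M$ as a $\ggg$-module. Because $V$ inherits $\bar{\hhh}$-semisimplicity from $M$ and is a fortiori locally finite over $\ggg_0$, Lemma~\ref{pre proj}(1) gives that $V$ is semisimple as a $\ggg_0$-module; finite-dimensionality then yields an isomorphism $V \cong \bigoplus_{j=1}^m L^0(\lambda_j)$ for some $\lambda_1,\ldots,\lambda_m \in \Lambda^+$.

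Next, for each $j$ the $\ggg_0$-embedding $L^0(\lambda_j) \hookrightarrow V \hookrightarrow M$ transports, via the Hom--tensor adjunction
\[
\Hom_{\ggg}\bigl(I(\lambda_j),\,M\bigr)\;\cong\;\Hom_{\ggg_0}\bigl(L^0(\lambda_j),\,M\bigr),
\]
to a $\ggg$-homomorphism $\phi_j : I(\lambda_j) \to M$ sending $1\otimes L^0(\lambda_j)$ isomorphically onto the corresponding summand of $V$. Set $P := \bigoplus_{j=1}^m I(\lambda_j)$ and $\phi := \sum_j \phi_j$. Then the image of $\phi$ contains $V$, and hence contains $U(\ggg)V = M$, so $\phi$ is surjective. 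Each $I(\lambda_j)$ is generated by the finite-dimensional subspace $1\otimes L^0(\lambda_j)$ and is projective in $\comi$ by step~(i) of the proof of Theorem~\ref{projective thm}; the finite direct sum $P$ therefore lies in $\comif$ and is projective in $\comi$, and projectivity of $P$ restricts unchanged to the full subcategory $\comif$.

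There is no substantial obstacle beyond the passage from an arbitrary finite generating set to the cleanly $\ggg_0$-semisimple subspace $V$, which is handled immediately by Lemma~\ref{pre proj}(1). In particular, this argument will use none of the more delicate machinery (existence of $\Delta$-flags, indecomposability of direct summands, essentiality of the cover) that was required to pin down the indecomposable projective covers $P(\lambda)$ in Theorem~\ref{projective thm}.
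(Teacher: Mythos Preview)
Your proof is correct and takes a more direct route than the paper's. The paper first establishes a filtration $M = M^0 \supset M^1 \supset \cdots \supset M^t = 0$ whose successive quotients are nonzero quotients of standard modules $\Delta(\lambda_i)$, sets $P = \bigoplus_{i=1}^t P(\lambda_i)$ using the indecomposable projective covers from Theorem~\ref{projective thm}, and argues by induction on $t$ that $P$ surjects onto $M$. Your approach bypasses both the standard filtration and the refined covers $P(\lambda)$: you only need the projectivity of the induced modules $I(\lambda_j)$ (the easy step~(i) in the proof of Theorem~\ref{projective thm}) together with the tensor--Hom adjunction. This is conceptually cleaner and requires less machinery; the paper's version, by contrast, yields the sharper information that $P$ may be taken as a sum of the minimal projectives $P(\lambda_i)$ indexed by the standard filtration.

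One minor point you should make explicit: to ensure that $V$ is a \emph{graded} subspace of $M$, so that each simple $\ggg_0$-summand $L^0(\lambda_j)$ sits in a single degree (and a single parity) and the resulting $\phi_j$ is a morphism in $\comi$, first replace the generators $v_1,\ldots,v_k$ by their homogeneous components with respect to the $\mathbb{Z}$- and $\mathbb{Z}_2$-gradings. This preserves finiteness of the generating set and makes $V = U(\sfp)\{v_i\}$ a graded $\sfp$-submodule, after which your argument goes through verbatim.
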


\begin{proof}  Note that $P(\lambda)$, $\Delta(\lambda)$, $\nabla(\lambda)$ and $L(\lambda)$ are all in $\comif$. And it is still true that $P(\lambda)$ is a projective cover of $L(\lambda)$ in $\comif$.
For any nonzero object $M\in \comif$, $M$ admits a filtration of finite length
\begin{align}\label{standard fil 0}
M=M^0\supset M^1\supset M^2 \supset\cdots\supset M^{t-1}\supset M^{t}=0
\end{align}
 such that $M^{i-1}\slash M^{i}$ is isomorphic to a non-zero quotient of  $\Delta(\lambda_i)$ for some $L(\lambda_i)\in \bfe$, $i=1,\cdots, t$. The least number $t$ in  all possible filtrations as in (\ref{standard fil 0}) is called the standard length of $M$, denoted by $l(M)$.

Set  $P=\bigoplus_{i=1}^t P(\lambda_i)$. Then by induction on $t$, there will be a covering morphism from $P$  onto $M$.  The proof is completed.
\end{proof}

\iffalse
 We continue to give a consequence of the proof of Theorem \ref{projective thm 2}. From the argument there,  it follows that the projective cover of $M$ is isomorphic to either $P(\lambda_{t-1})\oplus P$ or $P$,  dependent on if $M^{t-1}$ is contained in $\pi_M(P)$, where $P$ is a projective cover of $\bar M$, which can be lifted to a projective cover of $\pi_M(P)$.

\begin{prop}\label{projective prop}  Suppose $M\in \comif$, and $l(M)=t>1$ with  a standard-length filtration as follows
\begin{align}
M=M^0\supset M^1\supset M^2 \supset......\supset M^{t-1}\supset M^{t}=0
\end{align}
 satisfying that $M^{i-1}\slash M^{i}$ is isomorphic to a non-zero quotient of  $\Delta(\lambda_i)$ for some $L(\lambda_i)\in \bfe$, $i=1,\cdots, t$.
Then the projective cover $M$ must be isomorphic to
$$\bigoplus_{i'\in I} P(\lambda_{i'})$$
where  $I\subset \{1,...,t\}$.
\end{prop}
\begin{proof} It is easily carried out by induction on the length, with the start-point of induction from the proof of the previous theorem.
\end{proof}
\fi

%\subsection{Indecomposable projective modules in $\comif$}
\begin{prop} \label{ind proj prop} In $\comif$,  every indecomposable projective module is isomorphic to the projective cover $P(\lambda)$ of some irreducible module $L(\lambda)$.
\end{prop}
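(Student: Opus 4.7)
The plan is to take an arbitrary indecomposable projective $Q\in\comif$, produce a simple quotient $L(\lambda)$ of $Q$, and then use the universal property of the projective cover $P(\lambda)$ together with Fitting-type dichotomy on $\End_\comi(P(\lambda))$ to show that $P(\lambda)$ is a direct summand of $Q$. Indecomposability of $Q$ will then force $Q\cong P(\lambda)$.

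First I would construct a simple quotient. Since $Q$ lies in $\comif$, the argument in the proof of Theorem \ref{projective thm 2} furnishes a finite filtration $Q=M^0\supset M^1\supset\cdots\supset M^t=0$ in which each $M^{i-1}/M^i$ is a nonzero quotient of some standard module $\Delta(\lambda_i)$. In particular $Q/M^1$ is a nonzero quotient of $\Delta(\lambda_1)$, and composing with the canonical surjection $\Delta(\lambda_1)\twoheadrightarrow L(\lambda_1)$ yields a surjection $\pi:Q\twoheadrightarrow L(\lambda)$ with $\lambda=\lambda_1$. Let $p:P(\lambda)\twoheadrightarrow L(\lambda)$ be the essential projective cover given by Theorem \ref{projective thm}. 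Projectivity of $P(\lambda)$ lifts $p$ through $\pi$ to some $\tilde p:P(\lambda)\to Q$ with $\pi\circ\tilde p=p$; projectivity of $Q$ lifts $\pi$ through $p$ to some $\tilde\pi:Q\to P(\lambda)$ with $p\circ\tilde\pi=\pi$. Set $\varphi:=\tilde\pi\circ\tilde p\in\End_\comi(P(\lambda))$, so that $p\circ\varphi=p\neq 0$, whence no power of $\varphi$ vanishes.

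To conclude, I would invoke the finite-dimensionality of $\End_\comi(P(\lambda))$ established inside the proof of Theorem \ref{projective thm}(v): since $P(\lambda)$ is a direct summand of $I(\lambda)$ and $\End_\comi(I(\lambda))\cong\Hom_{U(\ggg_0)}(L^0(\lambda),I(\lambda))$ is finite-dimensional, so is $\End_\comi(P(\lambda))$. Together with the indecomposability of $P(\lambda)$, this algebra is local, so each element is either nilpotent or invertible (Fitting's lemma). As $\varphi$ is not nilpotent, it must be an automorphism of $P(\lambda)$. Hence $\tilde p$ is a split monomorphism, and $Q\cong P(\lambda)\oplus\ker(\varphi^{-1}\tilde\pi)$; the indecomposability of $Q$ together with $P(\lambda)\neq 0$ forces the kernel summand to vanish, giving $Q\cong P(\lambda)$. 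The only genuine technical point is the finite-dimensionality of $\End_\comi(P(\lambda))$, but this is already at hand from the construction of $P(\lambda)$ as a summand of $I(\lambda)$, so the remaining steps are formal.
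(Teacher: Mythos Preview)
Your argument is correct and follows the same overall scheme as the paper: produce a simple quotient $L(\lambda)$ of the given indecomposable projective, lift in both directions against the cover $P(\lambda)\twoheadrightarrow L(\lambda)$, show the resulting endomorphism of $P(\lambda)$ is surjective/invertible, and split to exhibit $P(\lambda)$ as a summand.

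The one difference is in the key step. The paper observes $\pi_0=\pi_0\circ(\bar\phi\circ\bar\pi_0)$ and invokes directly that $\pi_0$ is \emph{essential}: the image of $\bar\phi\circ\bar\pi_0$ maps onto $L(\lambda)$, hence equals $P(\lambda)$, so $\bar\phi$ is surjective and therefore split (by projectivity of $P(\lambda)$). You instead argue that $\varphi=\tilde\pi\circ\tilde p$ is not nilpotent and then appeal to the local endomorphism ring of $P(\lambda)$, using the finite-dimensionality of $\End_\comi(P(\lambda))$ extracted from the proof of Theorem~\ref{projective thm}(v). Both routes are standard and equivalent here; the paper's is marginally shorter because essentiality of $\pi_0$ is exactly the property just established, whereas your route re-imports the Fitting/local-ring machinery that underlies that essentiality. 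Either way the conclusion $Q\cong P(\lambda)$ follows.
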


\begin{proof}
		
Suppose $P$ is an indecomposable projective module in $\comif$. By the definition of $\comif$,  $P$ has an irreducible quotient $L(\lambda)$, which defines an epimorphism $\phi:P\rightarrow L(\lambda)$. The projective property of $P$ yields the following commutative diagram
	\begin{equation*}%\label{projective diag}
	\begin{array}{c}
	\xymatrix{
		& P\ar[dl]_{\bar\phi}\ar[d]^{\phi} \\
		P(\lambda) \ar[r]^{\pi_0} & L(\lambda).}
	\end{array}
	\end{equation*}
	Because $\pi_0$ is essential, $\bar \phi$ must be surjective. %We further have that $\bar\phi\circ\bar\pi_0$ is identity on $P(\lambda)$.
	Hence $P(\lambda)$ is isomorphic to a direct summand of $P$.
	The assumption of indecomposability of $P$ entails that $P$ is isomorphic to $P(\lambda)$.		
\end{proof}

\section{Degenerate BGG reciprocity and typical functor}\label{degen}

Maintain the previous notations and assumptions.
\subsection{} Thanks to Lemma \ref{pre proj}, Brundan's arguments in \cite{Brun04} {\color{red}are} available to $\comi$.

\begin{thm}\label{basic proj citedthm} (\cite[Theorem 4.4]{Brun04} and \cite[Theorem 3.2]{Soe98}) Every simple object $L(\lambda)=L(\lambda)_d$ in $\mathcal{O}^{\text{min}}_{\geq d'}$ admits a projective cover $P_{\geq d'}(\lambda)$ with $d\geq d'$, the head of $P_{\geq d'}(\lambda)$ is isomorphic to $L(\lambda)=L(\lambda)_{d}$. Moreover,
\begin{itemize}
\item[(1)] $P_{\geq d'}(\lambda)$ admits a finite $\Delta$-flag with $\Delta(\lambda)$  at the top.

\item[(2)] For $m < l$, the kernel of any surjection $P_{\geq m}(\lambda)
\rightarrow P_{\geq l}(\lambda)$  admits a finite $\Delta$-flag with subquotients of the form $\Delta(\mu)$ for $m\leq \lfloor L^0(\mu)\rfloor< l$.

\item[(3)]  $L(\lambda)$ admits a projective cover in $\comi$  if and only if there exists $l\ll 0$  with $P_{\geq l}(\lambda)=P_{\geq l-1}(\lambda)=P_{\geq l-2}(\lambda)=\cdots$, in which case $P(\lambda)=P_{\geq l}(\lambda)$.
\end{itemize}
\end{thm}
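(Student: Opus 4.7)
The plan is to adapt the general framework developed by Brundan \cite{Brun04} and Soergel \cite{Soe98} for $\bbz$-graded Lie (super)algebras equipped with a semi-infinite character. The two prerequisites needed from the present paper are the existence of a semi-infinite character (Lemma \ref{semi-inf}) and the projectivity of each finite-dimensional $L^0(\lambda)$ in $\cof$ (Lemma \ref{pre proj}(4)); together with the finite-dimensionality of each graded piece of objects in $\comiid$, these assemble into the categorical setup required by Brundan--Soergel.

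First, I would construct $P_{\geq d'}(\lambda)$ as an indecomposable summand of the truncation $I_{\geq d'}(\lambda)$ of $I(\lambda)=U(\ggg)\otimes_{U(\ggg_0)}L^0(\lambda)$ to $\comiid$ (divided by the submodule generated by components in degrees below $d'$). The module $I_{\geq d'}(\lambda)$ represents the functor $M\mapsto \Hom_{\ggg_0}(L^0(\lambda),M_d)$ on $\comiid$, which is exact by Lemma \ref{pre proj}(4); hence $I_{\geq d'}(\lambda)$ is projective in $\comiid$. Since its endomorphism ring is finite dimensional, Fitting's lemma decomposes it into indecomposable projective summands, and the essentiality argument of step (v) in the proof of Theorem \ref{projective thm} extracts the unique summand admitting $L(\lambda)$ as a quotient, which we declare to be $P_{\geq d'}(\lambda)$.

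For (1), the $\Delta$-flag is imported from the explicit construction in step (ii) of Theorem \ref{projective thm}: Weyl's complete reducibility applied to $\bigwedge(\ggg_{-1})\otimes L^0(\lambda)$ as a $\ggg_0'$-module yields a finite filtration of $I_{\geq d'}(\lambda)$ with standard subquotients, and this flag descends to direct summands by the standard argument of \cite[\S3.7]{Hum08}; the head layer $1\otimes v^0_\lambda$ ensures that $\Delta(\lambda)$ sits at the top. For (2), the truncation functor produces a canonical surjection $\pi_{m,l}\colon P_{\geq m}(\lambda)\twoheadrightarrow P_{\geq l}(\lambda)$ whose kernel is supported in degrees $[m,l-1]$; comparing $\Delta$-flags on both sides and matching multiplicities $[P_{\geq m}(\lambda):\Delta(\mu)] = [P_{\geq l}(\lambda):\Delta(\mu)]$ for $\lfloor L^0(\mu)\rfloor \geq l$ shows that the kernel inherits a $\Delta$-flag whose subquotients have depths in the prescribed range.

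Finally, (3) is a stabilization argument. If the tower $\{P_{\geq l}(\lambda)\}_{l\leq d}$ stabilizes from some $l_0$ downward, then $P_{\geq l_0}(\lambda)$ automatically inherits both projectivity and the essentiality of its head map over all of $\comi$, giving $P(\lambda)=P_{\geq l_0}(\lambda)$. Conversely, if $P(\lambda)\in\comi$ exists, each truncation $P(\lambda)_{\geq l}$ is again projective in $\comiid$ with head $L(\lambda)$ and so coincides with $P_{\geq l}(\lambda)$; finiteness of the $\Delta$-flag of $P(\lambda)$ (from part (1) applied at the stable level) then forces stabilization for $l\ll 0$. The main obstacle will be the bookkeeping in part (2), namely showing that every $\Delta$-factor of depth $\geq l$ in $P_{\geq m}(\lambda)$ lifts uniquely and compatibly through $\pi_{m,l}$, so that only the factors of depth in $[m,l-1]$ survive in $\ker\pi_{m,l}$ and assemble into a genuine $\Delta$-flag rather than a more general filtration by standard subquotients.
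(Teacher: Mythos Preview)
Your proposal is correct and takes essentially the same approach as the paper: the paper does not give an independent proof of this theorem but simply cites \cite[Theorem 4.4]{Brun04} and \cite[Theorem 3.2]{Soe98}, after noting (in the sentence preceding the theorem) that Lemma \ref{pre proj} makes Brundan's framework applicable to $\comi$. Your sketch spells out the content of those cited arguments in the present setting, using exactly the same prerequisites (Lemma \ref{semi-inf} and Lemma \ref{pre proj}(4)) that the paper invokes.
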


 In our case we have a stronger result (Theorem \ref{projective thm}). This is to say, the projective covers of $L(\lambda)$ in $\comid$ and $\comis$ exist. But the above theorem can help us to give more information on $P(\lambda)$ in the next subsection.

\subsection{} By Theorem \ref{basic proj citedthm}, every simple object $L(\lambda)=L(\lambda)_{d'}$ in $\comid$ admits a projective cover $P_{\geq d}(\lambda)$ with $d'\geq d$, the head of $P_{\geq d}(\lambda)$ is isomorphic to $L(\lambda)=L(\lambda)_{d'}$. Theorem \ref{basic proj citedthm} along with Theorem \ref{projective thm} implies that there exists $l\ll 0$  with $P_{\geq l}(\lambda)=P_{\geq l-1}(\lambda)=P_{\geq l-2}(\lambda)=\cdots$, and  $P(\lambda)=P_{\geq l}(\lambda)$.
Furthermore, by Theorem \ref{basic proj citedthm}, any $P_{\geq l}(\lambda)$ admits a $\Delta$-flag. Denote by  $[P_{\geq l}(\lambda):\Delta(\mu)]$ the multiplicity of $\Delta(\mu)$ in the $\Delta$-flag of $P_{\geq l}(\lambda)$.
By \cite[\S4]{Soe98} or \cite[Lemma 4.5]{Brun04}, we have the following result.

\begin{lem}\label{Soergel lem}
$[P_{\geq l}(\lambda):\Delta(\mu)]=(\nabla(\mu):L(\lambda))$ for all $L(\lambda)$ and $L(\mu)\in \bfe$ as long as $\dpt(L(\lambda))\geq l$ and $\dpt(L(\mu))\geq l$.
\end{lem}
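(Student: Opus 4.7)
The plan is to compute $\dim\Hom_{\comid}(P_{\geq l}(\lambda),\nabla(\mu))$ in two different ways and match the results, which is the standard Soergel--Brundan reciprocity pattern adapted to our truncated category. First I would establish the fundamental $\Hom$/$\Ext$ computation between standards and co-standards in $\comid$ (for $d=l$):
\[
\Hom_{\comid}(\Delta(\lambda),\nabla(\mu))\cong \delta_{\lambda,\mu}\,\mathbb{C},\qquad \Ext^{i}_{\comid}(\Delta(\lambda),\nabla(\mu))=0 \text{ for all } i\geq 1.
\]
This rests on Frobenius reciprocity applied to the adjoint pair $(U(\ggg)\otimes_{U(\sfp)}-\,,\,(-)^{\ggg_{-1}})$: a morphism $\Delta(\lambda)\to\nabla(\mu)$ corresponds to a $\ggg_0$-morphism $L^0(\lambda)\to \nabla(\mu)^{\ggg_{-1}}\cong L^0(\mu)$, which by Lemma~\ref{pre proj} is one-dimensional exactly when $\lambda=\mu$ and vanishes otherwise. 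The higher $\Ext$-vanishing follows because $L^0(\lambda)$ is projective in $\cof$ (Lemma~\ref{pre proj}(4)) and $L^0(\mu)$ is injective there, combined with the general framework of \cite{Brun04,Soe98} which is available thanks to the semi-infinite character (Lemma~\ref{semi-inf}).

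Next, using the finite $\Delta$-flag of $P_{\geq l}(\lambda)$ guaranteed by Theorem~\ref{basic proj citedthm}(1), together with the vanishing $\Ext^{1}_{\comid}(\Delta(\nu),\nabla(\mu))=0$, one unwinds the flag in an inductive short exact sequence argument to obtain
\[
\dim\Hom_{\comid}(P_{\geq l}(\lambda),\nabla(\mu))=\sum_{\nu}[P_{\geq l}(\lambda):\Delta(\nu)]\cdot\dim\Hom_{\comid}(\Delta(\nu),\nabla(\mu))=[P_{\geq l}(\lambda):\Delta(\mu)].
\]

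On the other hand, since $P_{\geq l}(\lambda)$ is projective in $\comid$ with simple head $L(\lambda)$ (Theorem~\ref{basic proj citedthm}), the functor $\Hom_{\comid}(P_{\geq l}(\lambda),-)$ is exact, and for any simple $L(\tau)\in\bfe$ one has $\dim\Hom_{\comid}(P_{\geq l}(\lambda),L(\tau))=\delta_{\lambda,\tau}$. Filtering $\nabla(\mu)$ by a composition series (which is finite, as $\nabla(\mu)$ is finite-dimensional by the computation $\dim\nabla(\mu)=2^{n}\dim L^0(\mu)$ following (\ref{costandard})) and applying the exact functor termwise, we obtain
\[
\dim\Hom_{\comid}(P_{\geq l}(\lambda),\nabla(\mu))=(\nabla(\mu):L(\lambda)).
\]
Comparing the two expressions yields the claimed identity.

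The main obstacle I anticipate is the careful verification of the $\Ext$-vanishing inside the truncated category $\comid$: induction from $\sfp$ and coinduction from $\ggg_{\geq 0}$ are not obviously dual, so one must check that truncation at depth $l$ preserves both the projective resolutions used on the $\Delta$-side and the acyclicity of $\nabla(\mu)$ against $\Delta$-flagged objects. This is precisely the technical content of \cite[\S4]{Soe98} and \cite[Lemma~4.5]{Brun04}, whose hypotheses hold for $\comid$ in view of Lemma~\ref{pre proj} and Lemma~\ref{semi-inf}, so I would invoke those references for the abstract input rather than redoing the homological algebra.
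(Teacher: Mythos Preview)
Your proposal is correct and follows precisely the standard Soergel--Brundan double-counting argument that the paper invokes by citing \cite[\S4]{Soe98} and \cite[Lemma~4.5]{Brun04}; the paper's own proof consists solely of that citation, so you have simply unpacked what those references contain. Note that the $\Hom$/$\Ext^{1}$ input you need is also recorded later in the paper as Lemma~\ref{basic lem O}(3)(4), so you could streamline by citing that directly rather than re-deriving it.
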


\subsection{Degenerate BGG reciprocity}
\begin{thm}\label{deg bggthm} In the category $\comis$, the following statement holds
$$ [P(\lambda):\Delta(\mu)]=(\nabla(\mu):L(\lambda))$$%\leq (\Delta(\mu):L(\lambda))$$
for all $L(\lambda),L(\mu)\in \bfe$.
%If $[P(\lambda):\Delta(\mu)]\ne 0$, then $(\Delta(\mu):L(\lambda))\ne 0$. Furthermore, in this case we have BGG reciprocity $$[P(\lambda):\Delta(\mu)]=(\Delta(\mu):L(\lambda)).$$
\end{thm}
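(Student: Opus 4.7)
The plan is to reduce the statement in $\comis$ to the truncated statement of Lemma \ref{Soergel lem} by pushing the truncation level far enough down. The key point is that Theorem \ref{projective thm} gives us a genuine projective cover $P(\lambda)$ in $\comis$ with a \emph{finite} $\Delta$-flag, so only finitely many $\Delta(\nu)$ appear, all of bounded depth.

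Fix $L(\lambda), L(\mu) \in \mathbf{E}$, and set $d_\lambda = \dpt(L(\lambda))$, $d_\mu = \dpt(L(\mu))$. First I would use Theorem \ref{projective thm} to obtain $P(\lambda)$ together with a filtration $P(\lambda) = P_0 \supset P_1 \supset \cdots \supset P_N = 0$ whose subquotients are $\Delta(\nu_i)$. Let $l_0 := \min\{\lfloor L^0(\nu_i)\rfloor \mid 0 \leq i < N\}$, which is a finite integer since the flag is finite. Now choose any integer $l \leq \min\{l_0, d_\mu, d_\lambda\}$, so that in particular $L(\lambda), \Delta(\mu), \nabla(\mu)$ and every subquotient of the flag of $P(\lambda)$ all lie in $\comil$.

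The next step is to identify $P(\lambda)$ with $P_{\geq l}(\lambda)$. By Theorem \ref{basic proj citedthm}(3), the projective cover $P(\lambda)$ in $\comis$ coincides with $P_{\geq l}(\lambda)$ for all sufficiently negative $l$; more precisely, the tower stabilizes once there are no more new $\Delta$-factors to absorb. Since $P(\lambda)$ already has $\Delta$-flag supported in depths $\geq l_0 \geq l$, the natural surjection $P_{\geq l}(\lambda) \twoheadrightarrow P(\lambda)$ provided by Theorem \ref{basic proj citedthm}(2) has kernel with a $\Delta$-flag of factors $\Delta(\tau)$ with depth strictly less than $l_0$, hence less than $l$ by choice; but no such factors can exist in $\comil$, so the kernel is zero and $P_{\geq l}(\lambda) = P(\lambda)$. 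In particular,
\[
[P(\lambda):\Delta(\mu)] = [P_{\geq l}(\lambda):\Delta(\mu)].
\]

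Finally, apply Lemma \ref{Soergel lem} at this chosen $l$: since $\dpt(L(\lambda)) \geq l$ and $\dpt(L(\mu)) \geq l$, we obtain
\[
[P_{\geq l}(\lambda):\Delta(\mu)] = (\nabla(\mu):L(\lambda)),
\]
and combining this with the previous identity proves the theorem. The main (minor) obstacle is verifying that the truncation parameter $l$ can be chosen uniformly for both the flag of $P(\lambda)$ and the depths of $L(\lambda), L(\mu)$; this is guaranteed by finiteness of the $\Delta$-flag together with Theorem \ref{basic proj citedthm}(2), so no genuine difficulty arises beyond keeping track of depths.
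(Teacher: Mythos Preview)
Your approach is essentially the same as the paper's: reduce to Lemma \ref{Soergel lem} by identifying $P(\lambda)$ with $P_{\geq l}(\lambda)$ for $l$ sufficiently negative, using the existence of $P(\lambda)$ from Theorem \ref{projective thm} together with Theorem \ref{basic proj citedthm}.

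There is one slip in your identification step. You choose $l \leq l_0$ and then claim that the kernel of $P_{\geq l}(\lambda) \twoheadrightarrow P(\lambda)$ has $\Delta$-factors of depth ``strictly less than $l_0$, hence less than $l$ by choice''; but $l \leq l_0$ gives the inequality in the wrong direction. Moreover, Theorem \ref{basic proj citedthm}(2) speaks about surjections $P_{\geq m}(\lambda) \to P_{\geq l'}(\lambda)$ between \emph{truncated} covers, so invoking it for a surjection onto $P(\lambda)$ already presupposes $P(\lambda) = P_{\geq l'}(\lambda)$ for some $l'$. The paper avoids this by simply citing part (3): since $P(\lambda)$ exists in $\comis$ (Theorem \ref{projective thm}), the tower $P_{\geq l}(\lambda)$ stabilizes, so $P(\lambda) = P_{\geq l}(\lambda) = P_{\geq l-i}(\lambda)$ for all $l \ll 0$ and all $i \geq 0$; one then picks $i$ large enough that $\dpt(L(\mu)) > l-i$ and applies Lemma \ref{Soergel lem}. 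Alternatively, you could argue directly that $P(\lambda)$, being projective in $\comis$ and lying in the full subcategory $\comil$, is projective there and hence coincides with $P_{\geq l}(\lambda)$. Either fix is immediate; the rest of your argument is fine.
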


\begin{proof} For any given $L(\lambda)\in \bfe$, assume $\dpt(L(\lambda))=d$. By Theorem \ref{basic proj citedthm}(3), there exists some $l\ll 0$ such that $P(\lambda)$ in $\comi$ and $P(\lambda)=P_{\geq l}(\lambda)=P_{\geq l-i}(\lambda)$ for all $i\in \bbz_{\geq 0}$ (certainly, $l<d$). For any $L(\mu)\in \bfe$, there exists some $i_0\in \bbz_{\geq 0}$ such that $\dpt(L(\mu))\geq l-i_0$.
Since $l<d$ and  $i_0\in \bbz_{\geq 0}$,  we have $P(\lambda)=P_{\geq l-i_0}(\lambda)$ and
$\dpt(L(\lambda))\geq l-i_0.$
Now applying Lemma \ref{Soergel lem} to $P_{\geq l-i_0}(\lambda)$, we have $ [P(\lambda):\Delta(\mu)]=(\nabla(\mu):L(\lambda))$.
%  The remaining thing is to prove $[P(\lambda):\Delta(\mu)]\leq (\Delta(\mu):L(\lambda))$.
%By the construction of $\Delta(\mu)$ and $\nabla(\mu)$ ($\cong K(\mu+n\delta)$ by Proposition \ref{iso2}), one can easily have that $(\nabla(\mu):L(\lambda))\leq (\Delta(\mu):L(\lambda))$ by Lemmas \ref{basic nat rep} and \ref{basic nat rep ch}. Therefore, $[P(\lambda):\Delta(\mu)]\leq (\Delta(\mu):L(\lambda))$.
\end{proof}

\subsection{The Kac-module realization of co-standard modules}\label{kac}
Set  $\ggg^+:=\oplus_{i\geq0}\ggg_{i}$. The following module is the so-called Kac-module
$$K(\lambda)=U(\ggg)\otimes_{U(\ggg^+)}L^0(\lambda),$$
where $L^0(\lambda)$ is regarded as a $\ggg^+$-module with trivial $\ggg_{\geq1}$-action. One can check that $K(\lambda)$ has a simple head, denoted by $\overline{L}(\lambda)$.

Following \cite{Ser05}, we introduce the set $\Omega$ of the so-called Serganova atypical weights as follows.

If $\ggg=W(n),$ set
$$\Omega=\{a\epsilon_i+\epsilon_{i+1} +\cdots+\epsilon_n\mid a\in\mathbb{C},1\leq i\leq n \}.$$

If $\ggg=\bar{S}(n),$ set
$$\Omega=\{a\epsilon_1+\cdots+a\epsilon_{i-1}+b\epsilon_{i}+ (a+1)\epsilon_{i+1} +\cdots+(a+1)\epsilon_n\mid a,b\in\mathbb{C},1\leq i\leq n \}.$$

If $\ggg=\bar{H}(n),$ set
$$\Omega=\{-\epsilon_1-\cdots-\epsilon_{i-1}+b\epsilon_{i}+a\delta  \mid a,b\in\mathbb{C},1\leq i\leq r \}.$$
\begin{defn}
	A weight $\lambda$ is called Serganova atypical if $\lambda$ belongs to $\Omega.$ Otherwise, $\lambda$ is called Serganova typical.
\end{defn}

 Keep it in mind that the notation  $\Xi=\epsilon_{1}+\epsilon_{2}+\cdots+\epsilon_{n}$ for $\ggg=W(n)$ or $\bar{S}(n).$

\begin{prop}\label{iso2}
	Let $\ggg=X(n), X\in\{W,\bar{S},\bar{H}\}$.
	\begin{itemize}
		\item[(1)] If $\ggg=W(n),\bar{S}(n)$, then $\nabla(\lambda)\cong K(\lambda+ \Xi)$.
		\item[(2)] If $\ggg=\bar{H}(n)$, then $\nabla(\lambda)\cong K(\lambda+ n\delta)$.
		
		\item[(3)] The Kac-module $K(\lambda)$ is irreducible if and only if $\lambda$ is Serganova typical.
		
	\end{itemize}
\end{prop}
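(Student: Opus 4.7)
The plan is to prove (1) and (2) by exhibiting an explicit isomorphism $\Phi: K(\mu) \to \nabla(\lambda)$, where $\mu = \lambda+\Xi$ for $\ggg = W(n),\bar{S}(n)$ and $\mu = \lambda+n\delta$ for $\ggg = \bar{H}(n)$. Using the vector-space identification $\nabla(\lambda) \cong \Hom_\bbc(\bigwedge \ggg_{-1}, L^0(\lambda))$ afforded by $\ggg_{\geq 0}$-equivariance, I would single out the distinguished functional $\phi_0 \in \nabla(\lambda)$ sending $D_1\wedge\cdots\wedge D_n \mapsto v^0_\lambda$ and vanishing on all shorter basis wedges. The weight computations in \S\ref{nat rep sec} show that $D_1\wedge\cdots\wedge D_n$ has weight $-\Xi$ for $W,\bar{S}$ and $-n\delta$ for $\bar{H}$, so a direct check gives $\mathrm{wt}(\phi_0)=\mu$.

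Next I would verify that $\phi_0$ is a $\ggg^+$-highest-weight vector of weight $\mu$. For $\ggg_{\geq 1}$-annihilation, one expands $D_J x$ (with $x \in \ggg_{\geq 1}$) in PBW form as $x D_J + \text{(lower)}$, where the ``lower" terms have strictly shorter wedge-length in $U(\ggg^-)$ multiplied by an element of $U(\ggg^+)$; applying $\phi_0$ gives $x\cdot v^0_\lambda = 0$ together with $\phi_0$-values on shorter wedges, which vanish by construction. The $\nnn^+$-annihilation reduces to showing $[D_1 \wedge \cdots\wedge D_n,\, x] = 0$ for $x \in \nnn^+$, which holds because $\bigwedge^n\ggg_{-1}$ is the one-dimensional determinant module for $\ggg_0$ on which $\nnn^+$ acts trivially in each case ($W,\bar{S},\bar{H}$). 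Combined with Lemma \ref{pre proj}(3), this forces $U(\ggg_0)\phi_0 \cong L^0(\mu)$, and the universal property of $K(\mu)$ then yields a $\ggg$-homomorphism $\Phi: K(\mu) \to \nabla(\lambda)$ with $1\otimes v^0_\mu \mapsto \phi_0$.

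To show $\Phi$ is an isomorphism, I would argue surjectivity directly: a short computation gives
\[
D_{i_1}\cdots D_{i_k}\cdot \phi_0 = \pm f_{\{1,\ldots,n\}\setminus\{i_1,\ldots,i_k\},\, v^0_\lambda},
\]
where $f_{K,v}$ denotes the functional $D_K\mapsto v$, $D_J\mapsto 0$ for $J\neq K$. Thus $U(\ggg^-)\phi_0$ already produces the entire ``$v^0_\lambda$-column" $\{f_{K,v^0_\lambda}\}_{K}$. The $\ggg_0$-submodule $U(\ggg_0)\phi_0\cong L^0(\mu)\cong L^0(\lambda)\otimes\bbc_{\mu-\lambda}$ exhausts the top-layer $\{f_{\{1,\ldots,n\},v}:v\in L^0(\lambda)\}$, and one more round of $U(\ggg^-)$-action applied to these fills in $\{f_{K,v}\}_{K,v}$, giving $U(\ggg)\phi_0=\nabla(\lambda)$. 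Because the weight $\mu-\lambda$ is a $\ggg_0$-character, $\dim L^0(\mu)=\dim L^0(\lambda)$, so $\dim K(\mu)=2^n\dim L^0(\lambda)=\dim\nabla(\lambda)$; surjectivity and the dimension match together force $\Phi$ to be an isomorphism.

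Part (3) follows once (1)--(2) are in hand, by invoking Serganova's character formula for the irreducible $\bbz$-graded highest-weight modules $\bar L(\lambda)$ established in \cite{Ser05}, which shows that the radical of $K(\lambda)$ vanishes exactly when $\lambda$ lies outside the atypicality locus $\Omega$. The main obstacle I anticipate is the bookkeeping in the PBW reductions used to verify that $\phi_0$ is $\ggg_{\geq 1}$-invariant: one must track repeated commutators of $\ggg_{\geq 1}$-elements with successively shorter wedges in $U(\ggg_{-1})$ and confirm, via the $\ggg^+$-equivariance of $\phi_0$, that every resulting term evaluates to $0$ in $L^0(\lambda)$. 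The rest of the argument is essentially formal.
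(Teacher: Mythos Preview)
Your approach is correct and genuinely different from the paper's. The paper does not construct an explicit highest-weight vector in $\nabla(\lambda)$; instead it invokes the theory of Frobenius extensions: by \cite[Theorem~2.2]{BF93} the pair $U(\ggg):U(\ggg^+)$ is a free $\theta$-Frobenius extension for a certain automorphism $\theta$ of $U(\ggg^+)$, and then the Nakayama--Tsuzuku duality \cite{NT60} gives directly
\[
K(\lambda)=U(\ggg)\otimes_{U(\ggg^+)}L^0(\lambda)\;\cong\;\Hom_{\ggg^+}\bigl(U(\ggg),{}_\theta L^0(\lambda)\bigr).
\]
The remaining work is to compute the twist: $\theta(h)=h+\mu(h)$ with $\mu(h)=\mathrm{tr}\bigl(\mathrm{ad}(h)|_{\ggg/\ggg^+}\bigr)$, which yields ${}_\theta L^0(\lambda)\cong L^0(\lambda-\Xi)$ for $W,\bar S$ and $L^0(\lambda-n\delta)$ for $\bar H$. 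This is exactly the same weight shift you obtain by reading off the weight of $\bigl(\bigwedge^n\ggg_{-1}\bigr)^*$, and that coincidence is no accident: the Frobenius twist character is the trace of $\ggg^+$ on $\ggg/\ggg^+=\ggg_{-1}$, which is the determinant character of $\bigwedge^n\ggg_{-1}$.

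What each route buys: the Frobenius-extension argument is cleaner and completely bypasses the PBW bookkeeping you flag as the main obstacle---no commutator chasing is needed to get $\ggg_{\geq 1}$-invariance, because the isomorphism is furnished abstractly. Your construction, on the other hand, is entirely self-contained within the paper's set-up (Lemma~\ref{pre proj}, the weight computations in \S\ref{nat rep sec}) and avoids importing the external references \cite{BF93,NT60}; it also makes the socle/head correspondence between $K(\mu)$ and $\nabla(\lambda)$ visible at the level of the distinguished vector $\phi_0$. Both arguments reduce part~(3) to \cite[Theorem~6.3]{Ser05}, as you note.
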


\begin{proof} The third statement follows from  \cite[Theorem 6.3]{Ser05}. We proceed to prove the first two statements.
	
	Note that $\ggg^+$ is a subalgebra of $\ggg$ with codimension
	$n$ and $\ggg_{\bar{0}}\subseteq\ggg^+.$
	Let $f:\ggg^+\rightarrow\mathfrak{gl}(\ggg/\ggg^+)=\mathfrak{gl}(\ggg_{-1})$ be the map defined by $f(a)(b+\ggg^+)=[a,b]+\ggg^+.$ Then it follows from \cite[Theorem 2.2]{BF93} that $U(\ggg):U(\ggg^+)$ is a free $\theta$-Frobenius
	extension, where $\theta$ is the unique automorphism of $U(\ggg^+)$ defined by
	\[
	\theta(a)=\left\{
	\begin{array}{ll}
	a+\mu(a)\cdot 1,& \mbox{if}~a~\in\ggg_{\bar{0}}^+,\\
	(-1)^na,& \mbox{if}~a~\in\ggg_{\bar{1}}^+,
	\end{array}
	\right.
	\]
	and $\mu:\ggg^+\rightarrow\mathbb{C}$ is defined by $\mu(a)=\mathrm{tr}f(a).$
	Thus by \cite[\S3]{NT60}, we have
	
	\begin{equation}\label{formu}
	\mathrm{Hom}_{\ggg^+}\left(U(\ggg),~_{\theta}L^0(\lambda)\right)\cong U(\ggg)\otimes_{U(\ggg^+)}L^0(\lambda)=K(\lambda),
	\end{equation}
	where $_{\theta}L^0(\lambda)$ is a $\ggg^+$-module with action twisted by $\theta$, i.e.,  $s*v:=\theta(s)v$ for any $s\in\ggg^+$ and $v\in L^0(\lambda)$.
	
	Now let $v_{\lambda}^0$ be a maximal vector of $L^0(\lambda)$ corresponding to the standard Borel subalgebra $\bbb^+_0.$  Since
	\[
	\theta(x)=\left\{
	\begin{array}{ll}
	x,& \mbox{if}~x\in\ggg_{\geq 1}\cap\ggg_{\bar{0}},\\
	(-1)^nx,& \mbox{if}~x\in\ggg_{\geq 1}\cap\ggg_{\bar{1}},
	\end{array}
	\right.
	\]
	$_{\theta}L^0(\lambda)$ is still an irreducible $\ggg^+$-module with trivial $\ggg_{\geq 1}$-action. Because $\mu(x)=0$ for $x\in\nnn^+$, $v_{\lambda}^0$ is still a maximal vector of $_{\theta}L^0(\lambda)$. Let $h\in \bar\hhh$.
	
	Case (i): $\ggg=W(n)$ or $\bar{S}(n)$.
	
	Since $f(h)(D_i+\ggg^+)=[h,D_i]+\ggg^+=-\epsilon_i(h)(D_i)+\ggg^+$, it follows that $\mu(h)=\mathrm{tr}f(h)=-\epsilon_1(h)-\epsilon_2(h)-\cdots-\epsilon_n(h)=-\Xi(h).$ Consequently,
	$$h*v_{\lambda}=\theta(h)v_{\lambda}=(h+\mu(h).1)v_{\lambda}=(\lambda- \Xi)(h)v_{\lambda}.$$
	Hence by (\ref{formu}) we get that
	$$\nabla(\lambda- \Xi)\cong K(\lambda)\mbox{~for~any~}\lambda\in\Lambda^+.$$
	Equivalently,
	\begin{equation}
	\nabla(\lambda)\cong K(\lambda+ \Xi)\mbox{~for~any~}\lambda\in\Lambda^+.
	\end{equation}
	
	Case (ii): $\ggg=\bar H(n)$.
	
	Subcase (ii-1): $n=2r$.
	
	In this subcase,
	$$f(h)(D_i+\ggg^+)=(-\epsilon_i-\delta)(h)(D_i+\ggg^+)\mbox{~for~}1\leq i\leq r,$$
	$$f(h)(D_i+\ggg^+)=(\epsilon_{i'}-\delta)(h)(D_i+\ggg^+)\mbox{~for~}r+1\leq i\leq 2r.$$
	
	Subcase (ii-2): $n=2r+1$.
	
	In this subcase,
	$$f(h)(D_i+\ggg^+)=({-\epsilon_i-\delta})(h)(D_i+\ggg^+)\mbox{~for~}1\leq i\leq r,$$
	$$f(h)(D_i+\ggg^+)=({\epsilon_{i'}-\delta})(h)(D_i+\ggg^+)\mbox{~for~}r+1\leq i\leq 2r,$$
	$$f(h)(D_n+\ggg^+)=-\delta(h)(D_n+\ggg^+).$$
	
	It follows that $_{\theta}L^0(\lambda)\cong L^0(\lambda-n\delta)$. Hence, by (\ref{formu}), we get
	$$\nabla(\lambda-n\delta)\cong K(\lambda),\,\,\forall\,\lambda\in\Lambda^+.$$
	Equivalently,
	$$\nabla(\lambda)\cong K(\lambda+n\delta),\,\,\forall\,\lambda\in\Lambda^+.$$	
\end{proof}

\begin{thm}\label{iso-2}
	Let $\lambda,\mu\in \bfe$. Then the following statements hold.
	\begin{itemize}
		\item[(1)] If $\ggg=W(n)$ or $\ggg=\bar{S}(n)$, then
		$$ [P(\lambda):\Delta(\mu)]=(K(\mu+\Xi):L(\lambda)).$$
		\item[(2)] If $\ggg=\bar{H}(n)$, then
		$$ [P(\lambda):\Delta(\mu)]=(K(\mu+n\delta):L(\lambda)).$$
	\end{itemize}
\end{thm}
\pf Theorem \ref{deg bggthm} and Proposition \ref{iso2} can be applied to get these results.\qed

\iffalse

The following proposition is very important to the arguments for \S\ref{for I}-\ref{revisit to I}.

\begin{prop}\label{kac mod hom} For $\lambda\in\Lambda^+$, denote by $K(\lambda)_{X(n)}$ the Kac-module for $\ggg=X(n)$ with $X\in\{W,S,H,\bar H\}$. The following statements hold.
	\begin{itemize}
		\item[(1)] As an $\bsn$-module, $K(\lambda)_\wn$ is indecomposable.
		
		\item[(2)] As an $\bhn$-module, $K(\lambda)_\ochn$ is indecomposable. $\bhn$-module.
	\end{itemize}
\end{prop}

\begin{proof}
	We prove the first statement. The proof for the second one is the same.
	
	Note that  $W(n)_{-1}=\bsn_{-1}=\sum_{i=1}^n\bbc D_i$ and $W(n)_0=\bsn_0\cong \mathfrak{gl}(n)$. As vector spaces, both of $K(\lambda)_\wn$ and $K(\lambda)_\bsn$ are the same, equal to $\bigwedge\ggg_{-1}\otimes L^0(\lambda)$. So  $\Hom_{\bsn}(K(\lambda)_\wn,K(\lambda)_\wn)\cong \bbc$. Hence $K(\lambda)_\wn$ is an indecomposable $\bsn$-module.
\end{proof}
\fi

\subsection{Typical blocks and the typical  functor}\label{typical fun}
 We begin this subsection with the
 following  consequence of indecomposable projective modules in $\comi$, which is well known  for Noetherian categories.

\begin{lem}\label{simult proj} Suppose $M\in\comi$. Then the following statements hold.
\begin{itemize}
\item[(1)] For any $L(\lambda)\in \bfe$,
 $$(M: L(\lambda))=\dim\Hom_{\comi}(P(\lambda),M).$$

 \item[(2)] If there exists a nonzero vector $v\in M$ of weight $\lambda$, which is annihilated by $\ggg_{-1}+\nnn^+$, then $(M:L(\lambda))\ne 0$.
 \end{itemize}
 \end{lem}

\begin{proof}
  (1) Suppose $\dpt( L(\lambda))= t$.
   If $(M: L(\lambda))\ne 0$, then the multiplicity is less than the dimension of $M_t$. By the definition of $\comi$, $\dim M_t< \infty$. Thus, it is a routine way to prove the lemma by induction on $(M:L(\lambda))<\infty$.

   (2) Consider the submodule $N$ generated by $v$ in $M$. Note that by assumption the module $U(\ggg_0)v$ is a finite-dimensional highest weight module over $U(\ggg_0)$,  generated by the maximal vector $v$. Therefore $U(\ggg_0)v$ is isomorphic to $L^0(\lambda)$. Furthermore,  the assumption of $\ggg_{-1}$-annihilation of $v$ implies that  as a $U(\ggg)$-module, $N$ is a homomorphism image of $\Delta(\lambda)$. Hence $(M:L(\lambda))\ne 0$.
   \end{proof}

In general, define $A_\lambda:=\Hom_\comi(P(\lambda),P(\lambda))$. Then $A_\lambda$ is a finite-dimensional $\mathbb{C}$-algebra, whose dimension is exactly $(P(\lambda): L(\lambda))$ by Lemma \ref{simult proj}(1).
Let ${\mathcal{O}}^{\text{min}}_\lambda$ stand for the block in which $L(\lambda)$ lies.
In general, we can define a functor
 $$\sfS_\lambda:=\Hom_\comi(P(\lambda),-).$$
  By Lemma \ref{simult proj}(1) again, $\sfS_\lambda$ gives rise to a functor from ${\mathcal{O}}^{\text{min}}_\lambda$  to the category  of finite-dimensional $A_\lambda$-modules, the latter of which is denoted by $\almf$.%$A_\lambda\mbox{-mod}^f$.

%Recall that a  weight in $\Lambda^+$ is called  typical, if $\nabla(\lambda)=L(\lambda)$.

Denote by $\Lambda_{st}$ the set of all Serganova typical weights. All dominant Serganova  typical weights  can be clearly described. For example, if $\ggg=W(n)$, then
$\Lambda^+_{st}=\{\lambda=\sum_{i=1}^n a_i\epsilon_i\mid a_i-a_{i+1}\in \bbz_{\geq 0}\}\backslash \Omega^+$ with $\Omega^+:=\Omega\cap \Lambda^+$.

Set $\Lambda_\sft:=\{\lambda-\Xi\mid \lambda\in \Lambda_{st}\}$ if $\ggg=W(n),\bar{S}(n)$, and  $\Lambda_\sft:=\{\lambda-n\delta\mid \lambda\in \Lambda_{st}\}$ if $\ggg=\bar{H}(n)$. All weights lying in $\Lambda^+_\sft:=\Lambda^+\bigcap\Lambda_\sft $ are called typical. According to Theorem \ref{deg bggthm} and Proposition \ref{iso2} (or Theorem \ref{char form P}), we have that for $\lambda\in \Lambda^+_\sft$,
\begin{align}\label{trip equa}
P(\lambda)=\Delta(\lambda).
\end{align}
So when $\lambda$ is typical, $A_\lambda=\End_\comi(\Delta(\lambda))$, which is one-dimensional. The functor $\sfS_\lambda$ is degenerated.

\begin{prop} Let $\lambda$ be a typical weight and $M$ be an object of $\comitl$. Then the functor $\sfS_\lambda$ measures the multiplicity  of $L(\lambda)$ in $M$. This is to say, if
	$(M:L(\lambda))=m,$ then $\sfS_\lambda(M)=\mathbb{C}^m$, the unique   $m$-dimensional $A_\lambda$-module up to isomorphisms.
\end{prop}
\begin{proof}  Note that $A_\lambda$ is a one-dimensional algebra over $\mathbb{C}$, which is isomorphic to $\mathbb{C}$. The isomorphism class of an object in $\almf$ is only dependent on the dimension. So the statement is a direct consequence of Lemma \ref{simult proj}(1).
\end{proof}

\section{Blocks  of $\comi$}\label{block sec}

\subsection{Definition}\label{def of block} Due to  Theorem \ref{projective thm}, we  define an equivalent relation $\sim$ in $\mathbf{E}$.
For any simple objects $L(\lambda_1), L(\lambda_2)$ in $\mathbf{E}$, we say that $L(\lambda_1)$ and $L(\lambda_2)$ are linked (or $\lambda_1$ and $\lambda_2$ are linked) if there exists $L(\mu)\in \bfe$ such that $(P(\mu):L(\lambda_i))\ne 0$ for $i=1,2$. We say that $L(\lambda)\sim L(\mu)$ (or $\lambda\sim\mu$)  if there  exist  a sequence $L(\lambda)=L(\lambda_1)$, $L(\lambda_2)$,......,$L(\lambda_k)=L(\mu)$ in $\mathbf{E}$ such that
$L(\lambda_i)$ and $L(\lambda_{i+1})$ are linked (or $\lambda_i$ and $\lambda_{i+1}$ are linked) for every $i=1,...,k-1$.

For a given element $\theta\in \bfe\slash{\hskip-2pt\sim}$, we define a full subcategory $\comit$ of $\comi$ whose objects are those modules $M$ only admitting composition factors from $\theta$. We call $\comit$ a block corresponding to $\theta$.

\begin{lem}\label{indecom blo 1} Any indecomposable object in $\comif$ must belong to a certain $\comit$.
\end{lem}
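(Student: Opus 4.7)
The plan is to use the projective-cover machinery established in Theorem \ref{projective thm 2} together with Proposition \ref{ind proj prop} to split $M$ along linkage classes, and then invoke indecomposability. Given $M \in \comif$, I would first apply Theorem \ref{projective thm 2} to pick a surjection $\pi \colon P \twoheadrightarrow M$ with $P$ projective in $\comif$. From the construction in the proof of Theorem \ref{projective thm 2} (where the projective cover of $M$ of standard length $t$ is built as $\bigoplus_{i=1}^t P(\lambda_i)$) together with Proposition \ref{ind proj prop}, one may assume $P = \bigoplus_{i=1}^t P(\lambda_i)$ is a \emph{finite} direct sum of indecomposable projectives $P(\lambda_i)$ with $\lambda_i \in \bfe$.

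Next I would partition the summands by linkage class. For each $\theta \in \bfe/\!\sim$ that meets $\{\lambda_1,\ldots,\lambda_t\}$, set
\[
P^{\theta} := \bigoplus_{\lambda_i \in \theta} P(\lambda_i), \qquad M^{\theta} := \pi(P^{\theta}),
\]
so that $P = \bigoplus_\theta P^\theta$ and, because $\pi$ is surjective, $M = \sum_\theta M^\theta$. The crucial observation is that every composition factor of $P(\mu)$ is linked to $\mu$, which is immediate from the definition of linkage since $\mu$ itself occurs as the head of $P(\mu)$; thus any composition factor $L(\nu)$ of $P^\theta$ satisfies $\nu \in \theta$, and the same holds for the quotient $M^\theta$.

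Now I would show the sum $M = \sum_\theta M^\theta$ is direct. For a fixed class $\theta_0$, the submodule
\[
N := M^{\theta_0} \cap \Bigl(\sum_{\theta \neq \theta_0} M^{\theta}\Bigr)
\]
is simultaneously a subobject of $M^{\theta_0}$ (whose composition factors lie in $\theta_0$) and of $\sum_{\theta \neq \theta_0} M^{\theta}$ (whose composition factors lie in $\bfe \setminus \theta_0$). Since both $N$ and these ambient modules are of finite length in $\comif$, their composition factors are well defined, so $N$ cannot have any composition factor at all; hence $N = 0$. Consequently
\[
M = \bigoplus_{\theta} M^{\theta}.
\]
Because $M$ is indecomposable, exactly one summand $M^{\theta}$ is nonzero, and all composition factors of $M$ lie in that single $\theta$, i.e.\ $M \in \comit$.

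I do not anticipate serious obstacles: the standard-length finiteness ensures a finite-index decomposition of the projective cover, and the composition-factor disjointness argument is the standard block-decomposition trick. The one point to be attentive to is that one must apply Theorem \ref{projective thm 2} (rather than Theorem \ref{projective thm}) so that $P$ is actually a \emph{finite} direct sum of $P(\lambda_i)$'s; otherwise one would need to argue that the internal sum $\sum_\theta M^\theta$ is well behaved for an a priori infinite index set, which is harmless here but cleaner to avoid.
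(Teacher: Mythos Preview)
Your proposal is essentially correct and follows the same strategy as the paper's proof: use the covering by $P=\bigoplus_i P(\lambda_i)$ from Theorem \ref{projective thm 2}, split along linkage classes, and invoke indecomposability. The only imprecision is your claim that $N$ and the ambient modules are ``of finite length in $\comif$''; this is false in general, since $\Delta(\lambda)$ (and hence $M$) is typically infinite-dimensional while all simples are finite-dimensional (Lemma \ref{basic lem O pre}). However your argument survives unchanged: by Remark \ref{simple module aw csd}(2) composition multiplicities are well defined and finite, and any nonzero $N\in\comi$ has at least one composition factor (take a finitely generated nonzero submodule and use the standard filtration (\ref{standard fil 0})); that factor would lie in $\theta_0$ and in $\bfe\setminus\theta_0$ simultaneously, giving the contradiction. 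So just drop the phrase ``of finite length'' and the proof stands.
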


\pf Suppose that $M$ is a nonzero indecomposable module belonging to $\comif$ .
As in the proof of Theorem \ref{projective thm 2}, there is a projective module
	$P:=\oplus_{i=1}^t P(\lambda_i)$ and an epimorphism $\pi: P\twoheadrightarrow M.$ So
	\begin{align}\label{covering morp}
	M=\pi(P(\lambda_1))+\pi(P(\lambda_2))+\cdots+\pi(P(\lambda_t)).
	\end{align}
This ensures that we can define a non-zero submodule $M_\theta$ of $M$, which is a sum of submodules belonging to $\comit$.

If $M_\theta$ coincides with $M$, then we are done.	
	Otherwise, we have a non-zero submodule $M_\theta'$ of $M$, which is the sum of all submodules belonging to the blocks outside $\comit$.	
	Then $M=M_\theta + M_\theta'$ by (\ref{covering morp}). Furthermore, $M_\theta+ M_\theta'$ is a direct sum through the definition of blocks. This contradicts to the indecomposability of $M$. The proof is completed.\qed

Recall that all standard modules $\Delta(\lambda)$ and costandard  modules $\nabla(\lambda)$ are indecomposable and finitely generated. In addition, we have the following stronger results.
\begin{lem}\label{indecom blo}
	Let $(\lambda,d)\in \mathbf{E}= \Lambda^+\times \bbz$. Then
$\Delta(\lambda)$ and $\nabla(\lambda)$ are in the same block.
\end{lem}
\begin{proof} One can give a  proof following \cite[Lemma 3.5]{Brun04}. Here we give another one.  By the arguments  in the proof of Proposition \ref{iso2},  as a vector space $\nabla(\lambda)$ can be identified with $\bigwedge \ggg_{-1}\otimes {_{-\theta}}L^0(\lambda)$. Take  a maximal vector $v_0$ of $L^0(\lambda)$, and set $v=\bigwedge_{i=1}^nD_i\otimes v_0$. By definition, $v$ has weight $\lambda$. Furthermore, $v$ is annihilated by $\ggg_{-1}+\nnn^+$. Hence by Lemma \ref{simult proj}(2), $\nabla(\lambda)$ shares the same composition factor $L(\lambda)$ with $\Delta(\lambda)$.  So this lemma is a direct consequence of Lemmas \ref{sts-costa indec} and \ref{indecom blo 1}.
\end{proof}

\begin{rem} In \cite{Brun04}, the definition of blocks was introduced via  standard modules  and  co-standard modules because of the loss of projective covers of simple objects. Lemma \ref{indecom blo} shows that our definition of blocks is compatible with the one introduced therein.
\end{rem}

\subsection{} In the following, we discuss some block properties through investigating standard modules. Recall that $\ggg$ admits a $\bbz$-gradation which gives rise to the $\bbz$-gradation $U(\ggg)=\sum_{i\in \bbz}U(\ggg)_i$.  Similarly, we can talk about the gradation of $U(\ggg_{\geq 1})=\sum_{i\geq 0}U(\ggg_{\geq 1})_i$.

Consider $\Delta(\mu)=\sum_{i\geq 0}\Delta(\mu)_i$ for $\Delta(\mu)_i=U(\ggg_{\geq 1})_i\otimes L^0(\lambda)$. Set $\Delta(\mu)^{(j)}=\sum_{i\geq j}\Delta(\mu)_i$ for $j\in \bbn$.
Then as a $\ggg_{\geq 0}$-module,  $\Delta(\mu)$ has the natural descending  filtration  $\{\Delta(\mu)^{(j)}\}_{j\in \bbn}$.

\begin{lem}\label{comp}
	Let $\lambda,\mu\in\Lambda^+$. 	If $v_{\lambda}$ is a nonzero $\lambda$-weighted vector of $\Delta(\mu)$ annihilated by $\nnn^+$, then $\lambda\sim \mu$.
\end{lem}

\begin{proof}
 By definition, $\Delta(\mu)=U(\ggg_{\geq 1})\otimes L^0(\mu)$ as a vector space. For any $\nnn^+$-annihilating vector $v_{\lambda}\in \Delta(\mu)$ of weight $\lambda$,
if $v_\lambda$ lies in $1\otimes L^0(\mu)$, then $\lambda$ coincides with $\mu$, and the statement of the lemma is obvious.  In the following,  we suppose $v_\lambda\in \Delta(\mu)^{j}\backslash\Delta(\mu)^{j-1}$ for some $j>0$. Still set $\ggg^+=\ggg_{\geq0}$. Consider the  $U(\ggg^+)$-submodule generated by $v_\lambda$ in $\Delta(\mu)$, denoted by $\calm$. Clearly, $\calm$ has a proper submodule $\caln:=U(\ggg_{\geq1})\ggg_{\geq1}U(\ggg_0)v_\lambda$.  So we have a $U(\ggg^+)$-module $\overline{\calm}:=\calm\slash \caln$. This $\overline{\calm}$ is generated by the image of $v_\lambda$ in $\overline{\calm}$, denoted by $\bar v_\lambda$ which has weight $\lambda$ and is annihilated by $\nnn^+\oplus\ggg_{\geq 1}$. So we have surjective morphisms
$$\calm{\twoheadrightarrow} \overline\calm{\twoheadrightarrow} L^0(\lambda),$$
where $L^0(\lambda)$ is an irreducible $\ggg^+$-module with highest weigh $\lambda$ and  trivial $\ggg_{\geq1}$-action. Consider the functor $\Gamma=\Hom_{\ggg_{\geq 0}}(U(\ggg),-)$ from the category of $U(\ggg^+)$-modules to the one of $U(\ggg)$-modules. Then $\Gamma(L^0(\lambda))=\nabla(\lambda)$.

In the following we focus on the subcategory $\calc_{\ggg^+}$ of $U(\ggg^+)$-module category which consists of  objects $C$ satisfying: (i) it has $\bbz$-gradation, and finitely generated over $U(\ggg^+)$,  (ii) $C$ is locally finite over $\ggg_0$, i.e. for any $v\in C$ the  $U(\ggg_0)$-submodule generated by $v$ is finite-dimensional. Then all irreducible objects in $\calc_{\ggg^+}$ are finite-dimensional, and the isomorphism classes of irreducible objects in $\calc_{\ggg^+}$ coincide with $\{L^0(\lambda)\mid \lambda\in \Lambda^+\}$ (see the forthcoming Lemma \ref{lem: calc irr}).
The functor $\Gamma$ is regarded as a functor from $\calc_{\ggg^+}$ to the $\bbz$-graded $U(\ggg)$-module category. Furthermore, by the same arguments as in the proof of Proposition \ref{iso2},  $\Gamma(M)$ for any $M\in \calc_{\ggg^+}$, can be identified with $\bigwedge \ggg_{-1}\otimes {_{-\theta}}M$
where the meaning of ${_{-\theta}}M$ are the same as in the paragraph around (\ref{formu}).

Note that $\Delta(\mu)$ belongs to $\calc_{\ggg^+}$, and is still  an indecomposable $U(\ggg^+)$-module.
The irreducible $U(\ggg^+)$-module $L^0(\lambda)$ is already known as a composition factor of $\Delta(\mu)$. Hence, there is a series of irreducible $U(\ggg^+)$-modules $L^0(\lambda_i)$, $i=0,1,\ldots,s$ for $\lambda_i\in\Lambda^+$ such that $\lambda_0=\lambda$ and $\lambda_s=\mu$ with
$\text{Ext}_{\calc_{\ggg^+}}^1(L^0(\lambda_{i-1}), L^0(\lambda_{i}))\ne 0$ or   $\text{Ext}_{\calc_{\ggg^+}}^1(L^0(\lambda_{i}), L^0(\lambda_{i-1}))\ne 0$ for $i=1,\ldots, s$ (see the forthcoming Lemma \ref{lem: conExt quiver}).
 Note that $\Gamma$ is an exact functor.
 Under the former situation, for example, we claim that
\begin{align}\label{eq: non-split ext wanted}
 \text{Ext}^1_{U(\ggg)}(\nabla(\lambda_{i-1}),\nabla(\lambda_i))\ne 0.
 \end{align}
Actually, taking in $\calc_{\ggg^+}$ a non-split extension
\begin{align}\label{eq: non-split ext}
0\longrightarrow L^0(\lambda_i){\overset{\varphi}{\longrightarrow }} N{\overset{\psi}{\longrightarrow }} L^0(\lambda_{i-1})\longrightarrow 0,
\end{align}
one has {\color{red}a}  short exact sequence over $U(\ggg)$:
$$0\longrightarrow \nabla(\lambda_i){\overset{\Gamma(\varphi)}{\longrightarrow}}  \Gamma(N){\overset{\Gamma(\psi)}{\longrightarrow}} \nabla(\lambda_{i-1})\longrightarrow 0. $$
If this one is split, i.e. there exists a $U(\ggg)$-module homomorphism $\pi:\nabla(\lambda_{i-1})\longrightarrow \Gamma(N)$ such that $\Gamma(\psi)\circ\pi=\id_{\nabla(\lambda_{i-1})}$, then one in particular has
$\Gamma(\psi)\circ\pi|_{1\otimes L^0(\lambda_{i-1})}=\id_{1\otimes L^0(\lambda_{i-1})}$.
Notice that $\Gamma(\psi)^{-1}(1\otimes L^0(\lambda_{i-1}))=1\otimes N$.
  Hence $\pi$ maps $1\otimes L^0(\lambda_{i-1})$ to $1\otimes N$. This implies that  the extension (\ref{eq: non-split ext}) is split, which contradicts to the assumption. The claim (\ref{eq: non-split ext wanted}) is proven.

Hence as $U(\ggg)$-modules, the indecomposable module $\nabla(\lambda)$ must lie in the same block as  the indecomposable module $\nabla(\mu)$ does.

Thanks to Lemma \ref{indecom blo 1}, it follows that $\lambda\sim\mu$. The proof is completed.
\end{proof}

\begin{lem}\label{lem: calc irr} Let $M$ be an irreducible module in the category $\calc_{\ggg^+}$  which is defined in the proof above.  Then $M$ is finite-dimensional, which is actually an irreducible $\ggg_0$-module annihilated by $\ggg_{\geq 1}$.
\end{lem}

\begin{proof} At first, $U(\ggg^+)$ has the $\bbz$-graded structure defined by the $\bbz$-gradation of $\ggg^+$. That is, $U(\ggg^+)=\bigoplus_{i\in \bbz}U(\ggg^+)_i$ with all $U(\ggg^+)_i$ being $\ggg_0$-modules. Furthermore, $U(\ggg^+)_{\geq k}:=\bigoplus_{i\geq k}U(\ggg^+)_i$ is a regular $U(\ggg^+)$-module.

 For $M=\sum_{i\in \bbz}M_i$, we suppose that $M_0$ is nonzero without loss of generality.  Take a nonzero vector $v\in M_0$. By assumption, $V_0=U(\ggg_0)v$ is a finite-dimensional subspace in $M_0$. By the irreducibility of $M$, we have $M=\sum_{i\geq 0}V_i$ where $V_i=U(\ggg^+)_iV_0$. Furthermore, set
 $$M^{(k)}:= \sum_{i\geq k}V_{i}.$$
 Then all $M^{(k)}$ are $U(\ggg^+)$-submodules of $M$ and $M\slash M^{(1)}$ is finite-dimensional. The irreducibility of $M$ yields that for any $k$,  $M^{(k)}$ either coincides with $M$ itself or equals to zero. Combining with the filtration $M=M^{(0)}\supset M^{(1)}\supset M^{(2)}\supset\cdots$ along with the fact that $\ggg_{\geq1}$ is nilpotent, we have that if $M=M^{(1)}$, by Nakayama Lemma $M=0$. It's a contradiction. So it must happen that $M^{(1)}=0$. Note that
 $M=M\slash M^{(1)}$ is finite-dimensional, which actually coincides with $V_0$.
  Consequently,  $M$ is irreducible over $\ggg_0$, annihilated by $\ggg_{\geq1}$.
\end{proof}

\begin{lem}\label{lem: conExt quiver} In $\calc_{\ggg^+}$, any two composition factors of $\Delta(\mu)$ lie in a connected Ext-quiver. This is to say, if $L^0(\lambda), L^0(\lambda')$ are two composition factors
of $\Delta(\mu)$, then there are a series of different $\lambda_i$, $i=0,1,\ldots, s$ with $\lambda_0=\lambda$ and $\lambda_s=\lambda'$ such that $\text{Ext}_{\calc_{\ggg^+}}^{\hskip2pt1}(\lambda_{i-1},\lambda_i)\ne 0$ or $\text{Ext}_{\calc_{\ggg^+}}^{\hskip2pt1}(\lambda_{i},\lambda_{i-1})\ne 0$
for all $i=1,\ldots,s$.
\end{lem}
\begin{proof} We only need to show the lemma for the fixed  $\lambda'=\mu$ because $\Delta(\mu)$ has a simple head isomorphic to $L^0(\mu)$ in $\calc_{\ggg^+}$. For this we write $\Delta(\mu)=\bigoplus_{i\geq 0} \Delta(\mu)_i$ which has a natural $\bbz$-grading arising from the gradation of $\ggg^+=\sum_{i\geq 0}\ggg_{i}$, furthermore $\Delta(\mu)$ admits a $U(\ggg^+)$-module filtration $\{\Delta(\mu)^{(k)}:=\bigoplus_{i\geq k}\Delta(\mu)_i\mid k\in \bbz_{\geq0}\}$.

By construction, there is $k\geq 1$ such that $L^0(\lambda)$ is a subquotient of $\Delta(\mu)^{(k)}$. We further suppose without loss of generality, that $L^0(\lambda)\cong M\slash N$ for
$M, N\in\calc_{\ggg^+}$ satisfying  $M, N\subset\Delta(\mu)^{(k)}$.

 Consider $\overline{\Delta(\mu)}:=\Delta(\mu)\slash \Delta(\mu)^{(k+1)}$ which is a finite-dimensional and indecomposable object in $\calc_{\ggg^+}$. Clearly $\overline{\Delta(\mu)}$ also has a head isomorphic to $L^0(\mu)$.
Set $\phi: \Delta(\mu)\rightarrow \overline{\Delta(\mu)}$ to be the canonical surjective homomorphism in $\calc_{\ggg^+}$. Then $L^0(\lambda)\cong \overline{\phi^{-1}(\phi(M))}\slash \overline{\phi^{-1}(\phi(N))}$ which is still a composition factor of $\overline{\Delta(\mu)}$. Here $\phi^{-1}(\bullet)$ stands for the preimage in $\Delta(\mu)$ of $\bullet$.

%Then $L^0(\lambda)\cong \phi(M)\slash \phi(N)$.
%
 By the same arguments as in the finite-dimensional module category, it can be shown that $L^0(\lambda)$ and $L^0(\mu)$ lie in a connected Ext-quiver in $\calc_{\ggg^+}$.

 The proof is completed.
\end{proof}

\subsection{}
Recall  $\Xi=\epsilon_{1}+\epsilon_{2}+\cdots+\epsilon_n.$
We have the following elementary observation, the proof of which follows directly from the forthcoming Lemma \ref{mults} in Appendix B.
\begin{lem}\label{lemm for block S(n)}
	Let $\ggg=\bar{S}(n).$ Then the following statements hold.
	\begin{itemize}
		\item[(1)] For $l\in\mathbb{C},$ we have  $l\Xi\sim (l+\mathbb{Z})\Xi$.
		\item[(2)] For $\lambda=b\Xi+c\epsilon_{n},b\in\mathbb{C}, c\in\mathbb{Z}_{\leq0},$ we have  $\lambda\sim b\Xi$.
		\item[(3)] For $\lambda=a\epsilon_{1}+b\Xi,a\in\mathbb{Z}_{\geq0},b\in\mathbb{C},$ we have  $\lambda\sim b\Xi$.
	\end{itemize}
\end{lem}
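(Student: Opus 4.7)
The plan is to deduce all three statements as direct applications of the forthcoming Lemma~\ref{mults} (which supplies explicit nonzero multiplicities of the form $[P(\nu):\Delta(\mu)]$, or equivalently $(P(\nu):L(\mu))$) and Lemma~\ref{sniso} (which identifies certain simple modules arising as $\ggg_0$-composition factors of natural tensor products). The underlying mechanism is supplied by Remark~\ref{proj remark}: once $L^0(\mu)$ appears as a $\ggg_0$-composition factor of $\bigwedge\ggg_{-1}\otimes_\bbc L^0(\lambda)$, the standard module $\Delta(\mu)$ occurs in the $\Delta$-flag of the enveloping projective $I(\lambda)$; Lemma~\ref{mults} is expected to refine this to locate $\Delta(\mu)$ inside the specific indecomposable summand $P(\lambda)$ of $I(\lambda)$, which forces $L(\mu)$ to be a composition factor of $P(\lambda)$ and hence $\lambda\sim\mu$.

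For part~(1), Remark~\ref{proj remark}($3^\circ$) gives $[I(l\Xi):\Delta(l\Xi-\Xi)]\neq 0$, and invoking Lemma~\ref{mults} yields $l\Xi\sim(l-1)\Xi$. Iterating, and also running the same argument from $(l+k)\Xi$ downward to $l\Xi$ for $k\geq 1$, together with transitivity of $\sim$, produces $l\Xi\sim(l+\bbz)\Xi$. For part~(2), the key observation is that $L^0(b\Xi)$ is one-dimensional over $\mathfrak{sl}(V)$ (with $\sfd$ acting by the scalar $nb$), so $V^*\otimes L^0(b\Xi)$ is an irreducible $\mathfrak{sl}(V)$-module of highest weight $b\Xi-\epsilon_n$. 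Hence $L^0(b\Xi-\epsilon_n)\in\Upsilon(b\Xi)$, and Lemmas~\ref{mults} and~\ref{sniso} give $b\Xi\sim b\Xi-\epsilon_n$. The analogous tensor decomposition at the new source $b\Xi-\epsilon_n$ again contains an irreducible $\mathfrak{sl}(V)$-component with last coordinate decreased by one, so iteration yields $b\Xi+c\epsilon_n \sim b\Xi+(c-1)\epsilon_n$ for every $c\in\bbz_{\leq 0}$, producing the desired chain. Part~(3) is the dual argument: for $\lambda=a\epsilon_1+b\Xi$ with $a\geq 1$, the Pieri-type decomposition shows that $V^*\otimes L^0(\lambda)$ contains $L^0((a-1)\epsilon_1+b\Xi)$ as an $\mathfrak{sl}(V)$-irreducible component (one of at most two dominant shifts available), so $\lambda\sim(a-1)\epsilon_1+b\Xi$ by Lemma~\ref{mults}, and descending induction on $a$ reduces to the base case $a=0$.

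The principal obstacle, which Lemma~\ref{mults} is designed to resolve, is that the mere existence of $[I(\lambda):\Delta(\mu)]\neq 0$ does not by itself imply $\lambda\sim\mu$, because the decomposition $I(\lambda)=\bigoplus_i J_i$ into indecomposable projective summands (as in the proof of Theorem~\ref{projective thm}) could in principle place $\Delta(\mu)$ in some summand $J_i\neq P(\lambda)$. What is genuinely needed, and what Lemma~\ref{mults} is expected to deliver, is the assertion that for the particular $\mu$'s arising in (1)--(3) the multiplicity $[P(\lambda):\Delta(\mu)]$ itself is nonzero. Lemma~\ref{sniso} is then expected to provide the companion compatibility (plausibly a socle/head identification in the relevant $\nabla$- or $\Delta$-module, or an identification of the twisted label $\tilde\lambda$ from Remark~\ref{simple module aw csd}) that closes any residual directionality issue in establishing the symmetric relation $\sim$.
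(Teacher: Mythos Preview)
Your proposal rests on a misreading of what Lemmas~\ref{mults} and~\ref{sniso} actually say. Lemma~\ref{mults} does not give multiplicities $[P(\nu):\Delta(\mu)]$; it lists the composition factors of the Kac modules $K(\lambda)$ in terms of their simple heads $\overline{L}(\mu)$. Lemma~\ref{sniso} has nothing to do with $\ggg_0$-composition factors of tensor products; it identifies each head $\overline{L}(\mu)$ with a specific $L(g(\mu))$ for an explicit weight shift $g$. Consequently your whole mechanism through $I(\lambda)$, Pieri-type decompositions of $V^*\otimes L^0(\lambda)$, and the ``principal obstacle'' about which summand of $I(\lambda)$ carries a given $\Delta$-factor is beside the point, and the speculative phrases ``Lemma~\ref{mults} is expected to refine this'' and ``Lemma~\ref{sniso} is then expected to provide the companion compatibility'' do not match what those lemmas deliver.

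The intended argument is much simpler. The Kac module $K(\nu)$ is finite-dimensional with a unique simple head $\overline{L}(\nu)$, hence indecomposable; by Lemma~\ref{indecom blo 1} all of its composition factors lie in one block. Lemma~\ref{mults} lists those factors as $\overline{L}(\cdot)$'s, and Lemma~\ref{sniso} translates each $\overline{L}(\mu)$ into an $L(\cdot)$. Reading off the appropriate pairs gives each step directly: for (1), $K(l\Xi)$ contains both $\overline{L}(l\Xi)\cong L(l\Xi)$ and $\overline{L}((l-1)\Xi)\cong L((l-1)\Xi)$; for (2), $K((b+1)\Xi-\epsilon_n)$ contains $L(b\Xi)$ and $L(b\Xi-\epsilon_n)$, and $K((b+1)\Xi+c\epsilon_n)$ with $c\le -2$ contains $L(b\Xi+(c+1)\epsilon_n)$ and $L(b\Xi+c\epsilon_n)$; for (3), $K((b+1)\Xi)$ contains $L(b\Xi)$ and $L(b\Xi+\epsilon_1)$, while for $a\ge 2$ the module $K((b+1)\Xi+(a-1)\epsilon_1)$ contains $L(b\Xi+a\epsilon_1)$ and $L(b\Xi+(a-1)\epsilon_1)$. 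Transitivity of $\sim$ finishes each chain. No appeal to $I(\lambda)$, tensor decompositions, or BGG reciprocity is needed.
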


The following result is crucial for determining the blocks for the Lie superalgebra $\bar{S}(n)$ of special type.

\begin{prop}\label{prop for bolck S(n)}
Let $\ggg=\bar{S}(n)$ and $\lambda=\lambda_1\epsilon_1+\lambda_2\epsilon_2+\cdots+\lambda_n\epsilon_n\in\Lambda^+.$ Then $\lambda\sim\lambda_1\Xi$, i.e., $L(\lambda)$ belongs to the same block as that $L(\lambda_1\Xi)$ lies in.
\end{prop}

\begin{proof}
We begin with the following Claim.

\textsc{Claim}: if there exists some $i$ with $2\leq i\leq n-2$ such that
$\lambda_i>\lambda_{i+1}$ and $\lambda_{n-1}\geq \lambda_{n}+1$, then
\begin{equation}\label{Sn-eq-1}
\lambda\sim \lambda+\epsilon_{i+1}.
\end{equation}

Indeed, for any $j$ with $2\leq j\leq n-1$, $\xi_1\xi_2\cdots\xi_jD_n\otimes v_{\lambda}^0$ is an $\nnn^+$-maximal weight vector of weight $ \lambda+\epsilon_1+\epsilon_2+\cdots+\epsilon_j-\epsilon_n$.
By Lemma \ref{comp}, we know that $L(\lambda)$ and $L(\lambda+\epsilon_1+\epsilon_2+\cdots+\epsilon_j-\epsilon_n)$ {\color{red}lie} in the same block. i.e.,
$\lambda\sim \lambda+\epsilon_1+\epsilon_2+\cdots+\epsilon_j-\epsilon_n$. In particular,
\begin{equation}\label{Sn-eq-2}
\lambda\sim \lambda+\epsilon_1+\epsilon_2+\cdots+\epsilon_{n-1}-\epsilon_n.
\end{equation}
By the condition of the claim, $\lambda-\epsilon_1-\epsilon_2-\cdots-\epsilon_i+\epsilon_n\in\Lambda^+$, so $\lambda\sim \lambda-\epsilon_1-\epsilon_2-\cdots-\epsilon_i+\epsilon_n$ and $\lambda-\epsilon_1-\epsilon_2-\cdots-\epsilon_i+\epsilon_n\sim \lambda+\epsilon_{i+1}$, it follows that $\lambda\sim \lambda+\epsilon_{i+1}$. The claim follows.

With the above claim, we carry on the proof by taking all possibilities of $\lambda_1$ into the arguments.

\textsc{Case 1}: $\lambda_1=\lambda_2$.

In this case, set $\mu=\lambda+\epsilon_1+\epsilon_2+\cdots+\epsilon_{n-1}-\epsilon_n=(\lambda_1+1)\epsilon_1+(\lambda_2+1)\epsilon_2+\cdots+
(\lambda_{n-1}+1)\epsilon_{n-1}+(\lambda_n-1)\epsilon_n$. Then $\lambda\sim\mu$ by (\ref{Sn-eq-2}). Moreover, we can use (\ref{Sn-eq-1}) successively to obtain $\mu\sim (\lambda_1+1)(\epsilon_1+\epsilon_2+\cdots+\epsilon_{n-1})+(\lambda_n-1)\epsilon_n$. Hence $\lambda\sim\mu \sim (\lambda_1+1)\Xi\sim \lambda_1\Xi$ by Lemma \ref{lemm for block S(n)}(1) and (2), as desired.

\textsc{Case 2}: $\lambda_1\neq \lambda_2$.

By using similar arguments as in \textsc{Case 1}, without loss of generality,  we can assume $$\lambda_1>\lambda_2=\lambda_3=\cdots=\lambda_{n-1}\gg\lambda_n.$$

\textsc{Subcase (i):} $\lambda_1-\lambda_2$ is even.

Recall that $\Delta(\lambda)$ contains a $\ggg_0$-submodule $\bar{S}(n)_1\otimes_{\bbc}L^0(\lambda)$, and $\bar{S}(n)_1\cong L^0(\epsilon_1+\epsilon_2-\epsilon_n)$ as $\ggg_0$-modules. Take $w_1=(1n)\in\mathfrak{S}_n$ (the symmetric group on $n$ letters), which is the Weyl group of $\ggg_0$. Set
\begin{align*}
\nu_1&:=\lambda+\frac{1}{2}(\lambda_1-\lambda_2)w_1(\epsilon_1+\epsilon_2-\epsilon_n)\cr
&=\frac{1}{2}(\lambda_1+\lambda_2)\epsilon_1+\frac{1}{2}(\lambda_1+\lambda_2)\epsilon_2+\lambda_3\epsilon_3+
\cdots+\lambda_{n-1}\epsilon_{n-1}+\left(\lambda_n+\frac{1}{2}(\lambda_1-\lambda_2)\right)\epsilon_n\in\Lambda^+.
\end{align*}
It follows from \cite[Theorem 2.10]{Kum88} and Lemma \ref{comp} that $\lambda\sim\nu_1$. Furthermore, $\nu_1\sim \frac{1}{2}(\lambda_1+\lambda_2)\Xi\sim\lambda_1\Xi$ by the claim in \textsc{Case 1} and Lemma \ref{lemm for block S(n)}(1). Consequently, $\lambda\sim\lambda_1\Xi$.

\textsc{Subcase (ii):} $\lambda_1-\lambda_2$ is odd.

In this case, take $w_2=(13)(2n)\in\mathfrak{S}_n$. Set
\begin{align*}
\nu_2&:=(\lambda+2(\epsilon_1+\epsilon_2-\epsilon_n))+w_2(\epsilon_1+\epsilon_2-\epsilon_n)\cr
&=(\lambda_1+2)\epsilon_1+(\lambda_2+1)\epsilon_2+(\lambda_3+1)\epsilon_3+\lambda_4\epsilon_4
\cdots+\lambda_{n-1}\epsilon_{n-1}+(\lambda_n-1)\epsilon_n\in\Lambda^+.
\end{align*}
It follows from \cite[Theorem 2.10]{Kum88} and Lemma \ref{comp} that $\lambda\sim\nu_2$. Now, $(\lambda_1+2)-(\lambda_2+1)$ is even in $\nu_2$.  The claim in \textsc{Subcase (i)} implies that $\nu_2\sim (\lambda_1+2)\Xi\sim \lambda_1\Xi$. Hence, we also have $\lambda\sim\lambda_1\Xi$, as desired. We complete the proof.
\end{proof}

\subsection{}\label{for I}
Continue to investigate  the standard modules. Denote by $\Delta(\lambda)_X$ the standard $X(n)$-module for $X\in \{W,\bar S, \bar H, \och\}$. i.e. $\Delta(\lambda)_X=U(X(n))\otimes_{U(P)}L^0(\lambda)$. Similarly, we can define $\comx$, $I(\lambda)_X$, $L(\lambda)_X$,  $\bfe_X$ and $\Upsilon(\lambda)_X$.
%Associated with $\ggg=W(n)$ and $\lambda\in {\bar\hhh}^*$, the standard module and co-standard module  are temporarily denoted by $\Delta(\lambda)_\wn$ and $\nabla(\lambda)$ respectively both of which are naturally  $\bsn$-modules. Furthermore, we have the corresponding irreducible modules $L(\lambda)_\wn$ and $L(\lambda)_\bsn$.
 In this subsection, we establish some relation between standard modules for  $\ochn$ and $\bhn$. The following preliminary result is important for us.

\begin{lem}\label{key lem for blocks of H}
%Let $\Delta(\lambda)_{X}=U(X)\otimes_{U(\sfp)}L^0(\lambda)$ be the standard module of $X$, here $X\in\{\bhn, \ochn\}$.
Let  $\phi\in\Hom_{\bhn}(\Delta(\lambda)_{\ochn}, \Delta(\lambda)_{\ochn})$ with $\phi^2=\phi$.  If $\phi|_{\Delta(\lambda)_{\bhn}}=0$, then
$\phi=0$.
\end{lem}

\begin{proof}
Recall that $\ochn=\bhn\oplus\mathbb{C}D_H(\xi_1\cdots\xi_n)$. It suffices to show that
\begin{equation}\label{equiv assertion}
\phi((D_H(\xi_1\cdots\xi_n))^k\otimes v)=0,\,\,\forall\, k\in\mathbb{N}^+, v\in L^0(\lambda).
\end{equation}
We use induction on $k$ to show (\ref{equiv assertion}).

Since $\phi$ keeps the grading and weight spaces invariant, we can assume
$$\phi(D_H(\xi_1\cdots\xi_n)\otimes v^0_{\lambda})=cD_H(\xi_1\cdots\xi_n)\otimes v^0_{\lambda}+\sum\limits_{i=1}^s u_i\otimes v_i,$$
where $c\in\mathbb{C}$, $u_i\in U(\bhn_{\geq 1}), v_i\in L^0(\lambda), 1\leq i\leq s,$  and all $ v_i's$ are linearly independent.
On one hand,
\begin{align*}
\phi^2(D_H(\xi_1\cdots\xi_n)\otimes v^0_{\lambda})&=\phi(cD_H(\xi_1\cdots\xi_n)\otimes v^0_{\lambda}+\sum\limits_{i=1}^s u_i\otimes v_i)\cr
&=c\phi(D_H(\xi_1\cdots\xi_n)\otimes v^0_{\lambda})\cr
&=c^2D_H(\xi_1\cdots\xi_n)\otimes v^0_{\lambda}+\sum\limits_{i=1}^s cu_i\otimes v_i.
\end{align*}
On the other hand, we have
$$\phi^2(D_H(\xi_1\cdots\xi_n)\otimes v^0_{\lambda})=\phi(D_H(\xi_1\cdots\xi_n)\otimes v^0_{\lambda})=cD_H(\xi_1\cdots\xi_n)\otimes v^0_{\lambda}+\sum\limits_{i=1}^s u_i\otimes v_i.$$
Hence, $c=1$, or $c=0$ and $\sum\limits_{i=1}^s u_i\otimes v_i=0$. We claim that the latter happens. Indeed, if $c=1$, then for any $1\leq j\leq n$, we have
$$\phi(D_j(D_H(\xi_1\cdots\xi_n)\otimes v^0_{\lambda}))=D_j\phi(D_H(\xi_1\cdots\xi_n)\otimes v^0_{\lambda})=(-1)^{j-1}D_H(\xi_1\cdots \hat{\xi_j}\cdots\xi_n)\otimes v^0_{\lambda}+\sum\limits_{i=1}^s[D_j, u_i]\otimes v_i.$$
However,
$$\phi(D_j(D_H(\xi_1\cdots\xi_n)\otimes v^0_{\lambda}))=(-1)^{j-1}\phi(D_H(\xi_1\cdots \hat{\xi_j}\cdots\xi_n)\otimes v^0_{\lambda})=0.$$
We get a contradiction. Hence, $c=0$ and $\sum\limits_{i=1}^s u_i\otimes v_i=0$, i.e., $\phi(D_H(\xi_1\cdots\xi_n)\otimes v^0_{\lambda})=0$. Since $L^0(\lambda)=U(\nnn^-)v^0_{\lambda}$, it follows that
\begin{align*}
&\phi(D_H(\xi_1\cdots\xi_n)\otimes L^0(\lambda))\cr
=&\phi(D_H(\xi_1\cdots\xi_n)U(\nnn^-)\otimes v^0_{\lambda})\cr
=&\phi(U(\nnn^-)D_H(\xi_1\cdots\xi_n)\otimes v^0_{\lambda})\cr
=&U(\nnn^-)\phi(D_H(\xi_1\cdots\xi_n)\otimes v^0_{\lambda})\cr
=&0,
\end{align*}
i.e., (\ref{equiv assertion}) holds for $k=1$.

Now suppose $\phi((D_H(\xi_1\cdots\xi_n)^{l}\otimes L^0(\lambda))=0$ for $l<k$. We need to show that $$\phi((D_H(\xi_1\cdots\xi_n)^k\otimes L^0(\lambda))=0.$$
Since $\phi$ keeps the grading and weight spaces invariant, we can assume
$$\phi((D_H(\xi_1\cdots\xi_n))^k\otimes v^0_{\lambda})=a(D_H(\xi_1\cdots\xi_n))^k\otimes v^0_{\lambda}+\sum\limits_{i=1}^t w_i\otimes \nu_i,$$
where $a\in\mathbb{C}$, $w_i\in U(\bhn_{\geq 1})\mathcal{B}, \nu_i\in L^0(\lambda), 1\leq i\leq t$, $\mathcal{B}=\text{span}_{\mathbb{C}}\{(D_H(\xi_1\cdots\xi_n))^{i}\mid 0\leq i\leq k-1\}$, and all $\nu_i's, 1\leq i\leq t$, are linearly independent.
On one hand,
\begin{align*}
\phi^2((D_H(\xi_1\cdots\xi_n))^k\otimes v^0_{\lambda})&=\phi(a(D_H(\xi_1\cdots\xi_n))^k\otimes v^0_{\lambda}+\sum\limits_{i=1}^t w_i\otimes \nu_i)\cr
&=a\phi((D_H(\xi_1\cdots\xi_n))^k\otimes v^0_{\lambda})\cr
&=a^2(D_H(\xi_1\cdots\xi_n))^k\otimes v^0_{\lambda}+\sum\limits_{i=1}^t aw_i\otimes \nu_i.
\end{align*}
On the other hand, we have
$$\phi^2((D_H(\xi_1\cdots\xi_n))^k\otimes v^0_{\lambda})=\phi((D_H(\xi_1\cdots\xi_n))^k\otimes v^0_{\lambda})=a(D_H(\xi_1\cdots\xi_n))^k\otimes v^0_{\lambda}+\sum\limits_{i=1}^t w_i\otimes \nu_i.$$
Similar arguments as in the case $k=1$ yield that $a=0$ and $\sum\limits_{i=1}^t w_i\otimes \nu_i=0$, and furthermore $\phi((D_H(\xi_1\cdots\xi_n))^k\otimes v)=0$, i.e., (\ref{equiv assertion}) holds for $k$. Consequently, $\phi=0$, as desired.
\end{proof}
As a consequence of Lemma \ref{key lem for blocks of H}, we have the following result.
\begin{coro}\label{coro for H}
As an $\bhn$-module,  $\Delta(\lambda)_\ochn$ is indecomposable.
\end{coro}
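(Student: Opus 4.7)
The plan is to show that the only idempotents in the endomorphism ring $\End_{\bhn}(\Delta(\lambda)_{\ochn})$ are $0$ and $\id$, which is equivalent to indecomposability. The key tool is Lemma \ref{key lem for blocks of H}, coupled with a simple weight-space analysis of the highest-weight line.

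First, I would observe that $\ochn_{\leq 0} = \bhn_{\leq 0}$ (since the extra generator $D_H(\xi_1\cdots\xi_n)$ lies in degree $n-2 \geq 1$), so by PBW there is a canonical inclusion $\Delta(\lambda)_{\bhn} \hookrightarrow \Delta(\lambda)_{\ochn}$ of $\bhn$-modules, with complement a vector space spanned by elements involving positive powers of $D_H(\xi_1\cdots\xi_n)$. Next, I would check that the $\lambda$-weight space of $\Delta(\lambda)_{\ochn}$ is one-dimensional, spanned by the canonical generator $v_\lambda := 1 \otimes v_\lambda^0$. Indeed, any weight of $U(\ochn_{\geq 1})$ has $\delta$-coefficient $\geq 0$ with equality only on constants, while weights of $L^0(\lambda)$ have the same $\delta$-coefficient as $\lambda$ (the roots of $\ggg_0' \cong \mathfrak{so}(V)$ do not involve $\delta$); so the only way to hit weight $\lambda$ is to take the constant times a highest-weight vector of $L^0(\lambda)$.

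Given these two facts, let $\phi \in \End_{\bhn}(\Delta(\lambda)_{\ochn})$ be any idempotent. Since $\bar\hhh \subset \bhn$, the map $\phi$ preserves weight spaces, so $\phi(v_\lambda) = c\, v_\lambda$ for some scalar $c \in \bbc$. As $v_\lambda$ generates $\Delta(\lambda)_{\bhn}$ over $U(\bhn)$, this gives $\phi|_{\Delta(\lambda)_{\bhn}} = c\cdot \id$. The relation $\phi^2 = \phi$ forces $c^2 = c$, so $c \in \{0,1\}$.

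If $c = 0$, then $\phi|_{\Delta(\lambda)_{\bhn}} = 0$, and Lemma \ref{key lem for blocks of H} immediately gives $\phi = 0$. If $c = 1$, then $\psi := \id - \phi$ is again an idempotent in $\End_{\bhn}(\Delta(\lambda)_{\ochn})$ with $\psi|_{\Delta(\lambda)_{\bhn}} = 0$, so the same lemma yields $\psi = 0$, i.e., $\phi = \id$. Thus $\End_{\bhn}(\Delta(\lambda)_{\ochn})$ has no nontrivial idempotents, which is the desired indecomposability. The essential content is already packed into Lemma \ref{key lem for blocks of H}; the only new input is the scalar-action argument on the highest-weight line, so no serious obstacle remains.
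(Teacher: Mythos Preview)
Your proof is correct and follows essentially the same route as the paper: reduce to showing the only idempotents in $\End_{\bhn}(\Delta(\lambda)_{\ochn})$ are $0$ and $\id$, observe that any idempotent restricts to $0$ or $\id$ on $\Delta(\lambda)_{\bhn}$, then apply Lemma~\ref{key lem for blocks of H} to $\phi$ or $\id-\phi$. You supply a bit more detail than the paper does on \emph{why} the restriction is scalar (via the one-dimensional $\lambda$-weight space), but the argument is otherwise identical.
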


\begin{proof}
Let  $f$ be an element of $\Hom_{\bhn}(\Delta(\lambda)_{\ochn}, \Delta(\lambda)_{\ochn})$ with $f^2=f$. Then $f|_{\Delta(\lambda)_{\bhn}}=0$ or $1$ due to the indecomposability of $\Delta(\lambda)_{\bhn}$ as an $\bhn$-module. If $f|_{\Delta(\lambda)_{\bhn}}=0$, then $f=0$ by Lemma \ref{key lem for blocks of H}. If $f|_{\Delta(\lambda)_{\bhn}}=1$, then $(\textbf{id}-f)|_{\Delta(\lambda)_{\bhn}}=0$ and $(\textbf{id}-f)^2=\textbf{id}-2f+f^2=(\textbf{id}-f)$. It also follows from Lemma \ref{key lem for blocks of H} that $(\textbf{id}-f)|_{\Delta(\lambda)_{\ochn}}=0$, i.e., $f=\textbf{id}$. This implies that $0$ and $\textbf{id}$ are the only two idempotents in $\Hom_{\bhn}(\Delta(\lambda)_{\ochn}, \Delta(\lambda)_{\ochn})$. Then it follows from \cite[Proposition 5.10]{AF92} that $\Delta(\lambda)_\ochn$ is an indecomposable $\bhn$-module.
\end{proof}

\subsection{Revisit to $I(\lambda)$}\label{revisit to I}
\begin{prop} \label{I prop}  Let $\ggg=W(n),\bar{S}(n),\bar{H}(n),\ochn.$ Then all composition factors in $I(\lambda)$ lie in the same block.
\end{prop}

\begin{proof}    Note that $I(\lambda)=\bigoplus_{\mu\in \Upsilon(\lambda)}
	P(\mu)^{\oplus a_{\lambda\mu}},$ here $a_{\lambda\mu}\in\bbz_{>0}$. %=\{\lambda-\alpha\in \Lambda^+\mid -\alpha\in \wt(\bigwedge \ggg_{-1}) \}$.
	Each $P(\mu)$ ($\mu\in \Upsilon(\lambda)$) is actually the projective cover of both $L(\mu)$ and $\Delta(\mu)$.
	In order to prove the proposition,  it suffices by the definition of blocks to prove
	\begin{align}\label{I lambda pf}
	\mu\sim\lambda,\,\, \forall\mu\in \Upsilon(\lambda).
	\end{align}
	In the following we will prove the proposition for the case of $\wn$ by verifying the formula (\ref{I lambda pf}) (consequently, the case of $\bar{S}(n)$ is easily solved).  For the case $\ochn$, we will prove the proposition by partially verifing
	the formula (\ref{I lambda pf}) and  accomplishing the remaining cases by using Corollary \ref{coro for H}. So the arguments will be divided into cases.
	
	(i) Assume $\ggg=W(n)$.   Take $\mu\in \Upsilon(\lambda)$.
	For $\Delta(\mu)=U(\ggg)\otimes_{U(\sfp)}L^0(\mu)$, keeping the notations in  Lemma \ref{basic nat rep}, we see that
	$\Delta(\mu)$  contains a $\ggg_0$-submodule  $M^+(\mu)$ (see Remark \ref{g0 regarding}). From Lemma \ref{basic nat rep}(3),   there is a $\ggg_0$-maximal  vector $m_\lambda$ in $M^+(\mu)$, i.e. $\nnn^+m_\lambda=0$, $Hm_\lambda=\lambda(H)m_\lambda$ for any $H\in \bar\hhh$. By Lemma \ref{comp} we know $\mu\sim\lambda$, as desired.
	
	%According to the analysis at the beginning, we have proven the proposition %for $\ggg=W(n)$.

	(ii) Assume $\ggg=\bar{S}(n)$. For any $\mu\in \Upsilon(\lambda)$, it follows from Lemma \ref{lemm for block S(n)}, Proposition \ref{prop for bolck S(n)} and Remar \ref{proj remark}(3) that $\mu\sim\lambda$. Hence, the assertion for $\ggg=\bar{S}(n)$ is proven.
	
	(iii) Assume $\ggg=\ochn$. By the definition of $\Upsilon(\lambda)$ (see (\ref{upsilon})), we can write  $\Upsilon(\lambda)=\bigcup_{i=0}^n \Upsilon_i(\lambda)$ with
	\begin{align}\label{upsilon i}
	\Upsilon_i(\lambda)=\{\mu\in \Upsilon(\lambda)\mid (\bigwedge\nolimits^i\ggg_{-1}\otimes_\bbc L^0(\lambda):L^0(\mu))_{\ggg_0}\ne 0 \}.
	\end{align}
	
	By the same arguments as (i), it follows from Lemmas \ref{basic nat rep ch} and \ref{comp} that
	for $\mu\in \Upsilon(\lambda)$:
	\begin{align}
	&\mu\sim\lambda-2\delta \mbox{ if }\mu\in \bigcup_{i\geq 3}\Upsilon_i(\lambda);\label{qq1}\\
	&\mu-(n-2)\delta\sim\lambda-2\delta\mbox{ if }\mu\in \Upsilon_1(\lambda);\label{qq2}\\
	&\mu-(n-4)\delta\sim\lambda-2\delta \mbox{ if }\mu\in \Upsilon_2(\lambda).\label{qq3}
	\end{align}
	As all standard modules are indecomposable,  the above formula (\ref{qq1})  implies that all $\Delta(\mu)$ for $\mu\in \bigcup_{i\geq 3}\Upsilon_i(\lambda)$ lie in the same block as $L(\lambda-2\delta)$ does. Especially,	
 by Remark \ref{proj remark}(3)($4^\circ$),
We have the following result:
 $$L(\lambda+\epsilon_{1}+\cdots+\epsilon_k-(n-k)\delta) \mbox{~and~} L(\lambda-2\delta) \mbox{~lie~ in ~the~ same~ block.}$$
 Furthermore, we divide the following arguments into two different cases.
	
	(Case 1) For $\ggg=\ochn$ with $n=2r$. In this case, $r\geq 3$ by the assumption that $n\geq5$.
	
		\textbf{Claim 1:} $L(\lambda)$ and $L(\lambda+\sum_{i=1}^r\epsilon_i+r\delta)$ lie in the same block.
		Let $k=r$ and $k=0,$
		by the above result we know that  both $L(\lambda+\sum_{i=1}^r\epsilon_i-r\delta)$ and $L(\lambda-n\delta)$ lie in the same block. Due to the arbitrariness of $\lambda,$ one can change $\lambda$ to $\lambda-n\delta,$ then the claim follows.
		
		\textbf{Claim 2:} $L(\lambda)$ and $L(\lambda+\sum_{i=1}^r\epsilon_i+(r-2)\delta)$ share one block. This claim can be checked by the fact that  $D_H(\Pi_{i=1}^r\xi_i)\otimes v^0_\lambda$ is a $\ggg_0$-maximal vector
and Lemma \ref{comp}.

By the arbitrariness of $\lambda$ (or by translating $\lambda$ to $\lambda-(\sum_{i=1}^r\epsilon_i+(r-2)\delta)$ in the previous claims), we have that $L(\lambda)$ and $L(\lambda\pm 2\delta)$ lie in the same block. Furthermore, we see that  $L(\lambda)$, $L(\lambda\pm 2\delta)$ and $L(\lambda\pm n\delta)$ lie in the same block.
	
	%For $\ggg=\ochn$ with $n=2r$, note that $\lambda-n\delta\in \Upsilon_n(\lambda)$. By the above observation,  $L(\lambda-2r\delta)$ and $L(\lambda-2\delta)$ share the same block.
	%consider the standard module $\Delta(\lambda)=U(\ggg)\otimes_{U(\sfp)}L^0(\lambda)$. As previously, we here and therein fix a maximal vector $v^0_\lambda$ in the irreducible $\ggg_0$-module $L^0(\lambda)$.   By a direct verification, there is a $\ggg_0$-maximal vector $D_H(\xi_1\xi_2\xi_{r+1}\xi_{r+2})\otimes v^0_\lambda$ in $\Delta(\lambda)$. Hence $\Delta(\lambda)$ has a composition factor isomorphic to $L(\lambda+2\delta)$, due to Lemma \ref{pre proj}(2).
	%By the arbitrariness of $\lambda$, we have that $L(\lambda)$ and $L(\lambda\pm 2\delta)$ lie in the same block. Consequently,  $L(\lambda)$, $L(\lambda\pm 2\delta)$ and $L(\lambda\pm n\delta)$ share the same block.

	(Case 2) For $\ggg=\ochn$ with $n=2r+1$, by a direct verification, the standard module $\Delta(\lambda)$ admits $\ggg_0$-maximal vectors  $D_H(\prod_{i=1}^r\xi_i)\otimes v^0_\lambda$ and  $D_H((\prod_{i=1}^r\xi_i)\xi_{2r+1})\otimes v^0_\lambda$.
Hence by Lemma \ref{comp} we get $L(\lambda)$, $L(\lambda+\sum_{i=1}^r\epsilon_i+(r-2)\delta)$ and $L(\lambda+\sum_{i=1}^r\epsilon_i+(r-1)\delta)$ share the same block. By the arbitrariness of $\lambda$  again (or by translating $\lambda$ to $\lambda-(\sum_{i=1}^r\epsilon_i)+(2-r)\delta$ in the above), we have that in this case, $L(\lambda)$ and $L(\lambda\pm \delta)$ lie in the same block. Consequently,  $L(\lambda)$, $L(\lambda\pm 2\delta)$ and $L(\lambda\pm n\delta)$ share the same block.
	
With the above arguments, we can directly  deduce   that   not only for $\mu\in \Upsilon_{\geq 3}(\lambda)$ but also for $\mu\in \Upsilon_1(\lambda)\cup\Upsilon_2(\lambda)$, all $L(\mu)$ lie in the same block as $L(\lambda)$ does.
Hence we indeed prove that all composition factors in $\Delta(\mu)$ for $\mu\in \Upsilon(\lambda)$, thereby all composition factors in $I(\lambda)$, lie in the same block. We have proven the proposition in this case.

	(iv) Assume $\ggg=\bar H(n)$. %In order to express clearly, we will temporally    use the notations $\comi_X$, $I(\lambda)_X$, $L(\lambda)_X$, $\Delta(\lambda)_X$, $\Upsilon(\lambda)_X$ for $X\in \{W(n),\bar S(n), \bar H(n), \ochn\}$, instead of the original ones $\comi$, $I(\lambda)$, $L(\lambda)$, $\Delta(\lambda)$, $\Upsilon(\lambda)$ respectively.	
 Recall that $I(\lambda)_{\bar H(n)}$ has a $\Delta(\mu)_{\bar H(n)}$-filtration and $L(\mu)_{\bar H(n)}$ is the head of $\Delta(\mu)_{\bar H(n)}.$
	By Lemma \ref{indecom blo 1} and the indecomposability of $\Delta(\mu)_{\bar H(n)},$
	we need to show that all $L(\mu)_{\bar H(n)}, \mu\in\Upsilon(\lambda)_\bhn ,$ belong to the same block.	
	
	Recall that both $\bhn$ and $\ochn$ have the same $0$-graded spaces $ \frak{so}(n)\oplus\bbc\sfd$. The parameters of isomorphism classes of irreducible modules for $\comi_\ochn$ and for $\comi_\bhn$ are the same, arising from $\{L^0(\lambda)\mid \lambda\in \Lambda^+\}$ for $\ggg_0$. Since $\bhn$ and $\ochn$ has the same $-1$-graded spaces,  $\Upsilon(\lambda)_\bhn=\Upsilon(\lambda)_\ochn$.

 By the arguments in (iii), we have known that all $\Delta(\mu)_\ochn,\mu\in\Upsilon(\lambda)_\bhn,$ lie in the same block in $\comi_\ochn$ as  $L(\lambda)_\ochn$ does. Hence all  $\bhn$-modules $\Delta(\mu)_\ochn$, $\mu\in \Upsilon(\lambda)_\ochn$, lie in the same block of $\comi_\bhn$.
Because $\Delta(\mu)_\ochn$  admits an $\bhn$-irreducible quotient $L(\mu)_\bhn$, By Lemma \ref{indecom blo 1} and Corollary \ref{coro for H}, all $L(\mu)_{\bar H(n)}, \mu\in\Upsilon(\lambda)_\bhn ,$ belong to the same block.	
 So the desired result follows.

Summing up, we finish the proof.
\end{proof}

When $\ggg=\bar H(2r),$ set
$\aleph_r\in\{0,1\}$ satisfying  ${\aleph_r}\equiv r\mod 2$ for $\ggg=\bar H(2r)$.
Then we put forward some additional new notations
$$\begin{cases}
\Theta_B:=\epsilon_1+\cdots+\epsilon_{r-1}+\epsilon_r, &\mbox{ for }H(2r+1).\\
\Theta_{D,{\aleph_r}}:=\epsilon_1+\cdots+\epsilon_{r-1}+\epsilon_r +{\aleph_r}\delta,&\mbox{ for } H(2r).
\end{cases}$$
(along with already appointing
$\Xi:=\sum_{i=1}^n\epsilon_i, \mbox{ for }\ggg=X(n), \mbox{ with } X\in \{W,\bar S\}$).

Set $$\tilde\delta:=\begin{cases} \delta, &\mbox{ for } H(2r+1);\\
2\delta,  &\mbox{ for } H(2r).
\end{cases}
$$
Then we have the following corollary.
%Recalling the gradation of $\ggg=\sum_i\ggg_i$, we have that  if $x\in\ggg_{\alpha}$ with $\alpha=\epsilon_{i_1}+\epsilon_{i_2}+\cdots+\epsilon_{i_k}-\epsilon_{j},0\leq k\leq n, 1\leq j\leq n$, then
%\begin{equation}\label{ht}
%\mathrm{gr}{(x)}=\ell(\alpha).
%\end{equation}
\begin{coro}\label{height one lem}  %(Temporarily assume $\ggg=W(n)$)
	For any $L(\lambda)\in \bfe$, the following statements hold.
	\begin{itemize}
		%\item[(1)] There exist some $\alpha\in \bbz\Phi(\ggg_{\geq 1})$ with $\hht(\alpha)=1$
		%such that $L(\lambda)$, $L(\lambda+\alpha)$  share  the same block.
		\item[(1)] When  $\ggg=X(n)$ with $X\in\{W,\bar S, \bar{H}\}.$ If there exists $-\beta\in \wt(\bigwedge(\ggg_{-1}))$ such that   $\lambda-\beta\in \Upsilon(\lambda)$,   then $L(\lambda)$ and $L(\lambda-\beta)$ share the same block. %Furthermore,  any $L(\lambda+\epsilon_i)$ satisfies the above if  $L(\lambda+\epsilon_i)\in\bfe$.
		%\item[(2)] In particular, $L(\lambda)$,  $L(\lambda-\beta)$ share the same block for some $-\beta\in \wt(\ggg_{-1})$ with $\lambda-\beta$		
		\item[(2)] When  $\ggg=X(n)$ with $X\in\{W,\bar S\}$,  then
			$L(\lambda)$ and $L(\lambda+\sum_{i=k}^n\epsilon_i)$, $1\leq k\leq n$, lie in the same block. In particular,
			$L(\lambda)$ and $L(\lambda-\Xi)$ lie in the same block.
		\item[(3)] When  $\ggg=\bar H(n)$, then $L(\lambda)$ and $L(\lambda-\tilde\delta)$ lie in the same block.
		\item[(4)] When  $\ggg=\bar H(2r+1)$, then $L(\lambda)$ and $L(\lambda+\sum_{i=1}^{k}\epsilon_i-(n-k)\delta)$ lie in the same block.
		In particular,
		$L(\lambda)$ and $L(\lambda+\sum_{i=1}^{r}\epsilon_i)$ lie in the same block.
		\item[(5)] When  $\ggg=\bar H(2r)$, then  $L(\lambda)$ and $L(\lambda-\Theta_{D,{\aleph_r}})$ lie in the same block (${\aleph_r}\in\{0,1\}$ with ${\aleph_r}\equiv r\mod 2$).
	\end{itemize}
\end{coro}
\begin{proof} %(1) Set $\alpha=\epsilon_1+\epsilon_2-\epsilon_n\in \Phi(\ggg_1)$.
	%Note that $\Delta(\lambda)$ admits a submodule generated by a $\ggg_0$-highest weight vector $w:= \xi_1\xi_2\partial_n\otimes v^0_\lambda$, and $w$ is annihilated by $\ggg_{-1}$. Hence the submodule generated by $w$ is isomorphic to a quotient of $\Delta(\lambda+\alpha)$. By Corollary \ref{standard mod lem}, $L(\lambda)$ and $L(\lambda+\alpha)$ share the same block.

	(1) This is a direct consequence of  the proof for Proposition \ref{I prop}.
	
	(2) This is the consequence of Remark \ref{proj remark}(3)($3^\circ$) and the result in (1).
			
	(3) When $n=2r$  ({\sl{resp}}. $n=2r+1$), one can check it by the same arguments as the process for proof of Proposition \ref{I prop}(iii) for case 1(resp. case 2).  %This is because $\bigwedge_{i=1}^nD_i\otimes v_\lambda$ is a maximal vector.   By applying the same arguments there to  the case $H(2r+1)$, we have that $[\Delta(\lambda-\delta):L(\lambda-2\delta)]\ne 0$. On the other hand $L(\lambda)$ and $L(\lambda-2\delta)$ share the same block in the same reason as there. So we finally have that $L(\lambda)$ and $L(\lambda\pm \delta)$ share the same block.

	(4)-(5) By Remark \ref{proj remark} $(3)(4^{\circ}),$
 $I(\lambda)$ admits the composition factor  $L(\lambda+ \sum_{i=1}^k\epsilon_i-(n-k)\delta)$.  So $L(\lambda)$ and $L(\lambda+ \sum_{i=1}^k\epsilon_i-(n-k)\delta)$ lie in the same block due to Proposition \ref{I prop}. Thanks to (3), we further have that $L(\lambda)$ and $L(\lambda+\sum_{i=1}^r\epsilon_i)$ lie in the same block for $H(2r+1)$ and $H(2r)$ with even $r$. Similarly, for $H(2r)$ with odd $r$, we can check that $L(\lambda)$ and $L(\lambda+\sum_{i=1}^r\epsilon_i+\delta)$ lie in the same block.
\end{proof}

%\subsection{} The contents are inserted to Lemma \ref{height one lem} %and the next section.

%Recalling the gradation of $\ggg=\sum_i\ggg_i$, we have that  if $x\in\ggg_{\alpha}$ with $\alpha=\epsilon_{i_1}+\epsilon_{i_2}+\cdots+\epsilon_{i_k}-\epsilon_{j},0\leq k\leq n, 1\leq j\leq n$, then
%\begin{equation}\label{ht}
%\mathrm{gr}{(x)}=\ell(\alpha).
%\end{equation}

%and

\subsection{Depth Lemma and parity Lemma}

We will analyse the relation of depths for simple objects in a block. Suppose that $L(\lambda)$ is given, and $\dpt(L(\lambda))=d$. Then by the construction of $P(\lambda)$ (see Remark \ref{proj remark}(1)), the depth of each composition factor is consequently determined. Conversely, for any given composition factor $L(\mu')=L(\mu')_{d'}$ in $P(\lambda')$, the depth of $P(\lambda')$ (thereby the depth of $L(\lambda')$) is definitely determined by the predefined  depth of $L(\mu')$.  From this fact and the definition of blocks  we can easily have the following depth lemma.
We firstly introduce some new notations before the following lemma.
 Let $\mu=\mu_{1}\epsilon_{1}+\mu_{2}\epsilon_{2}+\cdots+\mu_{n}\epsilon_{n}$ be an element of $\bar\hhh^*$ for $\ggg=X(n)$ with $X\in \{W,\bar S\}$, and $\mu=\mu_{1}\epsilon_{1}+\mu_{2}\epsilon_{2}+\cdots+\mu_{r}\epsilon_{r}+c\delta$ for $\ggg=\bar H(n)$. We define the length of $\mu,$ which is denoted by $\ell(\mu)$, as below

 \begin{align}\label{length}
 \ell(\mu)=\begin{cases} \sum_{i=1}^{n}\mu_{i}, &\mbox{ for }\ggg=X(n), \mbox{ with } X\in \{W,\bar S\};\\
 c &\mbox{ for }\ggg=\bar H(n).
 \end{cases}
 \end{align}
 Obviously, $$ \ell(\lambda\pm\mu)=\ell(\lambda)\pm\ell(\mu).$$

\begin{lem} \label{dpt lem} (\textsc{Depth Lemma})
	
		\item[(1)] If $L(\mu)$ and $L(\nu)$ are in the same block, then
$\dpt(L(\mu))-\dpt(L(\nu))=\ell(\mu-\nu)$.

		\item[(2)] 	For any $\lambda\in \Lambda^+$, and different $d_1, d_2\in \bbz$,
		$L(\lambda)_{d_1}$ and $L(\lambda)_{d_2}$ do not lie in the same block.

\end{lem}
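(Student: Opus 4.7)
The plan is to reduce (1) to the case where $L(\mu)$ and $L(\nu)$ are both composition factors of a single $P(\tau)$, and then telescope along chains of links to handle the general block equivalence. The heart of the argument is to identify the length function $\ell(\cdot)$ with evaluation on the toral element $\sfd$, and then to exploit the compatibility of $\sfd$ with the $\bbz$-gradation.

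First, I would verify that $\ell(\nu) = \nu(\sfd)$ for every $\nu \in \bar\hhh^*$. For $\ggg = X(n)$ with $X \in \{W, \bar S\}$, the dual-basis computation gives $\epsilon_i(\sfd) = \epsilon_i(\sum_j \xi_j D_j) = 1$ for all $i$, so $\nu(\sfd) = \hht(\nu) = \ell(\nu)$. For $\ggg = \bar H(n)$, the description in \S\ref{toral extension} yields $\epsilon_i(\sfd) = 0$ and $\delta(\sfd) = 1$, so $\nu(\sfd)$ is precisely the $\delta$-coefficient of $\nu$, which is $\ell(\nu)$ by definition.

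Next, I would use the fact that the $\bbz$-gradation of $\ggg$ is induced by $\sfd$ in the sense that $[\sfd, x] = kx$ for $x \in \ggg_k$, and hence $[\sfd, u] = ku$ for every homogeneous $u \in U(\ggg)_k$ by PBW. Remark \ref{proj remark}(1) provides a cyclic generator $v_\tau$ of $P(\tau)$ that sits in degree $d_\tau := \dpt(L(\tau))$ and has $\bar\hhh$-weight $\tau$. For any $u \in U(\ggg)_k$,
\begin{equation*}
\sfd \cdot (u v_\tau) = [\sfd, u]\, v_\tau + u\, (\sfd v_\tau) = (k + \tau(\sfd))\, u v_\tau.
\end{equation*}
Thus $\sfd$ acts as the scalar $\tau(\sfd) + k$ on the entire degree-$(d_\tau + k)$ subspace of $P(\tau)$, and every $\bar\hhh$-weight $\mu$ appearing there satisfies $\mu(\sfd) = \tau(\sfd) + k$, i.e. $\ell(\mu - \tau) = k$.

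To finish, let $L(\mu)_{d_\mu}$ be a composition factor of $P(\tau)_{d_\tau}$. Its highest-weight vector lives in degree $d_\mu$ and has weight $\mu$, so a lift produces a weight-$\mu$ vector of $P(\tau)$ in degree $d_\mu$, and by the previous paragraph $\ell(\mu - \tau) = d_\mu - d_\tau$. If $L(\mu)$ and $L(\nu)$ are linked, applying this identity to both and subtracting (using linearity of $\ell$) yields $\dpt(L(\mu)) - \dpt(L(\nu)) = \ell(\mu - \nu)$. For $L(\mu)$ and $L(\nu)$ in the same block, I would pick a chain of consecutive links $L(\mu) = L(\lambda_1), \ldots, L(\lambda_k) = L(\nu)$ and telescope the identity, again invoking linearity of $\ell$, to get (1). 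Part (2) is then immediate: if $L(\lambda)_{d_1}$ and $L(\lambda)_{d_2}$ shared a block, then $d_1 - d_2 = \ell(\lambda - \lambda) = 0$, contradicting $d_1 \neq d_2$. The only mildly delicate step is the uniform identification $\ell(\nu) = \nu(\sfd)$ across the three series, but this is a direct check from the definitions; everything after that is bookkeeping.
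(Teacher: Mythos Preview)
Your proof is correct and takes a genuinely more direct route than the paper. The paper proceeds in two stages: first it establishes the depth identity for composition factors of a single standard module $\Delta(\lambda)$ (using the explicit decomposition $\Delta(\lambda)\cong U(\ggg_{\geq 1})\otimes L^0(\lambda)$ and tracking degrees in $U(\ggg_{\geq 1})_i$), and then extends to composition factors of $I(\lambda)$ by chasing through its $\Delta$-flag, finally descending to $P(\lambda)$ as a summand of $I(\lambda)$. Your argument bypasses both stages by observing that $\ell(\nu)=\nu(\sfd)$ uniformly and that $P(\tau)$ is cyclic on a vector of known weight and degree, so the $\sfd$-eigenvalue on each graded piece is determined at once; this dispenses with the standard-module and $\Delta$-flag machinery entirely. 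The paper's approach has the mild advantage of making the role of the $\Delta$-flag visible (which is thematically relevant elsewhere in the section), but yours is shorter and isolates the essential mechanism---the compatibility of $\sfd$ with the $\bbz$-grading---without auxiliary structure.
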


\pf (1) The proof is divided into the following steps.

\textsc{Claim I}: If $(\Delta(\lambda):L(\mu))\neq0$ and $(\Delta(\lambda):L(\nu))\neq0,$ then
\begin{equation}\label{eq}
\dpt(L(\mu))-\dpt(L(\nu))=\ell(\mu-\nu).
\end{equation}

Set $\lfloor L^0(\lambda)\rfloor=d.$ So $\Delta(\lambda),L(\lambda)$ are all of depth $d.$
Recall that $\Delta(\lambda)=U(\ggg)\otimes_{U(P)}L^0(\lambda)\cong U(\ggg_{\geq1})\otimes_{\mathbb{C}}L^0(\lambda)$ as a vector space.
So if $v\in U(\ggg_{\geq1})_i\otimes_{\mathbb{C}}L^0(\lambda)$ is a homogeneous element of $\Delta(\lambda),$ then $\mathrm{degree}(v)=d+i.$

Now let $L(\mu)$ be an irreducible $U(\ggg)$-module with $(\Delta(\lambda):L(\mu))\neq0.$
Then there exists an inclusion of submodule $\Delta(\lambda)\supseteq M \supseteq N \supseteq0$ such that $M/N\cong L(\mu).$ Let $v_{\mu}\in M/N$ be a maximal vector of $L(\mu)$.  If $v_{\mu}\in U(\ggg_{\geq1})_i\otimes_{\mathbb{C}}L^0(\lambda),$ then
\begin{equation}\label{rel1}
\dpt(L(\mu))=i+d=\dpt(L(\lambda))+\ell(\mu-\lambda).
\end{equation}
Similarly, we have
\begin{equation}\label{rel2}
\dpt(L(\nu))=\dpt(L(\lambda))+\ell(\nu-\lambda).
\end{equation}
Consequently, the equality (\ref{eq}) holds due to $\ell(\mu-\lambda)-\ell(\nu-\lambda)=\ell(\mu-\nu)$. The first claim is proven.

\textsc{Claim II}: If $(P(\lambda):L(\mu))\neq0$ and $(P(\lambda):L(\nu))\neq0,$ then $\dpt(L(\mu))-\dpt(L(\nu))=\ell(\mu-\nu).$

Since $P(\lambda)$ is a direct summand of $I(\lambda)$ (see Theorem \ref{projective thm}), it suffices to prove this claim for $I(\lambda),$ i.e. If $(I(\lambda):L(\mu))\neq0$ and $(I(\lambda):L(\nu))\neq0,$ then $\dpt(L(\mu))-\dpt(L(\nu))=\ell(\mu-\nu).$ Set $\dpt(L(\lambda))=d.$
Assume that
$$I(\lambda)=M_1 \supseteq M_2\supseteq M_3\supseteq\cdots\supseteq M_l\supseteq0$$
is the descending sequence such that $M_i/M_{i+1}\cong \Delta(\lambda_i)$ shown in Theorem \ref{projective thm}. Then we have that $\ell(\Delta(\lambda_i))=d+\ell(\lambda_{i}-\lambda).$
Denote by $s=\mbox{max}\{i \mid L(\mu)$ is a subquotient of $M_i \},$
$t=\mbox{max}\{j \mid L(\nu)$ is a subquotient of $M_j \}.$

If $s=t,$ then there exists the following down sequence
$$M_s\supseteq N_1\supseteq N_2\supseteq M_{s+1}$$
such that $N_1/N_2\cong L(\mu).$ because
$$N_1/N_2\hookrightarrow M_s/N_2\cong M_s/M_{s+1}\diagup N_2/M_{s+1}\cong \Delta(\lambda_s)\diagup N_2/M_{s+1} ,$$
$L(\mu)$ can be realized as a sub-quotient of $\Delta(\lambda_s)_{d+\ell(\lambda_s-\lambda)}.$ Meanwhile,
$L(\nu)$ can be also realized as a sub-quotient of $\Delta(\lambda_s)_{d+\ell(\lambda_s-\lambda)}.$
Thus $L(\mu)$ and $L(\nu)$ are two sub-quotients of $\Delta(\lambda_s)_{d+\ell(\lambda_s-\lambda)}.$
Then \textsc{Claim I} implies
\textsc{Claim II}.

If $s\neq t,$ assume $s<t$ without loss of generality.
Then by the above discuss, $L(\mu)$ (resp. $L(\nu)$) is a sub-quotient of $\Delta(\lambda_s)_{d+\ell(\lambda_s-\lambda)}$ (resp. $\Delta(\lambda_t)_{d+\ell(\lambda_t-\lambda)}).$
So by the equality (\ref{rel1}) we have

\begin{equation}\label{rel3}
\dpt(L(\mu))=d+\ell(\lambda_s-\lambda)+\ell(\mu-\lambda_s)
\end{equation}

\begin{equation}\label{rel4}
\dpt(L(\nu))=d+\ell(\lambda_t-\lambda)+\ell(\nu-\lambda_t)
\end{equation}

Then the desired assertion follows from  (\ref{rel3})-(\ref{rel4}).

Now the statement (1) of the theorem holds due to the definition of blocks in Subsection \ref{def of block}.

For (2), this is a direct consequence of (1).
\qed

Because $L(\lambda)$ (resp. $\Delta(\lambda)$, $P(\lambda)$) is generated by $v^0_\lambda,$ which is a maximal vector of $L^0(\lambda)$,
the super structure of $L(\lambda)$ (resp. $\Delta(\lambda)$, $P(\lambda)$) is completely determined by the predefined parity $|v^0_\lambda|$ of $v^0_\lambda$. By abuse of the notions and notations with the context being clear, we say that $L(\lambda)$ is of parity $|v^0_\lambda|$, denote $\pty(L(\lambda)):=|v^0_\lambda|$, or write $L(\lambda)=L(\lambda)^\iota$ for $\iota=|v^0_\lambda|$.
Meanwhile, we have the following parity Lemma.

\begin{lem} \label{pty lem} (\textsc{Parity Lemma}) Keep the notations as above. The following statements hold.
\begin{itemize}	
		\item[(1)] If $L(\mu)$ and $L(\nu)$ are in the same block, then $|v^0_\mu|-|v^0_\nu|=\overline{\ell(\mu-\nu)}$ where $\overline{\ell(\mu-\nu)}\in\bbz_2$ denotes the parity of $\ell(\mu-\nu)$.
		\item[(2)] 	For any $\lambda\in \Lambda^+$, and different parities $\iota_1, \iota_2\in \bbz_2$,
		$L(\lambda)^{\iota_1}$ and $L(\lambda)^{\iota_2}$ do not lie in the same block.
\end{itemize}
	\end{lem}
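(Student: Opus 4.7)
The plan is to mirror the proof of the Depth Lemma (Lemma \ref{dpt lem}) step by step, replacing $\bbz$-degree bookkeeping with $\bbz_2$-parity bookkeeping. The key structural observation I will exploit is that for each $\ggg\in\{W(n),\bar S(n),\bar H(n),\ochn\}$ the graded piece $\ggg_j$ lies inside $\ggg_{\bar j}$: indeed, a monomial $f_iD_i\in W(n)_j$ has $f_i\in\Lambda(n)_{j+1}$ of parity $\overline{j+1}$ and $D_i$ of parity $\bar 1$, so $f_iD_i$ has parity $\bar j$; the graded subalgebras inherit this and the added toral element $\sfd$ is even. Consequently any $\bbz$-homogeneous element of $U(\ggg_{\geq 1})_i$ has parity $\bar i$, so every vector $v\in U(\ggg_{\geq 1})_i\otimes L^0(\lambda)\subseteq \Delta(\lambda)$ satisfies $|v|=\bar i+|v^0_\lambda|$.

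First I would establish the analogue of Claim I from the Depth Lemma: if $(\Delta(\lambda):L(\mu))\neq 0$ and $(\Delta(\lambda):L(\nu))\neq 0$, then $\pty(L(\mu))-\pty(L(\nu))=\overline{\ell(\mu-\nu)}$. Take maximal vectors $v_\mu$ and $v_\nu$ representing the composition factors in suitable subquotients of $\Delta(\lambda)$; because $\sfd\in\bar\hhh$ measures the grading and each $v_\mu,v_\nu$ is a $\bar\hhh$-weight vector, each of them is $\bbz$-homogeneous and, by the very depth calculation already carried out in the proof of Lemma \ref{dpt lem}, lies in $U(\ggg_{\geq 1})_{\ell(\mu-\lambda)}\otimes L^0(\lambda)$ (respectively $U(\ggg_{\geq 1})_{\ell(\nu-\lambda)}\otimes L^0(\lambda)$). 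The parity identity above then gives $\pty(L(\mu))=\overline{\ell(\mu-\lambda)}+\pty(L(\lambda))$ and similarly for $\nu$, and subtracting (together with additivity of $\ell$) yields the claim.

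Next I would run the analogue of Claim II for $I(\lambda)$. For the $\Delta$-flag $I(\lambda)=M_1\supseteq M_2\supseteq\cdots\supseteq M_l\supseteq 0$ with $M_s/M_{s+1}\cong\Delta(\lambda_s)$, the top of each $\Delta(\lambda_s)$ is represented by a vector in $U(\ggg_{\geq 1})_{\ell(\lambda_s-\lambda)}\otimes L^0(\lambda)$, whence $\pty(L(\lambda_s))=\overline{\ell(\lambda_s-\lambda)}+\pty(L(\lambda))$; combining with the $\Delta$-case just proved gives $\pty(L(\mu))-\pty(L(\lambda))=\overline{\ell(\mu-\lambda)}$ for every composition factor $L(\mu)$ of $I(\lambda)$, and in particular of its direct summand $P(\lambda)$ (Theorem \ref{projective thm}). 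Telescoping this equality along a linking chain of projective covers in the definition of $\sim$ then yields (1) by additivity of $\ell$. Statement (2) is immediate: if $L(\lambda)^{\iota_1}$ and $L(\lambda)^{\iota_2}$ lay in the same block, (1) would force $\iota_1-\iota_2=\overline{\ell(0)}=\bar 0$.

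The only nontrivial technical point is the parity-grading compatibility $\ggg_j\subseteq\ggg_{\bar j}$; once this is recorded, the argument is a verbatim transcription of the Depth Lemma with the scalar $i\in\bbz$ replaced by its reduction $\bar i\in\bbz_2$, so I anticipate no further obstacle.
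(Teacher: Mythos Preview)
Your proposal is correct and follows exactly the approach the paper intends: the paper's proof reads in its entirety ``By arguments similar to the proof of Lemma \ref{dpt lem}, the lemma is readily justified,'' and you have faithfully spelled out that similarity, correctly isolating the one new ingredient $\ggg_j\subseteq\ggg_{\bar j}$ that translates degree shifts into parity shifts. One minor imprecision: in your Claim II step the top of $\Delta(\lambda_s)$ inside $I(\lambda)$ sits in $\bigwedge^{j}\ggg_{-1}\otimes L^0(\lambda)$ (with $j=-\ell(\lambda_s-\lambda)$) rather than in $U(\ggg_{\geq 1})_{\ell(\lambda_s-\lambda)}\otimes L^0(\lambda)$, but since $\ggg_{-1}\subseteq\ggg_{\bar 1}$ the parity conclusion $\pty(L(\lambda_s))=\overline{\ell(\lambda_s-\lambda)}+\pty(L(\lambda))$ is unaffected.
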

\begin{proof} By arguments similar to the proof of Lemma \ref{dpt lem},  the lemma is readily justified.
\end{proof}

\subsection{Blocks of $\comi$ for $\ggg=W(n)$ or $\bar S(n)$} \label{block WS sec}
In this subsection, we focus our concern on $W(n)$ and $\bar S(n)$. Recall the notation $\Xi=\sum_{i=1}^n\epsilon_i$.
%Let $V$ be an object of $\comi$ and $Q$ be the root lattice of $W(n)$.
Let
$\lambda=\lambda_{1}\epsilon_{1}+
\lambda_{2}\epsilon_{2}+\cdots+\lambda_{n}\epsilon_{n}$ {\color{red}be} an element of $\Lambda^+$.
%By Theorem \ref{projective thm}, $L(\lambda)$ admits a projective cover $P(\lambda)$ in $\comi$.
%Recall that by Theorem \ref{basic proj citedthm}, there are projective covers $P_{\geq d'}(\lambda)$ in $\comidd$ for any $d'\leq d$. Using Theorem \ref{basic proj citedthm} again, we know that
%there exists some $l\ll 0$ satisfying that $P(\lambda)=P_{\geq l}(\lambda)=P_{\geq l-i}(\lambda)$ for all $i\in \bbz_{\geq 0}$ (certainly, suppose $d>l$).
Write $\lambda$ in the following form
\begin{align}\label{WS weight form}
\lambda&=\lambda_{n}\Xi+(\lambda_{1}-\lambda_{n})\epsilon_{1}+\cdots+(\lambda_{n-1}-\lambda_{n})\epsilon_{n-1}\cr
&=\lambda_n(\epsilon_{1}+\epsilon_{2}+\cdots+\epsilon_{n})+\alpha,
\mbox{~ with~}\lambda_n\in\bbc,\; \alpha\in Q^+:=(\sum_{i=1}^{n-1}\bbz_{\geq 0}\epsilon_i)\cap \Lambda^+. \cr
\end{align}
Denote by $Q$ the root lattice of $\ggg$ with respect to the root system $\Phi(\ggg)$ (see \S\ref{root sys}). Then set
\begin{align*}
\comi(c,i)=\{L(\lambda)\in\bfe \mid \lambda\in (c+\bbz)\Xi+Q,\; &\dpt(L(c\Xi))=i;\\
&\dpt(L(\lambda))=i+\ell(\lambda-c\Xi)
\}.
\end{align*}
It further splits into
$$\comi(c,i)=\comi(c,\bz,i)\cup \comi(c,\bo,i)
$$
where
\begin{align*}
\comi(c,\iota,i)=\{L(\lambda)\in\comi(c,i)\mid\; &\pty(L(c\Xi))=\iota;\\ &\pty(L(\lambda))=\iota+\overline{\ell(\lambda-c\Xi)} \}
\end{align*}
for $\iota\in \bbz_2$. Here $\overline{\ell(\lambda-c\Xi)}\in \bbz_2$ denotes the parity of $\ell(\lambda-c\Xi)$.

Let $\mu=\mu_{1}\epsilon_{1}+\mu_{2}\epsilon_{2}+\cdots+\mu_{n}\epsilon_{n}$ be an element of $\bar\hhh^*$ for $\ggg=X(n)$ with $X\in \{W,\bar S\}$.  We define the height of $\mu$, which is denoted by $\hht(\mu)$, as $\hht(\mu)=\sum_{i=1}^{n}\mu_{i}$.

\begin{thm}\label{block thm WS}
	Assume that  $\ggg=X(n)$ with $X\in \{W,\bar S\}$. The complete set of all different blocks in $\comi$ is described as follows
	$$\{\comi(c,\iota, i)\mid (c,\iota,i)\in \mathbb{C}/\mathbb{Z}\times \bbz_2\times \bbz \}.$$

\end{thm}
\begin{proof} %We divide the arguments into two cases, according to $X(n)$ with $X\in\{W, \bar S\}$  or with $X=\bar H$.
%(Case 1) Assume $\ggg=X(n)$ with $X\in \{W, \bar S\}$.
Firstly, we will prove that simple objects belonging to $\comi(c,\iota, i)$ are indeed in the same block.

 For any given $L(\lambda)\in \comi(c,\iota,i)$, naturally $\lambda\in \Lambda^+$. By (\ref{WS weight form}) and Corollary \ref{height one lem}(2), we can write $\lambda=c\Xi+\alpha$ for some $\alpha\in Q^+$ without loss of generality.
 We will prove that $L(\lambda)$ lies in the block where $L(c\Xi)$ lies by induction on $\hht(\alpha)$.

  When $\hht(\alpha)=0$, then $\alpha=0$ because $\alpha\in Q^+.$ So the conclusion is true.

  When $\hht(\alpha)>0$, suppose that the conclusion has been true for the situation of being less than $\hht(\alpha)$.
  Assume $\alpha=\sum_{i=1}^{n-1}a_i\epsilon_i$ with $a_i\in\bbz_{\geq 0}.$ Then there exists $1\leq t\leq n-1$ such that $a_t>a_{t+1}$, where we make convention that $a_n=0$. Take $\beta=
   \sum_{k=t+1}^{n}\epsilon_{k}$. Consider $\lambda':=\lambda+\beta$ which lies in $\Lambda^+$.
    By Corollary \ref{height one lem}(2), $L(\lambda')$ and  $L(\lambda'-\beta)=L(\lambda)$ lie in the same block. Note that  $\alpha+\beta=\sum_{i=1}^{t} a_i\epsilon_i+ \sum_{k=t+1}^{n} (a_k+1)\epsilon_i=\Xi+\gamma$, where $\gamma=\alpha-\sum_{i=1}^t\epsilon_i\in Q^+$. So we have $\lambda'=(c+1)\Xi+\gamma$.
 By Corollary  \ref{height one lem}(2), $L(\lambda')$ and $L(c\Xi+\gamma)$
   lie in the same block.
Furthermore, $\hht(\gamma)<\hht(\alpha)$.
%Note that $L(\lambda+\beta)\in \bfe$, as in the proof of Corollary \ref{height one lem}, we can further choose such a $\beta$ satisfying that $\alpha+\beta$ is still in $\bbz_{\geq 0}^n$.
Thus, by inductive hypothesis, $L(c\Xi+\gamma)$ and $L(c\Xi)$ already lie in the same block.
Hence, $L(\lambda)$ and $L(c\Xi)$ finally turn out to lie in the same block.

Secondly, for any $L(\lambda)\in\bfe$, we see that $L(\lambda)\in \comi(c,\iota,i)$ for some $c\in\mathbb{C}, \iota\in\mathbb{Z}_2, i\in\mathbb{Z}$ by (\ref{WS weight form}). Moreover, we will prove that if a simple object $L(\mu)$ lies in the block where  $L(c\Xi)^{\gamma}_i$ lies, then $L(\mu)$ must lie in $\comi(c,\iota,i)$.  For this, we only need to note the following two facts:

(i) For any indecomposable projective module $P(\lambda)$ with $\lambda=c\Xi+\alpha$ for $c\in\bbc$ and $\alpha\in Q$,  and its  composition factor $L(\mu)$, by Remark \ref{proj remark}  we have $\mu-\lambda\in \sum_{i=1}^n\bbz \epsilon_i$, thereby $\mu\in c\Xi+Q$.

(ii) If $L(\mu')$ is a composition factor of $P(\lambda')$ and $\mu'\in c\Xi+Q$. By Theorem \ref{projective thm}, $P(\lambda')$ is a direct summand of $I(\lambda')$ and $L(\mu')$ is a composition factor of $I(\lambda')$. Hence $\lambda'\in c\Xi+Q$.

Thus, by the definition of blocks and taking Depth Lemma and Parity Lemma into account, we have proven that if a simple object $L(\mu)$ lies in the block where  $L(c\Xi)^{\iota}_i$ lies, then $L(\mu)$ must lie in $\comi(c,\iota,i)$.
The proof  is completed.
\end{proof}

\vskip-5pt

\subsection{Blocks of $\comi$ for $\ggg=\bar H(n)$}\label{block H sec}
In this case, $n=2r$ or $n=2r+1$.
% and $\epsilon_{i+r}|_{\hhh}=-\epsilon_i|_{\hhh}$ for $i=1,\cdots,r$.
Recall that the notation ${\aleph_r}\in\{0,1\}$ satisfies ${\aleph_r}\equiv r\mod 2$ for $H(2r)$.  And recall  that there is a standard dual $\delta$ of $\sfd$ in $\bar\hhh=\hhh+\bbc\sfd$.
Let $\lambda=\lambda_{1}\epsilon_{1}+\lambda_{2}\epsilon_{2}+\cdots+\lambda_{r}\epsilon_{r}+c\delta$ be an element of $\bar\hhh^*,$ We define the height of $\lambda$, which is denoted by $\hht(\lambda)$, as $\hht(\lambda)=\sum_{i=1}^{r}\lambda_{i}.$

Recall the notations  $\Theta_{D,\aleph_r}=\epsilon_1+\cdots+\epsilon_{r-1}+\epsilon_r+{\aleph_r}\delta$ for $\bar H(2r)$, and $\Theta_B=\epsilon_1+\cdots+\epsilon_{r-1}+\epsilon_r$ for $\bar H(2r+1)$.
For $\lambda\in \Lambda^+\subset \bar\hhh^*$, it can be further presented as
\begin{align}\label{H weight form-1}
\lambda
=\begin{cases}\lambda_r\Theta_{D,{\aleph_r}}+c\delta+\sum_{i=1}^{r-1} (\lambda_i-\lambda_r)\epsilon_i, &\mbox{ if }n=2r;\\
\lambda_r\Theta_B+c\delta+\sum_{i=1}^{r-1} (\lambda_i-\lambda_r)\epsilon_i,&\mbox{ if }n=2r+1
\end{cases}
\end{align}
satisfying that $\sum_{i=1}^{r-1} (\lambda_i-\lambda_r)\epsilon_i\in \sum_{i=1}^{r-1}\bbz_{\geq 0}\epsilon_i\cap \Lambda^+$  for both $\bar H(2r)$ and $\bar H(2r+1)$.
So for $\lambda\in \Lambda^+$, by (\ref{H weight form-1}) we can write
\begin{align}\label{H weight form}
\lambda=c\delta+d\Theta+\alpha  \mbox{~with~} c,d\in\bbc,\;\alpha=\gamma+\hht(\gamma)\delta
\end{align}
where $\gamma\in Q^+:=\sum_{i=1}^{r-1}\bbz_{\geq 0}\epsilon_i\cap \Lambda^+$ and $\gamma+\hht(\gamma)\delta\in Q:=\bbz\Phi(\ggg)$, here $\bbz\Phi(\ggg)$ denotes the root lattice of $\ggg$.

In the following, {\sl{we will simply write $\Theta=\Theta_B$ or $\Theta_{D,{\aleph_r}}$ according to the situation $n=2r+1$ or $n=2r$ respectively.}}

\begin{lem}\label{exp uni} (\textsc{Independence Lemma})\label{indep lem} Let $\ggg=\bar H(n)$ and  $\lambda\in \Lambda^+$. Then the expression  of $\lambda$ in (\ref{H weight form}) is unique.
\end{lem}
\begin{proof} Suppose $\lambda=c_i\delta+d_i\Theta+\alpha_i$, $i=1,2$. We need to prove that $c_1=c_2$, $d_1=d_2$ and $\alpha_1=\alpha_2$. We know $d_1=d_2=\lambda_r$. So we have $\lambda-\lambda_r\Theta=c_1\delta+\alpha_1=c_2\delta+\alpha_2$,
Hence $(c_1-c_2)\delta+(\alpha_1-\alpha_2)=0$. According to (\ref{H weight form}), assume that $\alpha_i=\gamma_i+\hht(\gamma_i)\delta$, $i=1,2$ with $\gamma_i\in Q^+$. Then $(c_1+\hht(\gamma_1)-c_2-\hht(\gamma_2))\delta=\gamma_2-\gamma_1$. Since $\gamma_1,\gamma_2\in Q^+,$ $\gamma_2-\gamma_1=0,$
we have $\gamma_1=\gamma_2$. Consequently, $\alpha_1=\alpha_2$ and $c_1=c_2$.
\end{proof}

\subsubsection{Case $H(2r+1)$}
In this case $\tilde \delta=\delta$ and $\Theta=\Theta_B=\sum_{i=1}^r\epsilon_i$.
Recall that $Q=\bbz\Phi(\ggg)$ is the root lattice of $\ggg$. By Lemma \ref{indep lem} it does make sense to set
\begin{align*}
\comi(c,d,i)=\{L(\lambda) \mid \lambda\in  (c+\bbz)\delta+(d+\bbz)\Theta+Q, \; &\dpt(L(c\delta+d\Theta))=i;\\
&\dpt(L(\lambda))=i+\ell(\lambda-c\delta-d\Theta)
\}.
\end{align*}
It further splits into
$$\comi(c,d,i)=\comi(c,d,\bz,i)\cup \comi(c,d,\bo,i)
$$
where
\begin{align*}
\comi(c,d,\iota,i)=\{L(\lambda)\in\comi(c,d,i)\mid \; &\pty(L(c\delta+d\Theta))=\iota;\\ &\pty(L(\lambda))=\iota+\overline{\ell(\lambda-c\delta-d\Theta)} \}
\end{align*}
for $\iota\in \bbz_2$. Here $\overline{\ell(\lambda-c\delta-d\Theta)}\in \bbz_2$ denotes the parity of $\ell(\lambda-c\delta-d\Theta)$.

\begin{thm}\label{block thm H odd} Assume $\ggg=\bar H(2r+1)$. The complete set of all different blocks in $\comi$ is listed as follows
	$$\{\comi(c,d,\iota, i)\mid (c,d,\iota,i)\in (\mathbb{C}/\mathbb{Z})^2\times \bbz_2\times \bbz \}.$$
\end{thm}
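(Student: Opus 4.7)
The plan is to mirror the argument for Theorem~\ref{block thm WS}, adapted to the $B_r$-type structure of $\ggg_0'$ when $\ggg=\bar H(2r+1)$. By Lemma~\ref{indep lem}, every $\lambda\in\Lambda^+$ admits a unique expression $\lambda=c\delta+d\Theta+\gamma+\hht(\gamma)\delta$ with $\gamma\in Q^+\cap\sum_{i=1}^{r-1}\bbz_{\geq 0}\epsilon_i$. First I will show that every $L(\lambda)\in\comi(c,d,\iota,i)$ lies in the block of $L(c\delta+d\Theta)_i^\iota$, proceeding by induction on $\hht(\gamma)$. The base case $\hht(\gamma)=0$ forces $\gamma=0$, and Corollary~\ref{height one lem}(2)-(3) (shifts by $\tilde\delta=\delta$ and $\Theta_B$) absorb any integer discrepancy between $c\delta+d\Theta$ and the stated representative.

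For the inductive step with $\gamma\neq 0$, write $\gamma=\sum_{i=1}^{r-1}a_i\epsilon_i$ with $a_1\geq\cdots\geq a_{r-1}\geq 0$, put $a_r:=0$, select the smallest $t$ with $a_t>a_{t+1}$, and set $\beta:=\sum_{k=t+1}^{r}\epsilon_k+(r-t)\delta$. Then $-\beta\in\wt(\bigwedge^{r-t}(\ggg_{-1}))$ (realized as the sum of the weights $-\epsilon_k-\delta$ for $t+1\leq k\leq r$), and the choice of $t$ guarantees $\lambda':=\lambda+\beta\in\Lambda^+$. The key technical claim is $\lambda\in\Upsilon(\lambda')$; granted this, Corollary~\ref{height one lem}(1) yields $L(\lambda)\sim L(\lambda')$. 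Re-expressing $\lambda'$ via Lemma~\ref{indep lem}, its residual $\gamma'$-component has height $\hht(\gamma)-t<\hht(\gamma)$, while its $(c,d)$-data is shifted only by integers, so the induction hypothesis combined with Corollary~\ref{height one lem}(2)-(3) completes the step.

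For the reverse inclusion, given $L(\mu)$ in the block of $L(c\delta+d\Theta)_i^\iota$, I will chase chains of common composition factors of projective covers; by Remark~\ref{proj remark}, every composition factor of any $P(\nu)$ has weight in $\nu+Q$, forcing $\mu\in c\delta+d\Theta+Q$. Because $\delta$ is a root of $\bar H(2r+1)$ and $\Theta_B=(\Theta_B+(r-2)\delta)-(r-2)\delta$ is a $\bbz$-combination of roots, both $\delta$ and $\Theta_B$ lie in $Q$, making the defining coset condition $\mu\in(c+\bbz)\delta+(d+\bbz)\Theta+Q$ equivalent to $\mu\in c\delta+d\Theta+Q$ and thus satisfied. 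The Depth Lemma~\ref{dpt lem}(1) and the Parity Lemma~\ref{pty lem}(1) give the required depth and parity conditions, while distinctness of the parameters follows from the disjointness of cosets $c\delta+d\Theta+Q$ modulo integer shifts together with Lemmas~\ref{dpt lem}(2) and~\ref{pty lem}(2). The principal obstacle is the $\Upsilon$-membership claim $\lambda\in\Upsilon(\lambda')$: unlike in the $A$-type cases $W(n),\bar S(n)$, in type $B_r$ one must verify that the candidate $\lambda$-weight line inside $\bigwedge^{r-t}(\ggg_{-1})\otimes L^0(\lambda')$ actually carries an $\nnn^+$-singular vector generating an irreducible $\ggg_0$-summand isomorphic to $L^0(\lambda)$, which I plan to settle by an explicit highest-weight-vector construction together with the $B_r$ Pieri rule for $\bigwedge^{k}(V^*)\otimes L^0(\lambda')$.
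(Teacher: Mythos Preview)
Your proof is correct, but differs from the paper's in the inductive step. You adapt the $W(n)$ argument directly by setting $\lambda'=\lambda+\sum_{k=t+1}^r\epsilon_k+(r-t)\delta$ and showing $\lambda\in\Upsilon(\lambda')$; as you note, the relevant weight $-\sum_{k=t+1}^r\epsilon_k-(r-t)\delta$ of $\bigwedge^{r-t}\ggg_{-1}$ is extremal but not highest in type $B_r$, so this step needs Kumar's PRV theorem \cite{Kum88} (which the paper already invokes for $\bar S(n)$ in Proposition~\ref{prop for bolck S(n)}, so no genuinely new ingredient is required---you do not need a full Pieri rule). The paper instead takes $\beta=\sum_{i=1}^t\epsilon_i-(n-t)\delta$, which \emph{is} the highest weight of $\bigwedge^{n-t}\ggg_{-1}$ (via $\bigwedge^{n-t}V\cong\bigwedge^tV$ for $\mathfrak{so}(V)$), making $\lambda+\beta\in\Upsilon(\lambda)$ immediate as the Cartan component (cf.\ Remark~\ref{proj remark}(3)($4^\circ$)); it then applies the same Cartan-component step with $\lambda$ replaced by $\lambda-\beta$ to conclude $L(\lambda)\sim L(\lambda-\beta)$, and it is $\lambda-\beta$ whose residual height drops. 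Thus the paper avoids PRV at the cost of an extra symmetry (``arbitrariness'') step, while your route is more direct but leans on a deeper result. Your height computation and the reverse-inclusion half of the argument match the paper's.
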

\begin{proof} We will take the same strategy as the proof of Theorem \ref{block thm WS}.
	 For any given $L(\lambda)\in \comi(c,d,\iota,i)$, we first prove that $L(\lambda)$ lies in the block where $L(c\delta+d\Theta)$ lies.  By Corollary \ref{height one lem} and Lemma \ref{exp uni}, we can write  $\lambda=c\delta+d\Theta+\alpha$ for some $\alpha=\gamma+\hht(\gamma)\delta\in Q$ with $\gamma=\sum_{i=1}^{r-1} a_i\epsilon_i\in Q^+$. By definition, we know  $\hht(\alpha)=\hht(\gamma)\geq0$.
Thus, we will  accomplish the proof by taking induction on $\hht(\alpha)$. When $\hht(\alpha)=0$, then $\alpha=0$ because $\gamma=\sum_{i=1}^{r-1} a_i\epsilon_i\in Q^+$. So the statement holds.

Suppose $\hht(\alpha)>0$, and suppose that the conclusion has been true for the situation of being less than $\hht(\alpha)$.  In this case, we can write
$\gamma=\sum_{i=1}^{r-1} a_i\epsilon_i$ with $a_i\in \bbz_{\geq 0}$ such that $a_1\geq a_2\geq \cdots \geq a_{r-1} \geq a_r=0.$
 Because  $\hht(\alpha)>0$ and $a_r=0,$ there exists at least one $t\in\{1,\ldots,r-1\}$  satisfying  $a_t>a_{t+1}$. Take $\beta=\sum_{i=1}^t\epsilon_i-(n-t)\delta$ and $\lambda'=\lambda-\beta.$  Because $a_t>a_{t+1}$, $\lambda'\in\Lambda^+.$
By Corollary \ref{height one lem}(4),  $L(\lambda')$ and $L(\lambda'+\beta)=L(\lambda)$ share the same block.
On the other hand, $\lambda'= c\delta+d\Theta+(\alpha-\beta)$ with $\alpha-\beta= (\gamma-\sum_{i=1}^t\epsilon_i)+(\hht(\gamma)+n-t)\delta$. Obviously, $\gamma-\sum_{i=1}^t\epsilon_i\in Q^+$ and
$\hht(\alpha-\beta)=\hht(\gamma)-t<\hht(\alpha)$.
%Note that $L(\lambda+\beta)\in \bfe$, as in the proof of Corollary \ref{height one lem}, we can further choose such a $\beta$ satisfying that $\alpha+\beta$ is still in $\bbz_{\geq 0}^n$.
Thus, $L(\lambda')$ and $L(c\delta+d\Theta)$ lie in the same block by inductive hypothesis. Hence, $L(\lambda)$ and $L(c\delta+d\Theta)$ finally lie in the same block.

Conversely, we have the following clear observation.

(i) Let  $P(\lambda)$ be any indecomposable projective module where  $\lambda=c\delta+d\Theta+\alpha$ with $c,d\in\bbc$ and $\alpha\in Q$.
By the construction of $P(\lambda)$ (Remark \ref{proj remark}(1)), all weights  of $P(\lambda)$ are in  $\lambda+\bbz\delta+Q$.
So if $L(\mu)$ is a composition factor of $P(\lambda)$, then $\mu\in (c+\bbz)\delta+(d+\bbz)\Theta+Q$.

(ii) If $L(\mu')$ is a composition factor of $P(\lambda')$ and $\mu'\in (c+\bbz)\delta+(d+\bbz)\Theta+Q$, then by Remark \ref{proj remark} again, we have $\lambda'\in (c+\bbz)\delta+(d+\bbz)\Theta+Q$.

Thus, by the definition of blocks and taking Depth Lemma and Parity Lemma into account, we have proven that if a simple object $L(\mu)$ lies in the block where  $L(c\delta+d\Theta)^{\iota}_i$ lies, then $L(\mu)$ must lie in $\comi(c,d,\iota,i)$.
The proof  is completed.
\end{proof}
\subsubsection{Case $\bar H(2r)$}\label{2r subsub}
In this case, $\Theta=\Theta_{D,\aleph_r}$.
Recall that $\ggg$ admits the root lattice $Q$ (see \S\ref{block WS sec}). In contrast with the block structure of $H(2r+1)$, there is a crucial difference in the case of $H(2r)$, that is, $L(\lambda)$ and $L(\lambda+\delta)$ do not lie in the same block. The following lemma is a clue to it.

\begin{lem}\label{no delta} Let $\ggg=\bar H(2r)$. Then the following statements hold.
\begin{itemize}
%\item[(1)] The root system is
%\begin{align*}
%\Phi=\{ \pm\epsilon_{i_1}\pm\cdots\pm \epsilon_{i_k} +l\delta\mid \;&1\leq i_1<i_2 %<\cdots< i_k \leq r;\\
 %&k-2<l<n-2,  l-k \in 2\bbz\}.
%\end{align*}

\item[(1)] The root lattice $Q$ contains $\pm2\delta$, but does not contain $\pm\delta$.

\item[(2)]
	If $L(\mu)$ and $L(\nu)$ are in the same block, then $(\mu-\nu)\in Q.$ In particular,
	$L(\lambda)$ and $L(\lambda\pm \delta)$ can not belong to the same block.

\item[(3)] Let $\beta=\beta_1\epsilon_{1}+\cdots+\beta_r\epsilon_r$ be an element of $Q\cap \Lambda^+.$ Then there exist $m\in\bbz$ and $\gamma\in Q^+$ such that
$$\beta=2m\delta+\beta_r\Theta+\gamma+\hht(\gamma)\delta.$$
\end{itemize}
\end{lem}

\begin{proof} (1) Recall that the root system is
\begin{align*}
\Phi=\{ \pm\epsilon_{i_1}\pm\cdots\pm \epsilon_{i_k} +l\delta\mid \;&1\leq i_1<i_2 <\cdots< i_k \leq r;\\
 &k-2<l<n-2,  l-k \in 2\bbz\}.
\end{align*}
 It is easily seen that $\pm\delta$ does not appear in the $\bbz$-linear combinations of roots.

(2) Consider $I(\lambda)$. Any of its weights is of the form $\lambda+\alpha$ for some $\alpha\in Q$.
Because $P(\lambda)$ is a direct summand of $I(\lambda)$, if $L(\mu)$ and $L(\nu)$ are two composition factors of $P(\lambda),$ then $(\mu-\nu)\in Q.$ The statement (2) follows due to the definition of blocks and the statement (1).
	
(3) Since $\epsilon_{1}-\delta,\epsilon_{2}-\delta,\cdots,\epsilon_{r}-\delta,2\delta$ belong to $Q,$ we can check that $\epsilon_1+\cdots+\epsilon_{r-1}+\epsilon_r+{\aleph_r}\delta\in Q.$
Hence,
\begin{align}\label{qq}
&\beta-\beta_r\Theta\nonumber\\
=&(\beta_1-\beta_r)\epsilon_{1}+(\beta_2-\beta_r)\epsilon_{2}+\cdots+(\beta_{r-1}-\beta_r)\epsilon_{r-1}+(\beta_1+\cdots+\beta_{r-1}
-(r-1)\beta_r)\delta\nonumber\\
&+(-\beta_1-\cdots-\beta_{r-1}
+(r-1)\beta_r-\beta_r\aleph_r)\delta\in Q\cap \Lambda^+.
    \end{align}
Write $\gamma:=(\beta_1-\beta_r)\epsilon_{1}+(\beta_2-\beta_r)\epsilon_{2}+\cdots+(\beta_{r-1}-\beta_r)\epsilon_{r-1}\in Q^+$ and $\gamma_i:=\beta_i-\beta_r$. Since
\begin{equation*}
\beta=\beta_1\epsilon_1+\cdots +\beta_r\epsilon_r\in Q,
\end{equation*}
by (1) we see that $\beta_1+\cdots+\beta_{r}$ is even.  Then there exists $m\in\bbz$ such that  $$-\beta_1-\cdots-\beta_{r-1}
+(r-1)\beta_r-\beta_r\aleph_r=-\beta_1-\cdots-\beta_{r}+\beta_r(r-\aleph_r)=2m. $$
By (\ref{qq}), we have
$$\beta-\beta_r\Theta=\gamma+\hht(\gamma)\delta+2m\delta$$
The statement (3) follows.
\end{proof}

%Consider a $\bbc$-vector space $\bbc_D:=\bbc\delta+\bbc Q$, and its $\bbz$-lattice
%$$\Gamma_D=2\bbz\delta+\bbz Q.$$
%Then there is an equivalence  in $\bbc_D$ via $\Gamma_D$, denoted by $\bbc_D\slash \Gamma_D$. Set $\tilde c$ or $\tilde c\delta$ to be the representative of $c\delta$ ($c\in \bbc$) in $\bbc_D$ under the equivalence mentioned above.
By Lemma \ref{indep lem} it does make sense to set
\begin{align*}
\comi(c,d,i)=\{L(\lambda) \mid \;&\lambda\in  (c+2\bbz)\delta+(d+\bbz)\Theta+Q, \\ &\dpt(L(c\delta+d\Theta))=i;\\
&\dpt(L(\lambda))=i+\ell(\lambda-c\delta-d\Theta)
\}.
\end{align*}
It further splits into
$$\comi(c,d,i)=\comi(c,d,\bz,i)\cup \comi(c,d,\bo,i)
$$
where
\begin{align*}
\comi(c,d,\iota,i)=\{L(\lambda)\in\comi(c,d,i)\mid \; &\pty(L(c\delta+d\Theta))=\iota;\\ &\pty(L(\lambda))=\iota+\overline{\ell(\lambda-c\delta-d\Theta)} \}
\end{align*}
for $\iota\in \bbz_2$. Here $\overline{\ell(\lambda-c\delta-d\Theta)}\in \bbz_2$ denotes the parity of $\ell(\lambda-c\delta-d\Theta)$.

\begin{thm}\label{block thm H even} Assume $\ggg=\bar H(2r)$. The complete set of all different blocks in $\comi$ is listed as follows
	$$\{\comi(c,d,\iota, i)\mid (c,d,\iota,i)\in  \bbc\slash 2\bbz \times \bbc\slash \bbz\times\bbz_2\times \bbz \}.$$
\end{thm}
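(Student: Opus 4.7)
The plan is to mirror the proof of Theorem \ref{block thm H odd}, with the one substantive modification dictated by Lemma \ref{no delta}(1): for $\bar H(2r)$ the root lattice $Q$ contains $\pm 2\delta$ but not $\pm\delta$, which is precisely why the $c$-parameter must be quotiented by $2\bbz$ rather than by $\bbz$. As before the proof splits into (i) every $\comi(c,d,\iota,i)$ is contained in a single block, and (ii) distinct quadruples $(c,d,\iota,i)$ yield disjoint blocks.

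For (i), I would fix $L(\lambda)\in\comi(c,d,\iota,i)$, invoke Lemma \ref{indep lem} to write $\lambda=c\delta+d\Theta+\alpha$ with $\alpha=\gamma+\hht(\gamma)\delta$ and $\gamma\in Q^+$, and induct on $\hht(\gamma)$. The base case $\gamma=0$ is trivial. When $\gamma\neq 0$, since $\gamma\in\sum_{i=1}^{r-1}\bbz_{\geq 0}\epsilon_i$ has no $\epsilon_r$-component, there exists $t\in\{1,\ldots,r-1\}$ with $a_t>a_{t+1}$ in $\gamma=\sum_i a_i\epsilon_i$. Setting $\beta=\sum_{i=1}^t\epsilon_i-(n-t)\delta$, Remark \ref{proj remark}($4^\circ$) gives $\lambda+\beta\in\Upsilon(\lambda)$, so Proposition \ref{I prop} yields $L(\lambda)\sim L(\lambda+\beta)$; by translating $\lambda$ one also gets $L(\lambda)\sim L(\lambda-\beta)$. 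A direct rewriting produces
\[
\lambda-\beta=(c+n)\delta+d\Theta+\gamma'+\hht(\gamma')\delta,
\]
with $\gamma'=\gamma-\sum_{i=1}^t\epsilon_i\in Q^+$ and $\hht(\gamma')=\hht(\gamma)-t<\hht(\gamma)$. Because $n=2r\in 2\bbz$, one has $c+n\equiv c\pmod{2\bbz}$, so $L(\lambda-\beta)$ still lies in $\comi(c,d,\iota,i)$, and the inductive hypothesis applies to conclude $L(\lambda)\sim L(c\delta+d\Theta)$.

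For (ii), the argument parallels the odd case. By Remark \ref{proj remark} every composition factor $L(\mu)$ of a projective cover $P(\lambda')$ satisfies $\mu\in\lambda'+Q$, so any $Q$-invariant of a weight is automatically a block-invariant. Lemma \ref{no delta}(1) then shows that the $\delta$-coefficient of $\lambda$ is preserved modulo $2\bbz$; a short verification from the root-system description in \S\ref{root sys} (using that $\sum_{i=1}^r\epsilon_i+l\delta$ is a root for suitable $l\equiv\aleph_r\pmod 2$, together with $2\bbz\delta\subset Q$) gives $\Theta=\Theta_{D,\aleph_r}\in Q$, so the $\Theta$-coefficient is preserved modulo $\bbz$. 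Combining these with the Depth Lemma \ref{dpt lem} and the Parity Lemma \ref{pty lem}, distinct quadruples $(c,d,\iota,i)\in\bbc/2\bbz\times\bbc/\bbz\times\bbz_2\times\bbz$ are forced into disjoint blocks.

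The principal subtlety, and the only point where the argument genuinely diverges from Theorem \ref{block thm H odd}, is the arithmetic of the $\delta$-shift in the inductive step: the combinatorial $\beta$ of Remark \ref{proj remark}($4^\circ$) changes the $c$-coordinate by precisely $n=2r$, which is what makes the induction close modulo $2\bbz$. If $\delta$ were in $Q$ one could further quotient and recover the cleaner statement of the odd case; its absence (Lemma \ref{no delta}(1)) is exactly the obstruction that enforces the coarser quotient $\bbc/2\bbz$ and makes the even case genuinely distinct.
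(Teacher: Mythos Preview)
Your proposal is correct and follows essentially the same approach as the paper's own proof: both argue part (i) by induction on $\hht(\gamma)$ using the vector $\beta=\sum_{i=1}^t\epsilon_i-(n-t)\delta$ from Remark \ref{proj remark}($4^\circ$) and Proposition \ref{I prop}, and both reduce part (ii) to the observation that composition factors of $I(\lambda')$ (hence of $P(\lambda')$) have highest weights in $\lambda'+Q$, combined with Lemma \ref{no delta}(1), the Depth Lemma, and the Parity Lemma. The paper's converse direction is phrased slightly more concretely via the explicit $\Delta$-flag of $I(\lambda)$, whereas you package it as a $Q$-coset invariance statement, but the content is the same; one small point of care is that your invocation of Lemma \ref{indep lem} at the start of (i) should really be preceded by the reduction via Corollary \ref{height one lem}(2)(4) (as in the odd case you are mirroring) to normalize the $2\bbz\delta$ and $\bbz\Theta$ shifts before running the induction.
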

\begin{proof} %We will take the same strategy as in the proof of Theorem \ref{block thm WS} to prove  the theorem.
 For any given $L(\lambda)\in \comi(c,d,\iota,i)$, we first prove that $L(\lambda)$ lies in the block where $L(c\delta+d\Theta)$ lies.
 Assume that $\lambda=(c+2m_1)\delta+(d+m_2)\Theta+\beta, \beta\in Q,$ is an element of $\Lambda^+.$ By the expression of $\Theta,$ we can deduce $\beta\in\Lambda^+\cap Q.$ Assume $\beta=\sum_{i=1}^{r}\beta_i\epsilon_{i},$ then $\beta_i\in\bbz.$
By Lemma \ref{no delta}(3),  there exist $m\in\bbz$ and $\gamma\in Q^+$ such that
$\beta=2m\delta+\beta_r\Theta+\gamma+\hht(\gamma)\delta.$ So $\lambda=(c+2m_1+2m)\delta+(d+m_2+\beta_r)\Theta+\gamma+\hht(\gamma)\delta.$
By Corollary \ref{height one lem}(3) and (5),
we can write  $\lambda=c\delta+d\Theta+\alpha$ directly
for some $\alpha=\gamma+\hht(\gamma)\delta\in Q$ with $\gamma=\sum_{i=1}^{r-1} \gamma_i\epsilon_i\in Q^+$ without loss of generality.
%So $(a_1\geq a_2\geq...\geq a_n)\in\bbz^n$.
Thus, we can  accomplish the proof similarly by taking induction on $\hht(\alpha)$.
By taking the same arguments as in the proof of Theorem \ref{block thm H odd} (here we omit the details) we can prove that $L(\lambda)$ and $L(c\delta+d\Theta)$ lie in the same block.
Readers need only to notice that $n=2r$ is even now.

What remains is to prove conversely  that if a simple object $L(\mu)$ lies in the block where  $L(c\delta+d\Theta)^{\iota}_i$ lies, then $L(\mu)$ must lie in $\comi(c,d,\iota,i)$.  For this, it suffices to observe the following facts. %The key point is that we have the following facts.

(i) Let  $P(\lambda)$ be any indecomposable projective module where $\lambda=c\delta+d\Theta+\alpha$ with $c,d\in\bbc$ and $\alpha\in Q$. We claim that any composition factor of $I(\lambda)$, say $L(\mu)$, must belong to the set $\lambda+2\bbz\delta+\bbz\Theta+Q$.
Recall that $I(\lambda)$ admits a $\Delta$-flag with subquotients $\Delta(\tau)$ for $\tau \in \Upsilon(\lambda)$. So $L(\mu)$ must be a composition factor of some $\Delta(\tau)$. By the definition of $\Upsilon(\lambda)$ we can assume $\tau=\lambda-\gamma$ with  $\gamma=\sum_{j=1}^k\pm\epsilon_{i_j}+m\delta,$ where $i_j\in\{1,2,...,r\}$ satisfying $m\geq k$ and $m-k\in 2\bbz$. %By Lemma \ref{no delta}(1), $\tau=(m-k)\delta+\sum_{i=1}^k(\epsilon_{i_j}+\delta)\in 2\bbz\delta+Q$.
So $\tau=\lambda-(\sum_{j=1}^k\pm\epsilon_{i_j}+m\delta)=\lambda-(\sum_{j=1}^k\pm\epsilon_{i_j}+k\delta+(m-k)\delta).$
Thus,  $\tau\in \lambda+2\bbz\delta+Q$.

Next we investigate $L(\mu)$ from $\Delta(\tau)$. Note that by the definition of standard modules, all weights of $\Delta(\tau)$ must lie in $\tau +\bbz_{\geq 0}\Phi(\ggg_{\geq 1})$ where $\Phi(\ggg_{\geq 1})$ meas the root system of $\ggg_{\geq 1}$. So $\mu$ lies in $\tau+2\bbz\delta+\bbz\Theta+Q$.  The claim is true. So the claim is naturally true for $P(\lambda)$.

(ii) If $L(\mu')$ is a composition factor of $P(\lambda')$ and $\mu'\in c\delta+d\Theta+Q$, then $L(\mu')$ is naturally a composition factor of $I(\lambda')$. By the same reason as in (i)  we have $\lambda'\in (c+ 2\bbz)\delta+(d+\bbz)\Theta+Q$.
Thus, by the definition of blocks and taking Depth Lemma and Parity Lemma into account, %we can similarly  prove that if a simple object $L(\mu)$ lies in the block where  $L(c\delta+d\Theta)^{\gamma}_i$ lies, then $L(\mu)$ must lies in $\comi(\tilde c,d,\iota,i)$.
we have that $L(\mu)$ indeed lies in $\comi(c,d,\iota,i)$.
Summing up, we finish the proof.
\end{proof}

\begin{rem} {\rm(1)} According to the proof, it is not hard to see that any irreducible module sharing the same block as  $L(c\delta+d\Theta+\alpha)$ must be of the form $L(\mu)$ with $\mu\in c\delta+d\Theta+Q$.

{\rm(2)} As a direct consequence of the above theorem, we know that $L(\lambda)$ and $L(\lambda\pm \delta)$ do not lie in the same block as mentioned at the beginning of the sub-subsection \S\ref{2r subsub}.

{\rm(3)} On the basis of Proposition \ref{I prop}, one easily knows that  Theorems \ref{block thm H odd} and \ref{block thm H even} are valid in the case  when $\ggg=\ochn$ ($n=2r$ or $n=2r+1$).

%(4) Our block structure theory of  the category $\comi$ for Cartan Lie superalgebras reveals  some Block degenerate property. It should be some  investigated further for this mathematics, in connection with some wider algebraic context of Cartan series.
\end{rem}

\subsection{Application to the category of finite-generated  modules over $\ggg$}

We are going to consider  blocks of the category of finite-generated  modules over $\ggg$. Denote this category by $\gmf$, whose objects are by definition,  finite-generated modules, and whose morphisms are required to be even.

Recall that the forgetful functor $\cf$ (see Remark \ref{def rem}(5))  makes $\comi$ into the $U(\ggg)$-module category  $\cf(\comi)$ whose objects are only subjected to  weighted-structure, and locally-finiteness over $U(\sfp)$. This is to say,  all objects in $\cf(\comi)$ inherit all structures in $\comi$ except $\bbz$-gradation. Then the isomorphism classes of simple objects both in $\cf(\comi)$ and in  $\cf(\comif)$ are parameterized by $\Lambda^+$ respectively, still denoted by $\{L(\lambda)\mid \lambda\in \Lambda^+\}$.

\begin{lem}\label{fin ind proj lem}
\begin{itemize}
\item[(1)] Any object of $\cf(\comif)$ can be naturally regarded as an object in $\comi$. Any morphism in $\cf(\comif)$ can be lifted to  $\comi$.

\item[(2)] For any $P(\lambda)$ in $\comi$, $\cf(P(\lambda))$ is still indecomposable and  projective in $\cf(\comif)$.
    \end{itemize}
\end{lem}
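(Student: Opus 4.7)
\textbf{Part (1).} My plan is to reconstruct the lost $\bbz$-grading from the action of the degree element $\sfd$. Since $\sfd\in\bar\hhh$, the $\bar\hhh$-semisimplicity that survives in $\cf(\comif)$ forces $\sfd$ to act semisimply on every $M\in\cf(\comif)$; moreover, the compatibility relation $[\sfd,x]=ix$ for $x\in\ggg_i$ on any object of $\comi$ shows that each graded piece sits in a single $\sfd$-eigenspace while passing from the $j$-th to the $(j{+}1)$-th piece shifts the $\sfd$-eigenvalue by $1$, so the grading is recorded in the $\sfd$-eigendecomposition up to a global $\bbz$-shift. Concretely, I will split a given $M\in\cf(\comif)$ into sub-objects according to the $\bbz$-cosets of $\sfd$-eigenvalues appearing on $M$, then on each sub-object fix a base eigenvalue and define $M_j$ to be the corresponding $\sfd$-eigenspace; this puts $M$ in $\comi$. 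A morphism $f$ in $\cf(\comif)$ is $\ggg$-linear, hence $\sfd$-linear, and therefore automatically preserves $\sfd$-eigenspaces, so after matching base-eigenvalue choices on source and target it becomes a graded morphism in $\comi$.

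\textbf{Part (2).} I will deduce both assertions from the identification
\[
\End_{\cf(\comif)}\bigl(\cf(P(\lambda))\bigr)\;=\;\End_{\comi}\bigl(P(\lambda)\bigr),
\]
which I plan to establish as follows. The inclusion $\supseteq$ is immediate. For $\subseteq$, an ungraded endomorphism $f$ commutes with $\sfd$, and the graded pieces of $P(\lambda)$ carry pairwise distinct $\sfd$-eigenvalues --- they lie in a single $\bbz$-coset because $P(\lambda)$ is cyclic, generated by its top copy of $L^0(\lambda)$ --- so $f$ preserves every graded piece and is therefore already graded. The argument in the proof of Theorem \ref{projective thm}(v) shows that $\End_{\comi}(I(\lambda))$ is finite-dimensional, hence so is the endomorphism ring of its direct summand $P(\lambda)$; combined with indecomposability of $P(\lambda)$ in $\comi$, Fitting's lemma makes $\End_{\comi}(P(\lambda))$ local. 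The identification then transports locality, and hence indecomposability, to $\cf(P(\lambda))$.

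For projectivity, given a surjection $\pi\colon M\twoheadrightarrow N$ in $\cf(\comif)$ and a morphism $\phi\colon\cf(P(\lambda))\to N$, Part (1) promotes $M$ and $N$ to objects of $\comi$ with gradings chosen so that $\pi$ and $\phi$ become graded morphisms. Projectivity of $P(\lambda)$ in $\comi$ then yields a graded lift $\tilde\phi\colon P(\lambda)\to M$, whose image under $\cf$ is the desired lift of $\phi$. The step that will require the most care, and is the main potential obstacle, is the bookkeeping in Part (1): the $\bbz$-shift ambiguity in the grading has to be made coherent along the whole diagram, and this is handled by labelling graded pieces directly by $\sfd$-eigenvalues, which makes the compatibility across the three modules automatic.
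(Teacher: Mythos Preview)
Your approach is correct and takes a genuinely different route from the paper's. For Part~(1), the paper reconstructs a $\bbz$-grading by induction on the standard length $l(M)$, using the filtration $M=M^{(0)}\supset\cdots\supset M^{(t)}=0$ with subquotients that are quotients of standard modules, and assigning degrees to the successive generators $v^0_{\lambda_i}$ so as to be compatible across the steps. Your $\sfd$-eigenspace argument is more direct and makes the lifting of morphisms automatic (any $\ggg$-linear map commutes with $\sfd$), whereas the paper's inductive scheme requires some care to extend a grading from $\phi(M)$ to all of $N$. One small slip in your motivation: it is not true that every graded piece of an arbitrary $M\in\comi$ lies in a single $\sfd$-eigenspace---take $L(\lambda)_{d_1}\oplus L(\mu)_{d_2}$ with $\lambda(\sfd)-d_1\ne\mu(\sfd)-d_2$---but this does not affect your construction, which only needs to produce \emph{some} grading, and for the cyclic module $P(\lambda)$ (Remark~\ref{proj remark}(1)) the claim is correct, which is exactly what Part~(2) requires.

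For Part~(2), the paper handles projectivity by first lifting $\pi\colon M\to N$ to $\comi$, reading off the degree $d_0$ of $\psi(v_0)$ in $\dot N$, and then shifting $P(\lambda)$ by $d_0$ so that $\psi$ becomes graded; your approach avoids this shift-and-match step by grading all three modules via $\sfd$-eigenvalues simultaneously. Your endomorphism-ring argument for indecomposability is also cleaner than what the paper does (it treats indecomposability of the shifted $P(\lambda)[d_0]$ inside $\comi$, but does not isolate the passage to $\cf(\comif)$). The paper's filtration method would survive in settings lacking a grading element like $\sfd$; yours buys transparency in the present situation where $\sfd\in\bar\hhh$ is available.
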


\begin{proof} (1) For any given object $M$ in $\cf(\comif)$ and any given integer $d$,  we will show that $M$ can be endowed with a $\bbz$-gradation related to $d$. By the same arguments as  (\ref{standard fil 0}) in Theorem \ref{projective thm 2}, we have that $M$ admits a filtration of finite length
\begin{align}%\label{standard fil}
M=M^{(0)}\supset M^{(1)}\supset M^{(2)} \supset\cdots\supset M^{(t-1)}\supset M^{(t)}=0
\end{align}
 such that $M^{(i-1)}\slash M^{(i)}$ is isomorphic to a non-zero quotient of  $\cf(\Delta(\lambda_i))$ associated with  some irreducible $U(\sfp)$-module $L^0(\lambda_i)=U(\nnn^-)v^0_{\lambda_i}$ with $\lambda_i\in \Lambda^+$, $i=1,\cdots, t$, with $t$ being the standard length $l(M)$.
 If $l(M)=1$,  $M=U(\ggg)v^0_{\lambda_1}$ which is easily endowed with a $\bbz$-gradation, provided that $L^0(\lambda_1)$ is predefined to be of grading $d$. In general, we can define such a gradation on $M$ by induction on $l(M)$. Suppose that $t=l(M)>1$, and the gradation is defined already for less than $t$. Especially,  $M^{(1)}$ is supposed to be already endowed with a $\bbz$-gradation associated with $d$, hence all gradations of $v^0_{\lambda_i}$ ($i=2,...,t$) are actually predefined, denoted by $g_i$. For any $m\in M$,  $m\equiv m_1 \mod M^{(1)}$ for $m_1\in U(\ggg)m_{\lambda_1}$ with $m_{\lambda_1}$ being a pre-image of $v^0_{\lambda_1}$. Then we can define the gradation of $m_{\lambda_1}$ to be $g_1$ such that $g_1$ is compatible with $\lambda_i$ for $i=2,...,t$, this is to say, if $\lambda_1-\lambda_i\in Q$, then $g_1=g_i+\ell(\lambda_1-\lambda_i)$. Thus, $m_1$, thereby $M$ is endowed with a $\bbz$-gradation. We have proven the first part of (1).

Suppose that $\phi: M\rightarrow N$ is a homomorphism  in $\cf(\comif)$ . In the way just mentioned above, $M$ can be endowed with a $\bbz$-gradation, thereby we can naturally endow a $\bbz$-gradation on $\phi(M)$ such that $\phi$ is lifted to be a morphism in $\comi$.  Hence we have proven the second part of (1).

(2) Let $P(\lambda)$ be the projective cover of $L(\lambda)\in \bfe$ with $\dpt(L(\lambda))=d$.
Due to Remark \ref{proj remark}(1), we can assume that $P(\lambda)=\sum_{g\in\bbz}P(\lambda)_g$ is generated by some $\lambda$-weighted vector $v_0$ and the grading of $v_0$ is $d$. For any given surjective morphism $\phi:M\rightarrow N$ in $\cf(\comif)$,  and a nonzero morphism $\psi: \cf(P(\lambda))\rightarrow N$ in $\comif$, we want to prove that there is a lift $\bar\psi: \cf(P(\lambda))\rightarrow M$.
We begin with the definition of grading shift functor. Let $L$ be a $\zz$-graded module belonging to $\comi$ and $d\in\zz$. Define a grading shift functor $[d]:L\mapsto L[d],$ such that as a vector space, $L[d]=L$, but the $\zz$-grading of $L[d]$ is changed through $L[d]_i=L_{i-d}.$ We can check that $P(\lambda)[d_0]$ is the projective cover of $L(\lambda)[d_0]\in \bfe.$
By (1), the surjective morphism $\phi:M\rightarrow N$ can be lifted to a surjective morphism in $\comi$ which becomes $\dot\phi: \dot M\rightarrow \dot N$. We suppose that $\psi(v_0)$ has a gradation $d_0$ in $\dot N$. Then by a suitable shift, we can re-endow a $\bbz$-gradation on $\cf(P(\lambda))$ such that $v_0$ is of gradation $d_0$,  getting a new object $\dot P(\lambda)$ in $\comi$. By the arguments in the previous paragraph, we see that  $\dot P(\lambda)$ is still indecomposable and projective in $\comi$. So we really have a morphism $\dot \psi: \dot P(\lambda)\rightarrow \dot N$ in $\comi$. The projectiveness of $\dot P(\lambda)$ entails that there exists a lift $\bar{\dot \psi}:\dot P(\lambda)\rightarrow \dot M$ of $\dot\psi$. After applying the forgetful functor $\cf$, we get the desired lift $\bar \psi$ of $\psi$. The proof is completed.
\end{proof}

By the above lemma, we can similarly define  blocks in $\cf(\comi)$ as below.

Set $\cf(\comi(c)):=\{\cf(L(\lambda)) \mid \lambda\in  c\Xi+Q\}$ when $\ggg=W(n)$ or $\bar S(n)$. Then we have in the same sense as in \S\ref{block WS sec}, that
\begin{align*}
\cf(\comi(c))=\cf(\comi(c,\bz))\cup \cf(\comi(c,\bo)).
\end{align*}
Similarly, set $\cf(\comi(c,d)):=\{\cf(L(\lambda)) \mid \lambda\in  c\delta+d\Theta+Q\}$ when $\ggg=\bar H(n)$. We have in the same sense as in \S\ref{block H sec}, that
\begin{align*}
\cf(\comi(c,d))=\cf(\comi(c,d,\bz))\cup \cf(\comi(c,d,\bo)).
\end{align*}
Then we have the following direct consequence by Theorems \ref{block thm WS}, \ref{block thm H odd} and \ref{block thm H even}.

\begin{coro} The complete classification  of all different blocks in $\cf(\comi)$ is listed as follows:
	
{\rm(1)} If $\ggg=W(n)$, or $\bar S(n)$, then it is
$$\{\cf(\comi(c,\gamma))\mid (c,\gamma)\in (\mathbb{C}/\mathbb{Z})\times \bbz_2 \}.$$

{\rm(2)} If $\ggg=\bar H(n)$, then it is
\begin{align*}
&\{\cf(\comi(c,d,\gamma))\mid (c,d,\gamma)\in (\mathbb{C}/\mathbb{Z})^2\times
\bbz_2 \} \mbox{ if }n=2r+1; \mbox{ and }\cr
&\{\cf(\comi(c,d,\gamma))\mid (c,d,\gamma)\in \mathbb{C}/2\mathbb{Z}\times \mathbb{C}/\mathbb{Z}\times \bbz_2 \} \mbox{ if }n=2r.
\end{align*}
\end{coro}

Obviously, $\gmf$ is a full subcategory of $\cf(\comi)$. We can introduce  blocks of $\gmf$ as follows.

	\begin{defn} A block $\mathbf{B}$ of $\gmf$ is a subcategory of $\gmf$, satisfying that for any $B\in \mathbf{B}$,  all its composition factors lie in the same block of $\cf(\comi)$.
\end{defn}

We finally obtain the block theorem for $\gmf$ as follows.
\begin{thm} \label{finite block thm} The following statements hold.
\begin{itemize}
\item[(1)] For $\ggg=W(n)$ or $\bar S(n)$,
$$ \gmf=\bigoplus_{(c,\gamma)\in \mathbb{C}/\mathbb{Z}\times \bbz_2} \mathbf{B}(c,\gamma) $$

\item[(2)] For $\ggg=\bar H(n)$,
$$ \gmf=\begin{cases}\bigoplus_{(c,d,\gamma)\in (\mathbb{C}/\mathbb{Z})^2\times \bbz_2} \mathbf{B}(c,d,\gamma) \mbox{ when } n=2r+1;\cr
\bigoplus_{(c,d,\gamma)\in \mathbb{C}/2\mathbb{Z}\times \mathbb{C}/\mathbb{Z}\times \bbz_2} \mathbf{B}(c,d,\gamma) \mbox{ when } n=2r.
\end{cases} $$
\end{itemize}
\end{thm}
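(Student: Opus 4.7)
\medskip
\noindent\textit{Proof proposal.} The plan is to leverage the block classification already established for $\cf(\comi)$ in the preceding corollary, combined with the lifting machinery provided by Lemma \ref{fin ind proj lem}, and standard Krull--Schmidt for finite-dimensional modules.

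First, given any $M\in\gmf$, since $M$ is finite-dimensional as a $U(\ggg)$-module, the Krull--Schmidt theorem applies and we may decompose $M=\bigoplus_{i=1}^kM_i$ into a finite direct sum of indecomposable finite-dimensional $\ggg$-modules. It therefore suffices to show that each indecomposable summand $M_i$ lies in a unique $\mathbf{B}(c,\gamma)$ (resp.\ $\mathbf{B}(c,d,\gamma)$), and that distinct blocks $\mathbf{B}(\cdot)$ are mutually orthogonal in the sense that $\Hom_{\gmf}(N,N')=0$ whenever $N,N'$ lie in different blocks (the latter being immediate from the disjointness of their simple constituents inside $\cf(\comi)$).

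Next, I would fix an indecomposable $M_i\in\gmf$ and lift it to an object $\dot M_i\in\comif$ using Lemma \ref{fin ind proj lem}(1). The lift $\dot M_i$ must itself be indecomposable in $\comi$: if $\dot M_i=N_1\oplus N_2$ were a nontrivial decomposition in $\comi$, then applying $\cf$ would give a nontrivial decomposition of $M_i$ in $\cf(\comi)\supset\gmf$, contradicting indecomposability of $M_i$. Now invoke Lemma \ref{indecom blo 1}: the indecomposable $\dot M_i\in\comif$ belongs to a single block $\comit$ of $\comi$, so all its composition factors share a common block in $\comi$ indexed (by Theorems \ref{block thm WS}, \ref{block thm H odd}, \ref{block thm H even}) by some tuple $(c,\iota,i_0)$ or $(c,d,\iota,i_0)$. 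Forgetting the $\bbz$-grading via $\cf$, the composition factors of $M_i$ all lie in a single block of $\cf(\comi)$, namely $\cf(\comi(c,\iota))$ or $\cf(\comi(c,d,\iota))$; in other words $M_i\in\mathbf{B}(c,\iota)$ or $M_i\in\mathbf{B}(c,d,\iota)$.

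Finally, grouping the indecomposable summands $M_i$ according to their assigned block yields the desired decomposition $M=\bigoplus_\theta M_\theta$ with $M_\theta\in\mathbf{B}(\theta)$. Uniqueness of the decomposition follows from the orthogonality observation above. The main technical point is ensuring the lifting step preserves indecomposability and that a block of $\comi$ (which remembers depth and parity) descends cleanly to a single block of $\cf(\comi)$ after forgetting the $\bbz$-grading. This is essentially bookkeeping once one notes that the parameters $(c,\gamma)$ or $(c,d,\gamma)$ indexing $\cf(\comi)$-blocks are exactly those components of the $\comi$-block data that survive the forgetful functor; I do not anticipate a serious obstacle here, as the matching of parameters is dictated by the definitions in \S\ref{block WS sec} and \S\ref{block H sec}.
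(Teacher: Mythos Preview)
Your proposal is correct and matches the paper's intended argument: the paper states the theorem without explicit proof, treating it as an immediate consequence of the preceding corollary on $\cf(\comi)$-blocks together with the lifting provided by Lemma~\ref{fin ind proj lem}. Your Krull--Schmidt reduction and the lift-then-apply-Lemma~\ref{indecom blo 1} argument spell out exactly the details the paper leaves implicit.
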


\begin{rem} \label{covers} {\rm(1)} In our setup, Theorem \ref{finite block thm} essentially covers the main result of \cite{Shom02} on  blocks of the category of finite-dimensional modules over $W(n)$.
 %(see \S\ref{sec: app to remark} Appendix C).}

%(2) By Lemmas \ref{indecom blo} and  \ref{belong}, the arguments in the proofs of  \cite[Lemma 5.1 and Theorem 5.4]{Shom02} for the category of finite-dimensional modules over $W(n)$ is available to $\comi$ for $W(n)$. This gives rise to an alternative proof of Theorem \ref{block thm WS} for the case of $W(n)$ .

{\rm(2)} By the same arguments as in \cite{Shom02}, one can show that all blocks of $\comi$ are wild.
\end{rem}

\section{Tilting modules and character formulas}\label{Costandard M}
Keep the same notations as in Sections \ref{yubei} and \ref{cat o}. In particular, $\delta$ is the linear dual of $\sfd$ in ${\bar\hhh}^*$ when $\ggg=\bar{H}(n)$ (See \S\ref{toral extension}). %which is equal to ${1\over n}(\epsilon_1+\epsilon_2+...+\epsilon_n)$  when $\ggg=W(n)$ or $\bar S(n)$.

\subsection{} Thanks to  Lemma \ref{pre proj}, we can apply the arguments in \cite{Brun04} to our category $\comi$. We first recall some properties for standard and co-standard modules.

\begin{lem} \label{basic lem O}
Keep the assumption as above. The following results hold in the category $\comi$:
\begin{itemize}

%\item[(1)] $\{L(\lambda)\}_{(\lambda,d)\in \bfe}$ form a complete set of pairwise non-isomorphic irreducibles in $\comis$.
%\item[(2)] Assume $\nabla(\lambda)$ has depth $d$. Then  $\nabla(\lambda)$ is the injective hull of $L(\lambda)$ in $\comid$.
\item[(1)] The category $\comi$ has enough injective objects.
\item[(2)] Assume that $\Delta(\lambda)$ has depth $d$. Then $\Delta(\lambda)$ is the projective cover of $L(\lambda)$ in  $\comid$.
\item[(3)]$\dim\Hom_{\comi}(\Delta(\lambda),\nabla(\mu))=
\delta_{\lambda,\mu}$ for $\lambda,\mu\in\Lambda^+.$
\item[(4)] $\mathrm{Ext}_{\comi}^1(\Delta(\lambda),\nabla(\mu))=0$ for $\lambda,\mu\in\Lambda^+$.
\end{itemize}
\end{lem}

\begin{proof}  For (1), readers can refer to \cite[Lemma 2.1]{Brun04}. For (2),(3),(4), readers can refer to \cite[Lemma 3.6]{Brun04}.
\iffalse
(ii) Now let us prove (2). Recall that we already have the following
\begin{align*}
\nabla(\lambda)&\cong \mathrm{Hom}_{\ggg_0}(U(\ggg_{\leq0}),L^0(\lambda))\cr
&\cong \mathrm{Hom}_{\ggg_0}(U(\ggg_{\leq0}),
\mathbb{C})\otimes_{\mathbb{C}}L^0(\lambda).
\end{align*}
Consequently, as a $\ggg_{\leq0}$-module,  $\nabla(\lambda)$ has a socle isomorphic to $L^0(\lambda)$.
Hence, as a $\ggg$-module, $\mathrm{Soc}_{U(\ggg)}(\nabla(\lambda))\cong L(\lambda)$.
Meanwhile, $\nabla(\lambda)$ is indecomposable as a $\ggg$-module. Let $M$ be an arbitrary object in $\comid$.
Then
\begin{align}\label{cof 5.1}
\begin{array}{ccl}
\mathrm{Hom}_{\comi}(M,\nabla(\lambda))&=&\mathrm{Hom}_{\comi}(M,\mathrm{Hom}_{\ggg_{\geq0}}(U(\ggg),L^0(\lambda))\\
&=&\mathrm{Hom}_{{\ggg_{\geq 0}}}(M,L^0(\lambda))\\
&=&\mathrm{Hom}_{\ggg_{0}}(M,L^0(\lambda)),
\end{array}
\end{align}
where the last equality is due to the definition of $\comi$ whose homomorphism preserves the gradings, along with the assumption that $M$ has depth equal to $d=\lfloor L^0(\lambda)\rfloor$ . Denote by $\cof$ the category of locally finite $\ggg_0$-modules.  Then $L^0(\lambda)$ is injective in $\cof$. From (\ref{cof 5.1}), it follows that the functor $\mathrm{Hom}_{U(\ggg)}(-,\nabla(\lambda))$
is exact. Hence, $\nabla(\lambda)$ is injective.

(iii) We deal with (7). By Lemma \ref{basic for comis}, both $L(\lambda)$ and $P(\lambda)$ belong to $\comis$. As $\comis$ is a subcategory of $\comis$, the remainder can be implied by Theorem \ref{projective thm} and Proposition \ref{ind proj prop}.
\fi
\end{proof}

\subsection{Tilting modules} Thanks to Lemma \ref{semi-inf}, the category $\comi$
is associated with a semi-infinite character of $\ggg$. So we can apply Soergel's tilting module theory to our category $\comi$. The following lemma  asserts the existence of the so-called indecomposable tilting modules $T(\lambda)$ for $\lambda\in\bfe$.

\begin{lem} \label{tilt lem}$($\cite[Theorem 5.2]{Soe98} and \cite[Theorem 5.1]{Brun04}$)$\label{tilt}
For any given $L^0(\lambda)=L^0(\lambda)_d$ $((\lambda,d)\in\mathbf{E}=\Lambda^+\times \bbz)$,  there exists a unique up to isomorphism  indecomposable object $T(\lambda)\in\comi$ such that
\begin{itemize}
\item[(1)] $\mathrm{Ext}_{\comi}^{1}(\Delta(\mu),T(\lambda))=0$ for any $\mu\in\bfe.$
\item[(2)] $T(\lambda)$ admits a $\Delta$-flag starting with $\Delta(\lambda)$ at the bottom.
\end{itemize}
\end{lem}

\begin{defn}
An  object $T$ in $\comi$ is called a tilting module  if it  satisfies (1) and (2) in Lemma \ref{tilt} as $T(\lambda)$ does. In particular, the indecomposable tilting object $T(\lambda)$ is called the indecomposable tilting module associated with $\lambda\in\bfe$.
\end{defn}

In the following,  we will  investigate the flags of standard
modules for indecomposable tilting modules, by means of Soergel reciprocity and the Kac-module realizations of co-standard modules. %We will work with $\comid$, this is to say, we  fix $\lfloor L^0(\lambda)\rfloor$ equal to $d\in \bbz$ for all $\lambda\in \Lambda^+$.

\subsection{Soergel reciprocity}
 By \cite[Corollary 5.8]{Brun04}, we have the following reciprocity for indecomposable tilting modules.

\begin{prop}\label{iso-1}
Let $\lambda,\mu\in \bfe$, and $w_0$ be the longest element of the Weyl group of $\ggg_0$. Denote by $[T:\Delta(\lambda)]$ the multiplicity of $\Delta(\lambda)$ in the $\Delta$-flag of a given tilting module $T$.
The following statements hold.
\begin{itemize}
\item[(1)] If $\ggg=W(n),$ then
	$$[T(\mu):\Delta(\lambda)]=(\nabla(-w_0\lambda+ \Xi):L(-w_0\mu+ \Xi)).$$
\item[(2)] If $\ggg=\bar{S}(n)$ or $\bar{H}(n)$, then
	$$[T(\mu):\Delta(\lambda)]=(\nabla(-w_0\lambda):L(-w_0\mu)).$$
\end{itemize}
\end{prop}

\begin{proof}
	Note that the character $\mathcal{E}_X$ gives rise to a one-dimensional $\ggg_0$-module $\mathbb{C}_{-\mathcal{E}_X}$, and we have the following $\ggg_0$-module isomorphism
	\[
	\mathbb{C}_{-\cae_X}\cong\left\{
	\begin{array}{ll}
	L^0(\Xi), & \mbox{if}~\ggg=W(n),\\
	L^0(0),&\mbox{if}~\ggg=\bar{S}(n),\bar{H}(n).
	\end{array}
	\right.
	\]
	Then the statements are consequences of \cite[Corollary 5.8]{Brun04}.
\end{proof}

\iffalse
The following proposition is very important to the arguments for \S\ref{for I}-\ref{revisit to I}.

\begin{prop}\label{kac mod hom} For $\lambda\in\Lambda^+$, denote by $K(\lambda)_{X(n)}$ the Kac-module for $\ggg=X(n)$ with $X\in\{W,S,H,\bar H\}$. The following statements hold.
\begin{itemize}
\item[(1)] As an $\bsn$-module, $K(\lambda)_\wn$ is indecomposable.

\item[(2)] As an $\bhn$-module, $K(\lambda)_\ochn$ is indecomposable. $\bhn$-module.
\end{itemize}
\end{prop}

\begin{proof}
We prove the first statement. The proof for the second one is the same.

Note that  $W(n)_{-1}=\bsn_{-1}=\sum_{i=1}^n\bbc D_i$ and $W(n)_0=\bsn_0\cong \mathfrak{gl}(n)$. As vector spaces, both of $K(\lambda)_\wn$ and $K(\lambda)_\bsn$ are the same, equal to $\bigwedge\ggg_{-1}\otimes L^0(\lambda)$. So  $\Hom_{\bsn}(K(\lambda)_\wn,K(\lambda)_\wn)\cong \bbc$. Hence $K(\lambda)_\wn$ is an indecomposable $\bsn$-module.
\end{proof}
\fi
%\subsection{Soergel reciprocity (revisit)}
With the aid of Proposition \ref{iso2}, the above Soergel reciprocity %in Proposition \ref{iso-1}
can be rewritten below.

\begin{prop}\label{iso1}
Let $\lambda,\mu\in\Lambda^+$. The following statements hold.
\begin{itemize}
\item[(1)] If $\ggg=W(n),$ then
		$$[T(\mu):\Delta(\lambda)]=(K(-w_0\lambda+2\Xi):L(-w_0\mu+ \Xi)).$$
\item[(2)] If $\ggg=\bar{S}(n),$ then
$$[T(\mu):\Delta(\lambda)]=(K(-w_0\lambda+\Xi):L(-w_0\mu)).$$
\item[(3)] If $\ggg=\bar{H}(n),$ then
$$[T(\mu):\Delta(\lambda)]=(K(-w_0\lambda+n\delta):L(-w_0\mu)).$$
\end{itemize}
\end{prop}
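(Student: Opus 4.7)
The plan is to derive Proposition \ref{iso1} by combining the two main ingredients already established just above it: the Soergel-style reciprocity in Proposition \ref{iso-1}, which expresses the $\Delta$-multiplicity of an indecomposable tilting module via composition multiplicities in a co-standard module, and the Kac module identification in Proposition \ref{iso2}, which rewrites each co-standard module $\nabla(\nu)$ as a Kac module $K(\nu+\Xi)$ (for $\ggg=W(n),\bar S(n)$) or $K(\nu+n\delta)$ (for $\ggg=\bar H(n)$). The proof is essentially a one-line substitution in each case, and the real content has already been done elsewhere in the paper, so the job here is bookkeeping.

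Concretely, I will split the argument into the three cases. For $\ggg=W(n)$, start from Proposition \ref{iso-1}(1), which gives
\[
[T(\mu):\Delta(\lambda)]=(\nabla(-w_0\lambda+\Xi):L(-w_0\mu+\Xi)),
\]
then apply the isomorphism $\nabla(-w_0\lambda+\Xi)\cong K(-w_0\lambda+\Xi+\Xi)=K(-w_0\lambda+2\Xi)$ from Proposition \ref{iso2}(1); since an isomorphism of $\ggg$-modules preserves composition multiplicities of every simple object, the identity (1) follows. For $\ggg=\bar S(n)$, combine Proposition \ref{iso-1}(2) with Proposition \ref{iso2}(1) in the same way: $\nabla(-w_0\lambda)\cong K(-w_0\lambda+\Xi)$, producing
\[
[T(\mu):\Delta(\lambda)]=(K(-w_0\lambda+\Xi):L(-w_0\mu)).
\]
For $\ggg=\bar H(n)$, apply Proposition \ref{iso-1}(2) together with Proposition \ref{iso2}(2), the latter yielding $\nabla(-w_0\lambda)\cong K(-w_0\lambda+n\delta)$, which gives (3).

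I do not expect any genuine obstacles: since $w_0$ is the longest element of the Weyl group of the reductive Lie algebra $\ggg_0$, both $-w_0\lambda$ and $-w_0\mu$ still lie in $\Lambda^+$, so all the symbols on the right-hand side are well-defined. The only points worth being careful about are first to check that the shift by $\Xi$ (respectively $n\delta$) in Proposition \ref{iso2} is inserted in the correct argument of $\nabla$, namely the one indexed by the tilting module's dual parameter $-w_0\lambda$, and second that the notation $(K(\cdot):L(\cdot))$ is unambiguously defined since each Kac module is finite-dimensional and hence has finite composition length. Once these remarks are made, the proof consists of three short displayed lines, one per type.
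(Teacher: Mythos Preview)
Your proposal is correct and matches the paper's own proof, which simply states that the result is a direct consequence of Proposition \ref{iso-1} and Proposition \ref{iso2}. Your careful tracking of the shifts by $\Xi$ and $n\delta$ is exactly the substitution the paper has in mind.
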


\begin{proof} This is a direct consequence of Propositions \ref{iso-1} and \ref{iso2}.
\end{proof}

\subsection{Definition of character formulas for $\comif$} \label{character f}

%\begin{rem} We write $\lambda=\sum_{i}a_i\epsilon_i=a_1\epsilon_1+\sum_{i=2}^n(a_1-m_{i}\epsilon_i$ with $m_i\in \bbz_{\geq 0}$, $i=2,...,n$. If $a_1\not\in \bbz$, then $L(0)$ does not lie in $\comitl$.
%\end{rem}

%\section{Character formulas of indecomposable projective and %indecomposable tilting modules}\label{char form section}
By Theorem \ref{iso-2} and Proposition \ref{iso1} we have seen that the multiplicities of $\Delta(\lambda)$ in $P(\mu)$ or $T(\mu)$ can be attributed to the Cartan invariants of some finite-dimensional Kac-module, so
 $P(\lambda)$ and $T(\lambda)$ belong to $\comif$.  In this section, we compute the character formulas for those $P(\lambda)$ and $T(\lambda)$,  on the basis of degenerate BGG reciprocity (Theorem \ref{deg bggthm}) and Soergel reciprocity (Propositions \ref{iso-1} and \ref{iso1}) respectively. In the following, we first introduce the formal characters of modules in the category $\comif$.

Recall that associated with the standard triangular decomposition $\ggg_0=\nnn^-\oplus \bar\hhh\oplus\nnn^+$, $\ggg_0$ admits a positive root system $\Phi_0^+$.
Furthermore, denote by $\Phig$ the root system of $\ggg_{\geq 1}$ relative to $\bar\hhh$, i.e.,
$\Phig:=\{\alpha\in {\bar\hhh}^*\mid (\ggg_{\geq 1})_\alpha\ne 0\}$ where
$$(\ggg_{\geq 1})_\alpha=\{x\in\ggg_{\geq 1}\mid [h,x]=\alpha(h)x,\,\,\forall\,h\in\bar\hhh\}.$$
Then we have $\ggg_{\geq 1}=\sum\limits_{\alpha\in\Phig}\ggg_{\alpha}$.
Associated with $\lambda\in \Lambda^+$, we define a subset of ${\bar\hhh}^*$:
$$D(\lambda)=\{\mu\in {\bar\hhh}^*\mid \mu\succeq \lambda\},$$
where $\mu\succeq \lambda$ means that $\mu-\lambda$ lies in $\bbz_{\geq 0}$-span of $\Phig\cup \Phi_0^+$. Now we define a $\bbc$-algebra $\mathcal{A}$, whose elements are series of the form $\sum_{\lambda\in \bar\hhh^*}c_\lambda e^\lambda$ with $c_\lambda\in \bbc$ and $c_\lambda=0$ for $\lambda$ outside the union of a finite number of sets of the form $D(\mu)$. Then $\mathcal{A}$ naturally becomes a commutative associative algebra if we define $e^\lambda e^\mu=e^{\lambda+\mu}$, and identify $e^0$ with the identity element. All formal exponentials $\{e^\lambda\}$ are linearly independent, and then in one-to-one correspondence with $\bar\hhh^*$. For a semisimple $\bar\hhh$-module $W=\sum_{\lambda\in\bar\hhh^*}W_\lambda$, if  the weight spaces are all finite-dimensional, then we can define $\ch(W)=\sum_{\lambda\in\bar\hhh^*}(\dim W_\lambda) e^\lambda$. In particular, if $V$ is an object in  $\comif$, then $\ch(V)\in \mathcal{A}$.  We have the following fact.

\begin{lem}\label{cha fact}  The following statements hold.
\begin{itemize}
\item[(1)] Let $V_1, V_2$ and $V_3$ be three $\ggg$-modules in the category $\comif$. If there is an exact sequence of $\ggg$-modules $0\rightarrow V_1\rightarrow V_2\rightarrow V_3\rightarrow 0 $, then $\ch(V_2)=\ch(V_1)+\ch(V_3)$.
\item[(2)] Suppose that $W=\sum_{\lambda\in\bar\hhh^*}W_\lambda$ is a semisimple $\bar\hhh$-module with finite-dimensional weight spaces, and $U=\sum_{\lambda\in\bar\hhh^*}U_\lambda$ is a finite-dimensional $\bar\hhh$-module. If $\ch(W)=\sum_{\lambda\in\bar\hhh^*}c_{\lambda}e^\lambda$ falls in $\mathcal{A}$, then $\ch(W\otimes_\bbc U)$ must fall in $\mathcal{A}$ and $\ch(W\otimes_\bbc U)=\ch(W)\ch(U)$.
\end{itemize}
\end{lem}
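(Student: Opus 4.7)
For part (1), the plan is to reduce to a weight-space-by-weight-space comparison. Since every morphism in $\comif$ commutes with the $\bar\hhh$-action, the short exact sequence $0\to V_1\to V_2\to V_3\to 0$ restricts, for each $\lambda\in\bar\hhh^*$, to a short exact sequence $0\to (V_1)_\lambda\to (V_2)_\lambda\to (V_3)_\lambda\to 0$ of finite-dimensional vector spaces. Taking dimensions gives $\dim(V_2)_\lambda=\dim(V_1)_\lambda+\dim(V_3)_\lambda$ for every $\lambda$. Summing over $\lambda$ after multiplying by $e^\lambda$ yields the desired identity $\ch(V_2)=\ch(V_1)+\ch(V_3)$; to legitimize this infinite sum I first verify that $\ch(V_1),\ch(V_2),\ch(V_3)$ all lie in $\mathcal{A}$, which follows from their membership in $\comif$: a finitely generated object has weight support contained in a finite union of sets of the form $D(\mu)$ (take $\mu$ to run over the weights of a finite generating set of maximal vectors).

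For part (2), the plan is to compute $(W\otimes_\bbc U)_\nu$ directly. Since $U$ is finite-dimensional, its weight support $S_U:=\{\mu\in\bar\hhh^*\mid U_\mu\ne 0\}$ is a finite set. Then $(W\otimes U)_\nu=\bigoplus_{\mu\in S_U} W_{\nu-\mu}\otimes U_\mu$ is a finite direct sum of finite-dimensional spaces, hence finite-dimensional, and
\begin{equation*}
\dim (W\otimes U)_\nu=\sum_{\mu\in S_U}(\dim W_{\nu-\mu})(\dim U_\mu).
\end{equation*}
Multiplying by $e^\nu$ and summing over $\nu$ formally gives $\ch(W\otimes U)=\ch(W)\ch(U)$, provided the product is well-defined in $\mathcal{A}$.

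To finish part (2), the remaining point, which is also the only mildly subtle step, is to check that $\ch(W)\ch(U)$ really lies in $\mathcal{A}$. Write $\ch(W)=\sum_\lambda c_\lambda e^\lambda$ with support contained in $\bigcup_{i=1}^r D(\mu_i)$, and $\ch(U)=\sum_{\mu\in S_U}d_\mu e^\mu$ with $|S_U|<\infty$. Then the support of the product is contained in $\bigcup_{i,\mu} (D(\mu_i)+\mu)$, and each set $D(\mu_i)+\mu$ equals $D(\mu_i+\mu)$ by the translation invariance of the partial order $\succeq$ (which is defined by lying in the $\bbz_{\ge 0}$-span of a fixed set of weights). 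This is a finite union of sets of the required form, so $\ch(W)\ch(U)\in\mathcal{A}$; combined with the weight-space computation above, this gives $\ch(W\otimes U)=\ch(W)\ch(U)$. The main (minor) obstacle is ensuring that the infinite formal manipulations are justified inside $\mathcal{A}$, which is handled by the support bookkeeping just described.
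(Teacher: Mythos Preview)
Your proof is correct and in fact supplies more detail than the paper, which simply states this lemma as an ``obvious fact'' without proof. Both parts are handled by the standard weight-space arguments you give, and your support bookkeeping to ensure everything stays in $\mathcal{A}$ is exactly what is needed.
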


Let us investigate the formal character of a standard module $\Delta(\lambda)$ for $\lambda\in \bfe$.
Recall $\Delta(\lambda)=U(\ggg_{\geq1})\otimes_{\bbc}L^0(\lambda)$. As a $U(\ggg_{\geq1})$-module, $\Delta(\lambda)$ is a free module of rank $\dim L^0(\lambda)$ generated by $L^0(\lambda)$. By Lemma \ref{cha fact}(2), we have $\ch(\Delta(\lambda))=\ch(U(\ggg_{\geq1}))\ch L^0(\lambda)$ for $\lambda\in \bfe$.
Note that  $$\Phi_{\geq 1}=\Phi_{\bar{0}}^{\geq1}\cup\Phi_{\bar{1}}^{\geq1}, \mbox{~where~}
\Phi_{\bar{i}}^{\geq1}=\Phi_{\geq 1}\cap \Phi_{\bar{i}}, i\in\mathbb{Z}_2.$$
Set
$$\varTheta:=\prod\limits_{\alpha\in\Phi_{\bar{1}}^{\geq1}}(1+e^{\alpha})\prod\limits_{\alpha\in\Phi_{\bar{0}}^{\geq1}}
(1-e^{\alpha})^{-1}.$$
Then we  further have  $\ch(\Delta(\lambda))=\varTheta\ch L^0(\lambda)$.
\iffalse
Recall the set $\Omega$ of Serganova atypical weights introduced in \S\ref{typical fun}:
\begin{align*}
&\Omega=\cr
&=\begin{cases}&
\{a\epsilon_i+\epsilon_{i+1} +\cdots+\epsilon_n\mid a\in\mathbb{C},1\leq i\leq n \} \mbox{ for } W(n);\cr
&\{a\epsilon_1+\cdots+a\epsilon_{i-1}+b\epsilon_{i}+ (a+1)\epsilon_{i+1} +\cdots+(a+1)\epsilon_n\mid a,b\in\mathbb{C},1\leq i\leq n \}
\mbox{ for }\bar S(n);\cr
&\{-\epsilon_1-\cdots-\epsilon_{i-1}+b\epsilon_{i}+a\delta  \mid a,b\in\mathbb{C},1\leq i\leq r \}\mbox{ for }\bar H(n).
\end{cases}
\end{align*}
\fi

\subsection{Character formulas of $T(\lambda)$}
As a direct consequence of the forthcoming Propositions \ref{prop type W}, \ref{prop type S} and \ref{prop type H} in the Appendix B, along with Lemma \ref{cha fact}, Soergel reciprocity leads to the following theorem on character formulas for indecomposable tilting modules.

\begin{thm}\label{char form T} Let $\ggg=X(n)$ for $X\in\{W,\bar{S},\bar{H}\}$, and $\lambda\in\Lambda^+$. The character formulas for tilting modules $T(\lambda)$ are listed as follows.
	
	(1) If $\ggg=W(n)$, then
	\begin{align*}
	\ch T(\lambda)=\begin{cases}
	\varTheta(\ch L^0(\lambda)+\ch L^0(2\lambda)),
	&\mbox{ if } \lambda= \Xi;\cr
	\varTheta(\ch L^0(\lambda)+\ch L^0(\lambda+\epsilon_1)), &\mbox{ if }\lambda=2 \Xi+ a\epsilon_1 \mbox{ with }a\geq0;\cr
	\varTheta(\ch L^0(\lambda)+\ch L^0(\lambda+\epsilon_n)),  &\mbox{ if }\lambda= \Xi+b\epsilon_{n} \mbox{ with }b\leq-1; \cr
	\varTheta(\ch L^0(\lambda)), &\mbox{ if } \lambda\notin\Omega.
	\end{cases}
	\end{align*}
	%\end{thm}
	
	%\begin{thm}\label{thm type S}
	(2) If $\ggg=\bar{S}(n)$, then
	\begin{align*}
	\ch T(\lambda)
	=\begin{cases}
	\varTheta(\ch L^0(\lambda)+\ch L^0(\lambda+\Xi)+\ch L^0(\lambda+\Xi-\epsilon_{n})+\ch L^0(\lambda+\epsilon_{1})),\cr
	\hskip2cm\mbox{ if } \lambda=k\Xi;\cr
	\varTheta(\ch L^0(\lambda)+\ch L^0(\lambda+\epsilon_n)+\ch L^0(\lambda+\epsilon_{1}+\epsilon_{n})), \cr
	\hskip2cm\mbox{ if }\lambda=k\Xi-\epsilon_{n};\cr
	\varTheta(\ch L^0(\lambda)+\ch L^0(\lambda+\epsilon_{n})), \cr
	\hskip2cm\mbox{ if }
	\lambda=k\Xi+b\epsilon_{n} \mbox{ with } b\in\mathbb{Z}_{\leq -2};\cr
	\varTheta(\ch L^0(\lambda)+\ch L^0(\lambda+\epsilon_{1})), \cr
	\hskip2cm\mbox{ if }\lambda=k\Xi+a\epsilon_{1} \mbox{ with }a\in\mathbb{Z}_{\geq1};\cr
	\varTheta(\ch L^0(\lambda)), \mbox{ if } \lambda\notin\Omega.
	\end{cases}
	\end{align*}
	%\end{thm}

	(3) If $\ggg=\bar{H}(n)$, then
	
	\begin{align*}
	\ch T(\lambda)
	=\begin{cases}
	\varTheta(\ch L^0(\lambda)+\ch L^0(\lambda+n\delta)+\ch L^0(\epsilon_{1}+(k+n+1)\delta)+\ch L^0(\epsilon_{1}+(k+3)\delta)),\cr
	\hskip2cm\mbox{ if } \lambda=k\delta;\cr
	\varTheta(\ch L^0(\lambda)+\ch L^0(\lambda+2\delta)+\ch L^0(\lambda+\epsilon_{1}+3\delta)+\ch L^0(\lambda-\epsilon_{1}-\delta)),\cr
	\hskip2cm\mbox{ if }\lambda=k\delta+a\epsilon_{1} \mbox{ with }a\in\mathbb{Z}_{\geq1};\cr
	\varTheta(\ch L^0(\lambda)), \mbox{ if } \lambda\notin\Omega.
	\end{cases}
	\end{align*}
\end{thm}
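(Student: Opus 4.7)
The plan is to derive the character formulas as a direct consequence of the Soergel reciprocity (Proposition \ref{iso1}) combined with the composition factor data for Kac modules obtained from Serganova's work \cite{Ser05}. Since $T(\mu)$ admits a $\Delta$-flag by Lemma \ref{tilt lem}(2), we have
\begin{equation*}
\ch T(\mu) \;=\; \sum_{\lambda \in \Lambda^+}\, [T(\mu):\Delta(\lambda)]\, \ch \Delta(\lambda) \;=\; \Pi \cdot \sum_{\lambda \in \Lambda^+}\, [T(\mu):\Delta(\lambda)]\, \ch L^0(\lambda),
\end{equation*}
where the second equality uses $\ch\Delta(\lambda) = \Pi\cdot \ch L^0(\lambda)$ from \S\ref{character f}. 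So the problem reduces entirely to computing the multiplicities $[T(\mu):\Delta(\lambda)]$.

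First I would invoke Proposition \ref{iso1}, which identifies these multiplicities with composition factor multiplicities $(K(-w_0\lambda + c_1):L(-w_0\mu + c_2))$ of irreducibles in Kac modules, with the shifts $c_1, c_2$ depending on which of $W(n)$, $\bar S(n)$, $\bar H(n)$ we are dealing with. Next, for a typical weight $\mu$ (i.e.\ $\mu\notin\Omega$), Proposition \ref{iso2}(3) asserts that the relevant Kac modules on the right-hand side are irreducible. Therefore $(K(-w_0\lambda + c_1):L(-w_0\mu + c_2))$ is nonzero only when $-w_0\lambda + c_1 = -w_0\mu + c_2$, i.e.\ only for one value of $\lambda$, which forces $T(\mu) = \Delta(\mu)$ and gives the ``generic'' line $\ch T(\lambda) = \Pi\,\ch L^0(\lambda)$ in all three cases.

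For the atypical cases $\mu \in \Omega\cap\Lambda^+$, I would appeal to the detailed composition factor description of Kac modules promised as Propositions \ref{prop type W}, \ref{prop type S} and \ref{prop type H} in Appendix B (these are the translations of Serganova's character formulas from \cite{Ser05} into our notation and shifts). Substituting those multiplicities into the sum above yields exactly the enumerated cases in the theorem. Concretely, for each atypical Serganova-orbit representative on the Kac-module side, one reads off finitely many $\lambda$'s contributing to $T(\mu)$, and the longest element $w_0$ of the Weyl group of $\ggg_0$ translates ``highest composition factor'' on the Kac side into ``bottom of the $\Delta$-flag'' on the tilting side. For instance, in type $W$ the atypical weights split into the three families $\lambda=\Xi$, $\lambda = 2\Xi + a\epsilon_1$ ($a\geq 0$), and $\lambda = \Xi + b\epsilon_n$ ($b\leq -1$); applying the shift $\mu\mapsto -w_0\mu + \Xi$ (resp.\ $\lambda\mapsto -w_0\lambda + 2\Xi$) converts Serganova's two-term expression for the composition series of the relevant $K$'s into the claimed two-term sum $\ch L^0(\lambda)+\ch L^0(\lambda+\cdots)$. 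The analogous conversion handles $\bar S(n)$ (four-, three-, and two-term cases) and $\bar H(n)$ (the $k\delta$ and $k\delta + a\epsilon_1$ families).

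The main obstacle will be the bookkeeping in this last step: one must be careful with (i) the involution $\mu \leftrightarrow -w_0\mu$ on $\Lambda^+$, which acts nontrivially on the Serganova ``atypical strata'' and reshuffles the case division between $\epsilon_1$ and $\epsilon_n$ (visible, for example, in the asymmetry between $a\geq 0$ and $b\leq -1$ in case $W(n)$), and (ii) the different grading shifts ($\Xi$ vs.\ $n\delta$ vs.\ $0$) coming from the semi-infinite characters $\mathcal{E}_X$ computed in Lemma \ref{semi-inf}. Once these translations are done correctly and Serganova's Kac-module composition multiplicities are plugged in, each case reduces to elementary verification, which is the content of Appendix B.
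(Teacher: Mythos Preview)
Your proposal is correct and follows essentially the same route as the paper: invoke the $\Delta$-flag to reduce to multiplicities, apply Soergel reciprocity (Proposition \ref{iso1}) to convert $[T(\mu):\Delta(\lambda)]$ into Kac-module composition multiplicities, and then read these off from Serganova's results as recorded in Propositions \ref{prop type W}, \ref{prop type S}, \ref{prop type H}. One small wording issue: in your ``typical'' paragraph you place the typicality hypothesis on $\mu$, but the Kac module in the reciprocity is $K(-w_0\lambda + c_1)$, so irreducibility is governed by $\lambda$; the actual argument (as in Proposition \ref{maint1}) first fixes $\lambda$, determines which $\mu$ can occur, and then inverts to obtain the $\Delta$-flag of $T(\mu)$---this is exactly the ``bookkeeping'' you flag at the end.
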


\subsection{Character formulas of $P(\lambda)$}
According to the degenerate BGG reciprocity (Theorem \ref{deg bggthm}), one can compute the character formulas of indecomposable projective modules precisely by the same method as Theorem \ref{char form T}. We omit the details and list the formulas  as below.

\begin{thm}\label{char form P}
	Let $\ggg=X(n)$ for $X\in\{W,\bar{S},\bar{H}\}$, and $\lambda\in\Lambda^+$. The character formulas for
	indecomposable projective modules $P(\lambda)$ are listed as follows.
	
	(1) If $\ggg=W(n)$, then
	\begin{align*}
	\ch P(\lambda)=\begin{cases}
	\varTheta(\ch L^0(0)+\ch L^0(-\Xi)),
	&\mbox{ if } \lambda= 0;\cr
	\varTheta(\ch L^0(\lambda)+\ch L^0(\lambda-\epsilon_1)), &\mbox{ if }\lambda=a\epsilon_1 \mbox{ with }a\geq1;\cr
	\varTheta(\ch L^0(\lambda)+\ch L^0(\lambda-\epsilon_n)),  &\mbox{ if }\lambda= -\epsilon_{1}-\epsilon_{2}-\cdots-\epsilon_{n-1}+a\epsilon_{n} \mbox{ with }a\leq-1; \cr
	\varTheta(\ch L^0(\lambda)), &\mbox{ otherwise }.
	\end{cases}
	\end{align*}
	%\end{thm}
	
	%\begin{thm}\label{thmm type S}
	(2) If $\ggg=\bar{S}(n)$, then
	\begin{align*}
	\ch P(\lambda)
	=\begin{cases}
	\varTheta(\ch L^0(\lambda)+\ch L^0(\lambda-\Xi)+\ch L^0(\lambda-\Xi+\epsilon_{1})+\ch L^0(\lambda-\epsilon_{n})),\cr
	\hskip2cm\mbox{ if } \lambda=k\Xi;\cr
	\varTheta(\ch L^0(\lambda)+\ch L^0(\lambda-\epsilon_1)+\ch L^0(\lambda-\epsilon_{1}-\epsilon_{n})), \cr
	\hskip2cm\mbox{ if }\lambda=\epsilon_{1}+k\Xi;\cr
	\varTheta(\ch L^0(\lambda)+\ch L^0(\lambda-\epsilon_{1})), \cr
	\hskip2cm\mbox{ if }
	\lambda=a\epsilon_{1}+k\Xi \mbox{ with } a\in\mathbb{Z}_{\geq 2};\cr
	\varTheta(\ch L^0(\lambda)+\ch L^0(\lambda-\epsilon_{n})), \cr
	\hskip2cm\mbox{ if }\lambda=k\Xi+c\epsilon_{n} \mbox{ with }c\in\mathbb{Z}_{\leq-1};\cr
	\varTheta(\ch L^0(\lambda)), \mbox{ otherwise } .
	\end{cases}
	\end{align*}
	%\end{thm}

	(3) If $\ggg=\bar{H}(n)$, then
	\begin{align*}
	\ch P(\lambda)
	=\begin{cases}
	\varTheta(\ch L^0(\lambda)+\ch L^0(\lambda-n\delta)+\ch L^0(\lambda+\epsilon_{1}+(1-n)\delta)+\ch L^0(\lambda+\epsilon_{1}-\delta)),\cr
	\hskip2cm\mbox{ if } \lambda=k\delta;\cr
	\varTheta(\ch L^0(\lambda)+\ch L^0(\lambda-2\delta)+\ch L^0(\lambda+\epsilon_{1}-\delta)+\ch L^0(\lambda-\epsilon_{1}-\delta)),\cr
	\hskip2cm\mbox{ if }\lambda=a\epsilon_{1}+k\delta \mbox{ with }a\in\mathbb{Z}_{\geq1};\cr
	\varTheta(\ch L^0(\lambda)), \mbox{otherwise }.
	\end{cases}
	\end{align*}
\end{thm}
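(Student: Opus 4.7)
My approach is to combine the degenerate BGG reciprocity with the Kac module realization of co-standard modules, mirroring the strategy used for tilting modules in Theorem \ref{char form T}. Since $P(\lambda)$ admits a $\Delta$-flag by Theorem \ref{projective thm} and every standard module satisfies $\ch\Delta(\mu)=\Pi\cdot\ch L^0(\mu)$ (Section \ref{character f}), the character assembles as
$$\ch P(\lambda)=\Pi\sum_{\mu\in\Lambda^+}[P(\lambda):\Delta(\mu)]\,\ch L^0(\mu),$$
while Theorem \ref{deg bggthm} rewrites each multiplicity as $[P(\lambda):\Delta(\mu)]=(\nabla(\mu):L(\lambda))$. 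Thus the whole computation reduces to identifying, for each fixed $\lambda$, the set of $\mu$'s for which $L(\lambda)$ is a composition factor of $\nabla(\mu)$, together with the multiplicity.

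Applying Proposition \ref{iso2} converts this into a question about Kac modules: $\nabla(\mu)\cong K(\mu+\Xi)$ when $\ggg\in\{W(n),\bar S(n)\}$ and $\nabla(\mu)\cong K(\mu+n\delta)$ when $\ggg=\bar H(n)$. When $\mu+\Xi$ (respectively $\mu+n\delta$) is Serganova typical, the Kac module is irreducible by Proposition \ref{iso2}(3), hence equal to its simple socle $L(\mu)$ as a $\ggg$-module, giving $(\nabla(\mu):L(\lambda))=\delta_{\lambda,\mu}$. This yields $P(\lambda)=\Delta(\lambda)$ and hence $\ch P(\lambda)=\Pi\cdot\ch L^0(\lambda)$, explaining the ``otherwise'' rows in each of the three parts of the theorem.

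For $\lambda$ in the exceptional families we invoke Serganova's explicit composition series for atypical Kac modules from \cite{Ser05}. Case by case, these composition series list the atypical Kac module as a succession of two standard-labelled factors in type $W(n)$, and of up to four factors in types $\bar S(n)$ and $\bar H(n)$ (whose four summands will match the stated $\lambda=k\Xi$ and $\lambda=k\delta$ cases). We then invert Serganova's formulas: for each exceptional $\lambda$, we enumerate all $\mu$'s such that $L(\lambda)$ appears in $K(\mu+\text{shift})$, and translate back through the shift to obtain the weights appearing in the character formula. Matching each summand with the corresponding case in the theorem's displayed list finishes the proof.

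The main obstacle lies precisely in this inversion step: Serganova parametrizes composition factors of $K(\nu)$ for varying $\nu$, whereas we fix the composition factor $L(\lambda)$ and vary the Kac label. Additionally one must carefully distinguish $\bar H(2r)$ from $\bar H(2r+1)$ because of their differing root data (of types $D_r$ versus $B_r$), track dominance under the shift $\mu\mapsto\mu-\Xi$ or $\mu-n\delta$, and verify that every candidate $\mu$ indeed lies in $\Lambda^+$. This bookkeeping is analogous to, though not identical with, the computations carried out in Appendix B for the tilting characters, and we anticipate handling it in the same spirit by a direct type-by-type enumeration.
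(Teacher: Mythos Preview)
Your proposal is correct and is exactly the approach the paper takes: the paper's entire proof of Theorem \ref{char form P} is the single sentence ``According to the degenerate BGG reciprocity (Theorem \ref{deg bggthm}), one can compute the character formulas of indecomposable projective modules precisely,'' and your write-up simply spells out that computation (via Proposition \ref{iso2} and Serganova's composition series, paralleling the Appendix B work for tilting modules). Nothing needs to change.
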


\subsection{Bar-typical weights and  indecomposable projective tilting modules}
 Call a weight $\lambda\in \bar\hhh^*$  bar-atypical if $\lambda\in\Omega^{\bar a}$
defined as below
 \begin{align*}
 \Omega^{\bar a}&=
 \begin{cases}
 \{\pm\Xi+b\epsilon_n\mid b\in\bbz_{\leq 0}\}\cup\{d\Xi+a\epsilon_1\mid d=0,2; a\in\bbz_{\geq 0}\}, &\mbox{ for } W(n);\cr
 \{a\epsilon_1+k\Xi \mid a\in \bbz_{\geq 1}, k\in\mathbb{C}\}\cup \{k\Xi+c\epsilon_n \mid c\in \bbz_{\leq 0}, k\in\mathbb{C}\},
 &\mbox{ for }\bar S(n);\cr
 \{a\epsilon_1+k\delta \mid a\in \bbz_{\geq 0}, k\in\mathbb{C}\}, &\mbox{ for }\bar H(n).
 \end{cases}
 \end{align*}
Call a weight $\lambda\in \bar\hhh^*$ bar-typical, if $\lambda\notin \Omega^{\bar a}$.
\begin{prop} \label{typ til} If $\lambda\in \Lambda^+$ is bar-typical, then $P(\lambda)=T(\lambda)=\Delta(\lambda)$. Conversely, if $P(\lambda)=T(\lambda)$,  then $\lambda$ must be bar-typical.
\end{prop}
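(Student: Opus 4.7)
My plan is to derive the proposition from the explicit character formulas in Theorems~\ref{char form P} and~\ref{char form T} by identifying the exceptional branches of those formulas with the bar-atypical set $\Omega^{\bar a}$.

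For the forward direction, suppose $\lambda \in \Lambda^+$ is bar-typical. A direct inspection confirms that, for each $X \in \{W, \bar S, \bar H\}$, the set $\Omega^{\bar a}$ is precisely the union of the dominant weights labelling the exceptional (non-``otherwise'') branches of both theorems; hence a bar-typical $\lambda$ falls into no exceptional branch, so both formulas reduce to $\ch P(\lambda) = \Pi\,\ch L^0(\lambda) = \ch \Delta(\lambda) = \ch T(\lambda)$ by \S\ref{character f}. Now $P(\lambda)$ carries a finite $\Delta$-flag with $\Delta(\lambda)$ appearing at the top (Theorem~\ref{projective thm}), and $T(\lambda)$ carries a finite $\Delta$-flag with $\Delta(\lambda)$ at the bottom (Lemma~\ref{tilt lem}); in both cases the flag multiplicities are non-negative integers and the character is the corresponding $\bbz_{\geq 0}$-combination of the $\ch\Delta(\mu)$'s. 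The equality $\ch P(\lambda) = \ch \Delta(\lambda) = \ch T(\lambda)$ therefore forces each flag to collapse to the single term $\Delta(\lambda)$, giving $P(\lambda) = T(\lambda) = \Delta(\lambda)$.

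For the converse I would argue the contrapositive: assuming $\lambda \in \Omega^{\bar a}$, I would show $\ch P(\lambda) \ne \ch T(\lambda)$, whence $P(\lambda) \ne T(\lambda)$. In the case $\ggg = W(n)$, the $P$-exceptional and $T$-exceptional branches turn out to be disjoint (the former contains only $a\epsilon_1$ and $-\Xi + b\epsilon_n$, the latter only $2\Xi + a\epsilon_1$ and $\Xi + b\epsilon_n$), so precisely one of $\ch P(\lambda)$, $\ch T(\lambda)$ equals $\ch\Delta(\lambda)$ while the other is strictly larger, ruling out equality. In the cases $\ggg = \bar S(n)$ and $\ggg = \bar H(n)$ the $P$-exceptional and $T$-exceptional weights coincide as sets, so one compares the two explicit expressions of the form $\Pi$ times a small sum of $\ch L^0(\mu)$'s; in every branch the multisets of $\mu$'s differ---for instance, $\lambda = k\delta$ in $\bar H(n)$ yields terms $L^0(\lambda - n\delta)$ and $L^0(\lambda + \epsilon_1 + (1-n)\delta)$ on the $P$-side versus $L^0(\lambda + n\delta)$ and $L^0(\epsilon_1 + (k+n+1)\delta)$ on the $T$-side. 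Linear independence of $\{\ch L^0(\mu)\}_{\mu \in \Lambda^+}$ inside the formal character algebra $\mathcal{A}$ of \S\ref{character f}, together with non-vanishing of $\Pi$, then gives $\ch P(\lambda) \ne \ch T(\lambda)$.

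The main routine obstacle is the case bookkeeping: one has to verify in each of the three types that the union of exceptional branches in Theorems~\ref{char form P} and~\ref{char form T} exhausts $\Omega^{\bar a}$, and for $\bar S(n)$ and $\bar H(n)$ one must track the distinct small sets of twisted weights $\mu$ appearing in $\ch P(\lambda)$ versus $\ch T(\lambda)$ branch by branch. These are mechanical comparisons rather than conceptual difficulties, so once the character formulas are in hand the proof is essentially a tabular check.
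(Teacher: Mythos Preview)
Your forward direction coincides with the paper's: both read off $P(\lambda)=T(\lambda)=\Delta(\lambda)$ directly from the ``otherwise'' branches of Theorems~\ref{char form T} and~\ref{char form P}. Your converse, however, takes a genuinely different route. You argue case by case that for every $\lambda\in\Omega^{\bar a}$ the two character formulas produce distinct $\bbz_{\geq0}$--combinations of $\ch L^0(\mu)$'s, invoking linear independence of irreducible $\ggg_0$--characters and the invertibility of $\Pi$ in $\mathcal{A}$; this is correct and entirely elementary, but it requires the branch-by-branch tabular check you describe. The paper instead uses a uniform structural argument: once one knows (still from the character tables) that for $\lambda\in\Omega^{\bar a}$ at least one of $P(\lambda),T(\lambda)$ strictly contains $\Delta(\lambda)$, the reciprocities give $[P(\lambda):\Delta(\lambda)]=[T(\lambda):\Delta(\lambda)]=1$, while $\Delta(\lambda)$ sits at the \emph{top} of $P(\lambda)$'s $\Delta$-flag (Theorem~\ref{projective thm}) and at the \emph{bottom} of $T(\lambda)$'s (Lemma~\ref{tilt lem}); these two positions are incompatible in a common indecomposable module of $\Delta$-length $\geq 2$, forcing $P(\lambda)\not\cong T(\lambda)$. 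The paper's argument thus trades your full character comparison for a single structural observation, eliminating most of the case analysis; your approach, on the other hand, avoids any appeal to the positional structure of $\Delta$-flags and stays purely at the level of characters.
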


\begin{proof} The first part of the proposition is a direct consequence of the above theorems.
As to the second part, we only need to verify that when $\lambda\in \Omega^{\bar a}$, $P(\lambda)$ is not a tilting module. In this case, it is really true that  $P(\lambda)=\Delta(\lambda)$ and $T(\lambda)=\Delta(\lambda)$ do not simultaneously happen.
By Propositions \ref{prop type W}, \ref{prop type S}, \ref{prop type H} in Appendix B, Theorem \ref{iso-2} and Proposition \ref{iso-1},
we can see that
$[P(\lambda):\Delta(\lambda)]=1$ and $[T(\lambda): \Delta(\lambda)]=1$ in their $\Delta$-flags. However,  $\Delta(\lambda)$ is a quotient of $P(\lambda)$  and a submodule of $T(\lambda)$ (see Lemma \ref{tilt lem}). This implies that $P(\lambda)\not\cong T(\lambda)$ in this case. The proof is completed.
\end{proof}

\section{Appendix A: A proof for the existence of semi-infinite characters}

(1) Assume  $\ggg=W(n)$.  Let us first check that the linear map $\cae_X$ is indeed a homomorphism of Lie algebras. For any basis elements $\xi_iD_j, \xi_sD_t\in\ggg_0$,  $$\cae_W([\xi_iD_j, \xi_sD_t])=\cae_W(\delta_{js} \xi_iD_t-\delta_{ti}\xi_sD_j)=0.$$
So $\cae_X$ is  a character.

Let $\xi_{k_1}\xi_{k_2}\cdots \xi_{k_{i+1}}D_s$ be an element in  $\ggg_{i}, i\geq 2$. We have the following two cases.

Case (i): $s\neq k_j,\,\forall\,1\leq j\leq i+1$.

In this case,
$$\xi_{k_1}\xi_{k_2}\cdots \xi_{k_{i+1}}D_s=(-1)^i[\xi_{k_2}\cdots \xi_{k_{i+1}}D_s,\xi_s\xi_{k_1}D_{k_1}].$$

Case (ii): $s=k_j$ for some $j\in\{1,\cdots, i+1\}$.

In this case, without loss of generality, we can assume $j=i+1$, i.e., $s=k_{i+1}$. Then we have
$$\xi_{k_1}\xi_{k_2}\cdots \xi_{k_{i+1}}D_s=[\xi_1\xi_{k_2}\cdots \xi_{k_{i}}D_{k_i},\xi_{k_i}\xi_{k_{i+1}}D_{s}].$$
It follows that $\ggg_i$ is included in $[\ggg_{i-1},\ggg_1]$ for any  $i\geq 2$.  By induction on $i$, we see that (SI-1) holds for $W(n)$.
For (SI-2), we can check it through direct calculation in the following.

Without loss of generality, we can assume $x=\xi_k\xi_iD_j, y=D_s$. We divide the proof into the following three cases.

Case (i): $s\neq k$ and $s\neq i$.

In this case, $[x,y]=0$. And we have
\[
[x,[y,z]]=\left\{
\begin{array}{lll}
\xi_iD_j,& \mbox{if}~z=\xi_sD_k;\\
-\xi_kD_j, & \mbox{if}~z=\xi_sD_i;\\
0,&\mbox{if}~z\in\{\xi_uD_t\mid1\leq u, t\leq n\}\setminus\{\xi_sD_k, \xi_sD_i\}.
\end{array}
\right.
\]
It follows that
$\textsf{str}((\textsf{ad}x\circ \textsf{ad}y)|_{\ggg_0})=0=\cae_W([x,y])$.

Case (ii): $s=k$ and $i=j$.

In this case, $[x,y]=\xi_jD_j$, and we have
\[
[x,[y,z]]=\left\{
\begin{array}{lll}
\xi_jD_j,& \mbox{if}~z=\xi_sD_s;\\
-\xi_sD_j, & \mbox{if}~z=\xi_sD_j;\\
0,&\mbox{if}~z\in\{\xi_uD_t\mid1\leq u, t\leq n\}\setminus\{\xi_sD_s, \xi_sD_j\}.
\end{array}
\right.
\]
It follows that
$\textsf{str}((\textsf{ad}x\circ \textsf{ad}y)|_{\ggg_0})=-1=\cae_W([x,y])$.

Case (iii): $s=k$ and $i\neq j$.

In this case, $[x,y]=\xi_iD_j$, and we have
\[
[x,[y,z]]=\left\{
\begin{array}{lll}
\xi_iD_j,& \mbox{if}~z=\xi_sD_s;\\
-\xi_sD_j, & \mbox{if}~z=\xi_sD_i;\\
0,&\mbox{if}~z\in\{\xi_uD_t\mid1\leq u, t\leq n\}\setminus\{\xi_sD_s, \xi_sD_i\}.
\end{array}
\right.
\]
It follows that
$\textsf{str}((\textsf{ad}x\circ \textsf{ad}y)\mid_{\ggg_0})=0=\cae_W([x,y])$.

Thus, (SI-2) holds for $W(n)$. Consequently, $\mathcal{E}_{W}$ is a semi-infinite character for $W(n)$.

(2) Assume $\ggg=\bar{S}(n)$.
For (SI-1), one can refer to \cite[Proposition 4.1.1]{Kac77}. Moreover, since $\ggg_0$ coincides with $W(n)_0$, and $\textsf{str}$ is linear, it follows that $\mathcal{E}_{\bar{S}}$ is a semi-infinite character for $\bar{S}(n)$.

(3) Assume $\ggg=\bar{H}(n)$ or $\ochn$.
For (SI-1), one can refer to \cite[Proposition 4.1.1]{Kac77}. For (SI-2), we can check it through direct calculation in the following.

Without loss of generality, we can assume $x=D_H(\xi_i\xi_j\xi_k)$ and $y=D_s$. We divide the proof into the following two cases.

Case (i): $s=i$.

In this case, $[x,y]=D_H(\xi_j\xi_k)$, and
\[
[x,[y,z]]=\left\{
\begin{array}{llll}
D_H(\xi_j\xi_k), & \mbox{if}~z=D_H(\xi_s\xi_{s^{\prime}})\, \text{or}\, \sfd;\\
D_H(\xi_s\xi_k), & \mbox{if}~z=D_H(\xi_s\xi_{j^{\prime}});\\
-D_H(\xi_s\xi_j), & \mbox{if}~z=D_H(\xi_s\xi_{k^{\prime}});\\
0,&\mbox{if}~z=D_H(\xi_s\xi_t)\,\text{for}\,t\neq s^{\prime}, j^{\prime}, k^{\prime};\\
0,&\mbox{if}~z=D_H(\xi_l\xi_m)\,\text{for}\, l\neq s, m\neq s.
\end{array}
\right.
\]
It follows that
$\textsf{str}((\textsf{ad}x\circ \textsf{ad}y)|_{\ggg_0})=0=\cae_{\bar H}([x,y])$.

Case (ii): $s\neq i,j,k$.

In this case, $[x,y]=0$, and
\[
[x,[y,z]]=\left\{
\begin{array}{ll}
\delta_{it^{\prime}}D_H(\xi_j\xi_k)-\delta_{jt^{\prime}}D_H(\xi_i\xi_k)+\delta_{kt^{\prime}}D_H(\xi_i\xi_j), & \mbox{if}~z=D_H(\xi_s\xi_t);\\
0,&\mbox{if}~z= \sfd\,\text{or}\,D_H(\xi_l\xi_m)\,\text{for}\,l\neq s, m\neq s.
\end{array}
\right.
\]
It follows that
$\textsf{str}((\textsf{ad}x\circ \textsf{ad}y)\mid_{\ggg_0})=0=\cae_{\bar H}([x,y])$.

Thus, (SI-2) holds both for $\bar{H}(n)$ and $\ochn$. Hence, $\mathcal{E}_{\bar{H}}$ (resp. $\mathcal{E}_{\och}$) is a semi-infinite character for $\bar{H}(n)$ (resp. $\ochn$).

\section{Appendix B: Computations for character formulas} \label{appendix}
%As we mentioned before,
 %we can focus on our concern on $\comid$ when we investigate the module structure in $\comi$. In the sequel, we always assume that $\Delta(\lambda), L(\lambda)$ are objects in $\mathcal{O}_{\geq d}$. This means that  $L^{0}(\lambda)$ falls in the grading-$d'$ component for with $d'\geq d$  for all $\lambda\in \Lambda^+$ concerned.

In this appendix, we list the composition factors of Kac-module which is contributed to compute the character formulas of tilting modules and indecomposable projective modules.  Recall that we have introduced the set $\Omega$ of the so-called Serganova atypical weights in  subsection \ref{kac}.
\subsection{The case of $W(n)$}\label{Wn}
\begin{lem}\label{Tongyi}
	Let $\lambda\in\Lambda^+$. Then the following statements hold.
	\begin{itemize}
		\item [(1)] If $\lambda\neq a\epsilon_i+\epsilon_{i+1}+\cdots+\epsilon_{n},$ $L(\lambda- \Xi)\cong\overline{L}(\lambda).$
		\item [(2)] If  $\lambda=a\epsilon_i+\epsilon_{i+1}+\cdots+\epsilon_{n}$ and $\lambda\neq0,$ then $L(\lambda- \Xi+\epsilon_i)\cong\overline{L}(\lambda)$.
		\item[(3)] If $\lambda=0,$  $L(0)\cong\overline{L}(0)$.
	\end{itemize}
\end{lem}

Based on \cite[Theorem 7.6]{Ser05} and Lemma \ref{Tongyi}, the following lemma holds.
\begin{lem}\label{mult}
	Let $\lambda, \mu \in\Lambda^+$. Then the following statements hold.
	\begin{itemize}
		\item[(1)] If $\lambda=0$, then there is the following exact sequence
		$$0\rightarrow L(-\Xi)\rightarrow K(0)\rightarrow L(0)\rightarrow 0.$$
		\item[(2)] If $\lambda=a\epsilon_{n}$, $a<0$, then there is the following exact sequence
		$$0\rightarrow L(a\epsilon_{n}-\Xi)\rightarrow K(a\epsilon_{n})\rightarrow L((a+1)\epsilon_{n}-\Xi)\rightarrow 0.$$
		\item[(3)] If $\lambda=\epsilon_{1}+\epsilon_2+\cdots+\epsilon_{n},$ then there is the following exact sequence
		$$0\rightarrow L(0)\rightarrow K(\epsilon_{1}+\epsilon_2+\cdots+\epsilon_{n})\rightarrow L(\epsilon_{1})\rightarrow 0.$$

		\item[(4)] If $\lambda=a\epsilon_{1}+\epsilon_2+\cdots+\epsilon_{n},a\geq2,$ then there is the following exact sequence
		$$0\rightarrow L((a-1)\epsilon_{1})\rightarrow K(a\epsilon_{1}+\epsilon_2+\cdots+\epsilon_{n})\rightarrow L(a\epsilon_{1})\rightarrow 0.$$
		\item[(5)] If $(K(\lambda):L(\mu))\neq 0,$ then
		$(K(\lambda):L(\mu))=1$.
		
	\end{itemize}
\end{lem}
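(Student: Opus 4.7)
The plan is to derive all five statements as direct consequences of Serganova's Theorem 7.6 in \cite{Ser05}, which describes the composition factors of Kac modules for Cartan-type Lie superalgebras in terms of atypical weights. The key reduction is that for $\ggg=W(n)$ the atypicality set $\Omega$ consists exactly of weights of the form $a\epsilon_i+\epsilon_{i+1}+\cdots+\epsilon_n$, and Serganova's analysis shows that every atypical Kac module has a two-step composition structure determined by an explicit translation of the highest weight. Cases (1)--(2) of the lemma correspond to the branch $i=n$, and cases (3)--(4) to the branch $i=1$; these are the two atypical series in which the translation appearing in Serganova's result takes the simple form $a\mapsto a-1$ on the free parameter.

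First I would recall how Serganova identifies simple subquotients: an atypical weight $\lambda\in\Omega^+$ has an associated ``shift partner'' $\lambda^\sharp\in\Omega^+$ (roughly, $\lambda^\sharp$ is obtained by decreasing the free coefficient $a$ by one), and $K(\lambda)$ fits into a non-split short exact sequence
\begin{equation*}
0\longrightarrow \overline L(\lambda^\sharp)\longrightarrow K(\lambda)\longrightarrow \overline L(\lambda)\longrightarrow 0.
\end{equation*}
Substituting $\lambda=0$, $\lambda=a\epsilon_n$ $(a<0)$, $\lambda=\epsilon_1+\cdots+\epsilon_n$, and $\lambda=a\epsilon_1+\epsilon_2+\cdots+\epsilon_n$ $(a\geq 2)$ into the shift rule directly yields the four exact sequences in (1)--(4). (Note: the translation labels used in Lemma \ref{tongyi} will convert between the $\overline L$-indexing used in Serganova's paper and the $L$-indexing used elsewhere in our category $\comi$; the displayed sequences of the lemma are stated in the $\overline L$-notation, so no further conversion is needed at this step.)

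Next I would handle the multiplicity-one statement (5). If $\lambda$ is Serganova-typical, then $K(\lambda)$ is already irreducible by Proposition \ref{iso2}(3), so $(K(\lambda):L(\mu))\in\{0,1\}$. If $\lambda$ is atypical, the two-step filtration just described exhibits $K(\lambda)$ as an extension of two distinct simple modules $\overline L(\lambda)$ and $\overline L(\lambda^\sharp)$, so again each composition multiplicity is at most one. Combining both cases gives (5).

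The main obstacle I anticipate is bookkeeping rather than mathematics: one must verify in each of the four sub-cases that the shift rule $\lambda\mapsto\lambda^\sharp$ in Serganova's Theorem 7.6 indeed specializes to the pair appearing in the statement, and that the socle/head assignment matches the direction of the exact sequences written in the lemma. Once this dictionary is set up, every claim becomes a direct substitution. The only genuinely non-formal input is Serganova's theorem itself, which we quote.
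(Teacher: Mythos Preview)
Your approach is correct and essentially identical to the paper's: the paper simply states that the lemma ``follows from \cite[Theorem 7.6]{Ser05}'' without further elaboration, and your plan is precisely to unpack that citation by specializing Serganova's two-step filtration for atypical Kac modules to the four dominant atypical series and then reading off multiplicity one. The only point worth noting for part~(5) is that you should make explicit that cases (1)--(4) exhaust all dominant atypical weights for $W(n)$ (since for $1<i<n$ the weight $a\epsilon_i+\epsilon_{i+1}+\cdots+\epsilon_n$ is never dominant), so that the typical/atypical dichotomy you invoke really covers every $\lambda\in\Lambda^+$.
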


By the definition of $\comi$ we only need to consider the weights belonging to $\Lambda^+,$ i.e., the weights $\lambda=\lambda_1\epsilon_1+\lambda_2\epsilon_2+\cdots+\lambda_n\epsilon_n$ such that $\lambda_1-\lambda_2,\lambda_2-\lambda_3,\cdots,\lambda_{n-1}-\lambda_n$
are all non-negative integers. Obviously, the following lemma holds.

\begin{lem}\label{typical}
	Let $\lambda$ be a weight belonging to $\Lambda^+$ such that $-w_0\lambda+2 \Xi$ is Serganova atypical. Then $\lambda$ has to be one of the following two forms
	\begin{itemize}
		\item[(1)] $\lambda=(2-a)\epsilon_1+2\epsilon_2+\cdots+2\epsilon_n,\,\,\text{for\,\,some}\,\,a\in\mathbb{Z}_{\leq 0}$.
		\item[(2)] $\lambda=\epsilon_1+\epsilon_2+\cdots+\epsilon_{n-1}+(2-b)\epsilon_n,\,\,
		\text{for\,\,some}\,\,b\in\mathbb{Z}_{\geq 1}$.
	\end{itemize}
	In case {\rm(1)}, $-w_0\lambda+2 \Xi=a\epsilon_{n}$, while in case {\rm(2)},  $-w_0\lambda+2\Xi=b\epsilon_{1}+\epsilon_{2}+\cdots+\epsilon_{n}$.
\end{lem}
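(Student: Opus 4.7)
\medskip

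\noindent\textbf{Proof plan for Lemma \ref{typical}.} The plan is to translate the condition ``$-w_0\lambda+2\Xi\in\Omega$'' into an explicit system of equations on the coordinates of $\lambda$, and then cut down the solutions using the dominance condition $\lambda\in\Lambda^+$. First I would fix the basis: for $\ggg=W(n)$ we have $\ggg_0\cong\mathfrak{gl}(n)$, so the Weyl group is $\mathbb{S}_n$ and its longest element acts on the standard basis by $w_0(\epsilon_i)=\epsilon_{n+1-i}$. Writing $\lambda=\sum_{i=1}^n\lambda_i\epsilon_i$ with $\lambda_1\geq\cdots\geq\lambda_n$ and $\lambda_i-\lambda_{i+1}\in\mathbb{Z}_{\geq 0}$, I would simply compute
\begin{equation*}
-w_0\lambda+2\Xi \;=\;\sum_{j=1}^n(2-\lambda_{n+1-j})\,\epsilon_j.
\end{equation*}

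Next, I would compare this with the defining form of $\Omega$: an element is Serganova atypical for $W(n)$ iff its coordinate vector (in the ordered basis $\epsilon_1,\ldots,\epsilon_n$) has the shape $(0,\ldots,0,a,1,\ldots,1)$, with the entry $a$ in some position $i\in\{1,\ldots,n\}$. This forces
\begin{equation*}
\lambda_{n+1-j}=2 \text{ for } j<i,\qquad \lambda_{n+1-i}=2-a,\qquad \lambda_{n+1-j}=1 \text{ for } j>i.
\end{equation*}
I would then impose the dominance inequalities on the resulting partition. For any intermediate index $1<i<n$, the constraint $\lambda_{n-i}\geq\lambda_{n-i+1}\geq\lambda_{n-i+2}$ becomes $1\geq 2-a\geq 2$, which has no solution; this rules out all intermediate $i$.

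The two surviving extremes give the two stated forms: $i=n$ forces $\lambda_2=\cdots=\lambda_n=2$ and $\lambda_1=2-a$, and dominance $\lambda_1\geq 2$ together with integrality yields $a\in\mathbb{Z}_{\leq 0}$, producing $\lambda=(2-a)\epsilon_1+2\epsilon_2+\cdots+2\epsilon_n$ with $-w_0\lambda+2\Xi=a\epsilon_n$; dually, $i=1$ forces $\lambda_1=\cdots=\lambda_{n-1}=1$ and $\lambda_n=2-a$, and dominance $\lambda_{n-1}\geq\lambda_n$ together with integrality yields $a\in\mathbb{Z}_{\geq 1}$, so, renaming $a=b$, $\lambda=\epsilon_1+\cdots+\epsilon_{n-1}+(2-b)\epsilon_n$ with $-w_0\lambda+2\Xi=b\epsilon_1+\epsilon_2+\cdots+\epsilon_n$. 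Since this exhausts the case analysis, the lemma follows.

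The computation is entirely mechanical; there is no genuine obstacle. The only point to be careful about is the mild ambiguity in the parameterization of $\Omega$ when the coefficient at position $i$ happens to equal $0$ or $1$ (so that the same element could be written with a different $i$); however, this redundancy does not enlarge the solution set, and the dominance inequalities immediately eliminate all intermediate $i$, so the conclusion is unaffected.
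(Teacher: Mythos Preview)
Your proof is correct and follows essentially the same approach as the paper: both write out the general atypical form $-w_0\lambda+2\Xi=a\epsilon_i+\epsilon_{i+1}+\cdots+\epsilon_n$, solve for $\lambda$, and then use the dominance condition to rule out the intermediate values of $i$. Your version is just more explicit in spelling out the inequality $1\geq 2-a\geq 2$ that kills the intermediate cases, which the paper leaves to the reader.
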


\begin{proof} Assume
	\begin{equation*}\label{omega}
		-w_0\lambda+2\Xi=a\epsilon_i+\epsilon_{i+1}+\cdots+\epsilon_{n}.
	\end{equation*}
	It follows that
	\begin{equation*}\label{lam}
		\lambda=\epsilon_1+\epsilon_2+\cdots+\cdots+\epsilon_{n-i}+(2-a)\epsilon_{n-i+1}+2\epsilon_{n-i+2}+\cdots+2\epsilon_{n-1}+2\epsilon_{n}.
	\end{equation*}
	Since $\lambda$ is an element in $\Lambda^+,$ $\lambda$ has to be one of the following two forms:
	\begin{equation*}\label{form1}
		\lambda=(2-a)\epsilon_1+2\epsilon_2+\cdots+2\epsilon_n,a\in\mathbb{Z}_{\leq 0},
	\end{equation*}
	or
	\begin{equation*}\label{form2}
		\lambda=\epsilon_1+\epsilon_2+\cdots+\epsilon_{n-1}+(2-b)\epsilon_n,b\in\mathbb{Z}_{\geq 1}.
	\end{equation*}
	Consequently, $-w_0\lambda+2\Xi=a\epsilon_{n}$ or $b\epsilon_{1}+\epsilon_{2}+\cdots+\epsilon_{n}$, respectively.   \qed

	Now we are in the position to determine the multiplicities of standard modules appearing in each tilting module.
	\begin{prop}\label{maint1}
		Let $\lambda,\mu\in \Lambda^+.$ Then the following statements hold.
		\begin{itemize}
			\item[(1)] In the case $\lambda=2\Xi$, $[T(\mu):\Delta(\lambda)]\neq0$ if and only if $\mu=\Xi$ or $\mu=\lambda.$
			
			\item[(2)] In the case $\lambda=(2-a)\epsilon_1+2\epsilon_2+\cdots+2\epsilon_n,a\in\mathbb{Z}_{\leq -1}$,
			$[T(\mu):\Delta(\lambda)]\neq0$ if and only if $\mu=\lambda-\epsilon_{1}$ or $\mu=\lambda.$
			
			\item[(3)] In the case $\lambda=\epsilon_1+\epsilon_2+\cdots+\epsilon_{n-1}+(2-b)\epsilon_n,b\in\mathbb{Z}_{\geq 1}$,
			$[T(\mu):\Delta(\lambda)]\neq0$ if and only if $\mu=\lambda-\epsilon_{n}$ or $\mu=\lambda.$
			
			\item[(4)] In the case that	
			$\lambda$ is not any one of the forms in Cases (i), (ii), (iii),
			$[T(\mu):\Delta(\lambda)]\neq0$ if and only if $\lambda=\mu.$
			
		\end{itemize}
		
		Moreover, if $[T(\mu):\Delta(\lambda)]\neq0,$ $[T(\mu):\Delta(\lambda)]=1.$

	\end{prop}
	
	\pf (1) Let $\lambda=(2-a)\epsilon_1+2\epsilon_2+\cdots+2\epsilon_n,a\in\mathbb{Z}_{\leq 0}$. By Proposition \ref{iso1} and Lemma \ref{typical}, we have
	$$[T(\mu):\Delta(\lambda)]=(K(a\epsilon_{n}):L(-w_0\mu+\Xi)).$$
	
	(1-i) $a=0$.
	
	In this case,
	\[
	\begin{array}{rl}
		&[T(\mu):\Delta(\lambda)]\neq 0\\
		\Longleftrightarrow& (K(0):L(-w_0\mu+\Xi))\neq0\\
		{\Longleftrightarrow}& L(-w_0\mu+ \Xi)\cong L(0)\mbox{~or~}L(-w_0\mu+ \Xi)\cong L(-\Xi) \;\; (\mbox{by Lemma }\ref{mult})
		\\
		{\Longleftrightarrow}&-w_0\mu+\Xi=0\mbox{~or~}-w_0\mu+\Xi=-\Xi
		\;\;
		\\
		\Longleftrightarrow&\mu= \Xi\mbox{~or~}\mu=2 \Xi\\
		i.e., &\mu= \Xi\mbox{~or~}\mu=\lambda.\\
		
	\end{array}
	\]	
	
	(1-ii) $a\leq -1$.
	
	In this case,
	\[
	\begin{array}{rl}
		&[T(\mu):\Delta(\lambda)]\neq 0\\
		\Longleftrightarrow& (K(a\epsilon_{n}):L(-w_0\mu+\Xi))\neq0\\
		{\Longleftrightarrow}& L(-w_0\mu+ \Xi)\cong L(a\epsilon_n-\epsilon_{n-1}-\cdots-\epsilon_{1})\mbox{~or~}\\
        &L(-w_0\mu+ \Xi)\cong L((a-1)\epsilon_n-\epsilon_{n-1}-\cdots-\epsilon_{1})
		\;\; (\mbox{by Lemma }\ref{mult})
		\\
		{\Longleftrightarrow}&-w_0\mu+\Xi=a\epsilon_n-\epsilon_{n-1}-\cdots-\epsilon_{1}
		\\
		&\mbox{~or~}-w_0\mu+\Xi=(a-1)\epsilon_n-\epsilon_{n-1}-\cdots-\epsilon_{1}
		\;\;
		\\
		\Longleftrightarrow&\mu=(1-a)\epsilon_1+2\epsilon_2+\cdots+2\epsilon_n
		\mbox{~or~}\mu=(2-a)\epsilon_1+2\epsilon_2+\cdots+2\epsilon_n\\
		i.e.,&\mu=\lambda-\epsilon_1\mbox{~or~}\mu=\lambda.\\
	\end{array}
	\]	
	
	For the results in (2)-(4), we can calculate them similarly.	
\end{proof}

As a direct consequence, the following proposition holds.
\begin{prop}\label{prop type W}
	Let $\ggg=W(n)$ and $\mu\in\Lambda^+$.  Then the following statements hold.
	\begin{itemize}
		\item [(1)] If $\mu=\epsilon_1+\epsilon_2+\cdots+\epsilon_{n}$, we have the following exact sequence:
		$$0\rightarrow\Delta(\mu)\rightarrow
		T(\mu)\rightarrow\Delta(2\mu)\rightarrow0.$$
		\item [(2)] If $\mu=a\epsilon_1+2\epsilon_2+\cdots+2\epsilon_{n}$ with $a\geq2$, then we have the following exact sequence:
		$$0\rightarrow\Delta(\mu)\rightarrow
		T(\mu)\rightarrow\Delta(\mu+\epsilon_{1})\rightarrow0.$$
		\item [(3)] If $\mu=\epsilon_1+\epsilon_2+\cdots+\epsilon_{n-1}+b\epsilon_{n}$ with $b\leq0$, then we have the following exact sequence:
		$$0\rightarrow\Delta(\mu)\rightarrow
		T(\mu)\rightarrow\Delta(\mu+\epsilon_{n})\rightarrow0.$$
		\item [(4)] Otherwise, $T(\mu)=\Delta(\mu)$.
	\end{itemize}
\end{prop}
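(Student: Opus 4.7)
The plan is to read off the $\Delta$-flag of $T(\mu)$ directly from Proposition \ref{maint1}, which has already computed $[T(\mu):\Delta(\lambda)]\in\{0,1\}$ for every pair $(\lambda,\mu)\in\Lambda^+\times\Lambda^+$. Together with Lemma \ref{tilt}(2), which places $\Delta(\mu)$ at the bottom of any $\Delta$-flag of $T(\mu)$, each short exact sequence in the statement is forced once the flag is shown to have length two, equivalently, to contain exactly one factor $\Delta(\lambda)$ with $\lambda\neq\mu$.

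For each $\mu\in\Lambda^+$, I would run through items (i)-(iv) of Proposition \ref{maint1} and ask which $\lambda\neq\mu$ contribute nonzero multiplicity. A direct inspection shows that this happens precisely in the three situations listed in (1)-(3): from item (i), one obtains $\lambda=2\Xi=2\mu$ when $\mu=\Xi$; from item (ii), taking $\mu=\lambda-\epsilon_1$ and setting $a'=1-a$, one obtains $\lambda=\mu+\epsilon_1$ when $\mu=a\epsilon_1+2\epsilon_2+\cdots+2\epsilon_n$ with $a\geq 2$; and from item (iii), taking $\mu=\lambda-\epsilon_n$ and setting $b'=1-b$, one obtains $\lambda=\mu+\epsilon_n$ when $\mu=\epsilon_1+\cdots+\epsilon_{n-1}+b\epsilon_n$ with $b\leq 0$. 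For any $\mu$ outside these three families, items (i)-(iii) produce no $\lambda\neq\mu$ and item (iv) forces $\lambda=\mu$, so the $\Delta$-flag of $T(\mu)$ has length one, which yields $T(\mu)=\Delta(\mu)$ as in (4).

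In each of cases (1)-(3), the flag of $T(\mu)$ has exactly the two factors $\Delta(\mu)$ and $\Delta(\mu')$, where $\mu'$ equals $2\mu$, $\mu+\epsilon_1$, or $\mu+\epsilon_n$ respectively. Lemma \ref{tilt}(2) places $\Delta(\mu)$ as a submodule, so the quotient $T(\mu)/\Delta(\mu)$ has the single remaining $\Delta$-factor and must be isomorphic to $\Delta(\mu')$, producing the displayed short exact sequences. No serious obstacle is anticipated; the main piece of bookkeeping is that the boundary value $\mu=2\Xi$ is simultaneously detected by item (i) and by item (ii) of Proposition \ref{maint1}, but these contribute the factors $\Delta(\mu)$ and $\Delta(\mu+\epsilon_1)$ respectively, keeping the total factor count at two and confirming the short exact sequence in case (2) with $a=2$.
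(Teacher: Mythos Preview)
Your proposal is correct and is precisely what the paper does: its proof reads in full ``This is a direct consequence of Proposition \ref{maint1}.'' You have simply spelled out the inversion from the $\lambda$-indexed statements of Proposition \ref{maint1} to the $\mu$-indexed $\Delta$-flag, together with the appeal to Lemma \ref{tilt}(2) for the bottom placement of $\Delta(\mu)$; the bookkeeping at the boundary value $\mu=2\Xi$ is handled correctly.
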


\subsection{The case $\bar{S}(n)$}

Let $\lambda$ be an element in $\Omega.$ Then it is easy to see that $\lambda$ belongs to $\Lambda^+$  if and only if
$$\lambda=b\epsilon_{1}+ a\epsilon_{2}+a\epsilon_{3}+\cdots+a\epsilon_{n}\,\,\text{with}\,\, (b-a)\in\mathbb{Z}_{\geq 0},$$
or
$$\lambda=a\epsilon_{1}+a\epsilon_{2}+\cdots+ a\epsilon_{n-1}+c\epsilon_{n}\,\,\text{with}\,\,(a-c)\in\mathbb{Z}_{\geq 0}. $$

The following result follows directly from \cite[Lemma 5.1]{Ser05}.
\begin{lem}\label{Sniso}
	Let $\lambda\in\Lambda^+$. Then the following statements hold.
	\begin{itemize}
		\item [(1)] If $\lambda=a \Xi-\epsilon_{n},$ $\overline{L}(\lambda)\cong L(\lambda- \Xi+\epsilon_1+\epsilon_{n}),$ i.e.,
		$$\overline{L}(a \Xi-\epsilon_{n})\cong L(a \Xi-\epsilon_{n}-\epsilon_{n-1}-\cdots-\epsilon_2).$$
		\item [(2)] If $\lambda=a \Xi,$ $\overline{L}(\lambda)\cong L(\lambda).$
		\item[(3)] If $\lambda=a \Xi-b\epsilon_{n}$ for $b\in\mathbb{Z}_{\geq 2}$, then $\overline{L}(\lambda)\cong L(\lambda-\Xi+\epsilon_n)$.
		\item[(4)] If $\lambda=a \Xi+b\epsilon_{1}$ for $b\in\mathbb{Z}_{\geq 1}$, then $\overline{L}(\lambda)\cong L(\lambda-\Xi+\epsilon_1)$.	
		\item[(5)] If $\lambda\notin\Omega,$ $\overline{L}(\lambda)\cong L(\lambda- \Xi).$
	\end{itemize}	
\end{lem}

Based on the results in \cite[\S8]{Ser05} and Lemma \ref{Sniso}, we get the following lemma.
\begin{lem}\label{mults}
	Let $\lambda\in\Lambda^+$. Then the following statements hold.
	\begin{itemize}
		\item[(1)] If $\lambda=a \Xi,$ then we have the following exact sequences
		$$0\rightarrow  M\rightarrow K(a\Xi)\rightarrow L(a \Xi)\rightarrow0,$$
		$$0\rightarrow  L((a-1)\Xi)\rightarrow M\rightarrow L((a-1) \Xi+\epsilon_{1})\rightarrow0.$$
		
		%\item[(2)] If $\lambda=a \Xi+\epsilon_{k}+\epsilon_{k+1}+\cdots+\epsilon_{n}$ with $2\leq k\leq n$,  then we have the following exact sequences:
		%$$0\rightarrow  M\rightarrow K(\lambda)\rightarrow \overline{L}(\lambda)\rightarrow0,$$
		%	$$0\rightarrow  \overline{L}(\lambda-\epsilon_{k-1})\rightarrow M\rightarrow \overline{L}(a \Xi)\rightarrow0.$$
		
		\item[(2)] If $\lambda=a \Xi+\epsilon_{1},$ then we have the following exact sequences
		$$0\rightarrow  M\rightarrow K( a\Xi+\epsilon_{1})\rightarrow L((a-1) \Xi+2\epsilon_{1})\rightarrow0,$$
		$$0\rightarrow  L((a-1) \Xi+\epsilon_{1})\rightarrow M\rightarrow L(a \Xi)\rightarrow0.$$
		
		\item[(3)] If $\lambda=a \Xi-\epsilon_{n},$  then we have the following exact sequences
		$$0\rightarrow  M\rightarrow K(a \Xi-\epsilon_{n})\rightarrow L((a-1) \Xi+\epsilon_{1})\rightarrow0,$$
		$$0\rightarrow  L((a-1) \Xi-\epsilon_{n})\rightarrow M\rightarrow L((a-1) \Xi)\rightarrow0.$$
		
		\item[(4)] If $\lambda=b\epsilon_{1}+a\Xi,b\in\zz_{\geq2}$,
		then we have the following exact sequence
		$$0\rightarrow L((a-1) \Xi+b\epsilon_{1})\rightarrow K(\lambda)\rightarrow L((a-1) \Xi+(b+1)\epsilon_{1})\rightarrow0.$$
		
		\item[(5)] If $\lambda=a\Xi-c\epsilon_{n},c\in\zz_{\geq2}$,
		then we have the following exact sequence
		$$0\rightarrow  L((a-1) \Xi-c\epsilon_{n})\rightarrow K(\lambda)\rightarrow  L((a-1) \Xi-(c+1)\epsilon_{n})\rightarrow0.$$
		
		\item[(6)] If $(K(\lambda):L(\mu))\neq 0,$ then
		$(K(\lambda):L(\mu))=1$.
	\end{itemize}
\end{lem}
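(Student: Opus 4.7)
The plan is to mirror Serganova's analysis from \cite[\S7]{Ser05} (carried out for $W(n)$), adapting it to $\bar{S}(n)$ using the atypicality combinatorics developed in \cite[\S8]{Ser05}. Serganova's character formula expresses $\ch K(\lambda)$ as an explicit alternating sum of simple characters $\ch \overline{L}(\mu)$, so the task splits into two parts: (a) listing the atypical composition factors of each $K(\lambda)$ together with their multiplicities (all of which turn out to be one), and (b) upgrading this character-level information to an honest filtration of $K(\lambda)$ realising the claimed subquotients.

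First I would recall from \cite[\S8]{Ser05} the description of $\Omega \cap \Lambda^+$ for $\bar{S}(n)$ and the explicit character formula for $K(\lambda)$ when $\lambda$ is atypical. A direct inspection shows that if $\lambda \in \Lambda^+$ is not of any of the shapes (1)--(5), then $K(\lambda)$ is already irreducible (i.e.\ $K(\lambda) \cong \overline{L}(\lambda)$), so nothing needs to be proved. In the five listed cases the formula reveals that $K(\lambda)$ has exactly two (in cases (4) and (5)) or three (in cases (1)--(3)) atypical composition factors, each of multiplicity one.

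For cases (4) and (5) the argument is the cleanest. The character identity reads $\ch K(\lambda) = \ch \overline{L}(\lambda) + \ch \overline{L}(\lambda - \epsilon_1)$ (respectively with $\epsilon_n$ in place of $\epsilon_1$). I would exhibit the unique maximal proper submodule of $K(\lambda)$ as the $\bar{S}(n)$-module generated by an explicit $\nnn^+$-singular vector of weight $\lambda - \epsilon_1$ (respectively $\lambda - \epsilon_n$); such a vector is located by inspecting the $\ggg_{-1} = \sum_i \bbc D_i$-action on the canonical generator $1 \otimes v_\lambda^0$, with the special-type (divergence-free) constraint ruling out extraneous candidates. The resulting submodule is a highest weight module whose head matches $\overline{L}(\lambda - \epsilon_1)$ by the character count.

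For cases (1)--(3), $K(\lambda)$ has three composition factors and the nontrivial structural input is to show that the maximal proper submodule $M$ sits in a nonsplit extension of the claimed form. This is where the main obstacle lies: the character data fixes which simples appear, but placing them correctly in the socle/top filtration of $M$ requires identifying explicit $\nnn^+$-singular vectors inside $K(\lambda)$ with the prescribed socle weight (e.g.\ weight $(a-1)\Xi$ in case (1)). I would produce such vectors case by case as images of specific monomials in $U(\ggg_{-1})$ acting on $1 \otimes v_\lambda^0$, then identify the quotient $M/\mathrm{soc}(M)$ through its character together with its structure as a nonzero quotient of another Kac module, whose head is the remaining atypical simple. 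The argument is parallel to \cite[Lemma 7.1]{Ser05}, with the only additional bookkeeping being the divergence-free restriction that distinguishes $\bar{S}(n)$ from $W(n)$ and that is responsible for the weight shifts appearing in (1)--(3) being different from those of the $W(n)$ case.
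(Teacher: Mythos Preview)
Your proposal is correct and matches the paper's own treatment: the paper does not give an independent proof but simply records that the lemma follows ``based on the results in \cite[\S8]{Ser05}'' by ``the same arguments as in \cite[\S7]{Ser05}''. Your plan of reading off the composition factors from Serganova's character formula and then upgrading to a filtration via explicit singular vectors is exactly the content of those cited sections, so you are spelling out what the paper leaves implicit.
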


Similar arguments as in the proof of Proposition \ref{maint1} yield the following  result on the multiplicities of standard modules in each tilting module for $\bar{S}(n)$.

\begin{prop}\label{prop type S}
	Let $\ggg=\bar{S}(n)$ and $\lambda$ be an element in $\Lambda^+$. Then
	$[T(\lambda):\Delta(\mu)]\neq0$ implies  $[T(\lambda):\Delta(\mu)]=1$. Furthermore,  the following statements hold.
	\begin{itemize}
		\item[(1)] Assume that $\lambda$ is Serganova atypical.
		\begin{itemize} 		
			\item[(1-i)] If $\lambda=k\Xi$, then
			\begin{align*}
				[T(\lambda):\Delta(\mu)]\neq0
				\Longleftrightarrow \mu\in \{\lambda,\lambda+\Xi,\lambda+ \Xi-\epsilon_{n},\lambda+\epsilon_{1}\}.
			\end{align*}
			\item[(1-ii)] If $\lambda=k \Xi-\epsilon_{n}$, then
			\begin{align*}
				[T(\lambda):\Delta(\mu)]\neq0
				\Longleftrightarrow&\mu\in \{\lambda+\epsilon_n,\lambda,\lambda+\epsilon_{1}+\epsilon_{n}\}.
			\end{align*}
			\item[(1-iii)] If $\lambda=k \Xi+b\epsilon_{n}$ with $b\in\mathbb{Z}_{\leq -2}$, then
			$$[T(\lambda):\Delta(\mu)]\neq0\Longleftrightarrow \mu\in \{\lambda,\lambda+\epsilon_{n}\}.$$			
			\item[(1-iv)] If $\lambda=k \Xi+a\epsilon_{1}$ with $a\in\mathbb{Z}_{\geq1}$, then
			$$[T(\lambda):\Delta(\mu)]\neq0\Longleftrightarrow \mu\in \{\lambda,\lambda+\epsilon_{1}\}.$$
		\end{itemize}
		\item[(2)] In the case that $\lambda$ is Serganova typical,  $T(\lambda)=\Delta(\lambda)$.
	\end{itemize}
\end{prop}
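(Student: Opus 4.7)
The plan is to follow the strategy of Proposition \ref{maint1} (the $W(n)$-case) verbatim, namely: apply the Soergel reciprocity in Proposition \ref{iso1}(2) to rewrite
\[
[T(\lambda):\Delta(\mu)]=(K(-w_0\mu+\Xi):L(-w_0\lambda)),
\]
then use Lemma \ref{mults} to enumerate the (at most three) composition factors of each atypical Kac module, and Lemma \ref{sniso} to identify each subquotient $\bar L(\nu)$ with the corresponding $L(\cdot)$. The problem then reduces to a purely combinatorial bookkeeping: for each $\lambda\in\Lambda^+$, list all pairs $(\tau,\nu)$ with $\tau\in\Lambda^+$, $\bar L(\nu)$ a composition factor of $K(\tau)$ and $\bar L(\nu)\cong L(-w_0\lambda)$; then translate $\tau=-w_0\mu+\Xi$ back into a value of $\mu$ via $\mu=-w_0^{-1}(\tau)+\Xi$, and verify $\mu\in\Lambda^+$.

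For the typical case (ii), I would argue as follows: when $\lambda\notin\Omega$, Lemma \ref{sniso}(5) gives $L(-w_0\lambda)\cong\bar L(-w_0\lambda+\Xi)$, and one checks that $-w_0\lambda+\Xi$ is again Serganova typical, so Proposition \ref{iso2}(3) forces $K(-w_0\lambda+\Xi)=\bar L(-w_0\lambda+\Xi)$ to be simple. Hence $L(-w_0\lambda)$ occurs only in $K(-w_0\lambda+\Xi)$, which yields $\mu=\lambda$ and $T(\lambda)=\Delta(\lambda)$. For the atypical cases (i-1)–(i-4), I would run the dictionary in each family explicitly. For instance, in (i-1) with $\lambda=k\Xi$, one has $-w_0\lambda=-k\Xi$, and Lemma \ref{sniso}(2) identifies $L(-k\Xi)\cong\bar L(-k\Xi)$ uniquely. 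Then Lemma \ref{mults}(1)--(3) shows that $\bar L(-k\Xi)$ appears as a subquotient precisely in $K(-k\Xi)$, $K((1-k)\Xi)$, $K(-k\Xi+\epsilon_1)$ and $K((1-k)\Xi-\epsilon_n)$; applying $\mu=-w_0^{-1}(\tau)+\Xi$ yields exactly the four values $\{\lambda+\Xi,\lambda,\lambda+\Xi-\epsilon_n,\lambda+\epsilon_1\}$ predicted by the proposition. Cases (i-2), (i-3), (i-4) proceed the same way, the only wrinkle being that one first has to re-express $L(-w_0\lambda)$ as the appropriate $\bar L(\nu)$ using the relevant part of Lemma \ref{sniso} (e.g.\ part (1) for (i-2), part (3) for (i-3), part (4) for (i-4)).

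The assertion that every nonzero multiplicity equals $1$ is then automatic: in each short exact sequence in Lemma \ref{mults} every $\bar L(\cdot)$ occurs once, so $(K(\tau):L(-w_0\lambda))\le 1$ throughout.

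The genuinely tricky step is the combinatorial verification in the atypical cases, specifically the need to recognise \emph{all} Kac modules in which a given $\bar L(\nu)$ occurs — not just the one where it is the head. This matters because the last two candidates $\lambda+\Xi-\epsilon_n$ and $\lambda+\epsilon_1$ in (i-1), and the third candidate in (i-2), arise from Kac modules other than the ``obvious'' one $K(-w_0\lambda+\Xi)$; one must be systematic about matching $\bar L(-w_0\lambda)$ against the lower subquotients in Lemma \ref{mults}(1)--(3), and check in each case that the resulting $\mu$ is still dominant in $\Lambda^+$ (this is what rules out the spurious solutions that appear in edge cases such as $\lambda=k\Xi-\epsilon_n$ when $k$ is small). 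Once this bookkeeping is carried out uniformly, the proposition follows without further work.
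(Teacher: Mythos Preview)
Your approach is correct and is exactly what the paper does: the paper's own proof of this proposition is the single sentence ``Similar arguments as in the proof of Proposition \ref{maint1} yield the following\ldots'', so your plan to run Soergel reciprocity (Proposition \ref{iso1}(2)) through Lemmas \ref{mults} and \ref{sniso} is precisely the intended argument. Two small slips to fix when you write it out: the parts of Lemma \ref{sniso} you need for (i-3) and (i-4) are swapped (for $\lambda=k\Xi+b\epsilon_n$ with $b\le -2$ you have $-w_0\lambda=-k\Xi+|b|\epsilon_1$, which inverts via part (4), while for $\lambda=k\Xi+a\epsilon_1$ you get $-w_0\lambda=-k\Xi-a\epsilon_n$, handled by part (3)); and there are no ``small $k$'' edge cases to worry about, since $k\in\mathbb{C}$ and every $\mu$ produced by the dictionary already lies in $\Lambda^+$.
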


\subsection{The case $\bar{H}(n)$}

\begin{lem}
	Let $\lambda\in \Lambda^+$ be a Serganova  atypical weight. Then $\lambda=a\epsilon_{1}+m\delta$ for some  $a\in\mathbb{Z}_{\geq0}$.
\end{lem}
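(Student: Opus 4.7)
The plan is to intersect the explicit description of the set $\Omega$ of Serganova atypical weights for $\bar H(n)$ with the dominance/integrality condition that defines $\Lambda^+$ for $\ggg_0'\cong \mathfrak{so}(V)$, and to rule out all but one case in the parametrization of $\Omega$ on the grounds that the leading coordinate $\lambda_1$ cannot be negative.

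First I would write $\lambda=\sum_{j=1}^r\lambda_j\epsilon_j+c\delta\in\bar\hhh^*$ and unwind the condition $\lambda\in\Lambda^+$ using the simple root systems of $\ggg_0'$ listed just before Lemma~\ref{indep lem}. This yields the usual dominance chains: $\lambda_1\geq\lambda_2\geq\cdots\geq\lambda_r\geq 0$ with $\lambda_j\in\bbz_{\geq 0}$ in type $B_r$ (i.e.\ $\bar H(2r+1)$), and $\lambda_1\geq\cdots\geq\lambda_{r-1}\geq|\lambda_r|$ with $\lambda_j\in\bbz$ in type $D_r$ (i.e.\ $\bar H(2r)$). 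In particular, in both cases one has the uniform constraint $\lambda_1\geq 0$.

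Next, using the hypothesis $\lambda\in\Omega$, I would expand $\lambda=-\epsilon_1-\cdots-\epsilon_{i-1}+b\epsilon_i+a\delta$ for some $1\leq i\leq r$ and $a,b\in\bbc$. If $i\geq 2$, then $\lambda_1=-1<0$, contradicting the dominance bound just obtained. Hence $i=1$ is forced, and the atypical weight reduces to $\lambda=b\epsilon_1+a\delta$. The remaining dominance constraints then collapse to $b\geq 0$, while integrality of $\lambda$ relative to the $B_r$ or $D_r$ root lattice forces $b\in\bbz_{\geq 0}$. Renaming $b\mapsto a$ and $a\mapsto m$ to match the notation of the lemma gives $\lambda=a\epsilon_1+m\delta$ with $a\in\bbz_{\geq 0}$, as required.

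There is essentially no hard obstacle here; the only thing to be careful about is the small asymmetry between the dominance conditions in types $B_r$ and $D_r$, but since both force $\lambda_1\geq 0$ (and both make the nonzero coordinates integers), the argument goes through uniformly in one step.
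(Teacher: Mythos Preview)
Your argument is correct and follows essentially the same approach as the paper: write out the dominance conditions defining $\Lambda^+$ for $\ggg_0'\cong\mathfrak{so}(n)$, observe that they force $\lambda_1\geq 0$, and conclude that the parameter $i$ in the description of $\Omega$ must equal $1$. One small imprecision: the general integrality condition for dominant weights of $\mathfrak{so}(n)$ allows half-integers (the paper states $\lambda_i\in\tfrac{1}{2}\bbz$), not only integers as you wrote; however, since for $i=1$ the remaining coordinates $\lambda_2=\cdots=\lambda_r=0$ are integers, the congruence $\lambda_1-\lambda_2\in\bbz$ (equivalently, pairing with the coroot of $\epsilon_1-\epsilon_2$) still forces $b\in\bbz_{\geq 0}$, so your conclusion stands.
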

\pf  With respect to our choice of positive roots, we can get that if $\lambda=\lambda_1\epsilon_{1}+\lambda_2\epsilon_{2}+\cdots+\lambda_r\epsilon_{r}+b\delta$ is an element of $\Lambda^+$, then it must satisfy the following conditions:
\begin{itemize}
	\item[(i)] when $n=2r$, then $\lambda_1\geq\lambda_2\geq\cdots\geq\lambda_{r-1}\geq|\lambda_{r}|,$
	$\lambda_i-\lambda_{j}\in\zz ~and~ \lambda_i\in\frac{1}{2}\zz;$
	
	\item[(ii)] when $n=2r+1$, then $\lambda_1\geq\lambda_2\geq\cdots\geq\lambda_{r-1}\geq\lambda_{r},$
	$\lambda_i-\lambda_{j}\in\zz ~and~ \lambda_r\in {1\over 2}\zz_{\geq0}.$
\end{itemize}
Consequently, from the expression of $\Omega$ in \S\ref{kac} the lemma follows. \qed

The following result follows from \cite[Lemma 5.1]{Ser05}.
\begin{lem}\label{iso-H}
	Let $\lambda\in\Lambda^+.$ Then the following statements hold.
	\begin{itemize}
		\item[(1)] If $\lambda$ is Serganova typical, then $\bar{L}(\lambda)\cong L(\lambda-n\delta)$.
		\item[(2)] If $\lambda$ is Serganova atypical and $\lambda\neq a\delta,$ then
		$\bar{L}(\lambda)\cong L(\lambda+(2-n)\delta)$.
		\item[(3)] If $\lambda= a\delta,$
		$\bar{L}(\lambda)\cong L(\lambda)$.
	\end{itemize}
\end{lem}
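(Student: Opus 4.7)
The approach is to combine Proposition \ref{iso2}(2)--(3) with Serganova's explicit description of the socle/head of Kac modules in \cite[Lemma 5.1]{Ser05}. Proposition \ref{iso2}(2) yields $\nabla(\lambda) \cong K(\lambda+n\delta)$; passing to the unique simple quotients gives $\tilde{L}(\lambda) \cong \bar{L}(\lambda+n\delta)$, so the lemma reduces to identifying each $\bar{L}(\mu)$ with the appropriate $L(\nu)$---i.e., locating a weight vector in $\bar{L}(\mu)$ annihilated by $\ggg_{\geq 1}$ that generates a $\ggg_0$-submodule isomorphic to $L^0(\nu)$ (which by Lemma \ref{pre proj}(2) and the universal property of $\Delta$ will produce a surjection $\Delta(\nu)\twoheadrightarrow \bar L(\mu)$, forcing $\bar L(\mu)\cong L(\nu)$ by irreducibility).

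For part (1), with $\lambda$ Serganova typical, Proposition \ref{iso2}(3) gives $K(\lambda) = \bar{L}(\lambda)$, and Proposition \ref{iso2}(2) then forces $\nabla(\lambda-n\delta) \cong K(\lambda)$ to be irreducible. By Lemma \ref{basic lem O}(3), $\Hom_{\comi}(\Delta(\lambda-n\delta),\nabla(\lambda-n\delta))$ is one-dimensional, and the image of a nonzero map factors through $L(\lambda-n\delta)$ and embeds into the simple socle of $\nabla(\lambda-n\delta)$; since the latter already equals $\bar{L}(\lambda)$, this chain of identifications yields $L(\lambda-n\delta)\cong \bar{L}(\lambda)$.

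For parts (2) and (3), with $\lambda$ Serganova atypical, $K(\lambda)$ is reducible, and Serganova's analysis provides a distinguished $\ggg_{\geq 1}$-annihilated weight vector in the head $\bar{L}(\lambda)$: for $\lambda \neq a\delta$ this vector has weight $\lambda+(2-n)\delta$ and generates a $\ggg_0$-submodule isomorphic to $L^0(\lambda+(2-n)\delta)$, whereas for $\lambda = a\delta$ the module $L^0(a\delta)$ is one-dimensional and the distinguished vector has weight $a\delta$ itself. In each case one obtains the required surjection from the appropriate standard module onto $\bar L(\lambda)$, and irreducibility yields the claimed isomorphism.

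The main obstacle is justifying the specific weight shift $(2-n)\delta$ in part (2) versus no shift in part (3). Geometrically this shift arises from tracking the longest $\ggg_{-1}$-descent inside the Kac module $K(\lambda)$---the vector $D_1\wedge\cdots\wedge D_{n-2}\otimes v_\lambda^0$ up to the action of $\ggg'_0$---modulo the unique maximal submodule of $K(\lambda)$. The dichotomy between the generic case $\lambda\neq a\delta$ and the degenerate case $\lambda=a\delta$ reflects that the trivial $\ggg'_0$-component of $\bigwedge^{n-2}(\ggg_{-1})\otimes L^0(\lambda)$ survives in the head only when $L^0(\lambda)$ is sufficiently non-degenerate; for the one-dimensional $L^0(a\delta)$ this extremal descent collapses back onto the generating vector of weight $a\delta$. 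Working out this survival/collapse dichotomy is the content of \cite[Lemma 5.1]{Ser05}, from which our formulation is obtained by applying the dictionary $\bar L(\lambda+n\delta)\cong \tilde L(\lambda)$ established above.
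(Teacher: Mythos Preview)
The paper's own proof is a single sentence: it cites \cite[Lemma 5.1]{Ser05} directly. Your argument for part (1) via Proposition \ref{iso2}(2)--(3) and Lemma \ref{basic lem O}(3) is correct and gives a self-contained treatment of the typical case, which is a mild improvement over the bare citation. For parts (2)--(3) you ultimately defer to Serganova as well, so the substantive content is the same.

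There is, however, a genuine error in your framework paragraph. To produce a surjection $\Delta(\nu)\twoheadrightarrow \bar L(\mu)$ you need a $\ggg_0$-maximal vector in $\bar L(\mu)$ of weight $\nu$ that is annihilated by $\ggg_{-1}$, not by $\ggg_{\geq 1}$: recall $\Delta(\nu)=U(\ggg)\otimes_{U(\sfp)}L^0(\nu)$ with $\sfp=\ggg_{-1}\oplus\ggg_0$ and trivial $\ggg_{-1}$-action. A vector annihilated by $\ggg_{\geq 1}$ would instead give a map from the Kac module $K(\nu)$, which is not what you want. Your later heuristic about ``the longest $\ggg_{-1}$-descent'' suggests you understand the correct picture, so this may be a slip, but as written the universal-property step does not go through.

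Two smaller points: the detour through $\tilde L(\lambda)\cong \bar L(\lambda+n\delta)$ in your first paragraph is never actually used---the lemma is already a statement about $\bar L(\lambda)$ versus $L(\nu)$, so no ``dictionary'' is needed. And the specific heuristic involving $D_1\wedge\cdots\wedge D_{n-2}\otimes v_\lambda^0$ is speculative; the actual mechanism in \cite{Ser05} locates the $\ggg_{-1}$-socle of the simple head differently, so I would not lean on that description.
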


The following description on composition factors of Kac modules with Serganova atypical weights follows from  Lemma \ref{iso-H} and \cite[Section 9]{Ser05}.
\begin{lem}\label{comp-H}
	Let $\lambda\in\Omega$. Then the following statements hold.
	\begin{itemize}
		\item[(1)] If $\lambda=a\delta,$ then the irreducible composition factors of $K(\lambda)$ are
		$$L(a\delta), L((a-n)\delta), L(\epsilon_{1}+(a+1-n)\delta).$$
		\item[(2)] If $\lambda=\epsilon_{1}+a\delta,$ then
		the irreducible composition factors of $K(\lambda)$ are
		$$L((a-1)\delta), L((a+1-n)\delta), L(\lambda+\epsilon_{1}+(1-n)\delta),L(\lambda+(2-n)\delta), L(\lambda-n\delta).$$
		\item[(3)] If $\lambda=b\epsilon_{1}+a\delta,b\in\mathbb{Z}_{\geq 2}$, then
		the irreducible composition factors of $K(\lambda)$ are
		$$L(\lambda+(2-n)\delta), L(\lambda-n\delta), L(\lambda+\epsilon_{1}+(1-n)\delta), L(\lambda-\epsilon_{1}+(1-n)\delta).$$
		\item[(4)] If $(K(\lambda):L(\mu))\neq 0,$ then
		$(K(\lambda):L(\mu))=1$.
		
	\end{itemize}
\end{lem}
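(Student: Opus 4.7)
The plan is to derive Lemma \ref{comp-H} as a direct translation of Serganova's composition series for Kac modules in \cite[\S 9]{Ser05}, using Lemma \ref{iso-H} as a dictionary between the irreducible quotients $\bar L(\mu)$ of Kac modules and the irreducible quotients $L(\nu)$ of our standard modules $\Delta(\nu)$. Serganova classifies, for each atypical dominant weight $\lambda$, the list of $\bar L(\mu)$ appearing as composition factors in $K(\lambda)$ for $\bar H(n)$, depending on whether $\lambda$ is of type $a\delta$, of type $\epsilon_1+a\delta$, or of type $b\epsilon_1+a\delta$ with $b\in\bbz_{\geq 2}$. Once this list is fixed, each factor $\bar L(\mu)$ will be rewritten using the three-case dictionary of Lemma \ref{iso-H}: if $\mu$ is typical, replace $\bar L(\mu)$ by $L(\mu-n\delta)$; if $\mu$ is atypical and not of the form $a\delta$, replace $\bar L(\mu)$ by $L(\mu+(2-n)\delta)$; and if $\mu=a\delta$, replace $\bar L(\mu)$ by $L(\mu)$.

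Concretely, I would first tabulate Serganova's composition factors $\bar L(\mu)$ separately in the three cases. For $\lambda=a\delta$ her result yields, up to ordering, the head $\bar L(a\delta)$ together with two further atypical factors whose weights differ from $a\delta$ by a root-lattice element arising from the $\ggg_{-1}$-action on the top; these I would identify by examining weights of $\bigwedge \ggg_{-1}\otimes L^0(a\delta)$ and matching against $\Omega$. For $\lambda=\epsilon_1+a\delta$ the Kac module has five composition factors (two of them of the special shape $c\delta$), and for $\lambda=b\epsilon_1+a\delta$ it has four composition factors, all of them atypical but none of shape $c\delta$ once $b\geq 2$. After listing them, a purely mechanical application of Lemma \ref{iso-H} case by case translates each $\bar L(\mu)$ to the $L$-module appearing in the statement of Lemma \ref{comp-H}.

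The main point to watch will be the bookkeeping at the boundary between the three translation rules. In part (1), the factor $L(a\delta)$ comes from $\bar L(a\delta)$ via Lemma \ref{iso-H}(3); the factor $L((a-n)\delta)$ comes from a $\bar L$ whose weight is also of the form $c\delta$ and hence is translated by case (3) again; while $L(\epsilon_1+(a-n+1)\delta)$ arises from an atypical $\bar L$ of shape $\epsilon_1+c\delta$, translated by case (2) with the shift $(2-n)\delta$. In part (2) and (3) the same principle applies, with the extra subtlety that some composition factors have weights $c\delta$ and others of form $\epsilon_1+c\delta$, so one must keep track of which translation rule applies to each factor; this is precisely what produces the mixed shifts $(1-n)\delta$, $(2-n)\delta$ and $-n\delta$ appearing in the statement.

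The only real obstacle is therefore the careful bookkeeping: the result is a translation exercise, with no further representation-theoretic input beyond Serganova's work and the identifications in Lemma \ref{iso-H}. In particular, no new vanishing arguments or module-theoretic constructions are required, so the proof is essentially a case-by-case verification table, which can be safely compressed in the write-up by saying ``this follows directly from \cite[\S 9]{Ser05} and Lemma \ref{iso-H}'' after displaying the translation rules.
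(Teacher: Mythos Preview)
Your proposal is correct and matches the paper's approach exactly: the paper states that Lemma \ref{comp-H} ``follows from Lemma \ref{iso-H} and \cite[Section 9]{Ser05}'' with no further argument, which is precisely the translation exercise you describe. Your identification of which case of Lemma \ref{iso-H} applies to each factor is the bookkeeping the paper leaves implicit.
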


Let $\lambda=a\epsilon_{1}+m\delta$ and $\mu=b\epsilon_{1}+l\delta$ be elements in $\Lambda^+$,  we have $-\omega_0\lambda=\lambda+(2a-2m)\delta, -\omega_0\mu=\lambda+(2b-2l)\delta$.
So $[T(\mu):\Delta(\lambda)]=(K(\lambda+(2a-2m+n)\delta):L(\mu+(2b-2l)\delta))$ due to Proposition \ref{iso1}. Then we obtain the following  result on the multiplicities of standard modules in each tilting module for $\bar{H}(n)$.

\begin{prop}\label{prop type H}
	Let $\ggg=\bar{H}(n)$ and $\lambda\in\Lambda^+$. Then $[T(\lambda):\Delta(\mu)]\neq0$ implies $[T(\lambda):\Delta(\mu)]=1$.
	Moreover, the following statements hold.
	\begin{itemize}
		\item[(1)] Assume that $\lambda$ is Serganova atypical.
		\begin{itemize}
			\item[(1-i)] If $\lambda=m\delta$,  then
			$$[T(\lambda):\Delta(\mu)]\neq0 \Longleftrightarrow \mu\in \{\lambda,
			\lambda+n\delta, \epsilon_{1}+(m+n+1)\delta, \epsilon_{1}+(m+3)\delta\}.$$
			\item[(1-ii)] If  $\lambda=a\epsilon_{1}+m\delta,a\geq1$, then $$[T(\lambda):\Delta(\mu)]\neq0\Longleftrightarrow \mu\in \{\lambda, \lambda+2\delta, \lambda+\epsilon_{1}+3\delta, \lambda-\epsilon_{1}-\delta\}.$$
		\end{itemize}			
		\item[(2)] If $\lambda$ is Serganova typical, $T(\lambda)=\Delta(\lambda).$
	\end{itemize}
\end{prop}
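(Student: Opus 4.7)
The plan is to translate each multiplicity $[T(\lambda):\Delta(\mu)]$ into a composition-factor multiplicity in a Kac module via the Soergel reciprocity of Proposition \ref{iso1}(3), namely
$$[T(\lambda):\Delta(\mu)] = (K(-w_0\mu + n\delta) : L(-w_0\lambda)),$$
and then read off the answer from Serganova's composition-factor description of Kac modules (Lemma \ref{comp-H}) after converting $\bar L$-parameters to $L$-parameters using Lemma \ref{iso-H}. Since every Serganova atypical $\lambda \in \Lambda^+$ has the form $a\epsilon_1 + m\delta$ with $a \in \bbz_{\geq 0}$, the weight $-w_0\lambda$ is again of the same shape in each of the cases $B_r$ and $D_r$, which keeps the subsequent matching transparent.

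For the typical half of the proposition, Proposition \ref{iso2}(3) says that $K(-w_0\mu + n\delta)$ is irreducible as soon as $-w_0\mu + n\delta$ is Serganova typical, and in that case Lemma \ref{iso-H}(1) identifies its unique composition factor with $L(-w_0\mu)$. Thus when $\lambda$ is typical, matching $L(-w_0\lambda)$ forces $\mu = \lambda$, so $T(\lambda) = \Delta(\lambda)$; conversely, if $\lambda$ were typical while some atypical $\mu$ appeared in the standard flag of $T(\lambda)$, Lemma \ref{comp-H} would produce an extra composition factor in $K(-w_0\mu+n\delta)$ coinciding with $L(-w_0\lambda)$, which is incompatible with $\lambda$ being typical. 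This gives item (2).

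For item (1), I treat the two atypical sub-cases by direct inspection. In case (1-i), $\lambda = m\delta$ so $-w_0\lambda = m\delta$; for each candidate $\mu \in \{m\delta,\ (m+n)\delta,\ \epsilon_1+(m+n+1)\delta,\ \epsilon_1+(m+3)\delta\}$, I compute $-w_0\mu+n\delta$, apply Lemma \ref{comp-H}(1) or (2) according to whether the candidate involves $\epsilon_1$, translate each $\bar L$-label into an $L$-label via Lemma \ref{iso-H}(2)--(3), and check that $L(m\delta)$ appears on the resulting list. The reverse direction — that these are the \emph{only} $\mu$ producing a nonzero multiplicity — is obtained by observing that Lemma \ref{comp-H} exhausts the composition factors of $K(\tau)$ for $\tau$ atypical, so matching $L(m\delta)$ pins down $-w_0\mu + n\delta$ up to the listed four choices. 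Case (1-ii) is handled identically, with $-w_0\lambda = a\epsilon_1 + m\delta$ ($a \geq 1$) compared against the five factors in Lemma \ref{comp-H}(2) or the four in Lemma \ref{comp-H}(3). The multiplicity-one assertion then follows from the fact that each composition factor appears with multiplicity one in $K(\tau)$, exactly as in Lemma \ref{mult}(5) for the $W$-analogue.

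The hard part will be the careful bookkeeping of the various shifts: (i) the $+n\delta$ inside the Soergel reciprocity, (ii) the $-n\delta$ and $(2-n)\delta$ shifts in Lemma \ref{iso-H}(1)--(2) translating $\bar L$-labels to $L$-labels, (iii) the action of $-w_0$ on weights of the form $b\epsilon_1 + l\delta$, which is uniform across the $B_r$ and $D_r$ types for this shape, and (iv) the verification, in each atypical sub-case, that no $\mu$ outside the tabulated set satisfies the matching condition — this relies on the finite list in Lemma \ref{comp-H} closing off any further contributions. Once these translations are set up consistently, the four displayed $\mu$'s in (1-i) and (1-ii) arise mechanically from the at most five composition factors of each relevant Kac module.
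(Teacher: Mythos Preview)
Your approach is correct and is essentially the paper's own: apply Soergel reciprocity (Proposition~\ref{iso1}(3)) to convert $[T(\lambda):\Delta(\mu)]$ into a Kac-module multiplicity, then read off the answer from Lemma~\ref{comp-H}. Two small bookkeeping remarks: Lemma~\ref{comp-H} already lists composition factors in $L$-labels (the translation via Lemma~\ref{iso-H} is built into it, so no further conversion is needed), and since $w_0$ fixes $\delta$ you have $-w_0(a\epsilon_1+m\delta)=a\epsilon_1-m\delta$ rather than $a\epsilon_1+m\delta$, so keep that sign straight when matching the four (or five) candidates against $L(-w_0\lambda)$.
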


\end{document}